\definecolor{db}{RGB}{0, 0, 130}
\definecolor{rp}{rgb}{0.25, 0, 0.75}
\definecolor{dg}{rgb}{0, 0.6, 0}
\newtheorem{theorem}{Theorem}[section]
\newtheorem{definition}{Definition}[section]
\newtheorem{corollary}[definition]{Corollary}
\newtheorem{example}[definition]{Example}
\newtheorem{assumption}[definition]{Assumption}
\newtheorem{proposition}[definition]{Proposition}
\newtheorem{remark}[definition]{Remark}
\def\R{\mathbb{R}}
\def\D{\mathbb{D}}
\def\E{\mathbb{E}}
\def\N{\mathbb{N}}
\def\Q{\mathbb{Q}}
\def\x{\times}
\def\Om{\Omega}
\def\om{\omega}
\def\Fc{\mathcal{F}}
\def\Ab{\mathbb{A}}
\def\F{\mathbb{F}}
\def\P{\mathbb{P}}
\def\G{\mathbb{G}}
\def\V{\mathbb{V}}
\def\L{\mathbb{L}}
\def\ox{\otimes}
\def\eps{\varepsilon}
\def\Yh{\widehat{Y}}
\def\Tc{\mathcal{T}}
\def\lt{\widetilde{\mathbf{l}}}
\def\Lh{\widehat{L}}
\def\Lo{\overline{L}}
\def\Po{\overline{\mathbb{P}}}
\def\Lt{\widetilde{L}}
\def\Bc{\mathcal{B}}
\def\Gc{\mathcal{G}}
\def\Lc{\mathcal{L}}
\def\Pc{\mathcal{P}}
\def\x {\times}
\def\Yh{\widehat{Y}}
\def\fh{\widehat{f}}
\def\Et{\widetilde E}
\def\Yt{\widetilde Y}
\def\Fcu{\widetilde{\Fc}}
\def\ft{\tilde f}
\def\ut{\tilde u}
\def\Omo{\overline{\Om}}
\def\etah{\overline{\eta}}
\def\tauh{\hat \tau}
\def\mt{\widetilde{m}}
\def\Thetah{\overline{\Theta}}
\def\thetah{\overline{\theta}}
\def\thetau{\underline{\theta}}
\def\esssup{{\rm ess}\!\sup}
\def\essinf{{\rm ess}\!\inf}
\def \endproof{\hbox{ }\hfill$\Box$}
\def\Zh{\widehat Z}
\def\S{\mathbb{S}}
\def\Fcb{\overline{\Fc}}
\def\Omb{\overline{\Omega}}
\def\omb{\bar \om}
\def\Pb{\overline{\P}}
\def\Fbb{\overline\F}
\def\Pcb{\overline \Pc}
\title{A mean-field version of Bank--El Karoui’s representation of stochastic processes\thanks{We are grateful to Peter Bank for very helpful discussions during this project.}}
\author{
Xihao He
\footnote{Department of Mathematics, University of Michigan, Ann Arbor. hexihao@umich.edu.}
\and
Xiaolu Tan
\footnote{Department of Mathematics, The Chinese University of Hong Kong. xiaolu.tan@cuhk.edu.hk. The research of Xiaolu Tan is supported by Hong Kong RGC General Research Fund (project 14302622).}
\and
Jun Zou
\footnote{Department of Mathematics, The Chinese University of Hong Kong. zou@math.cuhk.edu.hk. The  research of Jun Zou was substantially supported by Hong Kong RGC General Research Fund
(Projects 14306921 and 14306719)}}
\date{\today}
\begin{document}

\maketitle

\begin{abstract}

	We investigate a mean-field version of Bank--El Karoui's representation theorem of stochastic processes.
	Under three different technical conditions, we established some existence and uniqueness results, using three different approaches.
	As motivation and first applications, the results of mean-field representation provide a unified approach for studying various mean-field games (MFGs) in the setting with common noise and multiple populations,
	including the MFG of timing and the MFG with singular control, etc.
	As a crucial technical step, a stability result was provided on the classical Bank--El Karoui’s representation theorem.
	It has its own interest and other applications,
	such as deriving stability results of optimizers (in the strong sense) for a class of optimal stopping and singular control problems.

\end{abstract}

\noindent \textbf{Key words:} Stochastic process, Bank--El Karoui's representation theorem, Mean-field game.

\vspace{0.5em}

\noindent\textbf{MSC2010 subject classification:} 60G40, 93E20, 60G07, 93E15

\section{Introduction}

	Let $(\Om, \Fc, \P)$ be a complete probability space, equipped with the filtration $\F = (\Fc_t)_{t \ge 0}$ satisfying the usual conditions.
	For $T \in [0, \infty]$, $\Tc$ denotes the space of all stopping times taking values in $[0,T]$.
	Let $f: [0,  T] \x \Om \x \R \longrightarrow \R$ be such that
	$\ell \longmapsto f(t, \om, \ell)$ is continuous and strictly increasing from $-\infty$ to $\infty$ for every $(t, \om) \in [0, T] \x \Om$,
	and $(t, \om) \longmapsto f(t, \om, \ell)$ is progressively measurable with $\E \big[\int_0^T |f(t, \ell)| dt \big]< \infty$ for every $\ell \in \R$.
	Bank and El Karoui \cite{BankKaroui2004} proved that any optional process $Y$, satisfying $Y_T = 0$ and some further integrability and path regularity conditions,
	can be represented with an optional process $L$ by the following expression:
	\begin{equation} \label{eq:BankEK_repres_intro}
		Y_\tau = \E\bigg[\int_{\tau}^T f\Big(t, \sup_{\tau \le s < t}L_s \Big) dt \Big| \Fc_\tau\bigg],
		~\mbox{a.s., for all}~
		\tau \in \Tc.
	\end{equation}
	Notice that, as $L$ is an optional process, the variable $\sup_{\tau \le s < t} L_s$ is measurable in the complete probability space by the classical measurable selection theorem.
	Such a representation result provides a powerful tool in stochastic analysis, particularly for solving various stochastic optimization problems,
	such as the optimal consumption problem in Bank and Riedel \cite{BankRiedel01},
	the optimal stopping problem (see, e.g., Bank and F\"ollmer \cite{BankFollmer}), and
	the singular control problem (see, e.g., Bank \cite{Bank2004}, Bank and Kauppila \cite{BankKauppila2017}, Chiarolla and Ferrari \cite{ChiarollaFerrari2014}).
	It has also been used to solve an exit contract design problem by He, Tan, and Zou \cite{HeTanZou2021}.
	For extensions of this classical representation theorem, let us mention the work of Bank and Besslich \cite{BankBesslich2021}  for the case with Meyer-measurable processes,
	and the work of Ma and Wang \cite{MaWang2009}, Qian and Xu \cite{QianXu2018} for some variations of the reflected stochastic backward differential equation.

	\vspace{0.5em}

	In this study, we study a mean-field version of the representation result \eqref{eq:BankEK_repres_intro} in the form:
	\begin{equation} \label{eq:MF_repres_intro}
		Y_{\tau}
		=
		\E \Big[
		\int_{\tau}^T  f\Big(t , \Lc(L), \sup_{\tau \le s < t }L_s \Big) dt
		\Big|\Fc_\tau \Big],
		~\mbox{a.s., for all}~
		\tau \in \Tc,
		~\mbox{a.s.},
	\end{equation}
	where the new added mean-field term $\Lc(L)$ represents the distribution of the process $L$.
	Motivated by its applications, we will in fact study a more general version, by considering the conditional distribution $\Lc(L|\Gc)$ of $L$ knowing a sub-$\sigma$-filed $\Gc \subset \Fc$ and by letting the process $Y$ depend also on conditional distribution $\Lc(L|\Gc)$.

	\vspace{0.5em}
	
	To establish the representation result in \eqref{eq:MF_repres_intro}, we will apply three different approaches, each under a distinct set of assumptions which complement each other.
	Concretely, we  will consider it as a fixed-point problem and apply different fixed-point theorems.
	Given a distribution $m$, one replaces $\Lc(L)$ by $m$ in \eqref{eq:MF_repres_intro} and then applies the classical representation \eqref{eq:BankEK_repres_intro} to obtain a representation process $L^m$, which induces a distribution $\hat m := \Lc (  L^m )$.
	The problem then reduces to find a fixed point of the map $m \longmapsto \hat m$.
	In a first approach, we applied the Schauder fixed-point theorem to find a fixed point or equivalently to obtain a mean-field representation result.
	In this approach, a key step is to derive the continuity of $m \longmapsto L^m$ in some sense.
	Therefore, we establish a novel stability result of Bank--El Karoui's representation theorem \eqref{eq:MF_repres_intro}.
	In a second approach, we applied Tarski's fixed-point theorem to establish the mean-field representation result,
	in which a partial order was introduced on the space for $\Lc(L)$ (or the conditional law) and formulate some technical conditions on $f$ to verify the required conditions in Tarski’s fixed-point theorem.
	In a third approach, we consider a special structure condition which reduces the fixed point problem to the one-dimensional case satisfying a monotonicity condition,
	and then obtain an existence and uniqueness result.

	\vspace{0.5em}

	The main motivation to introduce and to study the mean-field representation in \eqref{eq:MF_repres_intro} is its applications in the mean-field game (MFG) theory.
	The MFG consists of studying the limit behavior of the Nash equilibrium of a stochastic differential game with a large number of agents in a symmetric setting.
	In this symmetric game, each agent interacts with others through the empirical distribution induced by the states (and/or optimal actions) of all agents,
	which converges to the distribution (or conditional distribution in the case with common noise) of the state (and/or optimal actions) of one representative agent as the population number increases.
	This (conditional) distribution term is called the mean-field interaction term.
	The MFG was first introduced by Lasry and Lions \cite{LasryLions2007} and by Huang, Caines, and Malham\'{e} \cite{HuangCainesMalhame2006}.
	Since then, MFG has been attracting considerable research interest from various communities because of its widespread applications in fields, such as economics, biology, and finance.
	In particular, we refer to Cardaliaguet \cite{Cardaliaguet2010} for a basic introduction and to Carmona and Delarue \cite{CarmonaDelarue2018} for recent advances.

	\vspace{0.5em}

	Although the initial formulation of the MFG is obtained by using a classical optimal control problem for each agent,
	other stochastic optimization problems can be considered to obtain other versions of the MFG problem.
	For example, when agents solve the optimal stopping problems with interaction in terms of the (conditional) distribution of the optimal stopping times (or stopped processes),
	the MFG of timing is obtained.
	The MFG of timing has been initially studied by Nutz \cite{Nutz2018} and by Carmona, Delarue, and Lacker \cite{CarmonaDelarueLacker2017} by using mainly a probabilistic approach.
	In the setting with the underlying process being given by a stochastic differential equation (SDE),
	the MFG of timing has been studied by Bertucci \cite{Bertucci2018} with a PDE approach,
	by Bouveret, Dumitrescu, and Tankov \cite{BouveretDumitrescuTankov2020}
	and by Dumitrescu, Leutscher, and Tankov \cite{DumitrescuLeutscherTankov2021} with a linear programming approach.
		
	\vspace{0.5em}

	When the agents in the game solve the singular control problem, the MFG with singular control is obtained.
	Such a formulation of the MFG has been studied by Fu and Horst \cite{FuHorst2017},
	Fu \cite{Fu2019}, Cao, Guo, and Lee \cite{CaoGuoLee2022},
	where the main results consist in the existence of solution, its approximation by those of regular controls, and its approximation by the corresponding $N$-player games, etc.
	Other formulations, such as the ergodic MFG with singular control was studied by Cao, Dianetti, and Ferrari \cite{CaoDianettiFerrari2021},
	a MFG of finite-fuel capacity expansion with singular control was studied in Campi, De Angelis, Ghio, and Livieri \cite{CampiDeAngelisGhio2020}.
	Recently, Dianetti, Ferrari, Fischer, and Nendel \cite{DianettiFerrari2022} provides a unified framework for different MFG with common noise under submodularity assumptions,
	which also includes the MFG of timing, the MFG with singular control, etc.

	\vspace{0.5em}
	
	The mean-field representation result \eqref{eq:MF_repres_intro} provides a unified method for studying various versions of the MFG.
	In fact, for various classical stochastic optimization problems, the optimal solution can be derived from process $L$ in the classical Bank--El Karoui's representation \eqref{eq:BankEK_repres_intro} for an appropriate process $Y$.
	In the mean-field setting, the (conditional) distribution of $L$ induces the (conditional) distribution of the optimizer, which would be exactly the required interaction term in the corresponding MFG problem.
	Consequently, by specifying the dependence of the generator function $f$ in $\Lc(L)$ in the mean-field representation result \eqref{eq:MF_repres_intro} according to various applications,
	a solution to \eqref{eq:MF_repres_intro} would induce solutions to some MFGs.
	Concretely, similar to the applications of \eqref{eq:BankEK_repres_intro} in the optimal stopping problem, singular control problem, optimal consumption problem,
	one can apply \eqref{eq:MF_repres_intro} to study the MFG of timing, MFG with singular control, and MFG of optimal consumption.
	Crucially, for a given process $Y$, the same induced process $L$ in \eqref{eq:BankEK_repres_intro} provides simultaneously optimal solutions to various optimization problems.
	Based on this result, the mean-field representation result \eqref{eq:MF_repres_intro} can be used to study the MFG with multiple populations.

	\vspace{0.5em}

	Our first main contribution consists in providing a nontrivial extension of the classical Bank--El Karoui's representation theorem to the case in which the process $Y$ and generator function $f$ depend on the (conditional) distribution of the representation process $L$, as in \eqref{eq:MF_repres_intro}.
	As first applications, the novel representation results provide a unified approach for studying various MFGs (e.g., MFG of timing and MFG with singular control), in a setting with (possibly) multiple populations and common noise.
	In the literature, these MFG problems are typically studied case by case, and in the one population setting.
	Furthermore, because our representation theorem is formulated on a fixed filtered probability space, the derived solutions (or $\eps$-solution) of the MFGs are strong solutions.
	This would be a major difference of our results compared with existing MFG literature (see more discussions in Remarks \ref{rmk:eps-mfe} and \ref{rmk:comparison_singular_control}).
	As a potential application, one can consider an optimal stopping version of Lions' \cite{Lions} MFG planning problem, that is, design a MFG (of timing) such that the equilibrium solution follows the given marginal distributions.
	Similar to Ren, Tan, Touzi, and Yang \cite{MFG_Planning}, the principal-agent type arguments can be used in \cite{HeTanZou2021} with our mean-field representation results to solve a MFG planning problem of timing.

	\vspace{0.5em}
	
	Moreover, as a crucial technical step, a stability result was established on Bank--El Karoui's representation \eqref{eq:BankEK_repres_intro};
	that is, when $(Y, f)$ changes slightly, the representation process $L$ changes slightly in some sense.
	In its applications to various stochastic optimization problems, such as the optimal stopping, singular control, and optimal consumption problems,
	process $L$ explicitly induces the optimal solutions.
	Consequently, stability of $L$ implies the stability of the corresponding optimal solutions.
	We will provide some stability results of the optimal stopping times (resp. the optimal control) for a class of optimal stopping (resp. singular control) problems,
	which should also be novel in the optimal stopping/control theory.

	\vspace{0.5em}
	
	The rest of the paper is organized as follows.
	In Section \ref{sec:main}, we formulate our mean-field version of Bank--El Karoui's representation theorem
	and then establish some existence and uniqueness results under different technical conditions.
	Then, we demonstrate how the mean-field representation results can be applied to solve different MFG problems.
	In Section \ref{sec:stability}, we provide a stability result of the classical Bank--El Karoui's representation theorem,
	which constitutes a key technical step for establishing the representation theorem in the first approach,
	and induces stability of the optimal solutions of a class of optimal stopping and singular control problems.
	Finally, some technical proofs of the results are reported in Section \ref{sec:proofs}.

\section{A mean-field version of Bank--El Karoui's representation of stochastic processes and its applications in MFGs}
\label{sec:main}

	We first describe some notations and then formulate our mean-field version of Bank--El Karoui's representation of stochastic processes in Section \ref{subsec:Prelim}.
	In Section \ref{subsec:existence}, we provide some existence and uniqueness results of the mean-field representation under different technical conditions.
	In Section \ref{subsec:applications}, we show how the representation results can be applied to study different MFGs.

\subsection{Preliminaries and formulation of the mean-field representation}
\label{subsec:Prelim}

	Let $(\Om,\Fc,\P)$ be a complete probability space equipped with a filtration $\F \vcentcolon= (\Fc_t)_{t \in [0, T)}$ satisfying the usual conditions, that is, $t \longmapsto \Fc_t$ is right-continuous, and $\Fc_0$ contains all $\P$-null sets in $\Fc$.
	For $T \in [0, +\infty]$, let us denote by $\Tc$ the collection of all $\F$-stopping times taking values in $[0,T]$.

	\vspace{0.5em}
	
	Let $D$ be a (nonempty) Polish space,
	we denote by $\D$ the space of all $D$-valued c\`adl\`ag paths on $[0,T]$ when $T < \infty$, or the space of all $D$-valued  c\`adl\`ag paths on $[0, \infty)$ when $T = \infty$.
	The space $\D$ is equipped with the Skorokhod metric $d_{\D}$.
	For a constant $\eta \in \R \cup\{ -\infty\}$, let $\V^+_{\eta}$ denote the space of all increasing and left-continuous functions $\mathbf{l}: [0,T) \longrightarrow [\eta, \infty)$ such that $\mathbf{l}(0) = \eta$ and $\mathbf{l}$ is $\R$-valued on $(0,T)$.
	The space $\V^+_{\eta}$ is equipped with the L\'evy metric $d_L$ defined as follows:
	\begin{equation*}
		d_L(\mathbf{l}_1, \mathbf{l}_2)
		\vcentcolon=
		\inf \big\{
			\eps \geq 0 ~:
			\mathbf{l}_1\big((t - \eps)\vee 0\big) - \eps \leq \mathbf{l}_2(t),
			~\mathbf{l}_2\big((t - \eps)\vee 0\big) - \eps \leq \mathbf{l}_1(t),
			~\forall t \in (0,T)
		\big\},
	\end{equation*}
	when $T < +\infty$ and
	\begin{equation*}
		d_L(\mathbf{l}_1,\mathbf{l}_2)
		~ \vcentcolon= ~
		\sum_{n = 1}^{+\infty}
		2^{-n}(d_L(\mathbf{l}_1|_{[0,n)},\mathbf{l}_2|_{[0,n)}) \wedge 1),
	\end{equation*}
	when $T = +\infty$,
	so that it is a Polish space (see Appendix \ref{sec:spaceV} for a detailed proof).
	Finally, let us define
	$$
		\V^+_{\circ} ~:=~ \cup_{\eta \in \R} \V^+_{\eta},
		~~\mbox{and}~~
		\V^+ ~:=~ \V^+_{-\infty}.
	$$

	Let $E$ be a (nonempty) Polish space, we denote by $\Pc(E)$ the space of all (Borel) probability measures on $E$.
	The space $\Pc(E)$ is equipped with the weak convergence topology, under which it is also a Polish space.
	We denote by $\L^0_{\Fc}(\Om, E)$ (resp. $\L^0_{\Gc}(\Om, E)$ the space of all $\Fc$-measurable (resp. $\Gc$-measurable)) $E$-valued random variables,
	which is equipped with the topology induced by the convergence in probability.
	Similarly, $\L^0_{\Gc}(\Om, \Pc(E))$ denotes the space of all $\Gc$-measurable random measures,
	and $\L^0_{\Fc}(\Om, \D \x \V^+)$ denotes the space of all random variables taking values in $\D \x \V^+$.

	Given two measurable spaces $(X_1,\Fc_1)$ and $(X_2,\Fc_2)$, together with a probability measure $\mu$ on $X_1$,
	and a measurable mapping $f: X_1 \longrightarrow X_2$,
	we recall that the pushforward measure $f\#\mu$ is defined by
  \begin{equation*}
    f\#\mu (B) := \mu(f^{-1}(B)),
    ~\mbox{for all}~
    B \in \Fc_2.
  \end{equation*}
	We are given a family $(X^m, Y^m, f^m)_{m \in \L^0_{\Gc}(\Om, \Pc( E))}$,
	where for each $m \in \L^0_{\Gc}(\Om, \Pc(E))$, $f^m: [0, T] \x \Om \x \R \longrightarrow \R$,
	$X^m$ is an adapted $D$-valued c\`adl\`ag process, and $Y^m$ is a $\R$-valued optional process.
	Throughout the paper, we assume the following conditions on $(Y^m, f^m)_{m \in \L^0_{\Gc}(\Om, \Pc(E))}$.

	\begin{assumption} \label{assum:Yfm}
		For each $m \in \L^0_{\Gc}(\Om, \Pc(E))$,
		$Y^m$ is a $\R$-valued optional process of class (D) and upper semi-continuous in expectation (in the sense of Definition \ref{def:USCE}) such that $Y^m_T = 0$,
		and $f^m$ satisfies that $\ell \longmapsto f^m(t, \om, \ell)$ is continuous and strictly increasing from $-\infty$ to $\infty$ for all $(t, \om)$,
		and $(t, \om) \longmapsto f^m(t, \om, \ell)$ is progressively measurable with
		\begin{equation} \label{eq:cond_Ym}
			\E \Big[ \int_0^T \big| f^m(t, \ell) \big| dt \Big] < \infty,
			~\mbox{for all}~
			\ell \in \R.
		\end{equation}
	\end{assumption}

	Given a fixed measurable function
	$$
		 \Psi: \Om \x \D \x \V^+ \longrightarrow E,
	$$
	our mean-field version of Bank--El Karoui's representation theorem consists of finding a couple $(L, m)$,
	where $L$ is an optional process and $m \in \L^0_{\Gc}(\Om, \Pc(E))$, satisfying the following:
	\begin{equation} \label{eq:MF_representation}
		Y^m_{\tau}
		=
		\E\bigg[
			\int_{\tau}^{T} f^m \Big(t, \sup_{s \in [\tau,t)} L_s \Big) dt
			\bigg| \Fc_\tau
		\bigg],
		~~\mbox{and}~
		m = \Lc \big( \Psi(X^m, \Lh) \big| \Gc \big),
	\end{equation}
	for all $\tau \in \Tc$ and $\Lh$ being the running maximum process of $L$ defined by, with the convention that $\sup \emptyset = -\infty$,
	$$
		\Lh_t \vcentcolon= \sup_{s \in [0,t)}L_s,
		~~t \in [0,T).
	$$
	Throughout the paper, $\Lc \big(\Psi(X^m, \Lh) \big| \Gc \big)$ denotes the conditional distribution of $\Psi(X^m, \Lh)$ knowing $\Gc$.
	In particular, $\Lc \big(\Psi(X^m, \Lh) \big| \Gc \big) \in \L^0_{\Gc}(\Om, \Pc(E))$.

	\begin{remark}\label{rmk:Lh}
	
		$\mathrm{(i)}$ For the mean-field term $m$ in \eqref{eq:MF_representation}, the (conditional) distribution of the running maximum process $\Lh$ is considered rather than that of $L$ itself.
		The technical reason is that, given an optional process $Y$, the solution $L$ in the classical Bank--El Karoui's representation \eqref{eq:BankEK_repres_intro} may not be unique, but the corresponding running maximum process $\Lh$ is unique.
		Moreover, because process $L$ does not have a priori path regularity, its induced mean-field term would be a probability measure on $\R^{[0,T]}$.
		By considering $\Lh$, the mean-field term becomes a probability measure on $\V^+$, which has better topological (metric) and order structure than $\R^{[0,T]}$.
		Crucially, in all the well-known applications of Bank--El Karoui's representation theorem in stochastic optimization,
		the optimal solutions can be characterized by $\Lh$.
		
		\vspace{0.5em}
		
		\noindent $\mathrm{(ii)}$ 
		The process $X^m$ would be a general stochastic process depending on the interaction term $m$,
		which can serve as an underlying process, so that $(Y^m, f^m)$ are functionals of $X^m$.
		This is also why we include the (conditional) law of $X^m$ in the mean-field term $m$.
	
		\vspace{0.5em}

		As an example of the underlying process in the MFG, $X^m$ could be a diffusion process, with $D = \R^d$, defined by
		\begin{equation} \label{eq:diffusion_Xm}
			X^m_t = X^m_0 + \int_0^t b^m(s, X^m_s) ds + \int_0^t \sigma^m(s, X^m_s) dW_s,
		\end{equation}
		where $(b^m, \sigma^m): \Om \x \R_+ \x \R^d \longrightarrow \R^d \x \S^d$ are coefficient functions such that $(\om, t) \longmapsto (b^m, \sigma^m)(\om, t, x)$ is progressively measurable for all $x \in \R^d$, and $W$ is a standard Brownian motion.
		
		\vspace{0.5em}
		
		\noindent $\mathrm{(iii)}$ In some applications of the classical Bank--El Karoui's representation \eqref{eq:BankEK_repres_intro} in stochastic optimization problems,
		the optimal solution is given by a transformation of the running maximum process $\Lh$.
		This  is our main reason to consider the (conditional) law of $\Psi(X, \Lh)$ in the mean-field term, for some functional $\Psi: \Om \x \D \x \V^+ \longrightarrow E$.
		For example, $\Psi$ can be defined by, with $E = \D \x \V^+$ and the given processes $(X^+, X^-, \Lh^+, \Lh^-): \Om \longrightarrow \D \x \D \x \V^+ \x \V^+$
		such that $X^- \le X^+$ and $\Lh^- \le \Lh^+$, a.s.,
		$$
			\Psi(\om, \mathbf{x}, \mathbf{l}) ~:=~ \big( X^-_t(\om) \vee \mathbf{x}_t \wedge X^+_t(\om), ~\Lh^-_t(\om) \vee \mathbf{l}_t \wedge \Lh_t^+(\om) \big)_{t \in [0,T)}.
		$$
		
		\noindent $\mathrm{(iv)}$ 
		Although we consider a common noise $\sigma$-field $\Gc$ in our formulation,
		it nevertheless covers the setting with common noise filtration $\G = (\Gc_t)_{t \in [0,T]}$ in many MFG problems. 

		Indeed, in many MFGs, $(X^m, Y^m, f^m)$ depends on the mean-field term $m$ in an adaptive way, 
		that is, at each time $t \ge 0$, $(X^m_t, Y^m_t, f^m(t, \cdot))$ depends on the conditional distribution $\Lc( ( X^m_{t\wedge \cdot}, L^m_{t \wedge \cdot}) | \Gc_t)$.
		When $\G$ is a Brownian filtration (or a sub-filtration of $\F$ satisfying the (H)-hypothesis), such that 
		$$
			\Lc( ( X^m_{t\wedge \cdot}, L^m_{t \wedge \cdot}) | \Gc_t)
			=
			\Lc( ( X^m_{t\wedge \cdot}, L^m_{t \wedge \cdot}) | \Gc_T), 
			~\mbox{for all}~
			t \in [0,T],
		$$
		one can deduce the flow of measures $\big( \Lc( ( X^m_{t\wedge \cdot}, L^m_{t \wedge \cdot}) | \Gc_t) \big)_{t \in [0,T]}$
		from the conditional distribution $\Lc( (X^m_{\cdot}, L^m_{\cdot}) | \Gc_T)$ knowing the common noise $\sigma$-field $\Gc = \Gc_T$.
		
		\vspace{0.2em}
		
		For a concrete example of the dependence of$f^m$ on $m$,
		we can consider $m = \Lc(X_{\cdot}| \Gc)$ for some underly process $X$, 
		and with $m_t = \Lc(X_t| \Gc)$, $f^m(t,\om,\ell) := F(t,\om,\ell,m_t(\phi))$ for some functions $F$ and $\phi$, 
		where $m_t(\phi):= \int_\R \phi(x) m_t(dx)$.
		
		\vspace{0.5em}
		
		In particular, when $\Gc = \{\emptyset, \Om\}$, the conditional distribution knowing $\Gc$ becomes a deterministic probability measure.
		Thus, any $\Gc$-measurable random variable is a constant, and $\L^0_{\Gc}(\Om, \Pc(E))$ can be identified as $\Pc(E)$.
	\end{remark}
		
	\begin{remark} \label{rem:YT0}
		In Assumption \ref{assum:Yfm}, it is assumed that $Y^m_T = 0$.
		When $(Y_T^m)_{m \in \L^0_{\Gc}(\Om, \Pc(E))}$ is a family of integrable random variables, one can define the processes $\Yh^m$ by $\Yh^m_t := Y^m_t - \E[\xi^m | \Fc_t]$,
		so that $\Yh^m_T = 0$ and
		the corresponding representation result for $\Yh^m$ in \eqref{eq:MF_representation} is equivalent to the following:
		$$
		Y^m_{\tau}
		=
		\E\bigg[
			Y^m_T +
			\int_{\tau}^{T} f^m \Big(t, \sup_{s \in [\tau,t)} L_s \Big) dt
			\bigg| \Fc_\tau
		\bigg],
		~~\mbox{and}~
		m = \Lc \big( \Psi(X^m, \Lh) \big| \Gc \big).
		$$
	\end{remark}

\subsection{Existence and uniqueness results}
\label{subsec:existence}

	We now provide some existence and uniqueness results on the mean-field representation \eqref{eq:MF_representation} under different (abstract) technical conditions.

\subsubsection{A setting with continuity conditions}

	Recall that we are given a family of tuples $(X^m, Y^m, f^m)_{m \in \L^0_{\Gc}(\Om, \Pc(E))}$, $\Pc(E)$ is equipped with the weak convergence topology, and $\L^0_{\Gc}(\Om, \Pc(E))$ is equipped with the topology of convergence in probability.

	\begin{theorem}\label{thm:continuity}
		Let Assumption \ref{assum:Yfm} hold true,
		assume that for each $m \in \L^0_{\Gc}(\Om, \Pc(E))$, $t \longmapsto Y^m_t$ has almost surely upper semi-continuous paths.
		Suppose also that
		\begin{itemize}
		\item for a countable partition $(A_i)_{i \ge 1}$ of $\Om$ such that $A_i \in \Fc$ and $\P[A_i] > 0$ for each $i\ge 1$, $\cup_{i \ge 1}A_i = \Om$ and $A_i \cap A_j = \emptyset$ for all $i \neq j$,
		one has
		\begin{equation} \label{eq:Gc_countable}
			\Gc ~=~ \sigma(A_i ~: i \ge 1),
		\end{equation}
		so that $\L^0_{\Gc}(\Om, \Pc(E))$ can be identified as the space $(\Pc(E))^{\N}$;
		
		\item the map $(\mathbf{x}, \mathbf{l}) \longmapsto \Psi(\om,\mathbf{x}, \mathbf{l})$ is continuous for all $\om \in \Om$,
		and for some convex compact subset $K \subset \L^0_{\Gc}(\Om, \Pc(E))$, we have
		$$
			\big\{ \Lc\big(\Psi(X^m, \Lh) \big| \Gc \big) ~: m \in \L^0_{\Gc}(\Om, \Pc(E)),~ \Lh \in \L^0_{\Fc}(\Om, \V^+) \big\} \subseteq K;
		$$
		
		\item for all $\ell \in \R$, $\{m^n\}_{n \in \N} \subset K$ with $m^n \longrightarrow m^{\infty}$ in probability
		for some $m^{\infty} \in K$, one has
		$$
			\lim_{n \to \infty}
			\E \bigg[ d_{\D} (X^{m^n}, X^{m^{\infty}})
			+ \sup_{t \in [0,T]}
			\big| Y^{m^n}_{t}
			- Y^{m^{\infty}}_{t} \big|
			+
			\int_0^{T}
			\!\! \big|f^{m^n}(s,\ell) - f^{m^{\infty}}(s,\ell)\big|
			ds
			\bigg]
			=
			0.
		$$
		\end{itemize}
		Then there exists a couple $(L,m )$ as a solution to the mean-field representation \eqref{eq:MF_representation}.
	\end{theorem}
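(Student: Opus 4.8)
The plan is to recast the system \eqref{eq:MF_representation} as a fixed-point problem and to apply the Schauder--Tychonoff theorem on the convex compact set $K$. For each $m \in \L^0_{\Gc}(\Om, \Pc(E))$, Assumption \ref{assum:Yfm} ensures that the pair $(Y^m, f^m)$ meets the hypotheses of the classical Bank--El Karoui representation theorem, so there is an optional process $L^m$ representing $Y^m$; although $L^m$ itself need not be unique, its running maximum $\Lh^m$ is (cf. Remark \ref{rmk:Lh}(i)). This lets me define
$$
	\Phi(m) ~:=~ \Lc\big(\Psi(X^m, \Lh^m)\,\big|\,\Gc\big),
$$
which by the third bullet takes values in $K$. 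A couple $(L,m)$ solves \eqref{eq:MF_representation} if and only if $m$ is a fixed point of $\Phi$ and $L = L^m$, so it suffices to produce such a fixed point.

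First I would record the structural simplifications afforded by the hypotheses. Since $\Gc = \sigma(A_i : i \ge 1)$ for a countable partition, every $\Gc$-measurable random measure is constant on each atom $A_i$; thus $\L^0_{\Gc}(\Om, \Pc(E))$ is identified with $(\Pc(E))^{\N}$, the conditional law $\Lc(\cdot|\Gc)$ restricting on $A_i$ to the normalized push-forward $\P[\,\cdot \cap A_i]/\P[A_i]$. Under this identification $K$ is a convex compact subset of the product $(\Pc(E))^{\N}$, which embeds in the locally convex space $(\Mc(E))^{\N}$ of sequences of finite signed measures under the topology of weak convergence. This is exactly the setting in which Schauder--Tychonoff applies, so the only remaining task is the continuity of $\Phi$ on $K$.

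For continuity, suppose $m^n \to m^{\infty}$ in probability in $K$. The fourth bullet then gives $X^{m^n} \to X^{m^{\infty}}$ in $\D$ and $(Y^{m^n}, f^{m^n}) \to (Y^{m^{\infty}}, f^{m^{\infty}})$ in the $L^1$-type senses stated there. The crux is to transfer this to convergence of the representation maxima, i.e. $\Lh^{m^n} \to \Lh^{m^{\infty}}$ in $\V^+$ (for the L\'evy metric $d_L$) in probability; this is precisely the stability result for Bank--El Karoui's representation established in Section \ref{sec:stability}, and it is the main obstacle of the whole argument. The upper-semicontinuity of the paths of $Y^m$, together with the class-(D) and upper-semicontinuity-in-expectation hypotheses of Assumption \ref{assum:Yfm}, enter exactly here, to guarantee a genuinely stable representation rather than a one-sided inequality. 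Granting it, the continuity of $(\mathbf{x}, \mathbf{l}) \longmapsto \Psi(\om, \mathbf{x}, \mathbf{l})$ yields $\Psi(X^{m^n}, \Lh^{m^n}) \to \Psi(X^{m^{\infty}}, \Lh^{m^{\infty}})$ in $E$ in probability; finally, because $\Gc$ is atomic, $\Lc(\cdot|\Gc)$ is continuous from convergence in probability of $E$-valued random variables to convergence in probability of $(\Pc(E))^{\N}$-valued random measures (on each atom, convergence in probability of the underlying variables forces convergence of the induced laws in the weak topology). Chaining these implications gives $\Phi(m^n) \to \Phi(m^{\infty})$, so $\Phi$ is continuous.

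With $\Phi : K \longrightarrow K$ continuous and $K$ convex and compact in a locally convex space, the Schauder--Tychonoff fixed-point theorem produces $m^* \in K$ with $m^* = \Phi(m^*)$. Setting $L := L^{m^*}$, the pair $(L, m^*)$ solves the mean-field representation \eqref{eq:MF_representation}, which completes the proof. I expect the single nontrivial input to be the stability result of Section \ref{sec:stability}; once it is available the argument is a routine verification of the Schauder hypotheses.
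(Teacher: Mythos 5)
Your proposal is correct and takes essentially the same route as the paper's own proof: both define the fixed-point map $m \longmapsto \Lc\big(\Psi(X^m, \Lh^m)\,\big|\,\Gc\big)$ on $K$, invoke the stability result of Section \ref{sec:stability} (Theorem \ref{Thm:stability}) as the key input for continuity of this map, and conclude with the Schauder fixed-point theorem, setting $L = L^{m^*}$ at the fixed point $m^*$. Your additional remarks on the atomic structure of $\Gc$ and the resulting continuity of $\Lc(\cdot|\Gc)$ simply make explicit what the paper leaves implicit.
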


	The proof of Theorem \ref{thm:continuity} is reported in Section \ref{subsec:proof_theorems}, which is mainly based on the Schauder fixed-point theorem (recalled in Appendix \ref{sec:fixed_point}).

	\begin{remark}

		$\mathrm{(i)}$
		The continuity condition on $\Psi$ and $m \longmapsto (X^m, Y^m, f^m)$ is used to verify the continuity condition in  Schauder fixed-point theorem.
		Furthermore, we prove a continuous property of $(Y, f) \longmapsto \Lh$, where $\Lh$ is the running maximum of $L$ in \eqref{eq:BankEK_repres_intro},
		which is a stability result of Bank--El Karoui's representation \eqref{eq:BankEK_repres_intro} (see Section \ref{sec:stability}).

		\vspace{0.5em}

		\noindent $\mathrm{(ii)}$
		By assuming that $\Gc$ is generated by a countable partition of $\Om$, one can identify  $\L^0_{\Gc}(\Om, \Pc(E))$ as $(\Pc(E))^{\N}$, which is a Polish space and a subspace of a Hausdorff locally convex topological vector space,
		as required in Schauder fixed-point theorem.
		This is also the main reason why we assume such a technical condition. 
		This condition would be restrictive and it excludes the case in which $\Gc$ is generated by a Brownian motion.
		However, it is not surprising as our representation result \eqref{eq:MF_representation} stays in a strong formulation such that the probability space $(\Om, \Fc, \P)$ with the filtration $\F$ and the sub-$\sigma$-field $\Gc$ is fixed.
		This is also the main difficulty in the literature of the MFG with common noise.
		Nevertheless, our setting can be considered as a first step for studying mean-field problems with general common noise.
		For more general countably generated $\sigma$-field $\Gc$, a further approximation technique can be applied to obtain existence result.
		See, for example, Carmona, Delarue, and Lacker \cite{CarmonaDelarueLacker2016} for a MFG with common noise.
		However, this usually leads to a weak formulation of the mean-filed problem at the limit, which is out of the scope of this paper.
 	\end{remark}

	\begin{example}\label{eg:continuity}
		Let $E = \D \x \V^+$, with $\Lh^-, \Lh^+ $ being two stochastic processes with paths in $\V^+$ such that $\Lh^-_t \le \Lh^+_t$, a.s. for all $t \in [0,T)$,
		we define $\Psi: \Om \x \D \x \V^+ \longrightarrow E$ by
		$$
			\Psi(\om, \mathbf{x}, \mathbf{l}) := \big(\mathbf{x}_t, \Lh^-_t(\om) \vee \mathbf{l}_t \wedge \Lh^+_t(\om) \big)_{t \in [0,T)}.
		$$
		which satisfies the continuity condition of $\Psi$ in Theorem \ref{thm:continuity}.
		
		\vspace{0.5em}
		
		Furthermore, let $D = \R^d$, and $\{X^m\}_{m \in \L^0_{\Gc}(\Om, \Pc(E))}$ be the diffusion processes defined as the unique solution of \eqref{eq:diffusion_Xm}
		with coefficient functions $(b^m, \sigma^m): \Om \x \R_+ \x \R^d \longrightarrow \R^d \x \S^d$,
		which are uniformly bounded and uniformly Lipschitz continuous in $x$.
		Assume also that $(X^m_0)_{m \in \L^0_{\Gc}(\Om, \Pc(E))}$ is uniformly bounded,
		and for $m^n \longrightarrow m^{\infty}$ in probability, one has $(b^{m^n}, \sigma^{m^n}) \longrightarrow (b^{m^{\infty}}, \sigma^{m^{\infty}})$ uniformly.
		Then it is standard to check that
		$$
			\big\{ \P \circ (X^m)^{-1} ~: m \in \L^0_{\Gc}(\Om, \Pc(E)) \big\} ~\mbox{is tight in}~\Pc(\D),
			~~\mbox{and}~~
			\E\big[d_{\D} (X^{m^n}, X^{m^{\infty}}) \big] \longrightarrow 0.
		$$
		Next, we can further obtain that (see also Section \ref{subsec:proofs_examples} for a brief proof)
		$$
			\big\{ \Lc \big( \Psi(X^m, \widehat L) \big| \Gc \big) ~: m \in \L^0_{\Gc}(\Om, \Pc(E)), ~\widehat L \in \L^0_{\Fc}(\Om, \V^+) \big\} ~\mbox{is tight in}~ \L^0_{\Gc}(\Om, \Pc(E)),
		$$
		so that finding a convex compact set $K$ as required in Theorem \ref{thm:continuity} is easy.
	\end{example}

	\begin{example}\label{eg:continuity2}
		Let $\phi: \R \longrightarrow \R$ be an increasing, continuous function, uniformly bounded by some constant $C> 0$,
		and $E = \V_{\circ}$ be the space of all $\R$-valued bounded increasing paths on $[0,T)$,
		we define $\Psi: \Om \x \D \x \V^+ \longrightarrow E$ by
		$$
			\Psi(\om, \mathbf{x}, \mathbf{l}) := (\phi(\mathbf{l}_t))_{t \in [0, T)}.
		$$
		Because $\phi$ is uniformly bounded by $C$, for any processes $(X, \Lh) \in \L^0_{\Fc}(\Om, \D) \x \L^0_{\Fc}(\Om, \V^+)$, the induced process $\Psi(X, \Lh)$ is increasing and uniformly bounded by $C$.
		Under the L\'evy metric on $\V$, we can check that
		$$
			\big\{ \Lc \big( \Psi(X^m, \widehat L) \big| \Gc \big) ~: m \in \L^0_{\Gc}(\Om, \Pc(E)), \widehat L \in \L^0_{\Fc}(\Om, \V^+) \big\} ~\mbox{is tight}.
		$$
	\end{example}

\subsubsection{A setting with monotonicity conditions}

	In a second setting, where $E$ is a partially ordered Polish space, one can apply Tarski's fixed-point theorem to obtain an existence result under some monotonicity condition.
	Notice that the definition of the partial order, partially ordered Polish space,
	the corresponding complete lattice as well as Tarski's fixed-point theorem are recalled in Appendix \ref{sec:fixed_point}.
	
	\vspace{0.5em}
	
	Let $E$ be a partially ordered Polish space with partial order $\le_E$, a function $\phi: E \longrightarrow \R$ is said to be increasing if $\phi(e_1) \le \phi(e_2)$ for all $e_1, e_2 \in E$ such that $e_1 \le_E e_2$.
	We introduce  a partial order $\le_p$ on $\L^0_{\Gc}(\Om, \Pc(E))$ as follows:
	for all $m^1, m^2 \in \L^0_{\Gc}(\Om, \Pc(E))$, we say
	\begin{equation} \label{eq:def_lep}
		m^1 \le_{p} m^2
		~~\mbox{if and only if}~
		\int_E \phi(x) m^1(dx)
		\le
		\int_E \phi(x) m^2(dx),
		~\mbox{a.s.,}
	\end{equation}
	for all $\R$-valued bounded increasing measurable functions $\phi: E \longrightarrow \R$.

	\begin{theorem}\label{thm:monotonicity}
		Let Assumption \ref{assum:Yfm} hold true,
		$\D$ be equipped with a partial order $\le_{\D}$ such that $(\D, \le_{\D})$ becomes a partially ordered Polish space,
		$E$ be a partially ordered Polish space with partial order $\le_E$,
		and $\Psi: \Om \x \D \x \V^+ \longrightarrow E$ satisfy that,
		$$
			\mathbf{x}^1 \le_{\D} \mathbf{x}^2,~ \mathbf{l}^1_t \le \mathbf{l}^2_t, ~t \in [0,T)
			~\Longrightarrow~
			\Psi(\om, \mathbf{x}^1, \mathbf{l}^1) ~\le_E~  \Psi(\om, \mathbf{x}^2, \mathbf{l}^2).
		$$
		Suppose in addition that there exists a complete lattice $(K, \le_p)$ as subset of $\L^0_{\Gc}(\Om, \Pc(E))$ such that,
		for all $m^1, m^2 \in K$,
		\begin{equation} \label{eq:XYf_order}
			m^1 \leq_p m^2
			~\Longrightarrow~
			X^{m^1} \le_{\D} X^{m^2},\mbox{a.s.},
			~
			Y^{m^1} \!-\! Y^{m^2}~\mbox{is supermartingale and}~
			f^{m^1}(\cdot) \ge f^{m^2}(\cdot).
		\end{equation}
		Then there exists a couple $(L, m)$ being a solution to the mean-field representation \eqref{eq:MF_representation}.
	\end{theorem}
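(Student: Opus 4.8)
The plan is to recast \eqref{eq:MF_representation} as a fixed-point problem on $(K,\le_p)$ and to apply Tarski's fixed-point theorem. For each $m \in \L^0_{\Gc}(\Om,\Pc(E))$, Assumption \ref{assum:Yfm} guarantees that $(Y^m,f^m)$ satisfies the hypotheses of the classical Bank--El Karoui representation \eqref{eq:BankEK_repres_intro}, so there is a representing optional process $L^m$ whose running maximum $\Lh^m \in \L^0_{\Fc}(\Om,\V^+)$ is uniquely determined. Since $\Psi$ is measurable, $\Psi(X^m,\Lh^m)$ is an $E$-valued random variable, and I define the map
\begin{equation*}
    \Phi(m) \vcentcolon= \Lc\big( \Psi(X^m,\Lh^m) \,\big|\, \Gc \big) \in \L^0_{\Gc}(\Om,\Pc(E)).
\end{equation*}
A couple $(L^{m^*},m^*)$ solves \eqref{eq:MF_representation} precisely when $m^*$ is a fixed point of $\Phi$. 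To invoke Tarski's theorem on the complete lattice $(K,\le_p)$, I must check two things: that $\Phi$ maps $K$ into itself, and that $\Phi$ is order-preserving for $\le_p$.

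The monotonicity of $\Phi$ is where the structural hypothesis \eqref{eq:XYf_order} enters. Fix $m^1 \le_p m^2$ in $K$. By \eqref{eq:XYf_order} we have $X^{m^1} \le_{\D} X^{m^2}$ a.s., together with the prescribed ordering of $(Y^{m^1},f^{m^1})$ and $(Y^{m^2},f^{m^2})$. The crucial step is a comparison principle for the classical representation: these orderings of the data must force an ordering of the running maxima, namely $\Lh^{m^1}_t \le \Lh^{m^2}_t$ for all $t$, a.s. Granting this, the assumed monotonicity of $\Psi$ yields
\begin{equation*}
    \Psi(X^{m^1},\Lh^{m^1}) ~\le_E~ \Psi(X^{m^2},\Lh^{m^2}), \quad \text{a.s.}
\end{equation*}
Then for any bounded increasing measurable $\phi\colon E \to \R$, applying $\phi$ preserves the inequality pathwise, and taking the conditional expectation $\E[\,\cdot \mid \Gc]$ (which is monotone) gives $\int_E \phi\,d\Phi(m^1) \le \int_E \phi\,d\Phi(m^2)$ a.s.; by definition \eqref{eq:def_lep} this is exactly $\Phi(m^1) \le_p \Phi(m^2)$, so $\Phi$ is order-preserving.

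The main obstacle is the comparison principle $\Lh^{m^1} \le \Lh^{m^2}$. I would prove it through the optimal-stopping characterization underlying Bank--El Karoui's construction: for each level $\ell \in \R$, the event $\{\Lh^m_t > \ell\}$ is governed by a Snell-envelope/optimal-stopping problem whose reward is assembled from $Y^m$ and the deflation process $g^{m,\ell}_\cdot \vcentcolon= \E[\int_\cdot^T f^m(s,\ell)\,ds \mid \Fc_\cdot]$. Comparing the two value processes level by level should yield the ordering of the level sets, hence of $\Lh^{m^1}$ and $\Lh^{m^2}$. The delicate point is the bookkeeping of direction: the pointwise ordering $f^{m^1} \ge f^{m^2}$ and the martingale ordering of $Y^{m^1}-Y^{m^2}$ must \emph{reinforce} one another so as to produce a single inequality $\Lh^{m^1} \le \Lh^{m^2}$ rather than competing effects --- indeed, on a simple deterministic example with monotone $L$ one checks that varying $f$ alone and varying $Y$ alone push $\Lh$ in opposite directions, so the precise sign conventions in \eqref{eq:XYf_order} are essential to make them agree. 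This comparison principle is closely tied to, and can be extracted from, the stability analysis of Section \ref{sec:stability}.

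Finally, to close the argument I would verify $\Phi(K) \subseteq K$ --- which is where the choice of $K$ as a suitable $\le_p$-order interval adapted to the range of $\Psi$ (as in the tightness computations of the continuity setting) is used --- and then apply Tarski's fixed-point theorem to the order-preserving self-map $\Phi$ on the complete lattice $(K,\le_p)$ to obtain a fixed point $m^*$, and hence a solution $(L^{m^*},m^*)$ of \eqref{eq:MF_representation}.
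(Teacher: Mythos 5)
Your overall architecture coincides with the paper's: recast \eqref{eq:MF_representation} as a fixed point of $\Phi(m) = \Lc(\Psi(X^m,\Lh^m)\,|\,\Gc)$ on $(K,\le_p)$, reduce order-preservation of $\Phi$ to monotonicity of $\Psi$, of conditional expectation, and of $m \longmapsto (X^m,\Lh^m)$, then invoke Tarski. The genuine gap is that the comparison principle $\Lh^{m^1} \le \Lh^{m^2}$ --- the entire mathematical content of the theorem beyond bookkeeping --- is never proved: you write ``granting this'' and sketch a route through the level sets $\{\Lh^m_t > \ell\}$, Snell envelopes, and the stability analysis of Section \ref{sec:stability}. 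That route cannot deliver the lemma: Theorem \ref{Thm:stability} is a continuity statement (convergence of $\Lh^n$ when $(f_n,Y^n)$ converges in the sense of \eqref{eq:fnYn_fY}) and carries no order information, so there is nothing to ``extract'' from it. What the paper actually uses is the second assertion of Bank--El Karoui's theorem (Theorem \ref{thm:BErepresentationTheorem} $\mathrm{(ii)}$): the representing process satisfies $L^m_t = \essinf_{\sigma \in \Tc_{t,+}} \ell^m_{t,\sigma}$, where $\ell^m_{t,\sigma}$ is the unique $\Fc_t$-measurable random variable with $\E\big[\int_t^\sigma f^m(s,\ell^m_{t,\sigma})\,ds \,\big|\, \Fc_t\big] = \E\big[Y^m_t - Y^m_\sigma \,\big|\, \Fc_t\big]$. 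With this characterization the comparison is a short chain: the (super/sub)martingale hypothesis orders the right-hand sides for every $(t,\sigma)$, the pointwise ordering of the generators transfers the inequality to a common $f$, and strict monotonicity of $f(\cdot,\ell)$ in $\ell$ then orders $\ell^{m^1}_{t,\sigma}$ and $\ell^{m^2}_{t,\sigma}$, hence the essential infima and hence the running suprema. This characterization, not the stability theorem, is the missing key lemma in your write-up.

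Second, the sign tension you flag and then set aside is not something one may grant: it is a genuine obstruction under \eqref{eq:XYf_order} as literally stated, and your own observation cuts against you. With a common $Y$, $f^{m^1}\ge f^{m^2}$ forces $\Lh^{m^1}\le\Lh^{m^2}$, while with a common $f$, the supermartingale property of $Y^{m^1}-Y^{m^2}$ forces the opposite inequality. Deterministically, take $T=1$, $f^{m^1}(t,\ell)=f^{m^2}(t,\ell)=\ell$, $Y^{m^1}_t=2(1-t)$, $Y^{m^2}_t=1-t$: then $Y^{m^1}-Y^{m^2}$ is nonincreasing (a supermartingale) and $f^{m^1}\ge f^{m^2}$, yet $\Lh^{m^1}\equiv 2 > 1 \equiv \Lh^{m^2}$ on $(0,1)$. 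So under the stated hypotheses the two effects compete and the inequality you need can fail; no comparison principle of the required direction follows, and ``must reinforce one another'' is exactly the step that cannot be assumed. The effects do reinforce after a sign correction (for instance requiring $Y^{m^1}-Y^{m^2}$ to be a submartingale while keeping $f^{m^1}\ge f^{m^2}$), and the paper's own chain of inequalities silently uses the flipped ordering of the generators at its final step, so its argument is likewise coherent only after such a correction. A complete proof must pin down these signs and run the $\essinf$ comparison explicitly; your proposal leaves precisely this open.
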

	
	\begin{remark}
		The technical supermartingale condition in \eqref{eq:XYf_order} is adapted from Assumption B in Carmona, Delarue and Lacker \cite{CarmonaDelarueLacker2017}.
		Similar conditions are typically used in submodular games, see, e.g., Topkis \cite{Topkis1998},
		or Dianetti, Ferrari, Fischer, and Nendel \cite{DianettiFerrari2022}.
	\end{remark}

	We next provide two examples of $\Psi$ and the complete lattice $(K, \le_p)$ in the setting of Theorem \ref{thm:monotonicity},
	where the corresponding proofs are reported in Section \ref{subsec:proofs_examples}.

	\begin{example} \label{exam:orderedVD}
	
		Let $C > 0$ be a constant,
		$$
			E := \{ (v^1, v^2) \in \V^+\x \V^+ ~: |v^1(t)| \le C, ~|v^2(t)| \le C ~t \in [0,T)\},
		$$
		we introduce a partial order $\le_E$ by
		$$
			\mathbf{l}^1 \le_E \mathbf{l}^2 ~~\mbox{if}~~ \mathbf{l}^1_t \le \mathbf{l}^2_t, ~\mbox{for all}~t \in [0,T).
		$$
		Then $(E, \le_E)$ is a complete lattice and the corresponding space $\L^0_{\Gc}(\Om, \Pc(E))$ with the induced order $\le_p$ in \eqref{eq:def_lep} is also a complete lattice.
		
		\vspace{0.5em}
		
		Furthermore, given  an increasing and left continuous function $\phi: \R \longrightarrow \R$ uniformly bounded by $C > 0$,
		we define
		$$
			\Psi(\om, \mathbf{x}, \mathbf{l}) := \Big( \Big( \sup_{s \in [0,t)}\phi(\mathbf{x}_s) \Big)_{t \in[0,T)}, \big( \phi(\mathbf{l}_t) \big)_{t \in[0,T)} \Big).
		$$
		Let us consider the partial order $\le_{\D}$ on $\D$ by $\mathbf{x}^1 \le_{\D} \le \mathbf{x}^2$ if $\mathbf{x}^1_t \le \mathbf{x}^2_t$ for all $t \in [0,T]$,
		then it is clear that $\Psi$ satisfies the monotonicity condition in Theorem \ref{thm:monotonicity}.
	\end{example}

	\begin{example}\label{eg:monotonicity}

		Let $E = \V^+$, $\Lh^-, \Lh^+$ be two stochastic processes with paths in $\V^+$ such that $\Lh^-_t \le \Lh^+_t$, a.s. for all $t \in [0,T)$,
		and $\Psi: \Om \x \D \x \V^+ \longrightarrow E$ be defined by
		$$
			\Psi(\om, \mathbf{x}, \mathbf{l}) := \big( \Lh^-_t(\om) \vee \mathbf{l}_t \wedge \Lh^+_t(\om) \big)_{t \in [0,T)}.
		$$
		Then it is direct to check that $\Psi$ satisfies the monotonicity condition in Theorem \ref{thm:monotonicity}.
		
		\vspace{0.5em}
	
		Further, on the space $\V^+$ and $\L^0_{\Fc}(\Om, \V^+)$, let us introduce respectively the partial order $\le_v$ and $\le_l$ as follows:
		$$
			\mathbf{l}^1 \le_v \mathbf{l}^2 ~~\mbox{if}~~ \mathbf{l}^1_t \le \mathbf{l}^2_t ~\mbox{for all}~ t \in [0,T),
		$$
		and
		$$
			\Lh^1 \le_l \Lh^2 ~~\mbox{if and only if}~~ \Lh^1_t \le_v \Lh^2_t,~\mbox{a.s. for all}~t \in [0,T).
		$$
		Let
		$$
			\L_0
			~:=~
			\big \{
			L \in \L^0_\Fc(\Om, \V^+):
			\Lh^- \leq_l L \leq_l \Lh^+,
			\big \}.
		$$
		Then, $(\L_0, \le_l)$ is a complete lattice.
		Moreover, let
		$$
			K := \big\{ \Lc(\Lh| \Gc) ~: \Lh \in \L_0 \big\}.
		$$
		Then, with the partial order $\le_p$ induced by $(E = \V^+, \le_v)$ as in \eqref{eq:def_lep}, the space $(K, \le_p)$ is also a complete lattice.

    \end{example}

	\begin{example}\label{exam:Yf_order}
		Let $T < \infty$, $E = \R$, and $\phi: \R \longrightarrow \R$ be a monotone function,
		$\fh: [0,T] \x \Om \x \R \x \R \longrightarrow \R$ be such that $x \longmapsto \fh(t, \om, \ell, x)$ is decreasing,
		$g: [0,T] \x \Om \x \R \longrightarrow \R$ be such that $g(\cdot, x)$ is $\F$-adapted for all $x \in \R$, and $x \longmapsto g(\cdot, x)$ is decreasing,
		$h: [0,T] \x \Om \x \Pc(\R) \longrightarrow \R$ be such that $h(\cdot, m)$ is $\F$-adapted  for all $m \in \L^0_{\Gc}(\Om, \Pc(\R))$.
		Moreover, assume that
		\begin{equation*}
			\E\bigg[\int_{0}^{T}
			\big(|g(s, x)| + h^2(s,m)\big)ds
			\bigg] < +\infty.
		\end{equation*}
		Let $f^{m}(\cdot)$ and $Y^m$ be given as follows:
		\begin{equation*}
			f^m(\cdot) := \fh(\cdot, \langle\phi,m\rangle),
			~~~
			Y^m_t := Y^m_0 + \int_{0}^{t}g(s, \langle\phi,m\rangle)ds
			+ \int_{0}^{t}h(s, m)dW_s,
			~~t \in [0,T].
		\end{equation*}
		Then it is easy to verify that $f^m$ and $Y^m$ satisfy the property \eqref{eq:XYf_order}.
	\end{example}

\subsubsection{A setting with the dimension-reduction structure condition }

	We finally consider a case with special structure conditions on $f$, such that one can obtain existence and uniqueness result of the representation \eqref{eq:MF_representation}.
	
	\vspace{0.5em}
	
	Let $E = \V^+$, $\Gc = \{\emptyset, \Om\}$, so that $\L^0_{\Gc}(\Om, \Pc(E))$ can be identified as the space $\Pc(E)$.
	Let us take $E = \V^+$ as canonical space with canonical process $\Lh$, for $m \in \Pc(E)$, 
	and define
	$$
		m_t(dx)  := m \circ \Lh_t^{-1} \in \Pc(\R), ~\mbox{for all}~t \in [0,T),
	$$
	and recall that
	$$
		m_t(\phi) := \int_\R \phi(x) m_t(dx),
		~\mbox{for all bounded and measurable functions}~ 
		\phi: \R \longrightarrow \R.
	$$
	
	\begin{theorem}\label{thm:contraction}
		Let Assumption \ref{assum:Yfm} hold true, and $\Psi: \Om \x \D \x \V^+$ be defined by
		$$
			\Psi(\om, \mathbf{x}, \mathbf{l}) := \mathbf{l}, ~~\mbox{for all}~ (\om, \mathbf{x}, \mathbf{l}) \in \Om \x \D \x \V^+.
		$$
		Further, for some stochastic processes $\widetilde X$ and $\widetilde Y$, together with $\widetilde f: \Om \x [0,T) \x \R \longrightarrow \R$, one has
		$$
			(X^m, Y^m) = (\widetilde X, \widetilde Y),
			~\mbox{and}~
			f^m(\om, t, \ell) := \widetilde f \big(\om, t, \ell - m_t(\phi) \big),
			~~\mbox{for all}~m \in \Pc(E),
		$$
		where $\phi: \R \longrightarrow \R$ is bounded differentiable satisfying $\phi'(x) \in [0,1)$ for all $x \in \R$.
		The optional process $L$ in the representation theorem \eqref{eq:BankEK_repres_intro} of $\Yt$ w.r.t. $\ft$ has almost surely non-decreasing paths.

		\vspace{0.5em}

		Then there exists a unique solution $(L^m, m)$ to the mean-field representation \eqref{eq:MF_representation}.
	\end{theorem}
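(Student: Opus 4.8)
The plan is to exploit the fact that, under the stated structure, the mean-field term enters the generator only through a deterministic time-dependent shift, which collapses the fixed-point problem to a family of scalar equations indexed by $t$.

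\textbf{Step 1 (Reduction of the $m$-dependence).} For $m \in \Pc(E)$ I would write $c^m(t) := m_t(\phi) = \int_E \phi(\mathbf{l}_t)\, m(d\mathbf{l})$, so that $f^m(\om,t,\ell) = \ft(\om,t,\ell - c^m(t))$ depends on $m$ only through the scalar function $c^m$. One first records that $c^m$ has good regularity: since $\phi$ is bounded, $|c^m(t)| \le \|\phi\|_\infty$; since $t \mapsto \Lh_t$ is nondecreasing and left-continuous and $\phi$ is increasing and continuous, $c^m$ is nondecreasing and, by dominated convergence, left-continuous. Boundedness of $c^m$ together with monotonicity of $\ft$ in its last argument shows that $f^m$ inherits Assumption \ref{assum:Yfm} from $\ft$, so the classical representation \eqref{eq:BankEK_repres_intro} applies for every fixed $m$.

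\textbf{Step 2 (Explicit representation for fixed $m$).} Let $L$ be the representation process of $\Yt$ with respect to $\ft$, which is nondecreasing a.s. by hypothesis. I claim $L^m := L + c^m$ represents $\Yt$ with respect to $f^m$. Indeed $s \mapsto L_s + c^m(s)$ is nondecreasing, so for $\tau < t$ the supremum $\sup_{s\in[\tau,t)} L^m_s$ equals the left limit $L_{t-} + c^m(t-) = \Lh_t + c^m(t)$, using $L_{t-} = \Lh_t$ and the left-continuity of $c^m$. Consequently $f^m(t, \sup_{s\in[\tau,t)}L^m_s) = \ft(t, \Lh_t) = \ft(t, \sup_{s\in[\tau,t)}L_s)$, and integrating and conditioning reproduces exactly the defining identity $\Yt_\tau = \E[\int_\tau^T \ft(t, \sup_{s\in[\tau,t)} L_s)\, dt \mid \Fc_\tau]$. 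Hence $L^m$ represents $\Yt$ w.r.t. $f^m$, and its running maximum is $\widehat{L^m} = \Lh + c^m$.

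\textbf{Step 3 (Scalar fixed point).} Since $\Psi(\om,\mathbf{x},\mathbf{l}) = \mathbf{l}$ and $\Gc$ is trivial, the fixed-point constraint reads $m = \Lc(\widehat{L^m}) = \Lc(\Lh + c^m)$. As the whole construction depends on $m$ only through $c^m$, this reduces to finding a function $c$ with $c(t) = \E[\phi(\Lh_t + c(t))]$ for each $t \in [0,T)$, after which one sets $m^\ast := \Lc(\Lh + c)$. For fixed $t$ define $H_t(a) := a - \E[\phi(\Lh_t + a)]$. Using $\phi' \in [0,1)$, for $a_2 > a_1$ one has $\int_{a_1}^{a_2}\phi'(\Lh_t+u)\,du < a_2 - a_1$ pointwise, whence $H_t$ is strictly increasing; boundedness of $\phi$ gives $H_t(a) \to \pm\infty$ as $a \to \pm\infty$. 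Thus $H_t$ has a unique zero $c(t)$, and a standard stability argument (left-continuity of $t\mapsto\Lh_t$ together with strict monotonicity of $H_t$) shows $t \mapsto c(t)$ is left-continuous, hence Borel, so that $\Lh + c \in \V^+$ a.s. and $m^\ast$ is well defined.

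\textbf{Step 4 (Uniqueness and the crux).} For uniqueness, let $(L',m')$ be any solution and set $c'(t) := m'_t(\phi)$, again bounded and left-continuous. By Step 2, $L + c'$ represents $\Yt$ w.r.t. $f^{m'}$, so by the uniqueness of the running maximum in Bank--El Karoui's theorem its running maximum $\Lh + c'$ coincides with $\widehat{L'}$; the constraint $m' = \Lc(\widehat{L'})$ then forces $c'(t) = \E[\phi(\Lh_t + c'(t))]$, i.e. $c'$ solves the scalar equation of Step 3, so $c' = c$. Therefore $\widehat{L'} = \Lh + c$ and $m' = m^\ast$, yielding uniqueness at the level of $m$ and of the running maximum (and hence of $L^m$ in the relevant sense). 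The main obstacle I anticipate is the careful bookkeeping in Step 2: the cancellation of the shift hinges on matching $c^m(t)$ at time $t$ with the left limit $c^m(t-)$ that appears inside $\sup_{s\in[\tau,t)}$, so the left-continuity of $c^m$ is essential rather than cosmetic; the secondary technical point is to verify that the pointwise-in-$t$ solution $c$ assembles into an admissible $\V^+$-path with the required measurability.
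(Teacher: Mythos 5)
Your proposal is correct and follows essentially the same route as the paper's proof: reduce the fixed point to the scalar equation $y_t = \E[\phi(\cdot_t + y_t)]$, solve it uniquely using $\phi' \in [0,1)$ together with the boundedness of $\phi$, and translate back via the shift $L^m = L + m_\cdot(\phi)$. The only difference is one of detail: the paper merely ``observes'' that $L + m_\cdot(\phi)$ represents $\Yt$ with respect to $f^m$, whereas your Step 2 carefully verifies the cancellation (using the left-continuity and monotonicity of $c^m$), which is a welcome tightening of the same argument rather than a different approach.
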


	\begin{remark}
		With the above special structure, the representation problem  \eqref{eq:MF_representation} can be reduced to the following fixed point problem:
		for each $t \in [0,T]$, one looks for $y_t \in \R$ such that
		$$
			y_t = \E[\phi(L_t + y_t)].
		$$
		By considering the above fixed-point problem, one can deduce the existence and uniqueness result (see its proof in Section \ref{subsec:proof_theorems}).

	\end{remark}

\subsection{Applications in the mean-field games}
\label{subsec:applications}

	We now present some first applications of the above mean-field representation results in the mean-field games (MFGs) to obtain some existence results.
	In particular, we stay in a strong formulation in the sense that we fix a filtered probability space $(\Om, \Fc, \F, \P)$ and a common noise $\sigma$-field $\Gc \subset \Fc$.

\subsubsection{A MFG of timing with multiple populations}
\label{subsubsec:MFG_timing}

	We consider a MFG with possibly infinitely number of populations indexed by $\ell \in \R$,
	where each agent solves an optimal stopping problem with time horizon $T < \infty$.
	The mean-field (interaction) term is then a measure on $[0,T]^{\R}$, which is equipped with the product topology.
	Recall that $\D$ represents the space of all $D$-valued c\`adl\`ag paths on $[0,T]$, with a Polish space $D$.
	We denote by $\Pc( \D \x [0,T]^{\R})$ the collection of all Borel probability measures on $\D \x [0,T]^{\R}$,
	and $\L^0_\Gc\big(\Om;\Pc\big(\D \x [0,T]^\R\big)\big)$ denotes the space of all
	$\Gc$-measurable random measures $m: \Om \longrightarrow \Pc\big(\D \x [0,T]^\R\big)$.
	
	\vspace{0.5em}
	
	For a given interaction term $\mu \in \L^0_\Gc\big(\Om;\Pc\big(\D \x [0,T]^\R\big)\big)$, there is an underlying process $X^{\mu}$ having paths in $\D$.
	Further, let $\Tc$ denote the collection of all $\F$-stopping times taking value in $[0,T]$,
	$G: [0,T] \x \Om \x \L^0_\Gc\big(\Om;\Pc\big(\D \x [0,T]^\R\big)\big) \longrightarrow \R$
	and $g_\ell: [0,T] \x \Om \x \L^0_\Gc\big(\Om;\Pc\big(\D \x [0,T]^\R\big)\big) \longrightarrow \R$, $\ell \in \R$, be the reward functions.
	In each population $\ell \in \R$, a representative agent solves the following optimal stopping problem:
	\begin{equation}\label{eq:optimal_stopping}
		\sup_{\tau \in \Tc}
		~J_\ell(\tau, \mu),
		~\mbox{with}~
		J_\ell(\tau, \mu) :=
		\E\Big[
			\int_{0}^{\tau} g_\ell(t,\mu) dt  + G_{\tau} (\mu)
		\Big].
	\end{equation}

	\begin{definition}\label{def:MFE_optimal_stopping}
		$\mathrm{(i)}$ Let $\eps \ge 0$, a $\eps$-solution of the MFG of timing is a couple $\{\mu^*, (\tau^*_\ell)_{\ell \in \R}\}$, such that
		$\mu^* = \Lc\big( \big(X^{\mu^*}, (\tau^*_\ell)_{\ell \in \R}\big) \big| \Gc \big)$, and for each  $\ell \in \R$, $\tau^*_\ell \in \Tc$ satisfies that
		\begin{equation*}
			J_\ell(\tau^*_\ell, \mu^*)
			~ \geq ~
			\sup_{\tau \in \Tc} J_\ell(\tau,\mu^*) - \varepsilon.
		\end{equation*}

		\noindent $\mathrm{(ii)}$ When $\eps = 0$, a $\eps$-solution is also called a solution to the MFG of timing.
	\end{definition}

	Let us refer to Nutz \cite{Nutz2018}, and Carmona, Delarue and Lacker \cite{CarmonaDelarueLacker2017} for a detailed interpretation/justification of the formulation of the MFG of timing, as well as its solution as a Nash equilibrium.

	\begin{assumption}\label{ass:Optimal_stopping}
		$\mathrm{(i)}$
		For all $(t, \om, \mu) \in [0,T] \x \Om \x \Pc\big(\D \x [0,T]^\R\big)$,
		the map $\ell \longmapsto g_{\ell}(t, \om, \mu) $ is strictly increasing and continuous.

		\vspace{0.5em}

		\noindent $\mathrm{(ii)}$
		For each $\mu \in \L^0_{\Gc}\big(\Om, \Pc\big(\D \x [0,T]^\R\big)\big)$ and $\ell \in \R$,
		the process $(t, \om) \longmapsto g_\ell(t, \om, \mu)$ is progressively measurable and
		$$
			\E\bigg[\int_{0}^{T}|g_\ell (t,\mu)| dt\bigg] < +\infty,
		$$
		and the process $t \longmapsto G_t(\mu)$ is $\F$-optional of class(D) and u.s.c. in expectation (see Definition \ref{def:USCE}).
	\end{assumption}


	The MFG of timing in Definition \ref{def:MFE_optimal_stopping} is defined with a family of optimal stopping problems given a fixed interaction term $\mu$.
	Let us first recall how the classical Bank-El Karoui's representation provides the solutions of a family of optimal stopping problems.

	\begin{theorem}[Bank and El Karoui \cite{BankKaroui2004}]
		Let Assumption \ref*{ass:Optimal_stopping} hold true, and $\mu \in \L^0_\Gc(\Om;\Pc(\D \x [0,T]^\R))$ be fixed.
		Then there exists an optional process $L^\mu$ such that 
		$$
			G_\tau(\mu) = \E\bigg[ G_T(\mu) + \int_{\tau}^{T} g_{\ell} (t, \mu) \big|_{\ell = \sup_{s \in [\tau,t)}L^{\mu}_s} dt \Big| \Fc_\tau\bigg],
			~\mbox{a.s., for all}~
			\tau \in \Tc.
		$$
		Moreover, for each $\ell \in \R$,
		the stopping times
		\begin{equation*}
			\tau_{\ell}
            ~\vcentcolon=~
            \inf \big\{ t\geq 0 ~: L^\mu_{t-} \geq \ell \big\},
			~\mbox{and}~
            \tauh_{\ell}
            ~\vcentcolon=~
            \inf \big\{ t\geq 0 ~: L^\mu_{t-} > \ell \big\}
		\end{equation*}
		are respectively the smallest solution and the biggest solution of the optimal stopping problem \eqref{eq:optimal_stopping}.
	\end{theorem}

	As the solutions of the optimal stopping problems \eqref{eq:optimal_stopping} is given as hitting times of $L^{\mu}$,
	the (conditional) distribution of $(\mu_{\ell})_{\ell \in \R}$ can be then deduced from the (conditional) distribution of $L^{\mu}$.
	It is then natural to deduce solutions of MFG of timing from the mean-field representation results in Section \ref{subsec:existence}.
	
	\vspace{0.5em}	

	Nevertheless, although the representation process $L^\mu$ provides optimal solutions to a family of optimal stopping problems,
	it is only true under the strict increase condition on $\ell \longmapsto g_{\ell}(\cdot)$ which may be restrictive in practice.
 	As consequence, this increase condition will also be assumed in the following existence results for the MFG of timing. 
        
  \vspace{0.5em}
        
	Let us first provide an existence result for the MFG of timing in Definition \ref{def:MFE_optimal_stopping}, 
	which is based on Theorem \ref{thm:continuity} and under some continuity conditions on $\big( X^{\mu}, g_{\ell}(t,\mu), G_t(\mu) \big)$ in $\mu$.

	\begin{proposition}\label{prop:optimal_stopping_application}
		Let Assumption \ref{ass:Optimal_stopping} hold true, $\Gc$ be generated by a countable partition of $\Om$ as in \eqref{eq:Gc_countable},
		and the map $t \longmapsto G_t(\mu)$ has almost surely left upper semi-continuous paths for each $\mu \in \L^0_{\Gc}\big(\Om, \Pc\big(\D \x [0,T]^\R\big)\big)$.
		Suppose in addition that

		\vspace{0.5em}

		\noindent $\mathrm{(i)}$ The map $\ell \longmapsto g_\ell$ is uniformly continuous in the sense that,
		for any $\varepsilon > 0$,
		there exists a $\delta_\varepsilon > 0$ independent of $\om$ and $\mu$
		such that
		$$
			|g_{\ell_1}(t, \om, \mu) - g_{\ell_2}(t, \om, \mu)| ~ \leq ~ \varepsilon,
			~~\mbox{whenever}~
			|\ell_1 - \ell_2| \leq \delta_\varepsilon.
		$$

		\noindent $\mathrm{(ii)}$ The set $\big\{  \Lc\big( X^\mu  \big| \Gc \big): \mu \in \L^0_{\Gc}\big(\Om, \Pc\big(\D \x [0,T]^\R\big)\big)\big\}$ is tight in the space $\L^0_{\Gc}(\Om, \Pc(\D))$.

 		\vspace{0.5em}

		\noindent $\mathrm{(iii)}$
		for $\mu^n, \mu^\infty \in \L^0_{\Gc}\big(\Om;\Pc\big(\D \x [0,T]^\R\big)\big)$
		with
		$\lim_{n \to \infty} \mu^n = \mu^\infty$ almost surely,
		we have for all $\ell \in \R$,
		$$
			\lim_{n \to \infty}
    			\E \bigg[
				d_{\D} (X^{\mu^n}, X^{\mu^\infty})
				+
				\sup_{t \in [0,T]}
				\big| G_t(\mu^n) - G_t(\mu^\infty) \big|
				+
				\int_0^{T}
				\big| g_\ell(t, \mu^n\big) - g_\ell(t, \mu^\infty\big) \big| ds
			\bigg]
			=  0.
		$$
		Then for any $\varepsilon > 0$, there exists a $\varepsilon$-solution 
		$(\mu^*,(\tau^*_\ell)_{\ell \in \R})$ to the MFG of timing in Definition \ref{def:MFE_optimal_stopping}, where $(\tau^*_{\ell})_{\ell \in \R}$ is defined, with some optional process $L^*$, by
		$$
			\tau^*_{\ell} = \inf \{t \ge 0 ~: L^*_t \ge \ell \},
			~~\mbox{for all}~ \ell \in \R.
		$$
	\end{proposition}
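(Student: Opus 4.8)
The plan is to reduce Proposition \ref{prop:optimal_stopping_application} to an application of Theorem \ref{thm:continuity}, by setting up the right correspondence between the MFG of timing and the mean-field representation \eqref{eq:MF_representation}, and then recovering the $\eps$-optimal stopping times from the representation process. The key observation is that the multi-population structure (indexed by $\ell \in \R$) is exactly matched by the one-dimensional parameter $\ell$ appearing in the generator $f$ of the Bank--El Karoui representation: for each $\mu$, one expects that solving the family of optimal stopping problems \eqref{eq:optimal_stopping} over all $\ell$ simultaneously corresponds to a single representation process $L$. This is the crucial point flagged earlier in the introduction, namely that ``for a given process $Y$, the same induced process $L$ provides simultaneously optimal solutions to various optimization problems.''

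First I would identify the data of the representation problem. I set $E = \D \x [0,T]^{\R}$, take $Y^{\mu} := G(\mu)$ (so that, per Remark \ref{rem:YT0}, the non-zero terminal value is absorbed), and define the generator by $f^{\mu}(t, \om, \ell) := -g_\ell(t, \om, \mu)$, with the sign chosen so that Assumption \ref{ass:Optimal_stopping}(i) (strict monotonicity and continuity of $\ell \mapsto g_\ell$) translates into the strict monotonicity and continuity of $\ell \mapsto f^{\mu}(t,\om,\ell)$ required in Assumption \ref{assum:Yfm}; the integrability and measurability in Assumption \ref{ass:Optimal_stopping}(ii) give the remaining conditions of Assumption \ref{assum:Yfm}, and the class (D) and upper semi-continuity in expectation of $G(\mu)$ give the conditions on $Y^{\mu}$. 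Next I would specify the map $\Psi$ so that $\Lc(\Psi(X^{\mu}, \Lh)|\Gc)$ reproduces the equilibrium measure $\Lc\big((X^{\mu^*}, (\tau^*_\ell)_{\ell})\big|\Gc\big)$: concretely, $\Psi(\om, \mathbf{x}, \mathbf{l}) := \big(\mathbf{x}, (\tau_\ell(\om,\mathbf{l}))_{\ell \in \R}\big)$, where $\tau_\ell(\om,\mathbf{l}) := \inf\{t : \mathbf{l}_t \ge \ell\}$ is the hitting time of level $\ell$ by the running maximum path, this being precisely the optimal stopping rule for the $\ell$-th population coming from the classical representation.

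Second I would verify the hypotheses of Theorem \ref{thm:continuity} for this data. The countable-partition condition on $\Gc$ is assumed directly. The continuity of $(\mathbf{x},\mathbf{l}) \mapsto \Psi(\om,\mathbf{x},\mathbf{l})$ requires care at the level of the hitting times $\tau_\ell$, which are not continuous in $\mathbf{l}$ pointwise in $\ell$; here the uniform-continuity hypothesis (i) on $\ell \mapsto g_\ell$ and the product topology on $[0,T]^{\R}$ should let me control the relevant convergence (continuity holding off an at-most-countable exceptional set of levels $\ell$, which is negligible under the product topology / for the induced laws). The tightness needed to produce the convex compact set $K$ comes from hypothesis (ii) on $\{\Lc(X^\mu|\Gc)\}$ together with the fact that the $\tau_\ell$ take values in the compact $[0,T]$, so the $[0,T]^{\R}$-component is automatically tight; I would then pass to a convex compact $K$ as in Example \ref{eg:continuity}. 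The joint-continuity condition of the third bullet of Theorem \ref{thm:continuity} is exactly hypothesis (iii). Applying Theorem \ref{thm:continuity} yields a solution $(L,m)$ of \eqref{eq:MF_representation}.

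Finally I would translate the representation solution back into an $\eps$-solution of the MFG. Setting $\mu^* := m$ and $\tau^*_\ell := \tau_\ell(\Lh)$, the fixed-point identity $m = \Lc(\Psi(X^m,\Lh)|\Gc)$ gives the consistency condition $\mu^* = \Lc\big((X^{\mu^*}, (\tau^*_\ell)_\ell)\big|\Gc\big)$ of Definition \ref{def:MFE_optimal_stopping}. The remaining point is the $\eps$-optimality: the classical Bank--El Karoui theory guarantees that $\tau^*_\ell$ is an \emph{exact} optimizer of $J_\ell(\cdot,\mu^*)$ when the level $\ell$ is a continuity point of the relevant objects, so I would invoke the uniform-continuity hypothesis (i) to pass from the dense set of exact optimizers to an $\eps$-optimizer for \emph{every} $\ell$: given $\eps$, choose $\delta_\eps$ from (i), approximate an arbitrary $\ell$ by a nearby level $\ell'$ at which the representation gives exact optimality, and bound the resulting loss by $\eps$ using the uniform bound on $|g_{\ell}-g_{\ell'}|$. \textbf{The main obstacle} I anticipate is this last $\eps$-optimality step together with the continuity of $\Psi$: the hitting-time map is genuinely discontinuous at atoms of the maximum process and at non-continuity levels $\ell$, so the whole argument hinges on using hypothesis (i) to smear out these discontinuities uniformly in $\om$ and $\mu$, and on checking that the exceptional set of bad levels is negligible for the product topology on $[0,T]^\R$; making the exact/$\eps$-optimality dichotomy rigorous is where the real work lies.
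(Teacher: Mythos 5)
Your high-level reduction coincides with the paper's: take $Y^{\mu} = G(\mu)$, take the generator to be $g_{\ell}$, let $\Psi$ send the running maximum to its level-hitting times, apply Theorem \ref{thm:continuity}, and read the fixed point back as an equilibrium. Before the main issue, one concrete error: you set $f^{\mu}(t,\om,\ell) := -g_{\ell}(t,\om,\mu)$, but since $\ell \mapsto g_{\ell}$ is strictly \emph{increasing} (Assumption \ref{ass:Optimal_stopping}(i)), your $f^{\mu}$ is strictly decreasing in $\ell$ and violates Assumption \ref{assum:Yfm}; also the optimal stopping problem attached to the representation, $\sup_{\tau}\E\big[Y_{\tau} + \int_0^{\tau} f(t,\ell)\,dt\big]$, would then no longer be \eqref{eq:optimal_stopping}. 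The correct identification is simply $f^{\mu} = g_{\ell}$, with no sign change.

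The genuine gap is the one you flag but do not resolve, and it is not a technicality: Theorem \ref{thm:continuity} requires $E$ to be Polish and $(\mathbf{x},\mathbf{l}) \mapsto \Psi(\om,\mathbf{x},\mathbf{l})$ to be continuous for \emph{every} $\om$. Your $E = \D \x [0,T]^{\R}$ is an uncountable product, hence not metrizable and not Polish, and your $\Psi$, built from the raw hitting times $\tau^{\mathbf{l}}_{\ell} = \inf\{t : \mathbf{l}_t \ge \ell\}$, is genuinely discontinuous in $\mathbf{l}$ wherever paths have flat stretches. Discarding a countable set of ``bad'' levels cannot repair this, because the hypothesis is pathwise continuity of $\Psi$ as a map into $E$, not continuity at almost every coordinate or continuity of induced laws. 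The paper's device — which is what your ``smearing'' would have to become — is to perturb \emph{inside} $\Psi$: set $\Psi(\om,\mathbf{x},\mathbf{l}) := \big(\mathbf{x}, (\tau^{\mathbf{l}+\delta I}_{\ell})_{\ell\in\R}\big)$ with $\delta = \delta_{\eps/(3T)}$ from hypothesis (i) and $I$ the identity on $[0,T]$. The perturbed path increases at rate at least $\delta$, so its hitting times are continuous in the path and Lipschitz in $\ell$ with constant $1/\delta$; accordingly $E$ is taken to be $\D \x \Et_{\eps}$, with $\Et_{\eps}$ the set of increasing $1/\delta$-Lipschitz families $(t_{\ell})_{\ell}$, which \emph{is} Polish under the L\'evy metric. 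This perturbation is also the true source of the $\eps$ in ``$\eps$-solution'': the consistency condition of the fixed point forces the equilibrium stopping times to be the perturbed ones $\tau^{\Lh+\delta I}_{\ell}$, whereas the exact optimizer is $\tau^{\Lh}_{\ell}$ — exact for \emph{every} $\ell$ by Theorem \ref{thm:BErepresentationTheorem}(iii), not only at continuity levels as your dichotomy assumes — and the proof concludes by bounding $\big|J_{\ell}\big(\tau^{\Lh+\delta I}_{\ell}, m^*\big) - J_{\ell}\big(\tau^{\Lh}_{\ell}, m^*\big)\big|$ through the representation identity and hypothesis (i), accumulating three error terms each of size at most $T\cdot\eps/(3T)$. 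Your plan of using exact optimizers at nearby good levels $\ell'$ cannot work as stated: the equilibrium measure produced by the fixed point must be the conditional law of the stopping times actually employed, so you are not free to substitute different stopping times after the fact without breaking consistency.
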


	The proof will be reported in Section \ref{sec:proofs_applications}.

	\begin{remark}
		The above result gives only existence of $\eps$-solution, but not solution to the MFG.
		The main reason is that the first hitting time $\tau_{\ell}$ of a process $\Lh$ to some level $\ell \in \R$ is generally not continuous w.r.t. the paths of $\Lh$.
		We therefore need to modify the process $\Lh$ slightly to make its paths be strictly increasing, so that the corresponding hitting time becomes continuous w.r.t. the path.
		Consequently, an equilibrium in terms of the distribution of $\Lh$ can only induce a $\eps$-equilibrium in terms of the distribution of $\tau_{\ell}$.
		Nevertheless, as a $\eps$-solution in the strong sense, it consists still of a novel result, see more discussions in Remark \ref{rmk:eps-mfe}.
	\end{remark}

	We next consider a setting with monotonicity conditions.
	Let $\le_{\D}$ be a partial order on $\D$ such that $(\D, \le_\D)$ is a partially ordered Polish space (see Appendix \ref{sec:fixed_point} for its definition and Example \ref{exam:orderedVD} for an example).
	We first introduce a partial order on the space $\L^0_\Gc(\Om, \Pc(\D \x [0,T]^\R))$,
	for $\mu^1, \mu^2 \in \L^0_\Gc\big(\Om, \Pc \big(\D \x [0,T]^\R\big)\big)$,
	we say $\mu^1 \le_{st} \mu^2$ if
	for all $\Pi_{\ell \in \R}[t_\ell,T] \subset \Pi_{\ell \in \R}[0,T]$ with $t_\ell \in [0,T]$, $\ell \in \R$, and $t_\ell \neq 0$ for only finitely many $\ell$,
	\begin{equation*}
		\int_{\D} \phi( e ) \mu^1 \big( d e,\Pi_{\ell \in \R}[t_\ell, T] \big)
		\le
		\int_{\D} \phi(e) \mu^2 \big(d e,\Pi_{\ell \in \R}[t_\ell, T] \big),
		~\mbox{a.s.,}
	\end{equation*}
	for all bounded increasing functions $\phi: \D \longrightarrow \R$.

	\begin{proposition}\label{prop:optimal_stopping_application_order}
		Let Assumption \ref{ass:Optimal_stopping}  hold true,
		and assume in addition that,

		\noindent $\mathrm{(i)}$
		there exists a bounded increasing map $\varphi: \D \longrightarrow I_c$, with $I_c$ being a closed interval of $\R$,
		such that for any $\mu^1,\mu^2 \in \L^0_{\Gc}(\Om;\Pc(\D \x [0,T]^\R))$,
		\begin{equation}\label{eq:assumption_timing_order}
			(\varphi \otimes \mathrm{Id}) \# \mu_1 = (\varphi \otimes \mathrm{Id}) \# \mu_2
    			~\Longrightarrow~
			\big( X^{\mu_1}, G(\mu^1), g_{\cdot}(\cdot, \mu^1) \big) = \big(X^{\mu_2}, G(\mu^2), g_{\cdot}(\cdot, \mu^2) \big).
		\end{equation}

		\noindent $\mathrm{(ii)}$
		for all $\mu^1,\mu^2 \in \L^0_{\Gc}(\Om;\Pc(\D \x [0,T]^\R))$,
		$(\varphi \otimes \mathrm{Id}) \# \mu_1 \le_{st} (\varphi \otimes \mathrm{Id}) \# \mu_2$ implies
		\begin{equation*}
		X^{\mu^1} \le_{\D} X^{\mu^2},
		~
			G(\mu^1) - G(\mu^2)
		~\mbox{is a supermartingale and}~
			g_\cdot(\cdot, \mu^1) \ge g_\cdot(\cdot, \mu^2).
		\end{equation*}

		\noindent $\mathrm{(iii)}$
		for any $m \in \L^0_{\Gc}(\Om;\Pc(I_c \x [0,T]^\R))$, there exists a $\mu \in \L^0_{\Gc}(\Om;\Pc(\D \x [0,T]^\R))$ such that
		$$
			m = (\varphi \otimes \mathrm{Id}) \# \mu.
		$$

		Then there exists a solution $(\mu^*,(\tau^*_\ell)_{\ell \in \R})$ to the MFG of timing in Definition \ref{def:MFE_optimal_stopping}, 
		where $(\tau^*_{\ell})_{\ell \in \R}$ is defined, with some optional process $L^*$, by
		$$
			\tau^*_{\ell} = \inf \{t \ge 0 ~: L^*_t \ge \ell \},
			~~\mbox{for all}~ \ell \in \R.
		$$
	\end{proposition}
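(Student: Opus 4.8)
The plan is to recast the MFG of timing as an instance of the mean-field Bank--El Karoui representation \eqref{eq:MF_representation} and then invoke Theorem \ref{thm:monotonicity}. The bridge is the classical correspondence between the representation process and optimal stopping: setting $Y^\mu := G(\mu)$ and taking the generator to be the reward family itself, $f^\mu(t,\om,\ell) := g_\ell(t,\om,\mu)$, Assumption \ref{ass:Optimal_stopping} together with Remark \ref{rem:YT0} (to absorb $G_T(\mu) \neq 0$) shows that $(Y^\mu, f^\mu)$ satisfies Assumption \ref{assum:Yfm}, the strict monotonicity and continuity of $\ell \longmapsto g_\ell$ being exactly the required structure of the generator. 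With $L$ the representation process of $Y^\mu$ with respect to $f^\mu$ and $\Lh$ its running maximum, the classical Bank--F\"ollmer verification argument yields, for each population index $\ell \in \R$, that the level-crossing time $\tau_\ell := \inf\{t \in [0,T]: \Lh_t \ge \ell\}$ is optimal for \eqref{eq:optimal_stopping}, that is, $J_\ell(\tau_\ell,\mu) = \sup_{\tau \in \Tc} J_\ell(\tau,\mu)$. This reduces the search for a Nash equilibrium to a fixed point of the induced map, which is precisely the content of \eqref{eq:MF_representation}.

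To fit the framework of Theorem \ref{thm:monotonicity}, I would take $E := I_c \x [0,T]^\R$ and define $\Psi(\om, \mathbf{x}, \mathbf{l}) := \big( \varphi(\mathbf{x}), (\tau_\ell(\mathbf{l}))_{\ell \in \R} \big)$, where $\tau_\ell(\mathbf{l}) := \inf\{t : \mathbf{l}_t \ge \ell\}$. By Assumption (i) the data $(X^\mu, G(\mu), g_\cdot(\cdot,\mu))$ factors through the pushforward $(\varphi \ox \mathrm{Id}) \# \mu$, so the surjectivity in Assumption (iii) allows one to define consistently a family $(X^m, Y^m, f^m)$ indexed by $m \in \L^0_\Gc(\Om; \Pc(E))$; a fixed point $m^* = \Lc(\Psi(X^{m^*}, \Lh) \,|\, \Gc)$ then yields the equilibrium measure $\mu^* := \Lc\big((X^{m^*}, (\tau_\ell)_\ell) \,|\, \Gc\big)$, which satisfies $(\varphi \ox \mathrm{Id}) \# \mu^* = m^*$ and hence, again by Assumption (i), $\mu^* = \Lc((X^{\mu^*}, (\tau^*_\ell)_\ell) \,|\, \Gc)$. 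What remains is to verify the two hypotheses of Theorem \ref{thm:monotonicity}: monotonicity of $\Psi$ and the existence of a complete lattice $(K, \le_p)$ satisfying \eqref{eq:XYf_order}.

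For the monotonicity of $\Psi$, the crucial observation is that a higher running maximum crosses every level earlier, so $\mathbf{l}^1 \le \mathbf{l}^2$ forces $\tau_\ell(\mathbf{l}^1) \ge \tau_\ell(\mathbf{l}^2)$. I would therefore equip $E = I_c \x [0,T]^\R$ with the order that is the natural one on the $I_c$ coordinate but the \emph{reversed} one on each hitting-time coordinate; with this choice $\Psi$ is increasing, and the induced order $\le_p$ is the one against which the proposition's order $\le_{st}$ must be matched. Concretely, the monotonicity of the representation in $(Y,f)$ (from the stability results of Section \ref{sec:stability}) shows that $m^1 \le_p m^2$ forces $f^{m^1} \ge f^{m^2}$ and the supermartingale ordering of $Y^{m^1} - Y^{m^2}$, whence $\Lh^{m^1} \le \Lh^{m^2}$ pointwise; together with $X^{m^1} \le_{\D} X^{m^2}$ and the monotonicity of $\Psi$, this gives $\hat m^1 \le_p \hat m^2$, i.e. the fixed-point map is order preserving. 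For the lattice $(K, \le_p)$ I would proceed as in Example \ref{eg:monotonicity}: the hitting times take values in the compact set $[0,T]$ and $\varphi$ in the closed interval $I_c$, and $\ell \longmapsto \tau_\ell$ is monotone, so the relevant family of laws is order-bounded and forms a complete lattice. Tarski's fixed-point theorem then delivers $m^*$, and by the above a solution $\{\mu^*, (\tau^*_\ell)_\ell\}$.

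The main obstacle is the order bookkeeping in the last two steps: one must reconcile the direction in which hitting times move (earlier for higher maxima) with the definition of $\le_{st}$ through the upper rectangles $\Pi_{\ell \in \R}[t_\ell, T]$, and check that Assumption (ii) is exactly the translation of \eqref{eq:XYf_order} under the reversed order on the time coordinates. A secondary point is that $[0,T]^\R$ is an uncountable product; here the monotone approach is advantageous, since Tarski's theorem needs only the lattice structure and no topology, while the monotonicity of $\ell \longmapsto \tau_\ell$ confines the hitting-time component to a well-behaved subspace. Finally, in contrast with Proposition \ref{prop:optimal_stopping_application}, no regularization of $\Lh$ is needed: because Tarski's theorem dispenses with continuity of $\Psi$, the exact hitting times $\tau^*_\ell$ may be used, and one obtains a genuine solution rather than merely an $\eps$-solution.
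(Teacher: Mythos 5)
You follow the paper's route: recast the game on $E = I_c \x [0,T]^{\R}$ with data $(X^m,Y^m,f^m)$ defined through hypotheses (i) and (iii), take $\Psi(\om,\mathbf{x},\mathbf{l}) = \big(\varphi(\mathbf{x}),(\tau_\ell(\mathbf{l}))_{\ell\in\R}\big)$, build the complete lattice $K$ from $\L^0_\Fc(\Om,I_c)$ and $\Tc^{\R}$, obtain a fixed point from Theorem \ref{thm:monotonicity}, and conclude via Theorem \ref{thm:BErepresentationTheorem} (iii) that the exact hitting times are optimal, so a genuine (not $\eps$-) solution results; your Steps 1 and 3 and your closing remark all match the paper. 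The genuine gap is in the middle step, where you make the fixed-point map order-preserving. The implication you rely on --- ``$Y^{m^1}-Y^{m^2}$ supermartingale and $f^{m^1}\ge f^{m^2}$ imply $\Lh^{m^1}\le\Lh^{m^2}$'' --- is false, because the two hypotheses push the representation process in \emph{opposite} directions. By the characterization $L^m_t=\essinf_{\sigma}\ell^m_{t,\sigma}$ of Theorem \ref{thm:BErepresentationTheorem} (ii), the supermartingale property means that the decrements $\E[Y^{m^1}_t-Y^{m^1}_\sigma\,|\,\Fc_t]$ dominate those of $Y^{m^2}$, which for equal generators forces $\Lh^{m^1}\ge\Lh^{m^2}$, not $\le$. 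A deterministic counterexample: $f^{m^1}(t,\ell)=f^{m^2}(t,\ell)=\ell$, $Y^{m^1}_t=2(T-t)$, $Y^{m^2}_t=T-t$; the difference is decreasing (hence a supermartingale) and the $f$-inequality holds with equality, yet $L^{m^1}\equiv 2>1\equiv L^{m^2}$. The comparison that the proof of Theorem \ref{thm:monotonicity} actually establishes pairs the supermartingale property with the \emph{reverse} generator inequality ($f^{m^1}\le f^{m^2}$) and yields $\Lh^{m^1}\ge\Lh^{m^2}$: the map $m\longmapsto\Lh^m$ is order-\emph{reversing}. (This sign tension is already present in the paper, whose printed condition \eqref{eq:XYf_order} does not match the chain of inequalities in its own proof; but your proposed reading does not repair it.)

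Your second difficulty follows from the first: reversing the order on the time coordinates of $E$ does make $\Psi$ increasing, but it severs the link with hypothesis (ii). Under your induced $\le_p$, an increase of $m$ means stochastically \emph{smaller} stopping times, whereas the hypothesis of (ii) is an increase in $\le_{st}$, i.e.\ stochastically \emph{larger} stopping times; so (ii) is never triggered along a $\le_p$-increasing chain, and invoking it with the roles of $m^1,m^2$ exchanged returns $X^{m^2}\le_\D X^{m^1}$ --- the wrong direction for the $\varphi$-coordinate. This is precisely the ``bookkeeping'' you defer at the end, and it cannot be completed in your setup. The paper's mechanism is different: it keeps the natural order on $[0,T]^\R$ (the order in which $\le_{st}$ and (ii) are expressed) and exploits a cancellation of two antitonicities, namely $m\longmapsto\Lh^m$ order-reversing (as above) composed with $\mathbf{l}\longmapsto\tau_\ell(\mathbf{l})$ order-reversing (your own, correct, observation --- the paper misstates this map as increasing, but only the composition matters), so that $m\longmapsto\Lc\big((\varphi(X^{m}),(\tau^{\Lh^{m}}_\ell)_{\ell\in\R})\,\big|\,\Gc\big)$ is order-preserving and Tarski applies. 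So your instinct about the hitting times is right, but the fix is to keep the natural order and let the two reversals cancel, not to reverse the order on $E$ and ask each factor to be monotone separately.
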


    We also refer to Example \ref{exam:Yf_order} for an example satisfying Condition $\mathrm{(ii)}$ in Proposition \ref{prop:optimal_stopping_application_order}.

	\begin{remark}\label{rmk:timing_finite_population}
		Notice that the formulation of our MFG of timing with infinitely many population in Definition \ref{def:MFE_optimal_stopping} as well as the results in Propositions \ref{prop:optimal_stopping_application} and \ref{prop:optimal_stopping_application_order} cover the case with finitely many populations (or one population).
		Indeed, given finitely many $(g_i)_{i = 1}^n$, let us define a family of functionals $(g_\ell)_{\ell \in \R}$ as follows:
		for all $\ell \in \R$, $(t, \om, \mu) \in [0,T] \x \Om \x \L^0_{\Gc} \big( \Pc\big(\D\x[0,T]^n\big) \big)$,
		\begin{align} \label{eq:f_interpolation}
		g_\ell(t, \om, \mu) ~:=
		\begin{cases}
			g_1(t, \om, \mu) + (\ell - 1),
			& \ell \in (-\infty,1],
				\\
			(i+1 - \ell) g_i(t, \om, \mu) + (\ell - i) g_{i+1}(t, \om, \mu),
			~& \ell \in [i, i+1], ~1 \le i \le n - 1, 
				\\
			g_n(t, \om, \mu) + (\ell - n),
			&\ell \in [n,+\infty).
		\end{cases}
	    \end{align}
	        Further, a function defined on $\L^0_{\Gc} \big( \Pc\big(\D\x[0,T]^n\big) \big)$ can be easily extended on $\L^0_{\Gc} \big( \Pc\big(\D\x[0,T]^{\R}\big) \big)$
	        by using the projection $\pi((t_\ell)_{\ell \in \R}) = (t_i)_{i = 1}^n$.
	\end{remark}

	\begin{remark}
	\label{rmk:eps-mfe}
	The MFG of timing has already been studied considerably in the literature.
	As first works, Nutz \cite{Nutz2018} provided some examples of the MFG of timing with explicit solution,
	and Carmona, Delarue and Lacker  \cite{CarmonaDelarueLacker2017} studied general MFG of timing with common noise.
	They obtained a strong existence result under submodularity and a weak existence result under a general continuity assumption, and studied the corresponding limit theory.
	Let us also mention the work of  Dianetti, Ferrari, Fischer, and Nendel \cite{DianettiFerrari2022} for submodular MFG with common noise,
	which includes the MFG of timing.
	Further, Bertucci \cite{Bertucci2018, Bertucci2020} studied the problem with PDE arguments.
	In \cite{BouveretDumitrescuTankov2020}, Bouveret, Dumitrescu and Tankov studied the existence and uniqueness of the MFG of timing in a setting with diffusion underlying process, by a relaxed formulation approach.
	This has been further extended to the setting with control and optimal stopping in \cite{DumitrescuLeutscherTankov2021}.
	Recently, Burzoni and Campi  \cite{BurzoniCampi2022} studied a MFG with control and optimal stopping in a setting with absorption and common noise.

	\vspace{0.5em}
	
	While all the above works stay in a setting with one population.
	our mean-field representation results allow one to automatically consider a setting with multiple populations indexed by $\ell \in \R$.

	\vspace{0.5em}
	
	Further, without the monotonicity conditions, most of the above works studied the weak/relaxed solutions of the MFG, in the sense that the probability space may not be fixed so that the strategy is randomized as stated in \cite{CarmonaDelarueLacker2017}.
	Our results provide solutions in the strong sense as the probability space is fixed. In other words, the corresponding strategies are pure.
	
	\vspace{0.5em}
	
	Under the monotonicity conditions, our Proposition \ref{prop:optimal_stopping_application_order} can be considered as an extension of Theorem 3.5 in \cite{CarmonaDelarueLacker2017}, or Dianetti, Ferrari, Fischer, and Nendel \cite{DianettiFerrari2022},
	to the case with infinitely many populations.
	\end{remark}


\subsubsection{A MFG with singular control}

	We next study a MFG problem with singular control (based on the  singular control problem in Bank \cite{Bank2004}), in a setting with possibly many populations.

	\vspace{0.5em}

	Let $(\thetau_i)_{i \in \N} \in \R^\N$ be a sequence of real constants,
	for each $i \ge 1$, we denote by $\V^+_{\thetau_i}$ the space of all increasing and left continuous functions $v$ on $[0,T)$ such that $v(0) = \thetau_i$.
	Recall also that $\D$ denotes the space of all $D$-valued c\`adl\`ag paths on $[0,T]$ if $T< \infty$, or on $[0,T)$ if $T = \infty$.
	The mean-field interaction in this MFG is a $\Gc$-measurable random measure $\mu \in \L^0_{\Gc}(\Om, \Pc(\D \x \Pi_{i \in \N}\V^+_{\thetau_i}))$.	
	
	\vspace{0.5em}
	
	Next, for each $\mu \in \L^0_{\Gc}(\Om, \Pc(\D \x \Pi_{i \in \N}\V^+_{\thetau_i}))$, there exists a $D$-valued c\`adl\`ag processes $X^{\mu}$.
	Then for each population $i \in \N$, a representative agent solves the following singular control problem:
	\begin{equation}\label{eq:Bank_singular_control}
		\inf_{\Theta \in \Ab_i} J(\Theta, \mu),
		~~\mbox{with}~
		J(\Theta, \mu)
		:=
		\E\bigg[ \int_{0}^{T}c(t, \mu, \Theta_t) dt
				+ \int_{0}^{T}k(t,\mu) d\Theta_t
		\bigg],
	\end{equation}
	where $c:[0,T] \x \Om \x \L^0_{\Gc}(\Om, \Pc(\D \x \Pi_{i \in \N}\V^+_{\thetau_i})) \x \R \longrightarrow \R$ represents the running cost of the problem,
	$k: [0,T] \x \Om \x \L^0_{\Gc}(\Om, \Pc(\D \x \Pi_{i \in \N}\V^+_{\thetau_i})) \longrightarrow \R$ is the cost related to the control $\Theta$,
	and with an optional process $\overline \Theta^i$ having paths in $\V^+_{\thetau_i}$,
	$$
		\Ab_i \vcentcolon= \big\{
			\mbox{Optional processes}~\Theta ~\mbox{having paths in}~ \V^+_{\thetau_i} ~\mbox{such that}~
			\Theta_t \leq \overline \Theta^i_t
			~\mbox{for all}~
			t \geq 0, ~\mbox{a.s.}
		\big\}.
	$$
	Let us refer to  \cite{Bank2004} for the the motivation and application of the above singular control problem.
	Notice that for different populations, the cost functions $c$ and $k$ are common, but the constraints process $\overline \Theta^i$ are different.

	\begin{definition}\label{def:MFE_singular_control}
		A solution to the MFG with singular control is a pair $(\mu^*,(\Theta^*_i)_{i \in \N})$,
		such that $\mu^* \in \L^0_\Gc\big(\Om, \Pc(\D \x \Pi_{i \in \N}\V^+_{\thetau_i})\big)$ and $\Theta^*_i \in \Ab_i$, $i \in \N$,
		satisfy
		$$
			\mu^* =\Lc \big( X^{m^*}, (\Theta^*_i)_{i \in \N} \big| \Gc \big)
			~\mbox{and}~
			J(\Theta^*_i, \mu^*) = \inf_{\Theta \in \Ab_i} J(\Theta, \mu^*), ~\mbox{for each}~i \in \N.
		$$
        \end{definition}

        Now we recall one of the main theorems in \cite{Bank2004}, which shows how the Bank-El Karoui's representation theorem can be used to construct the unique solution to the singular control problem \eqref{eq:Bank_singular_control} with given $m$.

        \begin{assumption}\label{ass:Bank_singular_control}
            \noindent $\mathrm{(i)}$
            For any $(t, \om, \mu) \in [0,T] \x \Om \x \Pc(\D \x \Pi_{i \in \N}\V^+_{\thetau_i})$, the map $ \ell \longmapsto c(t,\om, \mu,\ell)$ is strictly convex with continuous derivative $c^\prime(t,\om,m,\ell) = \frac{\partial}{\partial \ell} c(t,\om,m,\ell)$ strictly increasing from $-\infty$ to $+\infty$, i.e. $c^\prime (t,\om,m, -\infty) = -\infty$ and $c^\prime (t,\om, m, +\infty) = +\infty$.

		\vspace{0.5em}
	
            \noindent $\mathrm{(ii)}$
            For each $(\mu, \ell) \in \L^0_\Gc(\Om,\Pc(\D \x \Pi_{i \in \N}\V^+_{\thetau_i})) \x \R$, the process $(t, \om) \longmapsto c(\om,t, \mu,\ell)$ is progressively measurable and
            $$
                \E\bigg[\int_{0}^{T}|c(t,\mu,\ell)|dt\bigg]
                 <
                +\infty,
                ~
                \E\bigg[\int_{0}^{T}\inf_{\ell \in [\thetau_i,(\thetah_i)_t]}|c(t,\mu,\ell)|dt\bigg]
                 <
                +\infty,
                ~
                i \in \N.
            $$

            \noindent $\mathrm{(iii)}$
            For each $\mu \in \L^0_\Gc\big(\Om, \Pc(\Pi_{i \in \N}\V^+_{\thetau_i})\big)$, the process $(t, \om) \longmapsto k(t, \om, \mu)$ is  an optional process of Class (D), continuous in expectation with $k(T, \mu) = 0$, and
            $$
                \sup_{\Theta \in \Ab_i}
                \E\Big[ \int_0^T \big| k(t,\mu) \big| d\Theta_t\Big]
                <
                +\infty,
                ~
                i \in \N.
            $$
        \end{assumption}

        \begin{theorem}[Bank \cite{Bank2004}]\label{thm:Bank_singular_control}
		Let Assumption \ref{ass:Bank_singular_control} hold true.
		Then each fixed $\mu \in \L^0_\Gc(\Om,\Pc(\D \x \Pi_{i \in \N}\V^+_{\thetau_i}))$, the unique minimizer for the problem \eqref{eq:Bank_singular_control} is given by
		\begin{equation} \label{eq:def_Theta_L}
		\Theta^{\mu,i, *}_t
		:=
		\sup_{s \in [0,t)} L^\mu_t \wedge \overline \Theta^i_{t} \vee \thetau_i,
                ~
                t \in [0,T],
                ~
                i \in \N,
            \end{equation}
            where $L^\mu$ is the optional process solving the representation problem
            \begin{equation} \label{eq:BEK_represent_singular_ctrl}
                -k(\tau,\mu)
                ~ = ~
                \E\bigg[\int_{\tau}^{T}
                    c^\prime\Big(t, \mu, \sup_{s \in [\tau,t)}L^\mu_s\Big) dt \Big| \Fc_\tau\bigg],
			    ~\mbox{a.s., for all}~
			    \tau \in \Tc.
            \end{equation}
        \end{theorem}

	\begin{remark}
		Theorem \ref{thm:Bank_singular_control} reveals that one can reduce the singular control problem \eqref{eq:Bank_singular_control} to the corresponding representation problem \eqref{eq:BEK_represent_singular_ctrl},
		thus one can also reduce the corresponding MFG with singular control to a mean-field version of the representation \eqref{eq:MF_representation}.
        \end{remark}

	First, based on the mean-field representation results in Theorem \ref{thm:continuity}, one can deduce the following existence result for the above MFG with singular control.

	\begin{proposition}\label{prop:singular_control_application}
		Let Assumption \ref{ass:Bank_singular_control} hold true and $\Gc$ be generated by a countable partition of $\Om$ as in \eqref{eq:Gc_countable},
		and $t \longmapsto k(t, \cdot, \mu)$ have almost surely left upper semi-continuous paths for each $\mu \in \L^0_\Gc\big(\Om, \Pc(\Pi_{i \in \N}\V^+_{\thetau_i})\big)$.
		Suppose in addition that

		\vspace{0.5em}

		\noindent $\mathrm{(i)}$
		for $\mu^n, \mu^\infty \in \L^0_{\Gc}\big(\Om;\Pc\big(\D \x \Pi_{i \in \N}\V^+_{\thetau_i}\big)\big)$
		with
		$\lim_{n \to \infty} \mu^n= \mu^\infty$ in probability,
		we have for all $\ell \in \R$,
		\begin{align*}
    			\lim_{n \to \infty}
    			\E \bigg[
                    d(X^{\mu^n}, X^{\mu^\infty})
                    ~ & + ~
                    \sup_{t \in [0,T]}
                        \big| k(t, \om, \mu^n)
                            - k(t, \om, \mu^\infty) \big|
                    \\ ~ & + ~
                        \int_0^{T}
                            \big|
                            c^\prime(t, \om, \mu^n, \ell)
                            - c^\prime(t, \om, \mu^\infty, \ell)
                            \big|
                        ds
                    \bigg]
                ~ = ~
                0.
    		\end{align*}

		\noindent $\mathrm{(ii)}$
		the collection of probability measures
		$\big\{\Lc(X^\mu|\Gc): \mu \in \L^0_{\Gc}\big(\Om;\Pc\big(\D \x \Pi_{i \in \N}\V^+_{\thetau_i}\big)\big)\big\}$ is tight in $\L^0_{\Gc}(\Om, \Pc(E))$ by identifying  $\L^0_{\Gc}(\Om, \Pc(E))$ as $(\Pc(E))^{\N}$ under the weak convergence topology.
		
		\vspace{0.5em}
		
		Then there exists a solution $(\mu^*,(\Theta^*_i)_{i \in \N})$ to the MFG with singular control,
		where $\Theta^*_i$ is defined as in \eqref{eq:def_Theta_L} for some optional process $L^{*}$ (in place of $L^{\mu}$).
	\end{proposition}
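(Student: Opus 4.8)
The strategy is to recast the MFG with singular control as a mean-field representation problem of the form \eqref{eq:MF_representation} and then invoke the existence result of Theorem \ref{thm:continuity}. Guided by Bank's Theorem \ref{thm:Bank_singular_control}, I would set, for $E := \D \x \Pi_{i \in \N} \V^+_{\thetau_i}$ and each $m \in \L^0_\Gc(\Om, \Pc(E))$,
$$
	Y^m := -k(\cdot, m), \qquad f^m(t, \ell) := c'(t, m, \ell),
$$
and define $\Psi: \Om \x \D \x \V^+ \longrightarrow E$ by
$$
	\Psi(\om, \mathbf{x}, \mathbf{l}) := \Big( \mathbf{x}, \big( \thetau_i \vee \mathbf{l}_t \wedge \overline\Theta^i_t(\om) \big)_{t \in [0,T),\, i \in \N} \Big).
$$
With these data the representation identity in \eqref{eq:MF_representation} is exactly \eqref{eq:BEK_represent_singular_ctrl}, and its running-maximum process $\Lh$ produces, through $\Psi$, the optimal controls $\Theta^{m,i,*}_t = \thetau_i \vee \Lh_t \wedge \overline\Theta^i_t$ of Theorem \ref{thm:Bank_singular_control}. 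Hence a fixed point $(L,m)$ of \eqref{eq:MF_representation} automatically satisfies $m = \Lc\big(X^m, (\Theta^{m,i,*})_{i \in \N} \big| \Gc\big)$ with each $\Theta^{m,i,*}$ an \emph{exact} minimizer of $J(\cdot, m)$, i.e. a genuine (not merely $\eps$-) solution in the sense of Definition \ref{def:MFE_singular_control}. This is where the singular-control case is cleaner than the timing case: Bank's representation delivers an exact optimizer, so no approximation of hitting times is needed.

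Next I would verify the hypotheses of Theorem \ref{thm:continuity}. Assumption \ref{assum:Yfm} for $(Y^m, f^m)$ translates directly from Assumption \ref{ass:Bank_singular_control}: part $\mathrm{(i)}$ gives that $\ell \mapsto c'(t, \om, m, \ell)$ is continuous and strictly increasing from $-\infty$ to $+\infty$; part $\mathrm{(ii)}$ gives progressive measurability and the integrability \eqref{eq:cond_Ym}; and part $\mathrm{(iii)}$ gives that $Y^m = -k(\cdot, m)$ is optional of class (D), continuous (hence upper semi-continuous) in expectation, with $Y^m_T = -k(T,m) = 0$. The almost-sure upper semi-continuity of the paths $t \mapsto Y^m_t$ follows from the assumed left upper semi-continuity of $t \mapsto k(t, \cdot, m)$. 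The continuity of $(\mathbf{x},\mathbf{l}) \mapsto \Psi(\om, \mathbf{x}, \mathbf{l})$ in the Skorokhod $\times$ L\'evy topology is exactly of the type treated in Example \ref{eg:continuity} (with $\Lh^- \equiv \thetau_i$ and $\Lh^+ = \overline\Theta^i$), holding coordinatewise in $i$ and thus on the countable product. Finally, condition $\mathrm{(i)}$ of the proposition is, after the identifications $Y^m = -k(\cdot,m)$ and $f^m = c'(\cdot, m, \cdot)$, precisely the required convergence of $m^n \mapsto (X^{m^n}, Y^{m^n}, f^{m^n})$ in the third bullet of Theorem \ref{thm:continuity}.

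The main work is to produce the convex compact set $K \subset \L^0_\Gc(\Om, \Pc(E))$ containing every $\Lc\big(\Psi(X^m, \Lh)\big|\Gc\big)$. Using the identification $\L^0_\Gc(\Om, \Pc(E)) \cong (\Pc(E))^{\N}$ afforded by \eqref{eq:Gc_countable}, it suffices to build on each atom a uniformly tight (hence, by Prokhorov, relatively compact) family and to take $K$ as the closure of the convex hull cut out by a common tightness bound; convexity survives because $m \mapsto m(C)$ is affine, so a compact set $C_\eps$ with mass $\ge 1-\eps$ is preserved under convex combinations. For the $\D$-marginal the required tightness is condition $\mathrm{(ii)}$. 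For each control marginal the key observation is that, for \emph{every} $\Lh \in \L^0_\Fc(\Om,\V^+)$, one has $\thetau_i \le \thetau_i \vee \Lh_t \wedge \overline\Theta^i_t \le \overline\Theta^i_t$, so each such path is squeezed pathwise between the constant $\thetau_i$ and the single fixed process $\overline\Theta^i$; the set of increasing left-continuous functions trapped between two fixed increasing functions is relatively compact in the L\'evy metric by a Helly-type selection argument, and the law of the single process $\overline\Theta^i$ is tight, giving uniform tightness of the control marginals in $(m,\Lh)$. Assembling these across the countable product produces $K$; this tightness construction is the principal obstacle.

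With all hypotheses of Theorem \ref{thm:continuity} verified, I obtain a couple $(L,m)$ solving \eqref{eq:MF_representation} for the above data. By the reduction of the first paragraph together with Bank's Theorem \ref{thm:Bank_singular_control}, setting $\Theta^*_i := \Theta^{m,i,*} = \thetau_i \vee \Lh \wedge \overline\Theta^i$ and $\mu^* := m$ yields $\mu^* = \Lc\big(X^{\mu^*}, (\Theta^*_i)_{i \in \N}\big|\Gc\big)$ and $J(\Theta^*_i, \mu^*) = \inf_{\Theta \in \Ab_i} J(\Theta, \mu^*)$ for every $i \in \N$, which is precisely a solution to the MFG with singular control as in Definition \ref{def:MFE_singular_control}.
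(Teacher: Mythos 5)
Your proposal is correct and follows essentially the same route as the paper's proof: the same identifications $Y^m = -k(\cdot,m)$ and $f^m = c'(\cdot,m,\cdot)$, the same map $\Psi(\om,\mathbf{x},\mathbf{l}) = (\mathbf{x}, (\thetau_i \vee \mathbf{l} \wedge \overline\Theta^i)_{i\in\N})$, the same compact convex set $K$ obtained from the tightness of the $\D$-marginal (condition $\mathrm{(ii)}$) together with the controls being squeezed between $\thetau_i$ and $\overline\Theta^i$, followed by Theorem \ref{thm:continuity} for the fixed point and Bank's Theorem \ref{thm:Bank_singular_control} for exact optimality and consistency. (Incidentally, your sign convention $Y^m = -k(\cdot,m)$ is the one actually consistent with the representation \eqref{eq:BEK_represent_singular_ctrl}; the paper's proof writes $Y^m = k(\cdot,m)$, an apparent typo.)
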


	We next provide an existence result based on Theorem \ref{thm:monotonicity} under some monotonicity conditions.
	Let $\le_{\D}$ be a partial order on $\D$ such that $(\D, \le_\D)$ is a partially ordered Polish space.
	Recall that the partial order $\le_{v}$ on $\V^+_{\thetau_i}$ is defined by $\mathbf{l}^1 \le_{v} \mathbf{l}^2$ if $\mathbf{l}^1_t \le \mathbf{l}^2_t$ for all $t \in [0,T)$, $i \in \N$.
	Then, by Example \ref{exam:p.o.PolishSpace}, $\V^+_{\thetau_i}$, for all $i \in \N$, is a partially ordered Polish space,
	and $\Pi_{i \in \N}\V^+_{\thetau_i}$ is also a partially ordered Polish space with the product order.
	Moreover, there exists a partial order $\le_p$ on $\L^0_{\Gc}\big(\Om;\Pc\big(\D \x\Pi_{i \in \N}\V^+_{\thetau_i}\big)\big)$ defined by \eqref{eq:def_lep} by letting $E = \D \x \Pi_{i \in \N}\V^+_{\thetau_i}$.

	\begin{proposition}\label{prop:singular_control_application_order}
		Suppose that the function $c$ and $k$ satisfy Assumption \ref{ass:Bank_singular_control}.
		Moreover,  we assume that

        \noindent $\mathrm{(i)}$
		there exists a bounded increasing map $\varphi: \D \longrightarrow I_c$, with $I_c$ being a closed interval of $\R$,
		such that for any $\mu^1,\mu^2 \in \L^0_{\Gc}\big(\Om;\Pc\big(\D \x \Pi_{i \in \N}\V^+_{\thetau_i}\big)\big)$,
		\begin{equation}\label{eq:assumption_singular_control_order}
			(\varphi \otimes \mathrm{Id}) \# \mu_1 = (\varphi \otimes \mathrm{Id}) \# \mu_2
    			~\Longrightarrow~
			\big( X^{\mu_1}, k(\cdot, \mu^1), c^\prime(\cdot, \mu^1, \cdot) \big) = \big(X^{\mu_2}, k(\cdot, \mu^2), c^\prime(\cdot, \mu^2, \cdot) \big),
		\end{equation}

        \noindent $\mathrm{(ii)}$
        for all $\mu^1,\mu^2 \in \L^0_{\Gc}\big(\Om;\Pc\big(\D \x \Pi_{i \in \N}\V^+_{\thetau_i}\big)\big)$,
		$(\varphi \otimes \mathrm{Id}) \# \mu_1 \le_{p} (\varphi \otimes \mathrm{Id}) \# \mu_2$ implies
		\begin{equation*}
		k(\cdot, \mu^2) - k(\cdot, \mu^1)
			~\mbox{is a supermartingale,}~
    			c^\prime(\cdot, \mu^1, \cdot) \ge c^\prime(\cdot, \mu^2, \cdot),
		\end{equation*}

        \noindent $\mathrm{(iii)}$
        for any $m \in \L^0_{\Gc}\big(\Om;\Pc\big(I_c \x \Pi_{i \in \N}\V^+_{\thetau_i}\big)\big)$, there exists a $\mu \in \L^0_{\Gc}\big(\Om;\Pc\big(\D \x \Pi_{i \in \N}\V^+_{\thetau_i}\big)\big)$ such that
        $$
            m = (\varphi \otimes \mathrm{Id}) \# \mu.
        $$

		Then there exists a solution $(\mu^*,(\Theta^*_i)_{i \in \N})$ to the MFG with singular control,
		where $\Theta^*_i$ is defined as in \eqref{eq:def_Theta_L} for some optional process $L^{*}$ (in place of $L^{\mu}$).
		
        \end{proposition}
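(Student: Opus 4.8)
The plan is to recast the MFG with singular control as a mean-field representation problem of the form \eqref{eq:MF_representation} and then invoke the monotone existence result of Theorem \ref{thm:monotonicity}, exactly parallel to the proof of Proposition \ref{prop:optimal_stopping_application_order}. The bridge is Theorem \ref{thm:Bank_singular_control}: for a frozen interaction term $\mu$, the unique optimizer of the singular control problem \eqref{eq:Bank_singular_control} is read off from the representation process $L^\mu$ of \eqref{eq:BEK_represent_singular_ctrl}. Accordingly, I would take as representation data $Y^m := -k(\cdot, m)$ and $f^m(\cdot) := c^\prime(\cdot, m, \cdot)$, so that the first identity of \eqref{eq:MF_representation} becomes precisely \eqref{eq:BEK_represent_singular_ctrl}, and I would choose
$$
  \Psi(\om, \mathbf{x}, \mathbf{l}) := \Big( \mathbf{x}, \big( \mathbf{l}_t \wedge \overline{\Theta}^i_t(\om) \vee \thetau_i \big)_{t \in [0,T), \, i \in \N} \Big) \in \D \x \Pi_{i \in \N} \V^+_{\thetau_i},
$$
so that, with $\Lh$ the running maximum of $L$, the induced quantity $\Psi(X^m, \Lh)$ encodes exactly the candidate optimal controls $\Theta^{m,i,*}_t = \Lh_t \wedge \overline{\Theta}^i_t \vee \thetau_i$ of Theorem \ref{thm:Bank_singular_control} together with the state $X^m$.

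First I would verify the standing hypotheses. Assumption \ref{assum:Yfm} follows directly from Assumption \ref{ass:Bank_singular_control}: $Y^m_T = -k(T,m) = 0$, and the class (D) and upper-semicontinuity-in-expectation of $Y^m$ come from the corresponding properties of $k$ in (iii); the strict monotonicity and continuity of $\ell \longmapsto f^m(t,\om,\ell) = c^\prime(t,\om,m,\ell)$ from $-\infty$ to $+\infty$, together with the integrability \eqref{eq:cond_Ym}, come from the strict convexity in (i) and the bounds in (ii). The monotonicity of $\Psi$ required in Theorem \ref{thm:monotonicity} is immediate: since $\mathbf{l} \longmapsto (\mathbf{l}_t \wedge \overline{\Theta}^i_t) \vee \thetau_i$ is order-preserving (the lattice operations $\wedge$ and $\vee$ preserve the pointwise order) and the first coordinate merely copies $\mathbf{x}$, one has $\Psi(\om, \mathbf{x}^1, \mathbf{l}^1) \le_E \Psi(\om, \mathbf{x}^2, \mathbf{l}^2)$ whenever $\mathbf{x}^1 \le_\D \mathbf{x}^2$ and $\mathbf{l}^1_t \le \mathbf{l}^2_t$, where $E = \D \x \Pi_{i \in \N} \V^+_{\thetau_i}$ carries the product of $\le_\D$ and the pointwise orders $\le_v$.

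The core of the argument is to produce the complete lattice $(K, \le_p)$ demanded by Theorem \ref{thm:monotonicity} and to check the monotonicity \eqref{eq:XYf_order} on it. Here the map $\varphi$ enters: because the candidate controls take values in the band $[\thetau_i, \overline{\Theta}^i_t]$ and $\varphi$ maps $\D$ into the closed bounded interval $I_c$, the pushforwards $(\varphi \otimes \mathrm{Id}) \# \mu$ live in $\L^0_\Gc(\Om, \Pc(I_c \x \Pi_{i \in \N} \V^+_{\thetau_i}))$, a space on which the stochastic order $\le_p$ of \eqref{eq:def_lep} is a complete lattice by the same bounded-band argument as in Examples \ref{exam:orderedVD} and \ref{eg:monotonicity}. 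I would let $K$ be this lattice of pushforward laws; condition (iii) guarantees that every element of $K$ is realised as $(\varphi \otimes \mathrm{Id}) \# \mu$ for some admissible $\mu$, and condition (i) guarantees that $(X^\mu, k(\cdot, \mu), c^\prime(\cdot, \mu, \cdot))$ — hence $(Y^m, f^m)$ and $X^m$ — depend on $\mu$ only through this pushforward, so they are well defined on $K$. Condition (ii) then yields \eqref{eq:XYf_order}: if $m^1 \le_p m^2$ then $Y^{m^1} - Y^{m^2} = k(\cdot, m^2) - k(\cdot, m^1)$ is a supermartingale and $f^{m^1} = c^\prime(\cdot, m^1, \cdot) \ge c^\prime(\cdot, m^2, \cdot) = f^{m^2}$, together with the state monotonicity $X^{m^1} \le_\D X^{m^2}$, which holds trivially when $X^\mu$ does not depend on the interaction and is otherwise imposed exactly as in Proposition \ref{prop:optimal_stopping_application_order}(ii).

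With all hypotheses in place, Theorem \ref{thm:monotonicity} delivers a couple $(L, m)$ solving \eqref{eq:MF_representation}. To conclude, I would set $\mu^* := m$ and $\Theta^*_i := (\Lh \wedge \overline{\Theta}^i) \vee \thetau_i \in \Ab_i$. By Theorem \ref{thm:Bank_singular_control}, each $\Theta^*_i$ is the unique minimizer of $J(\cdot, \mu^*)$ over $\Ab_i$, while the fixed-point identity $m = \Lc(\Psi(X^m, \Lh) \mid \Gc)$ reads precisely $\mu^* = \Lc(X^{\mu^*}, (\Theta^*_i)_{i \in \N} \mid \Gc)$, which is the equilibrium condition of Definition \ref{def:MFE_singular_control}. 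The main obstacle I anticipate is the lattice step: verifying that $\le_p$ makes the space of pushforward laws on $I_c \x \Pi_{i \in \N} \V^+_{\thetau_i}$ a genuine complete lattice (not merely a partially ordered set), and that arbitrary suprema and infima of such laws remain realisable through condition (iii); this bookkeeping around the $\varphi$-reduction, rather than any analytic estimate, is where the proof is most delicate.
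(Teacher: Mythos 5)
Your overall strategy coincides with the paper's: freeze the interaction, use Theorem \ref{thm:Bank_singular_control} to identify the representation data ($Y^m = -k(\cdot,\mu)$, $f^m = c^\prime(\cdot,\mu,\cdot)$ for any lift $\mu$ of $m$ through $\varphi\otimes\mathrm{Id}$, well defined by (i) and (iii)), check \eqref{eq:XYf_order} from (ii), apply Theorem \ref{thm:monotonicity}, and read the equilibrium off the fixed point via the truncation $\thetau_i \vee \Lh \wedge \Thetah_i$. Your sign convention $Y^m = -k$ is in fact the correct one, and your remark that monotonicity of $\mu \longmapsto X^\mu$ must be imposed separately is a point the paper's statement glosses over. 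But the step you yourself flag as the main obstacle --- the complete lattice --- is where the proposal genuinely fails, and the fix is not bookkeeping but a different choice of $K$.

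You take $K$ to be the set of pushforward laws in $\L^0_{\Gc}\big(\Om, \Pc(I_c \x \Pi_{i \in \N}\V^+_{\thetau_i})\big)$, which by your own appeal to condition (iii) is the \emph{whole} space, and you claim it is a complete lattice ``by the bounded-band argument''. It is not: $I_c$ is bounded, but $\V^+_{\thetau_i}$ is unbounded above, and boundedness is exactly what the band argument needs. Concretely, take $v_n \in \V^+_{\thetau_i}$ with $v_n(t) = n$ for $t > 0$; any $\le_p$-upper bound $m$ of the family $\{\delta_{v_n}\}_{n \ge 1}$ must satisfy $\int \mathds{1}_{\{v(t_0) \ge K\}}\, m(dv) = 1$ for every $K$ and fixed $t_0 \in (0,T)$ (these indicators are bounded, increasing and measurable), which is impossible for a probability measure on real-valued paths; so arbitrary suprema fail to exist and Tarski's theorem cannot be invoked on your $K$. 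The paper's resolution is to exploit the random band constraint already present in the control problem: it sets $K := \big\{\Lc\big((X,(L^i)_{i\in\N})\big|\Gc\big) : X \in \L^0_{\Fc}(\Om, I_c),\ L^i \in \Ab_i\big\}$, i.e., only conditional laws of controls confined to $[\thetau_i, \Thetah_i]$. This $K$ is a complete lattice by Proposition \ref{prop:complete_lattice} (a band of processes under $\le_l$ is a complete lattice, and $\L^0_{\Fc}(\Om,I_c)$ is one too) combined with the order-preservation of $\xi \longmapsto \Lc(\xi|\Gc)$, and it is stable under the fixed-point map precisely because $\Psi$ truncates $\Lh$ into the bands. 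Relatedly, your $\Psi$ keeps $\mathbf{x}$ rather than $\varphi(\mathbf{x})$ in its first coordinate, so $\Lc(\Psi(X^m,\Lh^m)|\Gc)$ lives on $\D \x \Pi_{i\in\N}\V^+_{\thetau_i}$ and the fixed-point map does not even map your $K$ into itself; it must be $\Psi(\om,\mathbf{x},\mathbf{l}) = \big(\varphi(\mathbf{x}), (\thetau_i \vee \mathbf{l}_t \wedge \Thetah_i(\om))_{i\in\N}\big)$, as in the paper, which simultaneously repairs this mismatch and makes the truncated controls land in $\Ab_i$.
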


        \begin{remark}\label{rmk:comparison_singular_control}
		The MFG with singular control has been investigated in several works during the last years.
		In Fu and Horst \cite{FuHorst2017}, the authors studied a general mean field game with both regular control and singular control,
		where the interaction takes place only in states, and provided existence of solutions as well as approximation results by regular controls.
		Fu \cite{Fu2019} extended the model in \cite{FuHorst2017} to the case with jumps, where the interaction takes place in both states and controls.
		In both \cite{FuHorst2017} and \cite{Fu2019}, the solutions of the MFG are in the relaxed form.
		In Campi, De Angelis and Ghio \cite{CampiDeAngelisGhio2020}, the authors studied a MFG with special interaction terms,
		and obtained existence of solution as well as convergence rate of the corresponding $N$-players symmetric game.
		Cao and Guo \cite{CaoGuo2020} considered a MFG with singular control with special dynamic and reward function, and obtained an explicit solution.
		Cao, Dianetti and Ferrari \cite{CaoDianettiFerrari2021} studied an ergodic MFG with singular control, where the interaction takes a special form,
		and proved the existence and uniqueness of solution in the strong sense.
		In Dianetti, Ferrari, Fischer and Nendel \cite{DianettiFerrari2022}, the authors provided a unifying framework for several different submodular games including the MFG with singular control, and obtained existence of strong mean field equilibrium by Tarski's fixed point theorem based on order structure assumptions.
		Let us also mention the work of Bertucci \cite{Bertucci2020_2} where a MFG with singular control problem has been studied by PDE approaches.

		\vspace{0.5em}

		Our framework of MFG with singular control is quite close to that in \cite{FuHorst2017} and \cite{DianettiFerrari2022}.
		Compared with \cite{FuHorst2017}, we consider the MFG with only singular control but not regular control by adapting the framework of Bank \cite{Bank2004}.
		While MFG in \cite{FuHorst2017} is a one population model and the solutions are in relaxed sense, our Proposition \ref{prop:singular_control_application} gives existence results for MFG of multiple populations and the solution is in strong sense, i.e. the solutions are in a fixed filtered probability space.
		Our Proposition \ref{prop:singular_control_application_order} can be considered as a multiple population extension of the results in \cite{DianettiFerrari2022}.
		Technically, our  Proposition \ref{prop:singular_control_application_order} is based on the representation results in Theorem \ref{thm:monotonicity}, whose proof is based on Tarski's fixed point theorem under some order structure conditions,
		which is similar to the technique used in \cite{DianettiFerrari2022}.

        \end{remark}

	\begin{remark}
		In contrast to the MFG of timing in Section \ref{subsubsec:MFG_timing} where we use an uncountable set of populations, 
		we consider a countable set of populations for the MFG with singular control.
		The main reason is that, by considering a countable set of populations in the MFG with singular control, the interaction term $\mu$ is a probability measure on $\sum_{i\in \N} \V^+_{\theta_i}$, which is a Polish space.
	
		\vspace{0.5em}
	
		For the MFG of timing with an uncountable set of population, the interaction is a probability measure on $[0,T]^{\R}$.
		Although $[0,T]^{\R}$ is not a Polish space as it is separable, we can use the order structure of the space $[0,T]$ to identify a Polish subspace of $[0,T]^{\R}$ so that the interaction is in fact a probability measure on the Polish subspace.
	\end{remark}

\subsubsection{A mean field game of optimal consumption}

	We next introduce a MFG of optimal consumption with one population, based on the infinite time horizon (i.e. $T = \infty$) model in Bank and Riedel \cite{BankRiedel01}.
	Let us denote by $\D_-$ the space of all $\R$-valued l\`adc\`ag functions on $\R_+$ equipped with the Skorokhod topology, so that it is a Polish space.
	In this MFG, the mean-field interaction term is a $\Gc$-measurable random measure $\mu \in \L^0_\Gc\big(\Om, \Pc(\D \x \D_-)\big)$.
	Given $\mu$, the market interest rate is a progressively measurable process $(r^{\mu}_s)_{s \ge 0}$,
	and a consumption process $C$ is left-continuous and right-limit (l\`adc\`ag) adapted increasing process.
	Next, with the total budget $b \in \R$, the set of all admissible consumption processes is defined by
	$$
		\Ab(b) \vcentcolon= \bigg\{
			C~\mbox{is l\`adc\`ag  adapted increasing process s.t.}~
			\E\bigg[
				\int_{0}^{+\infty} \!\!\! e^{-\int_0^t r^{\mu}_s ds} d C_t
			\bigg]
			\leq b
		\bigg\}.
        $$
	Further, given a consumption process $C$, with the initial satisfaction level $\eta > 0$ and discount constant $\beta > 0$,
	a satisfaction process $Y^C$ is defined by
	$$
		Y^C_t  \vcentcolon= \eta e^{-\beta t}
		+
		\int_{0}^{t}\beta e^{-\beta(t-s)} dC_s,
		~ t \geq 0.
        $$
	Then with the utility function $u: \R_+ \x \Om  \x \R \x \Pc(\D \x \D_-) \longrightarrow \R$,
	 a representative agent in the MFG solves the optimal consumption problem
	\begin{equation}\label{eq:Bank_optimal_consumption}
		\sup_{C \in \Ab(b)} U(C, \mu),
		~~\mbox{with}~~
		U(C, \mu)
		\vcentcolon=
		\E\bigg[
			\int_{0}^{+\infty}u(t, Y^C_t, \mu) dt
		\bigg].
	\end{equation}

	Let us refer to Bank and Riedel \cite{BankRiedel01} for an economic interpretation of the above optimal consumption model.
	Moreover, in  \cite{BankFollmer}, a characterization of the optimal consumption process $C^*$ is derived by using  Bank-El Karoui's representation \eqref{eq:BankEK_repres_intro}.

	\begin{assumption}\label{ass:Bank_optimal_consumption}
		\noindent $\mathrm{(i)}$
		For every $(t, \om, \mu) \in [0,T] \x \Om \x \L^0_{\Gc}(\Pc(\D \x \D_-))$, the map $ \ell \longrightarrow u(t, \om, \mu, \ell)$ is strictly concave, increasing in the variable $\ell \in [0,+\infty)$ with continuous derivative
		$$
			u^\prime(t, \om,\mu,\ell) ~:=~ \frac{\partial}{\partial \ell} u(t, \om, \mu,\ell)
		$$
		decreasing from $+\infty$ to $0$, i.e. $u^\prime (t, \om, \mu, 0) = +\infty$ and $u^\prime (t, \om, \mu, +\infty) = 0$.

		\vspace{0.5em}

		\noindent $\mathrm{(ii)}$
		For all $b \in \R_+$, and $(\mu, \ell) \in \L^0_\Gc\big(\Om, \Pc(\D \x \D_-)\big) \x \R$,
		the processes $(t, \om) \mapsto u(t, \om, \mu,\ell)$ and $e^{-\int_0^\cdot r_s(\mu) ds}$ are progressively measurable, and
		$$
			\E\bigg[\int_{0}^{+\infty}|u(t,\mu,\ell)|dt\bigg]
			~ < ~
			+\infty,
			~~~
			\sup_{C \in \Ab(b)}U(C, \mu) < +\infty.
		$$
	\end{assumption}

	\begin{theorem}[Bank and Follmer \cite{BankFollmer}]\label{thm:Bank_optimal_consumption}
		Let Assumption \ref{ass:Bank_optimal_consumption} hold true and $\mu \in \L^0_\Gc(\Om;\Pc(\D \x \D_-))$ be fixed.
		Then for any Lagrange multiplier $\lambda > 0$, the discounted price deflator process $\lambda e^{-\beta \cdot-\int_0^\cdot r^{\mu}_s ds}$ admits the representation,
		\begin{equation}\label{eq:Optimal_consumption_representation}
			\lambda e^{-\beta \tau-\int_0^\tau r^{\mu}_s ds}
			~ = ~
			\E\bigg[\int_{\tau}^{+\infty}
			\beta e^{-\beta t}u^\prime\Big(t, {\mu}, \frac{- e^{-\beta t}}{\sup_{s \in [\tau,t)}L^{{\mu},\lambda}_s}\Big) dt \Big| \Fc_\tau\bigg]
			    ~\mbox{a.s. for all}~
			\tau \in \Tc,
		\end{equation}
		for some $\F$-progressively measurable process $L^{{\mu},\lambda}$ with upper-right continuous paths
		and the consumption plan $C^{{\mu},\lambda} $ for the problem \eqref{eq:Bank_optimal_consumption} such that
		\begin{equation*}
			Y^{C^{{\mu},\lambda}}_t
			~ = ~
			e^{-\beta t} \Big( \eta \vee \frac{- 1}{\sup_{s \in [0,t)}L^{{\mu},\lambda}_s} \Big),
			~
			t \in \R^+,
		\end{equation*}
		is optimal for its cost
		$$
			b^{{\mu},\lambda} =
			\E\bigg[\int_{0}^{+\infty}e^{-\int_0^t r^{{\mu}}_s ds} d C^{{\mu},\lambda}_t\bigg]
			= \E\bigg[\int_{0}^{+\infty}\frac{e^{-\int_0^t r^{{\mu}}_s ds - \beta t}}{ \beta} d\Big( \eta \vee \frac{- 1}{\sup_{s \in [0,t)}L^{{\mu},\lambda}_s}\Big)\bigg].
		$$
	\end{theorem}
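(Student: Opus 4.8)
The plan is to obtain \eqref{eq:Optimal_consumption_representation} by Lagrangian relaxation of the budget constraint, followed by an application of the classical Bank--El Karoui representation \eqref{eq:BankEK_repres_intro}, exactly along the lines of Bank and F\"ollmer \cite{BankFollmer}. Fix $\mu$ and the multiplier $\lambda>0$, and consider the unconstrained functional
\[
	C \longmapsto \E\Big[\int_0^{+\infty} u(t,Y^C_t,\mu)\,dt\Big] - \lambda\,\E\Big[\int_0^{+\infty} e^{-\int_0^t r^\mu_s ds}\,dC_t\Big]
\]
over increasing adapted processes $C$. Since $Y^C_t = \eta e^{-\beta t} + \int_0^t \beta e^{-\beta(t-s)}\,dC_s$ is an affine image of $C$ through a deterministic convolution kernel and $u(t,\cdot,\mu)$ is concave by Assumption \ref{ass:Bank_optimal_consumption}$\mathrm{(i)}$, this functional is concave in $C$, so any critical point is automatically a global maximizer.

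First I would compute the Gateaux derivative in an admissible direction $\delta C$. Differentiating the satisfaction process gives $\delta Y^C_t = \int_0^t \beta e^{-\beta(t-s)}\,d(\delta C)_s$, and after interchanging the order of integration (Fubini) and conditioning on $\Fc_s$ (legitimate because $d(\delta C)_s$ is predictable), the derivative of the Lagrangian equals
\[
	\E\Big[\int_0^{+\infty}\Big(\E\big[\textstyle\int_s^{+\infty}\beta e^{-\beta(t-s)}u^\prime(t,Y^C_t,\mu)\,dt\,\big|\,\Fc_s\big] - \lambda e^{-\int_0^s r^\mu_u du}\Big)\,d(\delta C)_s\Big].
\]
Because $C$ ranges over a convex cone of increasing processes, the Kuhn--Tucker conditions for the optimizer $C^{\mu,\lambda}$ read: the inner bracket is $\le 0$ for all $s$ and vanishes $dC^{\mu,\lambda}$-almost everywhere. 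Multiplying through by $e^{-\beta s}$ puts the right-hand side in the form of the deflator $\lambda e^{-\beta s -\int_0^s r^\mu_u du}$, which is exactly the left-hand side of \eqref{eq:Optimal_consumption_representation}.

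Next I would identify this optimality condition with a Bank--El Karoui representation. Take as driving process $\tilde Y_\tau := \lambda e^{-\beta\tau-\int_0^\tau r^\mu_s ds}$, which is of class (D) and, under the integrability of $r^\mu$ together with Assumption \ref{ass:Bank_optimal_consumption}, satisfies $\tilde Y_t \to 0$ as $t\to+\infty$; and take the generator $f(t,\ell):=\beta e^{-\beta t}\,u^\prime\!\big(t,\mu,-e^{-\beta t}/\ell\big)$. On the negative range of $\ell$ the reciprocal map $\ell\mapsto -e^{-\beta t}/\ell$ is increasing into $(0,+\infty)$, and composing it with the strictly decreasing marginal utility $u^\prime(t,\mu,\cdot)$ (Assumption \ref{ass:Bank_optimal_consumption}$\mathrm{(i)}$) yields a strictly monotone generator on the relevant half-line, which is the form of the representation used in \cite{BankFollmer} for the consumption problem. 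The representation then produces the upper-right-continuous process $L^{\mu,\lambda}$ satisfying $\tilde Y_\tau = \E[\int_\tau^{+\infty} f(t,\sup_{s\in[\tau,t)}L^{\mu,\lambda}_s)\,dt\mid\Fc_\tau]$, i.e. \eqref{eq:Optimal_consumption_representation}. I would define the candidate satisfaction process by $Y^{C^{\mu,\lambda}}_t = e^{-\beta t}\big(\eta\vee(-1/\sup_{s\in[0,t)}L^{\mu,\lambda}_s)\big)$, where the outer $\eta\vee$ encodes the structural constraint $Y^C_t\ge\eta e^{-\beta t}$ (consumption may only raise the satisfaction level), invert the convolution defining $Y^C$ to recover an increasing adapted plan $C^{\mu,\lambda}$, and compute $b^{\mu,\lambda}$ by integrating the deflator against $dC^{\mu,\lambda}$ as in the statement.

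Finally, optimality of $C^{\mu,\lambda}$ for the budget $b^{\mu,\lambda}$ follows from concavity: the representation guarantees that the Kuhn--Tucker bracket is nonpositive for every $s$ and vanishes precisely where $L^{\mu,\lambda}$ attains its running maximum, which is exactly the support of $dC^{\mu,\lambda}$; hence $C^{\mu,\lambda}$ is a critical point of the concave Lagrangian and therefore a global maximizer among admissible plans of the same cost. I expect the main obstacle to be the bookkeeping around the running maximum together with the monotonicity and domain of the generator: one must verify that composing the strictly decreasing $u^\prime$ with the reciprocal transformation delivers exactly the strictly monotone generator to which the (half-line) representation applies, that the infinite-horizon integrability and transversality at $T=+\infty$ legitimately yield $\tilde Y_\infty=0$ and the passage to the limit in \eqref{eq:BankEK_repres_intro}, and that the $\eta$-floor is threaded consistently so that the reconstructed $C^{\mu,\lambda}$ is genuinely nondecreasing, adapted, and budget-feasible. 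These are precisely the delicate points resolved in \cite{BankFollmer}, whose construction I would follow in detail.
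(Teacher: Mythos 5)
Your overall route --- Lagrangian relaxation of the budget constraint, concavity of the relaxed functional, Kuhn--Tucker first-order conditions, and identification of those conditions with a Bank--El Karoui representation --- is exactly the strategy of the cited source \cite{BankFollmer}, and it is consistent with how the paper itself treats this statement: the paper does not reprove the theorem, it imports it and only supplies, in Remark \ref{rmk:optimal_conusmption}, the dictionary reducing \eqref{eq:Optimal_consumption_representation} to the classical representation \eqref{eq:BankEK_repres_intro}. Your Gateaux-derivative computation and the identification of the multiplier bracket with the discounted deflator are correct in outline.

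There is, however, a genuine gap at the one step that actually requires an argument, namely the invocation of the representation theorem. You apply it to $\tilde Y_\tau = \lambda e^{-\beta\tau-\int_0^\tau r^\mu_s ds}$ with generator $f(t,\ell) := \beta e^{-\beta t}u'\big(t,\mu,-e^{-\beta t}/\ell\big)$ and describe this as a ``strictly monotone generator on the relevant half-line.'' But for $\ell<0$ the map $\ell\mapsto -e^{-\beta t}/\ell$ is increasing while $u'(t,\mu,\cdot)$ is decreasing, so your generator is strictly \emph{decreasing} in $\ell$, takes values in $(0,\infty)$, and is defined only on a half-line; the representation theorem in the form used in this paper (Assumption \ref{Assumption:BErepresentation}, Theorem \ref{thm:BErepresentationTheorem}) requires a generator that is strictly increasing from $-\infty$ to $+\infty$ on all of $\R$, and there is no half-line version available to quote. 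The fix --- which is precisely the content of Remark \ref{rmk:optimal_conusmption} --- is to negate both sides, i.e.\ take $Y^{\mu,\lambda}_t := -\lambda e^{-\beta t-\int_0^t r^\mu_s ds}$ and $f^{\mu}(t,\ell) := -u'\big(t,\mu,-e^{-\beta t}/\ell\big)$ for $\ell<0$, extend the generator by $f^{\mu}(t,\ell):=\ell$ for $\ell\ge 0$ so that it is strictly increasing from $-\infty$ to $+\infty$, apply the classical theorem, and then verify a posteriori that the representation process never reaches $[0,\infty)$: on $\{\tau_0<\infty\}$, with $\tau_0:=\inf\{t\ge 0 : L^{\mu,\lambda}_t\ge 0\}$, one would have
\begin{equation*}
	0 ~>~ -\lambda e^{-\beta \tau_0-\int_0^{\tau_0} r^\mu_s ds}
	~=~ \E\bigg[\int_{\tau_0}^{\infty} f^{\mu}\Big(t,\sup_{s\in[\tau_0,t)}L^{\mu,\lambda}_s\Big)\,dt \,\Big|\, \Fc_{\tau_0}\bigg] ~\ge~ 0,
\end{equation*}
a contradiction. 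Without this negativity check the artificial extension of the generator would enter the representation and \eqref{eq:Optimal_consumption_representation} would not follow. Deferring this point, together with the entire Kuhn--Tucker verification, to \cite{BankFollmer} leaves your proposal without the substance it was meant to supply, and is close to circular, since the statement being proved \emph{is} the theorem of \cite{BankFollmer}.
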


	\begin{remark}\label{rmk:optimal_conusmption}
		$\mathrm{(i)}$
		The problem \eqref{eq:Optimal_consumption_representation} reduces to the representation \eqref{eq:BankEK_repres_intro} with
		\begin{equation*}
			Y^{\mu,\lambda}(t,\om)
			~ \vcentcolon = ~
			-\lambda e^{-\beta t -\int_0^t r^{\mu}_s ds},
		\end{equation*}
		and
		\begin{equation*}
			f^{\mu}(t, \om, \ell)
			~ \vcentcolon = ~
			\left\{
			\begin{aligned}
				& -u^\prime(t, \om, \mu, \frac{-e^{-\beta t}}{\ell})
				~ & ~\mbox{if}~ \ell < 0,
				\\
				& \ell & ~\mbox{if}~ \ell \ge 0.
			\end{aligned}
        		        \right.
		\end{equation*}
		Under Assumption \ref{ass:Bank_optimal_consumption}, the classical Bank-El Karoui's representation theorem  \eqref{eq:BankEK_repres_intro}  ensures the existence of a representation process $L^{m,\lambda}$.
		Then it remains to verify that the stopping time
		$\tau_0 \vcentcolon = \inf\{t \geq 0: L^{\mu,\lambda}_t \geq 0\}$ is equal to $+\infty$ a.s.
		The claim holds true if we observe that
		\begin{align*}
			0
			~ & \geq ~
			- \lambda e^{-\beta \tau_0-\int_0^{\tau_0} r^\mu_s ds}
			\\ ~ & = ~
			\E\bigg[\int_{\tau_0}^{T}
				f^\mu\Big(\sup_{s \in [\tau_0,t)}L^{\mu,\lambda}_s\Big) dt \Big| \Fc_{\tau_0}\bigg]
				~  = ~
			\E\bigg[\int_{\tau_0}^{T}
			\sup_{s \in [\tau_0,t)}L^{\mu,\lambda}_s dt \Big| \Fc_{\tau_0}\bigg]
			\geq 0.
		\end{align*}

		\noindent $\mathrm{(ii)}$
		Notice that for different Lagrange multiplier $\lambda \in \R_+$, the induced solution $C^{\mu, \lambda}$ solves an optimal consumption problem with budget $b^{\mu, \lambda}$, which is defined posterior.
		For the constructive results with an a priori given budget, we may refer to Bank and Kauppila \cite{BankKauppila2017} for details.
		We do not adopt the framework in \cite{BankKauppila2017} to formulate our MFG with optimal consumption,
		because it requires good understanding on how the Lagrange multiplier $\lambda_{\mu}$ and the process $\lambda_{\mu} e^{-\beta t -\int_0^t r^{\mu}_s ds}$ change
		when $\mu$ changes.
        \end{remark}

	Let us now define the solution to our MFG of optimal consumption.

	\begin{definition}\label{def:MFE_optimal_consumption}
		
		Let $b \in \R_+$,
		a mean field equilibrium to the MFG of the optimal consumption problem with total budget $b$ is a pair $(\mu^*,C^*)$,
		where $\mu^* \in \L^0_\Gc\big(\Om, \Pc(\D \x \D_-)\big)$, and $C^* \in \Ab(b)$ such that
		$\mu^* := \Lc\big( (r^{\mu^*}, Y^{C^{*}})  \big| \Gc \big)$ and
		$C^*$ is the optimal consumption process in the sense that $U(C^*, \mu^*) =\sup_{C \in \Ab(b)} U(C, \mu^*)$.
        \end{definition}

	First, based on the mean-field representation results in Theorem \ref{thm:continuity}, one can deduce the following existence result for the above MFG of optimal consumption.
	Recall that $\eta > 0$ is the given initial satisfaction level.

	\begin{proposition}\label{prop:optimal_consumption_application}
		Let Assumption \ref{ass:Bank_optimal_consumption} hold true and $\Gc$ be generated by a countable partition of $\Om$ as in \eqref{eq:Gc_countable}. Assume in addition that

		\noindent $\mathrm{(i)}$
		there exists a constant $\etah > \eta$ such that
		\begin{equation}\label{eq:consumption_continuity_restriction}
			u^\prime(\cdot, \mu^1, \ell) = u^\prime(\cdot, \mu^2, \ell),
			~
			r^{\mu^1}(\cdot) = r^{\mu^2}(\cdot),
			~\mbox{for all}~
			\ell \in \R,
		\end{equation}
		whenever
		$$
			\mu^1(A \x B) = \mu^2(A \x B),~\mbox{a.s.}~
			~\mbox{for all}~
			A \in \Bc(\D),
			B \in \Bc(\{D \in \D_- : \eta e^{-\beta \cdot} \le D_\cdot \le \etah e^{-\beta \cdot}\}).
		$$

            \noindent $\mathrm{(ii)}$
            for $\mu^n, \mu^\infty \in \L^0_{\Gc}(\Om;\Pc(\D \x \D_-))$
            with
            $\lim_{n \to \infty} \mu^n = \mu^\infty$ in probability,
            we have for all $\ell \in \R_+\setminus \{0\}$,
            \begin{align*}
    			\lim_{n \to \infty}
    			\E \bigg[
                    d_\D(r^{\mu^n}, r^{\mu^\infty})
                    ~ & + ~
                    \sup_{t \in \R_+}
                        \big| e^{-\int_0^t r^{\mu^n}_s ds}
                            - e^{-\int_0^t r^{\mu^\infty}_s ds} \big|
                    \\ ~ & + ~
                        \int_0^{\infty}
                            \big|
                            u^\prime(t, \om, \mu^n, \ell)
                            - u^\prime(t, \om, \mu^\infty, \ell)
                            \big|
                        ds
                    \bigg]
                ~ = ~
                0.
    		\end{align*}

	\noindent $\mathrm{(iii)}$
	the collection of probability measures $\big\{\Lc(r^\mu|\Gc) : \mu \in \L^0_{\Gc}(\Om;\Pc(\D \x \D_-))\}$ is tight.

	\vspace{0.5em}

	Then there exists a solution $(\mu^*, C^*)$ to the MFG of optimal consumption with budget
	\begin{equation} \label{eq:b_expression}
		b^* ~:=~\E\bigg[\int_{0}^{+\infty}e^{-\int_0^t r^{{\mu^*}}_s ds} d C^*_t\bigg].
	\end{equation}
	\end{proposition}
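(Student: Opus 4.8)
The plan is to recast the MFG of optimal consumption as a fixed-point problem for the mean-field representation \eqref{eq:MF_representation} and then invoke Theorem \ref{thm:continuity}. Fix a Lagrange multiplier $\lambda > 0$. Following Remark \ref{rmk:optimal_conusmption}$\mathrm{(i)}$, for each $m \in \L^0_\Gc(\Om, \Pc(\D \x \D_-))$ I set $Y^m_t := -\lambda e^{-\beta t - \int_0^t r^m_s ds}$, let $f^m$ be the generator defined there, and take $X^m := r^m$, so that $E = \D \x \D_-$ with $D = \R$. The running maximum $\Lh$ of the representation process encodes the satisfaction process through $Y^{C}_t = e^{-\beta t}(\eta \vee (-1/\Lh_t))$. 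Since condition $\mathrm{(i)}$ of the proposition only lets $u'$ and $r^m$ depend on $m$ through its restriction to the band $\{D : \eta e^{-\beta\cdot}\le D_\cdot \le \etah e^{-\beta\cdot}\}$, I define $\Psi$ to output the \emph{truncated} pair
$$
	\Psi(\om,\mathbf{x},\mathbf{l}) := \Big(\mathbf{x},\; \big(e^{-\beta t}\,\big(\eta \vee ((-1/\mathbf{l}_t)\wedge \etah)\big)\big)_{t \ge 0}\Big),
$$
whose second coordinate is valued in the band above. A couple $(L,m)$ solving \eqref{eq:MF_representation} with these data will be the candidate equilibrium.

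First I would check Assumption \ref{assum:Yfm} and the path-regularity hypotheses of Theorem \ref{thm:continuity}: $Y^m_\infty = 0$ follows from $\beta>0$; the class (D), upper semicontinuity in expectation, and a.s.\ upper semicontinuity of $t \mapsto Y^m_t$ follow from the continuity of $t\mapsto -\lambda e^{-\beta t-\int_0^t r^m_s ds}$ together with Assumption \ref{ass:Bank_optimal_consumption}; and the continuity, strict monotonicity from $-\infty$ to $+\infty$, and integrability of $\ell \mapsto f^m(t,\om,\ell)$ follow as in Remark \ref{rmk:optimal_conusmption}$\mathrm{(i)}$. Continuity of $(\mathbf{x},\mathbf{l}) \mapsto \Psi(\om,\mathbf{x},\mathbf{l})$ is immediate because $x\mapsto \eta\vee((-1/x)\wedge\etah)$ is bounded, continuous and increasing on $[-\infty,0)$ (at $x=-\infty$ it equals $\eta$). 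The continuity of $m\mapsto (X^m,Y^m,f^m)$ required by Theorem \ref{thm:continuity} is exactly hypothesis $\mathrm{(ii)}$, after noting that $\sup_t|Y^{m^n}_t-Y^{m^\infty}_t|\le \lambda\sup_t|e^{-\int_0^t r^{m^n}_s ds}-e^{-\int_0^t r^{m^\infty}_s ds}|$ since $e^{-\beta t}\le 1$.

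The main work is to exhibit a convex compact $K \subset \L^0_\Gc(\Om,\Pc(E))$ containing all images $\Lc(\Psi(X^m,\Lh)|\Gc)$. Under the identification $\L^0_\Gc(\Om,\Pc(E)) \cong (\Pc(E))^\N$ coming from the countable partition, it suffices to produce a uniformly tight family, since sets of the form $\{\nu : \nu(K_\eps^c)\le \eps\}$ are convex and compact in $\Pc(E)$. For the first marginal this is hypothesis $\mathrm{(iii)}$, the tightness of $\{\Lc(r^m|\Gc)\}$. For the second marginal the truncation is decisive: $\eta\vee((-1/\Lh_t)\wedge\etah)$ is an increasing path valued in the fixed compact interval $[\eta,\etah]$, so by an argument analogous to Example \ref{eg:continuity2} (increasing, uniformly bounded paths are tight) the processes $t\mapsto e^{-\beta t}\big(\eta\vee((-1/\Lh_t)\wedge\etah)\big)$ form a tight family in $\Pc(\D_-)$. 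I expect this tightness bookkeeping, together with the care needed because $\Lh$ takes values in $[-\infty,0)$, to be the main obstacle. Applying Theorem \ref{thm:continuity} then yields $(L,m)$, and Theorem \ref{thm:Bank_optimal_consumption} turns $L=L^{m,\lambda}$ into an optimal plan $C^*=C^{m,\lambda}$ with true satisfaction process $Y^{C^*}$.

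Finally I would upgrade the truncated fixed point to a genuine equilibrium. Set $\mu^* := \Lc((r^{m},Y^{C^*})|\Gc)$ with the \emph{untruncated} satisfaction process. On the event $\{Y^{C^*}\le \etah e^{-\beta\cdot}\}$ the truncated and true processes coincide, so $\mu^*$ and $m$ agree when restricted to the band $\{D : \eta e^{-\beta\cdot}\le D_\cdot\le\etah e^{-\beta\cdot}\}$; by condition $\mathrm{(i)}$ this forces $u'(\cdot,\mu^*,\cdot) = u'(\cdot,m,\cdot)$ and $r^{\mu^*} = r^{m}$, hence $Y^{\mu^*,\lambda} = Y^{m,\lambda}$ and $f^{\mu^*} = f^{m}$. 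Therefore the representation process for $\mu^*$ is again $L$, and $C^*$ is optimal for $U(\cdot,\mu^*)$ by Theorem \ref{thm:Bank_optimal_consumption}. Thus $(\mu^*,C^*)$ solves the MFG, and reading off its cost gives precisely the budget $b^*$ of \eqref{eq:b_expression}.
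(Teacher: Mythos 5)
Your proposal follows essentially the same route as the paper's own proof: you choose the same data $(X^m, Y^m, f^m)$ from Remark \ref{rmk:optimal_conusmption}$\mathrm{(i)}$ with $X^m = r^m$, the same band-truncation map $\Psi$ (which the paper writes as the composition $\psi_2 \circ \psi_1$), obtain compactness of $K$ from hypothesis $\mathrm{(iii)}$ together with tightness of the truncated increasing processes, apply Theorem \ref{thm:continuity} to get the fixed point, and then pass from the truncated fixed point to the untruncated equilibrium law via condition $\mathrm{(i)}$ and Theorem \ref{thm:Bank_optimal_consumption}, exactly as in the paper's Step 3. The argument is correct at the same level of rigor as the paper's (it shares the same lightly-glossed points, such as the identification of the truncated and untruncated laws on the band), so nothing further is needed.
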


	We next provide an existence result based on Theorem \ref{thm:monotonicity} under some monotonicity conditions.
	Let $\le_{\D}$ be a partial order on $\D$ such that $(\D, \le_\D)$ is a partially ordered Polish space,
	and define a partial order $\le_{\D_-}$ on $\D_-$ by $\mathbf{l}^1 \le_{\D_-} \le \mathbf{l}^2$ if $\mathbf{l}^1_t \le \mathbf{l}^2_t$ for all $t \in [0,+\infty)$.
        Then both $\D$ and $\D_-$ are partially ordered Polish spaces,
        and there exists a partial order $\le_p$ on $\L^0_{\Gc}(\Om;\Pc(\D \x \D_-))$ defined as \eqref{eq:def_lep}, if we let $E = \D \x \D_-$.

	\begin{proposition}\label{prop:optimal_consumption_application_order}
		Suppose that the functions $r$ and $u$ satisfy Assumption \ref{ass:Bank_optimal_consumption}.
		Assume in addition that

        \noindent $\mathrm{(i)}$
		there exists a bounded increasing map $\varphi: \D \longrightarrow \R$,
		such that for any $\mu^1, \mu^2 \in \L^0_{\Gc}(\Om;\Pc(\D \x \D_-))$,
		\begin{equation}\label{eq:consumption_order_equivalence}
			(\varphi \otimes \mathrm{Id}) \# \mu_1 = (\varphi \otimes \mathrm{Id}) \# \mu_2
			~\Longrightarrow~
			\big( r^{\mu_1}, u^\prime(\cdot, \mu^1, \cdot) \big)
			=
			\big(r^{\mu_2}, u^\prime(\cdot, \mu^2, \cdot) \big).
		\end{equation}

        \noindent $\mathrm{(ii)}$
        for any $\mu^1, \mu^2 \in \L^0_{\Gc}(\Om;\Pc(\D \x \D_-))$,
    		$$
                (\varphi \otimes \mathrm{Id}) \# \mu_1 \le_{p} (\varphi \otimes \mathrm{Id}) \# \mu_2
                \Longrightarrow
    			u^\prime(\cdot, \mu^1, \ell) \le u^\prime(\cdot, \mu^2, \ell),
                ~\mbox{for all}~
                \ell \in \R,
    		$$

        \noindent $\mathrm{(iii)}$
        for any $m \in \L^0_{\Gc}(\Om;\Pc(I_c \x \D_-))$, there exists a $\mu \in \L^0_{\Gc}(\Om;\Pc(\D \x \D_-))$ such that
        $$
            m = (\varphi \otimes \mathrm{Id}) \# \mu.
        $$
		
		\noindent $\mathrm{(iv)}$
        there exists some constant $\etah > \eta$ such that
		\begin{equation}\label{eq:consumption_order_restriction}
			u^\prime(\cdot, \mu^1, \ell) = u^\prime(\cdot, \mu^2, \ell),
			~
			r^{\mu^1}(\cdot) = r^{\mu^2}(\cdot),
			~\mbox{for all}~
			\ell \in \R,
		\end{equation}
		whenever
		$$
			\mu^1(A \x B) = \mu^2(A \x B),~\mbox{a.s.}~
			~\mbox{for all}~
			A \in \Bc(\D),
			B \in \Bc(\{D \in \D_- : \eta e^{-\beta \cdot} \le D_\cdot \le \etah e^{-\beta \cdot}\}).
		$$

		Then there exists a solution $(\mu^*, C^*)$ to the MFG of optimal consumption with budget $b^*$ given by \eqref{eq:b_expression}.
	\end{proposition}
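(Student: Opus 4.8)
The plan is to recast the MFG of optimal consumption as the monotone mean-field representation problem \eqref{eq:MF_representation} and then invoke Theorem \ref{thm:monotonicity}. First I would fix a Lagrange multiplier $\lambda > 0$; the posterior nature of the budget in \eqref{eq:b_expression} means $\lambda$ may be kept arbitrary and the budget $b^*$ read off at the fixed point. By Theorem \ref{thm:Bank_optimal_consumption} together with Remark \ref{rmk:optimal_conusmption}$\mathrm{(i)}$, for each interaction term the optimal consumption is encoded by the classical representation \eqref{eq:BankEK_repres_intro} with $Y^{\mu,\lambda}_t = -\lambda e^{-\beta t - \int_0^t r^\mu_s ds}$ and $f^\mu(t,\ell) = -u^\prime(t,\mu, -e^{-\beta t}/\ell)$ for $\ell<0$, and the representation process stays strictly negative (the computation $\tau_0 = +\infty$ in Remark \ref{rmk:optimal_conusmption}$\mathrm{(i)}$). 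Hence the satisfaction level $Y^{C}_t = e^{-\beta t}\big(\eta \vee (-1/\Lh_t)\big)$ is a well-defined increasing transform of the running maximum $\Lh$. Since, by condition $\mathrm{(i)}$ (see \eqref{eq:consumption_order_equivalence}), the data $(r^\mu, u^\prime(\cdot,\mu,\cdot))$ depend on $\mu$ only through $(\varphi \otimes \mathrm{Id})\#\mu$, I would work on the reduced state space and seek a fixed point of the reduced interaction.

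Concretely, I would take $E := I_c \times \{D \in \D_- : \eta e^{-\beta\cdot} \le D_\cdot \le \etah e^{-\beta\cdot}\}$, endowed with the product of the order on the closed interval $I_c$ and the pointwise order $\le_{\D_-}$; being a bounded order interval, $E$ is a partially ordered Polish space and a complete lattice. The truncation at the upper barrier $\etah e^{-\beta\cdot}$ is harmless by condition $\mathrm{(iv)}$ (see \eqref{eq:consumption_order_restriction}), which says $u^\prime$ and $r$ do not see the values of the satisfaction process above $\etah e^{-\beta\cdot}$. I would then set $X^m := r^m$ and $\Psi(\om, \mathbf{x}, \mathbf{l}) := \big(\varphi(\mathbf{x}),\ (e^{-\beta t}(\eta \vee (-1/\mathbf{l}_t)) \wedge \etah e^{-\beta t})_{t \ge 0}\big)$; monotonicity of $\Psi$ follows because $\varphi$ is increasing and $\mathbf{l} \mapsto \eta \vee (-1/\mathbf{l})$ is increasing on negative paths. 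For the lattice, I would mimic Example \ref{eg:monotonicity}: let $K$ be the image under $\mu \mapsto (\varphi \otimes \mathrm{Id})\#\mu$ of the interactions whose $E$-component lies in the above order interval, and check that $(K, \le_p)$, with $\le_p$ from \eqref{eq:def_lep}, is a complete lattice. Condition $\mathrm{(iii)}$ then guarantees every element of $K$ can be lifted back to a genuine $\mu \in \L^0_\Gc(\Om, \Pc(\D \times \D_-))$.

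The heart of the proof is to verify the order implication \eqref{eq:XYf_order} for the couple $(Y^m, f^m)$ just defined. The inequality $f^{m^1} \ge f^{m^2}$ whenever $m^1 \le_p m^2$ is immediate from condition $\mathrm{(ii)}$ together with $f^m = -u^\prime(\cdot,\mu,\cdot)$ and the sign reversal in the argument $-e^{-\beta t}/\ell$. The delicate part — and what I expect to be the main obstacle — is the joint control of $X^m = r^m$ and of the supermartingale property of $Y^{m^1} - Y^{m^2}$, since both are governed by the way the interest rate $r^m$ responds to $m$. Here I would use condition $\mathrm{(i)}$ to reduce the dependence to $(\varphi \otimes \mathrm{Id})\#\mu$ and combine the pointwise monotone effect of $r^m$ on $Y^m_t = -\lambda e^{-\beta t - \int_0^t r^m_s ds}$ with the order structure to obtain the required comparison, arguing as in the analogous singular-control statement (Proposition \ref{prop:singular_control_application_order}).

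Once \eqref{eq:XYf_order} is established, Theorem \ref{thm:monotonicity} yields a couple $(L, m^*)$ solving \eqref{eq:MF_representation}. Finally I would lift $m^*$ to some $\mu^*$ via condition $\mathrm{(iii)}$, define $C^*$ through the consumption plan of Theorem \ref{thm:Bank_optimal_consumption} associated with $L^{m^*,\lambda} = L$, and verify that $\mu^* = \Lc\big((r^{\mu^*}, Y^{C^*})\,\big|\,\Gc\big)$ and that $C^*$ is optimal for \eqref{eq:Bank_optimal_consumption}, the corresponding budget being the posterior quantity $b^*$ in \eqref{eq:b_expression}. A last consistency point to check is that truncating $Y^{C^*}$ at $\etah e^{-\beta\cdot}$ inside $\Psi$ does not alter the fixed-point identity, which is precisely what condition $\mathrm{(iv)}$ secures.
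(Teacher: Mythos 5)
Your proposal retraces the paper's proof almost step for step: fix $\lambda>0$ and reduce the consumption problem to the representation \eqref{eq:BankEK_repres_intro} via Theorem \ref{thm:Bank_optimal_consumption} and Remark \ref{rmk:optimal_conusmption}; set $X^m=r^\mu$, $Y^m_t=-\lambda e^{-\beta t-\int_0^t r^\mu_s ds}$ and $f^m(t,\ell)=-u^\prime(t,\mu,-e^{-\beta t}/\ell)$ for $\ell<0$, lifted through conditions (i) and (iii); take $\Psi=(\varphi,\psi_2\circ\psi_1)$ with the truncation into the band $[\eta e^{-\beta\cdot},\etah e^{-\beta\cdot}]$; build the complete lattice $K$ as in Proposition \ref{prop:complete_lattice}; apply Theorem \ref{thm:monotonicity}; and close by using condition (iv) to undo the truncation and conditions (i)/(iii) to recover $(\mu^*,C^*)$ with budget \eqref{eq:b_expression}. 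This is exactly the paper's Steps 1--3.

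The divergence is at the step you yourself call the heart of the proof: verifying \eqref{eq:XYf_order}. From condition (ii) one indeed gets $f^{m^1}\ge f^{m^2}$ whenever $m^1\le_p m^2$ (the sign flip through the argument $-e^{-\beta t}/\ell$), but your plan to obtain $X^{m^1}\le_{\D}X^{m^2}$ and the supermartingale property of $Y^{m^1}-Y^{m^2}$ from condition (i) ``combined with the order structure, arguing as in Proposition \ref{prop:singular_control_application_order}'' does not go through. Condition (i) is a pure factorization statement (equal reduced laws give equal data) and carries no order information; condition (ii) constrains only $u^\prime$; and nothing in (i)--(iv) says anything about how $r^\mu$ responds when the reduced law increases in $\le_p$, which is what governs both $X^m$ and $Y^m$. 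The analogy with the singular-control case fails precisely because there the supermartingale property of $k(\cdot,\mu^2)-k(\cdot,\mu^1)$ is an explicit hypothesis (condition (ii) of Proposition \ref{prop:singular_control_application_order}), whereas here the corresponding hypothesis on $r^\mu$ --- hence on $Y^m$ --- is absent. Note, however, that the paper's own proof never verifies these two requirements either: it invokes Theorem \ref{thm:monotonicity} with only the $f$-monotonicity in hand (the requirement is trivially satisfied when $r^\mu$ does not depend on $\mu$, the regime of Proposition \ref{prop:optimal_consumption_contraction}, but not in the stated generality). So your instinct that this is the main obstacle is correct, and you have isolated a point the paper glosses over; but the resolution you sketch does not follow from the stated hypotheses, and as written your argument carries the same gap.
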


        Finally, we provide an existence result based on the mean-field representation results in Theorem \ref{thm:contraction}.
        In this setting, $r^\mu$ is assumed to be independent of $\mu$ and $\Gc = \{\emptyset,\Om\}$, then all the arguments concerned with $\D \x \D_-$ reduce to those with $\D_-$.
        Let us take $\D_-$ as canonical space with canonical process $L$, for $\mu \in \Pc(\D_-)$, define
    	$$
    		\mu_t(dx)  := \mu \circ L_t^{-1} \in \Pc(\R), ~\mbox{for all}~t \in [0,T).
    	$$

        \begin{proposition}\label{prop:optimal_consumption_contraction}
            Let Assumption \ref{ass:Bank_optimal_consumption} hold true.
            Assume in addition that

            \noindent $\mathrm{(i)}$
            for some function $\ut^\prime: \R^+ \x \Om \x \R \longrightarrow \R$, one has
            $$
                u^\prime(t, \om, \mu, -\frac{e^{-\beta t}}{\ell})
                ~ = ~
                \ut^\prime\bigg(t, \om, -\frac{e^{-\beta t}}{\ell - \mu_t(\phi(-\frac{e^{-\beta t}}{\cdot}))}\bigg),
                ~\mbox{for all}~
                \mu \in \L^0_\Gc(\Om, \Pc(\D_-)),
            $$
            where the function $\phi: (-\infty, 0) \longrightarrow \R$ is bounded and differentiable, satisfying $\phi^\prime(x) \in [0,1)$ for all $x \in  (-\infty, 0)$,

            \noindent $\mathrm{(ii)}$
            for some $\lambda > 0$, the optional process $L$ to the representation \ref{thm:Bank_optimal_consumption} of $\lambda e^{-\beta \cdot-\int_0^\cdot r_s ds}$ w.r.t. $\ut^\prime$ has almost surely non-decreasing paths.

		\vspace{0.5em}

		Then there exists a solution $(\mu^*, C^*)$ to the MFG of optimal consumption with budget $b^*$ given by \eqref{eq:b_expression}.
        \end{proposition}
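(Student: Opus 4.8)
The plan is to reduce the MFG of optimal consumption to the mean-field representation \eqref{eq:MF_representation} in the dimension-reduction form of Theorem \ref{thm:contraction}, and then invoke that theorem directly. Since $r^\mu = r$ is independent of $\mu$ and $\Gc = \{\emptyset,\Om\}$, Remark \ref{rmk:optimal_conusmption}$\mathrm{(i)}$ tells us that, for a fixed $\mu$, the optimal consumption problem is equivalent to the Bank--El Karoui representation of the $\mu$-independent process $\widetilde Y_t := -\lambda e^{-\beta t - \int_0^t r_s ds}$ with generator $f^\mu(t,\om,\ell) = -u'(t,\om,\mu,-e^{-\beta t}/\ell)$ for $\ell<0$ (and $f^\mu(t,\om,\ell)=\ell$ for $\ell\ge 0$). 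First I would check, exactly as in Remark \ref{rmk:optimal_conusmption}$\mathrm{(i)}$, that the associated representation process stays negative, so that only the branch $\ell<0$ matters, and that $(\widetilde Y, f^\mu)$ satisfies Assumption \ref{assum:Yfm} (with the terminal value vanishing since $\widetilde Y_t\to 0$ as $t\to\infty$); this follows from Assumption \ref{ass:Bank_optimal_consumption} together with its integrability bounds.

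The crux is to put $f^\mu$ into the product form $f^m(t,\om,\ell)=\widetilde f(t,\om,\ell - m_t(\phi))$ required by Theorem \ref{thm:contraction}, where $m$ denotes the law of the running maximum $\Lh$ of the representation process. Setting $\widetilde f(t,\om,\ell) := -\ut'(t,\om,-e^{-\beta t}/\ell)$ for $\ell<0$, assumption $\mathrm{(i)}$ rewrites $f^\mu$ as $\widetilde f\big(t,\om,\ell - \mu_t(\phi(-e^{-\beta t}/\cdot))\big)$, so everything hinges on the identification $\mu_t(\phi(-e^{-\beta t}/\cdot)) = m_t(\phi)$. This I would derive from the explicit satisfaction process of Theorem \ref{thm:Bank_optimal_consumption}, namely $Y^C_t = e^{-\beta t}\big(\eta \vee (-1/\Lh_t)\big)$: since $\mu = \Lc((r,Y^C))$, a direct computation gives $-e^{-\beta t}/Y^C_t = \Lh_t \vee (-1/\eta)$, whence $\mu_t(\phi(-e^{-\beta t}/\cdot)) = \E[\phi(\Lh_t \vee (-1/\eta))]$. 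This matches $m_t(\phi)=\E[\phi(\Lh_t)]$ up to the cap at $-1/\eta$, which I would absorb by applying Theorem \ref{thm:contraction} with the capped function $\widetilde\phi(\cdot):=\phi(\cdot \vee (-1/\eta))$; this $\widetilde\phi$ is still bounded and non-decreasing with slope in $[0,1)$, so the underlying fixed-point map $y_t = \E[\widetilde\phi(\Lh_t + y_t)]$ remains a strict contraction.

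With this matching in place, all hypotheses of Theorem \ref{thm:contraction} hold: $\Psi(\om,\mathbf{x},\mathbf{l})=\mathbf{l}$ with $E=\V^+$, the generators have the required product structure, $\phi$ (hence $\widetilde\phi$) is bounded with derivative in $[0,1)$, and assumption $\mathrm{(ii)}$ provides that the representation process of $\widetilde Y$ with respect to $\ut'$ has almost surely non-decreasing paths. Theorem \ref{thm:contraction} then produces a unique solution $(L^m,m)$ to \eqref{eq:MF_representation}. I would then translate back: $L^m$ induces, via Theorem \ref{thm:Bank_optimal_consumption}, a consumption plan $C^*$ with satisfaction $Y^{C^*}_t = e^{-\beta t}(\eta\vee(-1/\Lh^m_t))$ optimal for $U(\cdot,\mu^*)$, where $\mu^* := \Lc((r,Y^{C^*}))$. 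The fixed-point identity $m=\Lc(\Lh^m)$ and the computation above guarantee the consistency $\mu^* = \Lc((r^{\mu^*},Y^{C^*}))$ of Definition \ref{def:MFE_optimal_consumption}, and the budget is read off as $b^*$ in \eqref{eq:b_expression}.

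The main obstacle is precisely the identification $\mu_t(\phi(-e^{-\beta t}/\cdot)) = m_t(\phi)$: one must carefully track the nonlinear change of variables between the genuine interaction variable (the law of the satisfaction process) and the variable controlled by the representation theorem (the law of the running maximum), and verify that the cap at $-1/\eta$ does not destroy the contraction structure. This is exactly why assumption $\mathrm{(i)}$ is phrased through the transform $-e^{-\beta t}/\ell$ and why the bound $\phi'\in[0,1)$ is imposed.
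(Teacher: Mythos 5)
Your proposal is correct and takes essentially the same route as the paper's proof: the same reduction to Theorem \ref{thm:contraction} with $Y_t=-\lambda e^{-\beta t-\int_0^t r_s\,ds}$ and generator built from $\ut^\prime$, the same key identification $-e^{-\beta t}/Y^{C}_t=\Lh_t\vee(-1/\eta)$ linking the MFG interaction term to the law of the running maximum, and the same passage back to an equilibrium and its budget via Theorem \ref{thm:Bank_optimal_consumption}. The only difference is cosmetic: the paper absorbs the cap at $-1/\eta$ into the map $\Psi(\om,\mathbf{l})=(-1/\eta)\vee\mathbf{l}$, so the fixed point lives in $\Pc(\V^+_{-1/\eta})$, whereas you absorb it into the test function $\widetilde\phi(x)=\phi\big(x\vee(-1/\eta)\big)$ — two equivalent ways of fitting the problem into Theorem \ref{thm:contraction}, each requiring the same mild extension of that theorem's literal hypotheses.
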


        \begin{remark}
		In the setting of Proposition \ref{prop:optimal_consumption_contraction},
		we consider a special structure of the dependence of the utility function $u$ in $\mu$
		by letting the utility of the consumer depend not only its own consumption, but also the average consumption of the whole population.
		Namely, its satisfaction increases in its own consumption, but decreases w.r.t. the average consumption of the whole population.

		\vspace{0.5em}

		More concretely, the interaction term is $\mu_t(\phi(-\frac{e^{-\beta t}}{\cdot}))$.
		Notice that, by Theorem \ref{thm:Bank_optimal_consumption}, the optimal consumption plan $C$ of an agent
		is that leads to the satisfaction level
		$$
			Y^{C}_t
    			~ = ~
    			e^{-\beta t} \Big( \eta \vee \frac{- 1}{\sup_{s \in [0,t)}L_s} \Big),
    			~
			t \in \R^+,
		$$
		for a strictly negative process $L$.
		The process
		$$
			\Big(
				\eta \vee \frac{- 1}{\sup_{s \in [0,t)}L_s} \
			\Big)_{t \ge 0}
		$$
		can be interpreted as a consumption intensity process,
		and the interaction term is given by
		$$
			\mu_t \Big(\phi \Big(-\frac{e^{-\beta t}}{\cdot} \Big) \Big)
			~=~
			\E \Big[ \phi \Big (-\frac{1}{\eta} \vee \sup_{s \in [0,t)}L_s \Big) \Big].
		$$
	\end{remark}

\section{A stability analysis on Bank-El Karoui's representation theorem}
\label{sec:stability}

	In this section, we provide a stability result on Bank-El Karoui's representation theorem.
	This plays a crucial technical step in the proof of Theorem \ref{thm:continuity} for our mean-field representation \eqref{eq:MF_representation}.
	At the same time, it would have its own interests and other applications.
	
	\vspace{0.5em}
	
	Recall that $T \in [0,\infty]$, $(\Om, \Fc, \P)$ is a complete probability space, equipped with a filtration satisfying the usual conditions.
	Let all $(f_n)_{ n \ge 0}$ and $f$ be a sequence of functions defined on $[0,T] \x \Om \x \R$, satisfy all the technical conditions in Bank-El Karoui's representation theorem (see Assumption \ref{Assumption:BErepresentation} and Theorem \ref{thm:BErepresentationTheorem}),
	and assumet that $(Y^n)_{n \ge 0}$ and $Y$ are optional processes of class (D) and are u.s.c in expectation (Definition \ref{def:USCE}) such that
	$Y^n_T = Y_T = 0$.
	Then by Bank-El Karoui's representation theorem (Theorem \ref{thm:BErepresentationTheorem}), for a couple $(f_n, Y^n)$ (resp. $(f, Y)$),
	there exists a representation optional process $L^n$ (resp. $L$) such that, for all $\tau \in \Tc$,
	$$
		Y^n_\tau = \E\bigg[\int_{\tau}^T f_n\Big(t, \sup_{\tau \le s < t}L^n_s \Big) dt \Big| \Fc_\tau\bigg]
		~~\bigg( \mbox{resp.}
		Y_\tau = \E\bigg[\int_{\tau}^T f\Big(t, \sup_{\tau \le s < t}L_s \Big) dt \Big| \Fc_\tau\bigg]
		\bigg),
		~\mbox{a.s.}
	$$
	While the representation processes $L^n$ and $L$ may not be unique,
	the corresponding running maximum $\Lh^n$ and $\Lh$ defined below is unique for the given $(f_n, Y^n)$ and $(f,Y)$,
	$$
		\Lh^n_t ~:=~ \sup_{0 \le s < t} L^n_s,
		~~~
		\Lh_t ~:=~ \sup_{0 \le s < t} L_s,
		~~
		t \in [0,T].
	$$

	\begin{theorem} \label{Thm:stability}
		Assume that $(f_n, Y^n)_{n \ge 0}$ converges to $(f, Y)$ in the sense that, for all $\ell \in \R$,
		\begin{equation} \label{eq:fnYn_fY}
			\lim_{n \to \infty}
			\E\bigg[
			\int_{0}^{T}
				|f_n(t,\ell) - f(t,\ell)|
			dt
			+
			\sup_{t \in [0,T]}
				|Y^n_t - Y_t |
			\bigg]
			~ = ~
			0,
		\end{equation}
		and, in addition, $Y$ has almost surely left upper semi-continuous paths.

		\vspace{0.5em}

		Then, with the L\'evy metric $d_L$ on $\V^+$,
		one has for every $\eps > 0$,
		$$
			\lim_{n \to \infty}
			\P \big[ d_L\big(\Lh^n, \Lh\big) \geq \eps   \big]
			= 0.
		$$
		Consequently, with the Lévy–Prokhorov metric $d_{LP}$ on the space $\Pc(\V^+)$
		one has
		$$
			\lim_{n \to \infty}
			\P \big[ d_{LP}\big(\Lc(\Lh^n|\Gc), \Lc(\Lh|\Gc)\big) \geq \eps \big]
			= 0,
			~~~
			\lim_{n \to \infty}
			\E \big[d_{LP}\big(\Lc(\Lh^n|\Gc), \Lc(\Lh|\Gc)\big) \big]
			= 0.
 		$$
	\end{theorem}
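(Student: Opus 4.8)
The substantive part of the statement is the first convergence, $d_L(\Lh^n,\Lh)\to 0$ in probability; the two assertions on the conditional laws will then follow from a soft continuity property of conditional distributions. The plan is to exploit the optimal-stopping representation of the running maximum that underlies Bank--El Karoui's construction (Theorem \ref{thm:BErepresentationTheorem}). For a level $\ell\in\R$ set $\tau^\ell:=\inf\{t\ge 0:\Lh_t>\ell\}$, the passage time of $\Lh$ through $\ell$; by construction $\Lh_t=\sup\{\ell:\tau^\ell<t\}$, so $\ell\mapsto\tau^\ell$ is the generalized inverse of $t\mapsto\Lh_t$, and it suffices to control the passage times. The key structural fact is that $\tau^\ell$ is an optimal stopping time, realized as the first-entry time of a Snell envelope into its reward, where the reward is assembled from $Y$ and the additive functional $t\mapsto\int_0^t f(u,\ell)\,du$. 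I denote by $V^\ell$ the associated value process and by $V^{\ell,n}$ its analogue for the data $(f_n,Y^n)$.

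First I would establish stability of the value processes. Writing $V^\ell_t=\esssup_{\tau\ge t}\E[R^\ell_\tau\mid\Fc_t]$ with reward $R^\ell$ built from $Y$ and $\int f(\cdot,\ell)$, one has the pointwise bound $\sup_t|V^{\ell,n}_t-V^\ell_t|\le\sup_t\E[\Delta^n\mid\Fc_t]$, where $\Delta^n:=\sup_\tau|R^{\ell,n}_\tau-R^\ell_\tau|\le\sup_t|Y^n_t-Y_t|+\int_0^T|f_n(t,\ell)-f(t,\ell)|\,dt$. By hypothesis \eqref{eq:fnYn_fY}, $\Delta^n\to 0$ in $L^1$ for each fixed $\ell$, and Doob's weak-type maximal inequality then gives $\sup_t\E[\Delta^n\mid\Fc_t]\to 0$ in probability; hence $\sup_{t\in[0,T]}|V^{\ell,n}_t-V^\ell_t|\to 0$ in probability. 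The left upper semicontinuity of $Y$ guarantees that the stopping region is actually hit, so that $\tau^\ell$ and $\tau^\ell_n$ are genuine first-entry times, the objects whose stability I then analyse.

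The delicate step, and the main obstacle, is to pass from convergence of the value processes to convergence of the passage times, since first-entry times are in general not continuous functionals of the reward: the Snell envelope may touch the reward tangentially, and an arbitrarily small perturbation can move the hitting time macroscopically. Here I would use the strict monotonicity of $\ell\mapsto f(t,\om,\ell)$ to break the tangency. Fixing $\delta>0$, the strict gaps between $f(\cdot,\ell-\delta)$, $f(\cdot,\ell)$ and $f(\cdot,\ell+\delta)$ turn tangential touching into transversal crossing, so that for $n$ large the small perturbation $f_n-f$ cannot overcome the gap; combined with the value convergence of the previous step (and the left upper semicontinuity of $Y$, used to handle monotone limits of stopping times), this furnishes the sandwich $\tau^{\ell-\delta}\le\liminf_n\tau^\ell_n\le\limsup_n\tau^\ell_n\le\tau^{\ell+\delta}$ in probability. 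Letting $\delta\downarrow 0$ and using that the increasing map $\ell\mapsto\tau^\ell$ has at most countably many discontinuities, I obtain $\tau^\ell_n\to\tau^\ell$ in probability for every continuity level $\ell$, i.e.\ for all but countably many $\ell$.

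It remains to convert this into the Lévy metric, which is a deterministic lemma about increasing functions: convergence of the inverses $\ell\mapsto\tau^\ell_n$ to $\ell\mapsto\tau^\ell$ on a dense set of continuity levels forces convergence of the completed graphs, which is precisely $d_L(\Lh^n,\Lh)\to 0$; applying it $\om$-wise together with the standard subsequence characterization of convergence in probability yields $d_L(\Lh^n,\Lh)\to 0$ in probability. Finally, convergence in probability of $\Lh^n\to\Lh$ in the Polish space $(\V^+,d_L)$ implies, for every bounded Lipschitz $g:\V^+\to\R$, that $g(\Lh^n)\to g(\Lh)$ boundedly in probability, hence $\E[g(\Lh^n)\mid\Gc]\to\E[g(\Lh)\mid\Gc]$ in probability; testing against a countable convergence-determining family of such $g$ and invoking the equivalence of $d_{LP}$ with the bounded-Lipschitz distance gives $d_{LP}(\Lc(\Lh^n\mid\Gc),\Lc(\Lh\mid\Gc))\to 0$ in probability, and since $d_{LP}\le 1$, bounded convergence upgrades this to $\E[d_{LP}(\Lc(\Lh^n\mid\Gc),\Lc(\Lh\mid\Gc))]\to 0$.
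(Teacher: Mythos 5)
Your proposal follows the paper's own reduction at the top level: prove $\tau^n_\ell \to \tau_\ell$ in probability for all but countably many levels $\ell$ (the discontinuity levels of the increasing map $\ell \longmapsto \tau_\ell$), then convert this into $d_L(\Lh^n,\Lh)\to 0$ by the deterministic fact that convergence of generalized inverses on a dense set of continuity levels yields L\'evy-metric convergence; this is exactly Proposition \ref{prop:cvg_st} combined with Billingsley's inverse-function theorem and Proposition \ref{prop:ac_[0,T]Polish} in the paper, and your Step 1 (value stability) and your final paragraph on conditional laws are fine. The genuine gap is at the step you yourself call the main obstacle: the sandwich $\tau_{\ell-\delta}\le\liminf_n\tau^n_\ell\le\limsup_n\tau^n_\ell\le\tau_{\ell+\delta}$ is asserted, not derived. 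The mechanism you invoke --- strict monotonicity creates a ``gap'' that the perturbation $f_n-f$ ``cannot overcome'' --- is unsound as stated: $f(\cdot,\ell+\delta)-f(\cdot,\ell)$ is strictly positive pointwise but admits no uniform positive lower bound, while $f_n-f$ is only small in $L^1(dt\otimes d\P)$, so no domination or pathwise comparison is available; and convergence of the value processes together with convergence of the rewards is precisely the kind of hypothesis that does \emph{not} control first-entry times (your own tangency objection applies verbatim to the perturbed problems). Since this is exactly the point where the paper deploys its entire machinery (randomized stopping times, Jacod--M\'emin stable convergence, the conditional-independence property of Proposition \ref{prop:H_hypothesis}, and uniqueness of the optimizer off a countable set of levels), leaving it at the level of a metaphor is a real gap rather than a routine omission.

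The step can, however, be repaired with the ingredients you already have, via an exchange (supermodularity) argument rather than domination. By your Step 1, $\tau^n_\ell$ is an $\eps_n$-optimizer of $J_\ell(\tau)=\E[Y_\tau+\int_0^\tau f(t,\ell)dt]$ with \emph{deterministic} $\eps_n\to 0$. For $\ell'>\ell$ and any stopping times $\sigma,\tau$ one has the identity
\begin{equation*}
J_\ell(\tau\wedge\sigma)+J_{\ell'}(\tau\vee\sigma)
~=~
J_\ell(\tau)+J_{\ell'}(\sigma)
+\E\Big[\mathds{1}_{\{\sigma<\tau\}}\int_\sigma^\tau\big(f(t,\ell')-f(t,\ell)\big)dt\Big].
\end{equation*}
Take $\ell'=\ell+\delta$, $\tau=\tau^n_\ell$ and $\sigma=\tau'_{\ell+\delta}$, the largest optimizer of $J_{\ell+\delta}$ (Theorem \ref{thm:BErepresentationTheorem}(iii)); bounding the left side by $\sup_{\tau\in\Tc} J_\ell(\tau)+J_{\ell+\delta}(\tau'_{\ell+\delta})$ and the right side from below using $J_\ell(\tau^n_\ell)\ge \sup_{\tau\in\Tc}J_\ell(\tau)-\eps_n$ yields
$\E\big[\mathds{1}_{\{\tau'_{\ell+\delta}<\tau^n_\ell\}}\int_{\tau'_{\ell+\delta}}^{\tau^n_\ell}(f(t,\ell+\delta)-f(t,\ell))dt\big]\le\eps_n$.
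Since $\int_{\tau'_{\ell+\delta}}^{(\tau'_{\ell+\delta}+\rho)\wedge T}(f(t,\ell+\delta)-f(t,\ell))dt>0$ a.s., a Chebyshev argument then gives $\P[\tau^n_\ell\ge\tau'_{\ell+\delta}+\rho]\to 0$ for every $\rho>0$; the lower bound is symmetric, and letting $\delta\downarrow 0$ off the countable exceptional set produces your sandwich. Note that this repaired argument is genuinely different from the paper's: it avoids the compactification by randomized stopping times entirely, it does not use your uniform value-process convergence (only the $t=0$ values), and it never uses the left upper semicontinuity of the paths of $Y$ --- which, incidentally, your sketch invokes for the wrong purpose: existence of the smallest/largest optimizers already follows from class (D) and u.s.c. in expectation, whereas the paper needs the path assumption to obtain upper semicontinuity of $\theta\longmapsto Y_\theta$ along stably convergent sequences.
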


	\begin{remark}\label{rmk:stability}
		$\mathrm{(i)}$ In general, one can only expect a stability result on the running maximum process $\Lh$ but not the process $L$ itself.
		Let us illustrate it by  the following counter-example.
		We consider a deterministic setting with $\Fc = \{\Om, \emptyset\}$ and $T = 1$, so that $(Y^n, f_n)_{n \ge 0}$ and $(Y, f)$ are all deterministic.
		Let
		$$
			f_n(t) = f(t) = t,
			~~
			Y^n_t = (t - \frac12) \mathds{1}_{[\frac12, \frac12+\frac1n)}(t),
			~~
			Y_t = 0,
			~~t \in [0,1].
		$$
		We observe that $(f_n, Y^n) \longrightarrow (f,Y)$ uniformly.
		In this deterministic setting, $L^n_t$ can be interpreted as the derivative of the convex envelope of $t \longmapsto - Y^n_t$ on $[t, 1]$ (see \cite[Theorem 2]{BankKaroui2004}).
		By direct computation,
		$$
			L^n_t = - \frac{2}{2 + n(1-2t)} \mathds{1}_{[0, \frac12 + \frac1n)}(t),
			~~
			L_t = 0,
			~~
			t \in [0,1).
		$$
		For $t = \frac12$, we observe that $L^n_{1/2}$ does not converge to $L_{1/2}$.
		But for the running maximum process, we have $\Lh^n_t \longrightarrow \Lh_t$ for all $t \in [0,1]$.

		\vspace{0.5em}

		\noindent $\mathrm{(ii)}$ We next provide a counter-example,
		showing that the convergence $Y^n_t \longrightarrow Y_t$, a.s. for every $t \in [0,T]$ is not enough to ensure the convergence of $\Lh^n \longrightarrow \Lh$.
		We still stay in the deterministic setting, with $T =1$, and
		$$
			f_n(t) = f(t) \equiv t,
			~~
			Y^n_t = n (t - \frac12) \mathds{1}_{[\frac12, \frac12+\frac1n)}(t),
			~~
			Y_t = 0,
			~~t \in [0,1].
		$$
		We observe that $Y^n_t \longrightarrow Y_t$, for all $t \in [0,1]$.
		Similarly, we compute that
		$$
			L^n_t = - \frac{2n}{2 + n(1-2t)} \mathds{1}_{[0, \frac12 + \frac1n)}(t),
			~~
			L_t = 0,
			~~
			t \in [0,1).
		$$
		In particular,
		$$
			\Lh^n_{1/2} = - \frac{2n}{n+2} ~\longrightarrow~ -2 \neq 0 = \Lh_{1/2}.
		$$

	\end{remark}

	\begin{remark}
		In the applications of Bank-El Karoui's representation theorem in the optimal stopping problem, the singular control problem with monotone follower type,
		and the optimal consumption problem,
		one can characterize the optimizers with the corresponding representation process $\Lh$.
		Consequently, the stability result $\Lh^n \longrightarrow \Lh$ should imply the corresponding stability results of the optimizers in concrete applications.

		\vspace{0.5em}
		
		More precisely, Proposition \ref{prop:cvg_st} below provides a stability results for a family of optimal stopping problems.
		Similarly, for a class of singular control problems as in Bank \cite{Bank2004} (see Theorem \ref{thm:Bank_singular_control}), the optimal singular control is directly given by $\Lh$ with truncation.
		The stability result in Theorem \ref{Thm:stability} induces naturally the stability of the corresponding optimal singular controls.
	\end{remark}

	\begin{remark}
		A potential application of the stability result and the corresponding techniques
		is to make some numerical analysis on the approximation methods to compute the process $L$ in the Bank-El Karoui's representation theorem.
		Formally, one can take a time discretization $0 = t_0 < t_1 < \cdots < t_N = T$ of $[0,T]$,
		and then compute $L^N_{t_{N-1}}$ by solving the equation
		$$
			Y_{t_{N-1}} = \E \Big[ f \big( t_{N-1}, L^N_{t_{N-1}} \big) \Delta t \Big| \Fc_{t_{N-1}} \Big],
		$$
		and then, in a backward way, compute $L^N_{t_i}$ by solving the equation
		$$
			Y_{t_i} = \E \Big[f(t_i, L^N_{t_i}) \Delta t +  \sum_{j=i+1}^{N-1} f \Big(t_j, L^N_{t_i} \vee \cdots \vee L^N_{t_j} \Big) \Delta t \Big| \Fc_{t_i} \Big].
		$$
		Under some regularity condition of $Y$ and $f$ in the temporal variable, we can naturally consider it as a perturbation of the continuous time problem,
		and obtain the convergence of the running maximum $\Lh^N$ of $L^N$ to the running maximum $\Lh$ of $L$, as a stability result.
		We will explore this more systematically in our future work.
	\end{remark}

	In preparation of the Proof of Theorem \ref{Thm:stability}, let us define, for each $\ell \in \R$, $n \ge 0$,
	\begin{equation} \label{eq:def_tau_l}
		\tau_{\ell}
		\vcentcolon=
		\inf \big\{ t\geq 0 ~: \Lh_{t} \geq \ell \big\},
		~~~
		\tau'_{\ell}
		\vcentcolon=
		\inf \big\{ t\geq 0 ~: \Lh_{t-} > \ell \big\},
		~~\mbox{and}~~
		\tau^n_\ell
		~\vcentcolon=~
		\inf\{t \ge 0: \Lh^n_t \ge 0\}.
	\end{equation}
	By Bank and El Karoui \cite{BankKaroui2004} (see Theorem \ref{thm:BErepresentationTheorem}), $\tau_{\ell}$ and $\tau'_{\ell}$ are respectively the smallest and the biggest solution to the optimal stopping problem:
	\begin{equation}\label{eq:stability_stopping}
		\sup_{\tau \in \Tc} J_\ell(\tau),
		~\mbox{with}~
		J_\ell(\tau)
		\vcentcolon =
		\E\Big[Y_\tau + \int_{0}^{\tau}f(t,\ell) dt\Big].
	\end{equation}
	Similarly, $\tau^n_{\ell}$ is the smallest solution to the optimal stopping problem
	\begin{equation}\label{eq:stability_stopping_n}
		\sup_{\tau \in \Tc} J^n_\ell(\tau),
		~\mbox{with}~
		J^n_\ell(\tau)
		\vcentcolon =
		\E\Big[Y^n_\tau + \int_{0}^{\tau}f_n(t,\ell) dt\Big].
	\end{equation}
	Further, based on the (abstract) probability space $(\Om, \Fc, \P)$, we introduce an enlarged measurable space $(\Omo, \Fcb)$ together with a canonical element $\Theta: \Omo \longrightarrow [0, T]$, by
	$$
		\Omo := \Om \x [0,T],
		~~
		\Fcb := \Fc \ox \Bc([0,T]),
		~~\mbox{and}~~
		\Theta(\omb) := \theta, ~\mbox{for all}~\omb = (\om, \theta) \in \Omo.
	$$
	We also introduce a filtration $\Fbb = (\Fcb_t)_{t \in [0,T]}$, together with a sub-filtration $\Fbb^0 = (\Fcb^0_t)_{t \in [0,T]}$ and a sub-$\sigma$-field $\Fcb^0$, by
	$$
		\Fcb_t := \Fc_t \ox \sigma\{[0,s], ~s \le t\},
		~~
		\Fcb^0_t := \Fc_t \ox \{ \emptyset, [0,T]\},
		~~
		t \in [0,T],
		~~\mbox{and}~~
		\Fcb^0:= \Fc \ox \{\emptyset, [0,T]\}.
	$$
	Notice that $\Theta$ is clearly a $\Fbb$-stopping time.
	Let $\Pc(\Omb)$ denote the space of all probability measures on $(\Omb, \Fcb)$, we equip $\Pc(\Omb)$ with the stable convergence topology of Jacod and M\'emin \cite{JacodMemin1981},
	that is, the coarsest topology making $\Pb \longmapsto \E^{\Pb}[\xi]$ continuous for all bounded random variable $\xi: \Omb \longrightarrow \R$ such that $\theta \mapsto \xi(\om, \theta)$ is continuous for all $\om \in \Om$ (see Section \ref{sec:stable_cvg} in Appendix for more details).
	Then for each $\ell \in \R$, $n \ge 0$, let us define $\Pb_{\ell, n} \in \Pc(\Omb)$ by
	\begin{equation} \label{eq:def_Pbn}
		\Po_{\ell, n}(A)
		~ \vcentcolon=
		\int_{\Om}
		\mathds{1}_{A} \big(\om, \tau^n_\ell(\om) \big)
		\P(d\om),
		~\mbox{for all}~
		A \in \Fc \ox \Bc([0,T]).
        \end{equation}

	\begin{proposition} \label{prop:cvg_st}
		Let the conditions in Theorem \ref{Thm:stability} hold true. Then there exists a countable set $\L_0 \subset \R$ such that, for all $\ell \in \R \setminus \L_0$,  the following holds true:

		\vspace{0.5em}
		
		\noindent $\mathrm{(i)}$ $\tau_{\ell} = \tau'_{\ell}$, a.s., so that it is the unique solution to the optimal stopping problem \eqref{eq:stability_stopping};

		\vspace{0.5em}

		\noindent $\mathrm{(ii)}$ for all $N > 0$ and $\eps > 0$, one has $\lim_{n \longrightarrow \infty} \P\big[ \big| \tau^n_{\ell} \wedge N - \tau_{\ell} \wedge N \big| > \eps \big] = 0$.
	
	\end{proposition}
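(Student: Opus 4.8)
The plan is to recast both claims as statements about the level-crossing times of the running maxima, using only Theorem \ref{Thm:stability} (which gives $d_L(\Lh^n,\Lh)\to 0$ in probability) together with the Bank--El Karoui fact (Theorem \ref{thm:BErepresentationTheorem}) that $\tau_\ell$ and $\tau'_\ell$ are the smallest and largest maximizers of \eqref{eq:stability_stopping}. Throughout I use that $\Lh$ is increasing and left-continuous and I read the last time in \eqref{eq:def_tau_l} as $\tau^n_\ell=\inf\{t\ge 0:\Lh^n_t\ge\ell\}$. The first preliminary observations are the pathwise facts that $\ell\mapsto\tau_\ell$ and $\ell\mapsto\tau'_\ell$ are nondecreasing and that, directly from the definitions and the left-continuity of $\Lh$, $\tau_{\ell-}:=\lim_{\ell'\uparrow\ell}\tau_{\ell'}=\tau_\ell$ while $\tau_{\ell+}:=\lim_{\ell'\downarrow\ell}\tau_{\ell'}=\tau'_\ell$.

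For part (i), I would fix a bounded, strictly increasing, continuous $\phi$ (e.g. $\phi(t)=1-e^{-t}$, so $\phi(+\infty)=1$) and set $H(\ell):=\E[\phi(\tau_\ell)]$. Then $H$ is bounded and nondecreasing, hence has at most countably many discontinuities; I let $\L_0$ be that set. By dominated convergence and $\tau_{\ell+}=\tau'_\ell$ one has $H(\ell+)-H(\ell)=\E[\phi(\tau'_\ell)-\phi(\tau_\ell)]\ge 0$, and for $\ell\notin\L_0$ the continuity $H(\ell+)=H(\ell)$ forces $\E[\phi(\tau'_\ell)-\phi(\tau_\ell)]=0$; since $\tau_\ell\le\tau'_\ell$ and $\phi$ is strictly increasing, this yields $\tau_\ell=\tau'_\ell$ a.s. The uniqueness assertion is then immediate, because every optimizer of \eqref{eq:stability_stopping} lies between $\tau_\ell$ and $\tau'_\ell$.

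For part (ii), writing $\eps_n:=d_L(\Lh^n,\Lh)$, I would unwind the definition of $d_L$ to get $\Lh((t-\eps_n)\vee 0)-\eps_n\le\Lh^n_t$ and $\Lh^n((t-\eps_n)\vee 0)-\eps_n\le\Lh_t$ for all $t$, and combine these with the monotonicity of $\Lh$ and of $\ell\mapsto\tau_\ell$ to obtain the sandwich $\tau_{\ell-\eps_n}-\eps_n\le\tau^n_\ell\le\tau_{\ell+\eps_n}+\eps_n$. Truncating at $N$ (and using that $x\mapsto x\wedge N$ is monotone and $1$-Lipschitz) gives, on the event $\{\eps_n\le\delta\}$, the bounds $\tau_{\ell-\delta}\wedge N-\delta\le\tau^n_\ell\wedge N\le\tau_{\ell+\delta}\wedge N+\delta$. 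For $\ell\notin\L_0$ the a.s. monotone limits $\tau_{\ell+\delta}\downarrow\tau'_\ell=\tau_\ell$ and $\tau_{\ell-\delta}\uparrow\tau_\ell$ let me, for given $\eps,\eta>0$, choose $\delta<\eps/2$ with $\P[\tau_{\ell+\delta}\wedge N-\tau_\ell\wedge N\ge\eps/2]+\P[\tau_\ell\wedge N-\tau_{\ell-\delta}\wedge N\ge\eps/2]<\eta$; intersecting the corresponding good events with $\{\eps_n\le\delta\}$ forces $|\tau^n_\ell\wedge N-\tau_\ell\wedge N|<\eps$, whence $\P[|\tau^n_\ell\wedge N-\tau_\ell\wedge N|\ge\eps]\le\P[d_L(\Lh^n,\Lh)>\delta]+\eta$. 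Letting $n\to\infty$ and then $\eta\downarrow 0$, Theorem \ref{Thm:stability} kills the first term and gives the claim.

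The main obstacle will be the careful derivation of the sandwich from the Lévy metric: the left-continuity of $\Lh$ and the behaviour exactly at the crossing level $\ell$ must be handled so that no spurious boundary error appears, and the pathwise identity $\tau_{\ell+}=\tau'_\ell$ must be verified cleanly. Once these pathwise facts are secured, the passage from the merely in-probability convergence $d_L(\Lh^n,\Lh)\to 0$ to the stated convergence of the truncated crossing times is a routine $\eps$--$\eta$ argument, and the countability of $\L_0$ follows at once from the monotonicity of $H$.
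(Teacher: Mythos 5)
Your treatment of part (i) is correct and is essentially the paper's own argument: pathwise monotonicity and left-continuity of $\ell \longmapsto \tau_\ell$, the identity $\tau_{\ell+} = \tau'_{\ell}$, a countable exceptional set coming from monotonicity (the paper gets it by citing Jacod--Shiryaev, Lemma IV.3.12, where you build it by hand via $H(\ell) = \E[\phi(\tau_\ell)]$; both are fine), and then uniqueness from the Bank--El Karoui fact that $\tau_\ell$ and $\tau'_\ell$ are the smallest and biggest optimizers of \eqref{eq:stability_stopping}.

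Part (ii), however, has a fatal structural gap: your argument is circular. The phrase ``the conditions in Theorem \ref{Thm:stability}'' refers to the \emph{hypotheses} of that theorem --- the convergence \eqref{eq:fnYn_fY} of $(f_n, Y^n)$ to $(f,Y)$ and the left upper semi-continuity of the paths of $Y$ --- not to its conclusion. In the paper the logical order is the reverse of yours: Proposition \ref{prop:cvg_st} is the key technical step, and Theorem \ref{Thm:stability} is \emph{deduced} from it (its proof extracts, via the proposition, subsequences along which $\tau^n_\ell \to \tau_\ell$ a.s.\ for $\ell$ in a countable dense set, and then converts convergence of the hitting times into L\'evy-metric convergence of $\Lh^n$). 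You instead take $d_L(\Lh^n,\Lh) \to 0$ in probability as given and recover convergence of the hitting times by a sandwich/generalized-inverse argument. That implication is sound as real analysis (it is essentially the inverse of the Billingsley-type step the paper uses in the other direction), but inside this paper it assumes exactly what the proposition exists to establish; Theorem \ref{Thm:stability} has no proof independent of Proposition \ref{prop:cvg_st}. Symptomatically, your proof of (ii) never uses any of the actual hypotheses: the convergence of $(f_n, Y^n)$, the class (D) and u.s.c.-in-expectation properties, or the characterization of $\tau^n_\ell$ as the smallest maximizer of \eqref{eq:stability_stopping_n}. The paper's proof works directly from these data: it encodes $\tau^n_\ell$ as measures $\Pb_{\ell,n}$ on the enlarged space $\Om \x [0,T]$, uses compactness under the Jacod--M\'emin stable convergence topology, shows via convergence of the optimal values and the upper semi-continuity of $Y$ that any limit point is a randomization of an optimizer of \eqref{eq:stability_stopping}, and then concludes by the uniqueness from part (i); the case $T=\infty$ is reduced to a finite horizon by a Snell-envelope truncation. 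This entire mechanism --- the passage from convergence of the data $(f_n,Y^n)$ to convergence of the optimal stopping times --- is the mathematical content of part (ii), and it is absent from your proposal.
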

	\begin{proof}
	$\mathrm{(i)}$ We first observe that, for a.e. $\om \in \Om$, $\ell \longmapsto \tau_\ell(\om)$ is left-continuous and increasing.
	By Jacod and Shiryaev \cite[Lemma IV.3.12]{JacodShiryaev2013}, there exists a countable subset $\L_0 \subset \R$ such that
	$$
		\P \big[ \tau_{\ell} = \tau_{\ell+} \big] = 1, ~\mbox{for all}~ \ell \in \R \setminus \L_0.
	$$

	Next, for any $h > 0$, we observe that $ \tau_\ell \le \tau'_\ell \le \tau_{\ell + h}$, a.s.
	Letting $h \longrightarrow 0$, it follows that $\tau_\ell \le \tau'_\ell \le \tau_{\ell + }$, a.s.
	Therefore, one has
	$$
		\tau_\ell ~=~ \tau'_\ell ~=~ \tau_{\ell + }, ~~\mbox{a.s., for all}~ \ell \in \R \setminus \L_0,
        $$
        so that $\tau_\ell$ is the unique solution to the optimal stopping problem \eqref{eq:stability_stopping}.

	\vspace{0.5em}

	\noindent $\mathrm{(ii)}$ The assumption that $Y$ has almost surely left upper semi-continuous path, and is u.s.c. in expectation implies that (see e.g. Bismut and Skalli \cite[Theorem II.1]{BismutSkalli1977})
	$Y$ has upper semi-continuous paths, i.e., for a.e. $\om \in \Om$, $t \longmapsto Y_t(\om)$ is u.s.c.
	
	\vspace{0.5em}

	\noindent $\mathrm{(ii.a)}$ Let us first consider the case where $T < \infty$.

	Recall that $\Fcb^0 := \Fc \ox \{\emptyset, [0,T]\}$, by the definition of $\Pb_{\ell, n}$ in \eqref{eq:def_Pbn}, it is clear that, for all $t \in [0,T]$,
	$$
		\Pb_{\ell, n} \big[ \Theta \le t \big| \Fcb^0 \big] (\omb)
		~=~
		\Pb_{\ell, n} \big[ \Theta \le t \big| \Fcb^0_t \big] (\omb)
		~=~
		\mathds{1}_{\{ \tau^n_{\ell}(\om) \le t \}},
		~~\mbox{for}~\Pb_{\ell,n} \mbox{-a.e.}~\omb \in \Omb.
	$$
	Next, we observe that  $T < \infty$ and $\Pb_{\ell, n}|_{\Om} = \P$ for all $n \ge 0$,
	then $(\Pb_{\ell, n})_{n \ge 0}$ is relatively compact in $\Pc(\Omb)$ under the stable convergence topology (see e.g. Theorem \ref{thm:stable_convergence}).
	Then there exists a subsequence $\{n_k\}_{k \ge 0}$ of $\{n\}_{n \ge 0}$, such that $\Po_{\ell, n_k} \longrightarrow \Po_{\ell}$ for some $\Pb_{\ell} \in \Pc(\Omo)$ under the stable convergence topology as $k \longrightarrow +\infty$.
	In particular, by Proposition \ref{prop:H_hypothesis}, one has
	$$
		\Pb_{\ell} \big[ \Theta \le t \big| \Fcb^0 \big] (\omb)
		~=~
		\Pb_{\ell} \big[ \Theta \le t \big| \Fcb^0_t \big] (\omb),
		~~
		\mbox{for}~~
		\Pb_{\ell} \mbox{-a.e.}~\omb \in \Omb.
	$$
	Since $\Fcb^0 = \Fc \otimes \{\emptyset, [0,T] \}$, one can consider $\Pb_{\ell} \big[ \Theta \le t \big| \Fcb^0 \big] $ as a function of $\om \in \Om$.
	Let us then define
	$$
		\overline F_{\om}(s) := \E^{\Pb_{\ell}} \big[\Theta \le s \big| \Fcb^0 \big] (\om),
		~~\mbox{for all}~
		s \in [0,T] \cap \Q,
	$$
	and
	$$
		\overline F_{\om}(t) := \limsup_{\Q \cap[0,T]  \ni s \searrow t} \overline F_{\om}(s),
		~~\mbox{for all}~
		t \in [0,T].
	$$
	Since $t \mapsto \mathds{1}_{\{\Theta \le t \}}$ is right-continuous, by dominated convergence theorem, it follows that,
	\begin{itemize}
		\item for all $t \in [0,T]$, $F_{\om}(t) = \Pb_{\ell} \big[ \Theta \le t \big| \Fcb^0_t \big] (\omb)$, for $\Pb_{\ell}$-a.e. $\omb = (\om, \theta) \in \Omb$;
		\item for $\P$-a.e. $\om \in \Om$, the map $t \mapsto F_{\om}(t)$ is right-continuous, increasing and takes value in $[0,1]$.
	\end{itemize}
	For each $\om \in \Om$, let $\overline{F}^{-1}_\om:[0,1] \longrightarrow [0,T]$ denote the right-continuous inverse function of $t \longmapsto \overline{F}_\om(t)$.
	It follows that, for all $u \in [0,1]$ and $t \in [0,T]$,
	$$
		\{\om \in \Om: \overline{F}^{-1}_\om(u) \le t \}
		~=~
		\{\om \in \Om: u \le \overline{F}_\om(t)\}
		~=~
		\{\om \in \Om: u \le \Po[\tau \le t|\Fcu_t](\om)\}
		~\in~
		\Fc_t.
	$$
	In particular, for all $u \in [0,1]$, $\om \longmapsto F^{-1}_{\om}(u)$ is a stopping time w.r.t. $\F$.

	\vspace{0.5em}
	
	We next claim that, for each $\ell \in \R$,
	\begin{align} \label{eq:claim_stable_cvg}
		\sup_{\tau \in \Tc}J_\ell(\tau)
		&=
		\lim_{k \to \infty}
		\sup_{\tau \in \Tc}J^{n_k}_\ell(\tau)
		=
		\lim_{k \to \infty}
		\E^{\Pb_{\ell,n_k}} \bigg[Y^{n_k}_{\Theta} + \int_{0}^{\Theta} f_{n_k}(t, \ell)dt \bigg] \nonumber \\
		&=
		\E^{\Pb_{\ell}} \bigg[Y_{\Theta} + \int_{0}^{\Theta} f(t, \ell)dt \bigg]
		=
		\int_0^1 \E \bigg[Y_{F^{-1}_{\cdot}(u)} + \int_{0}^{F^{-1}_{\cdot}(u)} f(t, \ell)dt \bigg] du.
	\end{align}
	Notice that $F^{-1}_{\cdot}(u)$ is a $\F$-stopping time, then \eqref{eq:claim_stable_cvg} implies that $F^{-1}_{\cdot}(u)$ is an optimal stopping time to the optimal stopping problem \eqref{eq:stability_stopping} for a.e. $u \in [0,1]$.
	When $\ell \in \R\setminus \L_0$, $\tau_{\ell}$ is the unique solution to the optimal stopping problem \eqref{eq:stability_stopping},
	so that
	$$
		F^{-1}(u) = \tau_{\ell}, ~\mbox{a.s. for a.e.}~
		u \in [0,1].
	$$
	This implies that, for any bounded and $\Fc \ox \Bc([0,T])$-measurable function $\xi: \Omo \longrightarrow \R$, one has
	\begin{equation*}
		\E^{\Pb_{\ell}} [\xi]
		~=~
		\E^{\Pb_{\ell}} \Big[ \E^{\Pb_{\ell}} \big[ \xi \big| \Fc^0 \big] \Big]
		=
		\int_0^1 \int_{\Om} \xi(\om, F^{-1}_{\om}(u)) \P(d \om) du
		=
		\int_{\Om}
		\xi(\om,\tau_\ell(\om))
		\P(d\om).
	\end{equation*}
	Let us consider the bounded random variable $\xi : \Omo \longrightarrow [0,1]$ define by
	\begin{equation*}
		\xi(\om, \theta)
		~\vcentcolon = ~
		|\tau_\ell(\om) - \theta| \wedge 1.
	\end{equation*}
	As $\theta \longmapsto \xi(\om, \theta)$ is continuous, and $\Pb_{\ell,n_k} \longrightarrow \Pb_{\ell}$ under the stable convergence topology,
	it follows that
	\begin{equation*}
		\lim_{k \to \infty} \E \big[ \big|\tau_\ell - \tau^{n_k}_\ell \big| \wedge 1 \big]
		~=~
		\lim_{k \to \infty}\E^{\Pb_{\ell, n_k}}[ \xi ]
		~ = ~
		\E^{\Pb_{\ell}}[\xi]
		~=~
		\E \big[ \big|\tau_\ell - \tau_\ell \big| \wedge 1 \big]
		~ = ~ 0.
	\end{equation*}	
	In fact, the above arguments show that, for all $\ell \in \R \setminus \L_0$,
	any subsequence $\{n_k\}_{k \ge 0}$ has a subsequence $\{n_{k_i}\}_{i \ge 0}$ such that
	$$
		\lim_{i \to \infty} \E \big[ \big|\tau_\ell - \tau^{n_{k_i}}_\ell \big| \wedge 1 \big],
	$$
	which proves the Item $\mathrm{(ii)}$ in the statement when $T < \infty$.
	
	\vspace{0.5em}
	
	To conclude the proof in the case where $T< \infty$, it is enough to prove the claim in \eqref{eq:claim_stable_cvg}.
	For the first equality in \eqref{eq:claim_stable_cvg}, we notice that when $k \longrightarrow +\infty$,
        \begin{equation*}
		\Big |
		\sup_{\tau \in \Tc}J^{n_k}_\ell(\tau) - \sup_{\tau \in \Tc}J_\ell(\tau)
		\Big |
		\le
		\E\bigg[
			\int_{0}^{T}
			|f_{n_k}(t,\ell) - f(t,\ell)|
			dt
			+
			\sup_{t \in [0,T]}
			|Y^{n_k}_t - Y_t |
		\bigg]
		~\longrightarrow~0.
	\end{equation*}
	The second equality  in \eqref{eq:claim_stable_cvg} follows by the definition of $\Pb_{\ell, n_k}$, together with the fact that $\tau^{n_k}_{\ell}$ is an optimal solution to the optimal stopping problem \eqref{eq:stability_stopping_n} with $n = n_k$.
	For the third equality  in \eqref{eq:claim_stable_cvg}, we first notice that
	\begin{align*}
		&\bigg| \E^{\Pb_{\ell,n_k}} \bigg[Y^{n_k}_{\Theta} + \int_{0}^{\Theta} f_{n_k}(t, \ell)dt \bigg] - \E^{\Pb_{\ell,n_k}} \bigg[Y_{\Theta} + \int_{0}^{\Theta} f (t, \ell)dt \bigg] \bigg| \\
		\le ~&
		\E\bigg[
			\int_{0}^{T}
			|f_{n_k}(t,\ell) - f(t,\ell)|
			dt
			+
			\sup_{t \in [0,T]}
			|Y^{n_k}_t - Y_t |
		\bigg]
		~\longrightarrow~0
		~\mbox{as}~
		k \longrightarrow \infty.
	\end{align*}
	Then, whenever the limit exists,
	$$
		\lim_{k \to \infty}
		\E^{\Pb_{\ell,n_k}} \bigg[Y^{n_k}_{\Theta} + \int_{0}^{\Theta} f_{n_k}(t, \ell)dt \bigg]
		~=~
		\lim_{k \to \infty}
		\E^{\Pb_{\ell,n_k}} \bigg[Y_{\Theta} + \int_{0}^{\Theta} f(t, \ell)dt \bigg].
	$$
	
	Further, recall that $t \mapsto Y_t$ is almost surely upper semi-continuous, and $\Pb_{\ell, n_k} \longrightarrow \Pb_{\ell}$ under the stable convergence topology.
	Then for every $K > 0$, one has
	$$
		\lim_{k \to \infty}
		\E^{\Pb_{\ell,n_k}} \bigg[ -K \vee \bigg( Y_{\Theta} + \int_{0}^{\Theta} f(t, \ell)dt \bigg) \wedge K \bigg]
		~\le~
		\E^{\Pb_{\ell}} \bigg[ -K \vee \bigg( Y_{\Theta} + \int_{0}^{\Theta} f (t, \ell)dt \bigg) \wedge K \bigg].
	$$
	Since $Y$ is in class (D), together with the integrability condition of $f$, it follows that when $K \longrightarrow +\infty$
	\begin{align*}
		&\sup_{k \ge 0}
		\bigg| \E^{\Pb_{\ell,n_k}} \bigg[ -K \vee \bigg( Y_{\Theta} + \int_{0}^{\Theta} f(t, \ell)dt \bigg) \wedge K \bigg] - \E^{\Pb_{\ell,n_k}} \bigg[ Y_{\Theta} + \int_{0}^{\Theta} f(t, \ell)dt  \bigg] \bigg| \\
		\le~ &
		\sup_{\tau \in \Tc}
		\E\bigg[
			\bigg(|Y_{\tau}| + \int_{0}^{T}|f(t,\ell)|dt\bigg)
			\mathds{1}_{\{(|Y_{\tau}| + \int_{0}^{T}|f(t,\ell)|dt) > K\}}
		\bigg]
		\longrightarrow 0,
	\end{align*}
	and
	$$
		\bigg| \E^{\Pb_{\ell}} \bigg[ -K \vee \bigg( Y_{\Theta} + \int_{0}^{\Theta} f(t, \ell)dt \bigg) \wedge K \bigg] - \E^{\Pb_{\ell}} \bigg[ Y_{\Theta} + \int_{0}^{\Theta} f(t, \ell)dt  \bigg] \bigg|
		\longrightarrow 0.
	$$
	This leads to
	$$
		\lim_{k \to \infty}
		\E^{\Pb_{\ell,n_k}} \bigg[Y^{n_k}_{\Theta} + \int_{0}^{\Theta} f_{n_k}(t, \ell)dt \bigg]
		~\le~
		\E^{\Pb_{\ell}} \bigg[ Y_{\Theta} + \int_{0}^{\Theta} f(t, \ell)dt  \bigg].
	$$

	Finally, notice that, for all (integrable) random variable $\xi: \Omo \longrightarrow \R$, one has
        \begin{align*}
            ~
            \E^{\Pb_{\ell}} \big[ \xi \big]
            ~ &=  ~
            \E^{\Pb_{\ell}} \Big[\E^{\Po} \big[ \xi \big| \Fcb \big] \Big]
           \\~&=~
            \int_\Om
                \int_{0}^{T}
                    \xi(\om, \theta)\overline{F}_\om(d\theta)
			\P(d\om)
		~=~
		\int_{\Om}
		\int_0^1
		\xi(\om,\overline{F}^{-1}_\om(u)) du \P(d\om).
        \end{align*}
	With
	$$
		\xi(\om, \theta) ~:=~ Y_{\theta}(\om) + \int_0^{\theta} f(t, \om, \ell) dt,
	$$
	it follows that
	$$
		\E^{\Pb_{\ell}} \bigg[Y_{\Theta} + \int_{0}^{\Theta} f(t, \ell)dt \bigg]
		=
		\int_0^1 \E \bigg[Y_{F^{-1}_{\cdot}(u)} + \int_{0}^{F^{-1}_{\cdot}(u)} f(t, \ell)dt \bigg] du.
	$$
	Since $F^{-1}_{\cdot}(u)$ are stopping times w.r.t. $\F$, it follows that
	\begin{align*}
		\sup_{\tau \in \Tc}J_\ell(\tau)
		&~=~
		\lim_{k \to \infty}
		\E^{\Pb_{\ell,n_k}} \bigg[Y^{n_k}_{\Theta} + \int_{0}^{\Theta} f_{n_k}(t, \ell)dt \bigg]
		~\le~
		\E^{\Pb_{\ell}} \bigg[Y_{\Theta} + \int_{0}^{\Theta} f(t, \ell)dt \bigg] \\
		&~=~
		\int_0^1 \E \bigg[Y_{F^{-1}_{\cdot}(u)} + \int_{0}^{F^{-1}_{\cdot}(u)} f(t, \ell)dt \bigg] du
		~\le~
		\sup_{\tau \in \Tc}J_\ell(\tau),
        \end{align*}
	which concludes Claim \eqref{eq:claim_stable_cvg}.

	\vspace{0.5em}
	
	\noindent $\mathrm{(ii.b)}$ Let us now consider the case where $T = +\infty$. We fix an arbitrary constant $N > 0$.
	
	\vspace{0.5em}
	
	For $n \ge 0$, let us define the random variables:
	$$
		Z_N
		\vcentcolon=
		\esssup_{\tau \in \Tc, ~\tau \ge N}
		\E\bigg[ Y_\tau + \int_{N}^{\tau}f(t,\ell)dt\Big|\Fc_N\bigg],
	$$
        $$
		Z^n_N
		\vcentcolon=
		\esssup_{\tau \in \Tc, ~\tau \ge N}
		\E\bigg[ Y^n_\tau + \int_{N}^{\tau}f_n(t,\ell)dt\Big|\Fc_N\bigg],
	$$
	and then processes $\widehat Y$ and $\widehat Y^n: [0,N] \x \Om \longrightarrow \R$ by
	$$
		\Yh_t := Y_t \mathds{1}_{\{t \in [0,N)\}} + Z_N \mathds{1}_{\{t =N\}},
		~~\mbox{and}~
		\Yh^n_t := Y^n \mathds{1}_{\{t \in [0,N)\}}  + Z^n_N \mathds{1}_{\{t = N\}},
		~~t \in [0,N].
	$$
	Under condition \eqref{eq:fnYn_fY}, it is clear that
	$$
		\E \Big[ \sup_{0 \le t \le N} \big| \Yh^n_t - \Yh_t \big| \Big]
		~\longrightarrow~
		0,
		~~\mbox{as}~ n \longrightarrow \infty.
	$$
	Moreover, it is clear that $\Yh$ and $(\Yh^n)_{n \ge 1}$ are all optional processes of Class (D), are u.s.c. in expectation and have left upper semi-continous paths.

	\vspace{0.5em}

	Let us denote by $\Tc_{[0,N]}$ the collection of all $\F$-stopping times $\tau$ taking value in $[0,N]$,
	we consider the following optimal stopping problems:
	\begin{equation}\label{eq:stability_stopping_alternative}
		\sup_{\tau \in \Tc_{[0,N]}} \widehat{J}_\ell(\tau),
		~~\mbox{with}~
		\widehat{J}_\ell(\tau)
		~\vcentcolon = ~
		\E\bigg[\Yh_\tau + \int_{0}^{\tau}f(t,\ell) dt\bigg],
	\end{equation}
	and
	\begin{equation} \label{eq:stability_stopping_alternative_n}
		\sup_{\tau \in \Tc_{[0,N]}} \widehat{J}^n_\ell(\tau),
		~\mbox{with}~
		\widehat{J}^n_\ell(\tau)
		~\vcentcolon = ~
		\E\bigg[\Yh^n_\tau + \int_{0}^{\tau}f_n(t,\ell) dt\bigg],
		~ n \ge 0.
	\end{equation}
	In fact, the above optimal stopping problems have the same Snell envelops as that for \eqref{eq:stability_stopping} and \eqref{eq:stability_stopping_n}.
	More concretely, by the dynamic programming principal of the optimal stopping problem (see e.g. El Karoui \cite{ElKaroui_SF}),
	it follows that, for any $\sigma \in \Tc_{[0,N]}$, one has
	\begin{align*}
		\Zh^{\ell}_{\sigma}
            ~ & := ~
            \esssup_{\tau \in \Tc_{[0,N]}, \tau \ge \sigma}
		\E\bigg[ \Yh_\tau + \int_{0}^{\tau}f(t,\ell)dt\Big|\Fc_\sigma\bigg]
		\\ ~ & = ~
		\esssup_{\tau \in \Tc, \tau \ge \sigma}
		\E\bigg[ Y_\tau + \int_{0}^{\tau}f(t,\ell)dt\Big|\Fc_\sigma\bigg]
		~ =: ~
		Z^{\ell}_{\sigma},
	\end{align*}
	and
	\begin{align*}
		\Zh^{\ell,n}_t
		~ & := ~
		\esssup_{\tau \in \Tc_{[0,N]}, \tau \ge \sigma}
		\E\bigg[ \Yh^n_\tau + \int_{0}^{\tau}f_n(t,\ell)dt\Big|\Fc_\sigma\bigg]
		\\ ~ & = ~
		\esssup_{\tau \in \Tc, \tau \ge \sigma}
		\E\bigg[ \Yh^n_\tau + \int_{0}^{\tau}f_n(t,\ell)dt\Big|\Fc_\sigma\bigg]
		~ =: ~
		Z^{\ell,n}_t.
	\end{align*}
	Recall that $\tau_{\ell}$ (resp. $\tau^n_{\ell}$) is the smallest optimal stopping times to the problems \eqref{eq:stability_stopping} (resp. \eqref{eq:stability_stopping_n}),
	then they can be also given by
	\begin{equation*}
		\tau_{\ell}
		=
		\inf \bigg\{t \ge 0: Y_t + \int_{0}^{t}f_n(s,\ell)ds = Z^{\ell}_t \bigg\},
            ~ \mbox{a.s.},
	\end{equation*}
    and
        \begin{equation*}
		\tau^n_{\ell}
		=
		\inf \bigg\{t \ge 0: Y^n_t + \int_{0}^{t}f_n(s,\ell)ds = Z^{\ell,n}_t \bigg\},
		~ \mbox{a.s.}
        \end{equation*}
	It follows then the smallest optimal stopping times $\tauh_{\ell,N}$ (resp. $\tauh^n_{\ell,N}$) of \eqref{eq:stability_stopping_alternative} (resp. \eqref{eq:stability_stopping_alternative_n}) satisfies
	$$
		\tauh_{\ell,N}
		~\vcentcolon =~
		\inf \bigg\{t \ge 0: \Yh_t + \int_{0}^{t}f(s,\ell)ds = \Zh_t \bigg\}
		~=~
		\tau_{\ell} \wedge N,
		~\mbox{a.s.},
	$$
	and
	$$
		\tauh^n_{\ell,N}
		~\vcentcolon =~
		\inf\bigg\{t \ge 0: \Yh^n_t + \int_{0}^{t}f_n(s,\ell)ds = \Zh^n_t \bigg\}
		~=~
		\tau^n_{\ell} \wedge N,
		~\mbox{a.s.}
	$$
	We thus reduces the problem to the case $T = N < \infty$ by considering the optimal stopping problems \eqref{eq:stability_stopping_alternative} and \eqref{eq:stability_stopping_alternative_n}.
	\end{proof}

	\vspace{0.5em}

	\noindent{\sl Proof of Theorem \ref{Thm:stability}}
	Let us consider an arbitrary subsequence $\{n_k\}_{k \in \N}$ of $\{n\}_{n \in \N}$.
 	By Proposition \ref{prop:cvg_st} and with the corresponding countable set $\L_0 \subset \R$,
	one can find a countable and dense subset $\L_1 \subset \R \setminus \L_0$ and a subsequence $\{n_{k_m}\}_{m \in \N}$ of $\{n_k\}_{k \in \N}$,
	such that
	\begin{equation*}
		\lim_{m \to \infty}\tau^{n_{k_m}}_\ell
		~=~
		\tau_\ell,
		~\mbox{a.s., for all}~
		\ell \in \L_1.
	\end{equation*}
	Since for $\P$-a.e. $\om \in \Om$, the maps $\ell \longmapsto \tau^n_\ell$ and $\ell \longmapsto \tau_\ell$
	are left-continuous and non-decreasing,
	it follows that, for $\P$-a.e. $\om \in \Om$, one has
	\begin{equation*}
		\lim_{m \to \infty}\tau^{n_{k_m}}_\ell(\om)
		~=~
		\tau_\ell(\om),
		~\mbox{whenver}~ \tau_{\ell}(\om) = \tau_{\ell+}(\om).
	\end{equation*}

	Then by Billingsley \cite[Theorem 25.6]{Billingsley2008}, for $\P$-a.e. $\om \in \Om$,
	\begin{equation*}
		\lim_{m \to \infty}\Lh^{n_{k_m}}_t(\om)
		~ = ~
		\Lh_t(\om),
		~\mbox{whenever}~
		\Lh_t(\om) = \Lh_{t+}(\om).
	\end{equation*}
	Hence, by Proposition \ref{prop:ac_[0,T]Polish}, one has
        \begin{equation*}
            \lim_{k \to \infty} d_L(\Lh^{n_{k_m}},\Lh)
            ~ = ~
            0.
        \end{equation*}
        Finally, since the subsequence $\{n_k\}_{k \in \N}$ is arbitrary, one can conclude that, for any $\eps > 0$,
        \begin{equation*}
            \lim_{n \to \infty}
            \P \big[d_L(\Lh^{n},\Lh)\ge \eps \big]
            ~ = ~
            0.
        \end{equation*}
	\qed

\section{Proofs}
\label{sec:proofs}

\subsection{Proofs for Examples \ref{eg:continuity} and \ref{eg:monotonicity}}
\label{subsec:proofs_examples}

	\begin{proposition}\label{prop:tightness}
		In the context of Example \ref{eg:continuity},
		the set $\{\Lc(\Psi(X^m,\Lh)|\Gc): m \in \L^0_\Gc(\Om,\Pc(E)), \Lh \in \L^0_\Fc(\Om,\V^+)\}$ is tight in $\L^0_\Gc(\Om,\Pc(E))$.
	\end{proposition}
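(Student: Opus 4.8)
The plan is to reduce the claimed tightness in $\L^0_\Gc(\Om, \Pc(E))$ to tightness of the \emph{unconditional} laws $\big\{\Lc(\Psi(X^m, \Lh)) : m \in \L^0_\Gc(\Om, \Pc(E)),\ \Lh \in \L^0_\Fc(\Om, \V^+)\big\}$ in $\Pc(E)$, where $E = \D \times \V^+$. Recall that $\Gc = \sigma(A_i : i \ge 1)$ is generated by a countable partition with $p_i := \P[A_i] > 0$, so that $\L^0_\Gc(\Om, \Pc(E))$ is identified with $(\Pc(E))^\N$ via $m \mapsto (m_i)_{i \ge 1}$, where $m_i := \Lc(\Psi(X^m, \Lh) \mid A_i)$ is the value of the random measure on $A_i$. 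Under this identification the topology of convergence in probability is metrised by $D(m, m') := \sum_i p_i\,(d_{LP}(m_i, m'_i) \wedge 1)$, and a diagonal-extraction argument (using $\sum_{i > N} p_i \to 0$ to control the tail) shows that a family is relatively compact in $(\L^0_\Gc(\Om, \Pc(E)), D)$ as soon as each coordinate family $\{m_i\}$ is tight in $\Pc(E)$. Since
\[
m_i(E \setminus K) = \P\big[\Psi(X^m, \Lh) \notin K \,\big|\, A_i\big] \le \tfrac{1}{p_i}\, \P\big[\Psi(X^m, \Lh) \notin K\big],
\]
choosing $K$ with $\sup \P[\Psi \notin K] < \eps\, p_i$ makes every coordinate family tight, yielding the desired relative compactness.

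It then remains to establish tightness of $\{\Lc(\Psi(X^m, \Lh))\}$ in $\Pc(\D \times \V^+)$, and since tightness on a product follows from tightness of each marginal family, I would treat the two factors separately. The $\D$-marginal is exactly $\P \circ (X^m)^{-1}$, whose family is tight in $\Pc(\D)$ by the hypotheses of Example \ref{eg:continuity} (uniform boundedness and Lipschitz continuity of $(b^m, \sigma^m)$ together with uniform boundedness of $X^m_0$). For the $\V^+$-marginal, write $v := \big(\Lh^-_t \vee \Lh_t \wedge \Lh^+_t\big)_{t \in [0,T)}$; as $\Lh^- \le \Lh^+$, one has the pathwise squeeze $\Lh^-_t \le v_t \le \Lh^+_t$ for all $t$, and $v \in \V^+$ because finite maxima and minima of increasing left-continuous functions are again increasing and left-continuous. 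Because $\Lh^-, \Lh^+$ are two \emph{fixed} $\V^+$-valued random variables, their individual laws are tight, so for $\eps > 0$ one may pick compacts $K^-, K^+ \subset \V^+$ with $\P[\Lh^\pm \notin K^\pm] < \eps/2$, whence $v$ lies in the deterministic order set
\[
\mathcal{C} := \big\{ w \in \V^+ : \exists\, k^- \in K^-,\ k^+ \in K^+ \ \text{with}\ k^-_t \le w_t \le k^+_t,\ \forall\, t \in [0,T) \big\}
\]
with probability at least $1 - \eps$.

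The \textbf{main obstacle} is to show that this order set $\mathcal{C}$ is relatively compact in $(\V^+, d_L)$. I would argue by sequential compactness: given $w_n \in \mathcal{C}$ with witnesses $k^-_n \le w_n \le k^+_n$, compactness of $K^\pm$ yields a subsequence along which $k^\pm_n \to k^\pm \in K^\pm$ in $d_L$. On each $[0,s]$ with $s < T$ the envelopes $k^\pm_n$ are eventually uniformly bounded, hence so are the $w_n$, and Helly's selection theorem furnishes a further subsequence converging, at all continuity points of the increasing limit, to a function whose left-continuous version $w$ still satisfies $k^- \le w \le k^+$, i.e. $w \in \mathcal{C}$. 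Invoking the characterisation of Lévy convergence by pointwise convergence at continuity points (Proposition \ref{prop:ac_[0,T]Polish}) upgrades this to $w_n \to w$ in $d_L$, and the case $T = \infty$ follows by running the argument on each $[0,n)$ and using the series defining $d_L$. This establishes relative compactness of $\mathcal{C}$, hence tightness of the $\V^+$-marginal, and combined with the $\D$-marginal it completes the proof. The delicate points to watch are the simultaneous control of both envelopes to prevent escape to $+\infty$ near $T$, and checking that the squeeze survives the pointwise limit before passing to the left-continuous modification.
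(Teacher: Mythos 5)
Your proof is correct, and its overall architecture matches the paper's: reduce to the coordinates of the countable partition generating $\Gc$ (the paper works on the conditioned spaces $(\Om,\Fc,\P_i)$ with $\P_i = \P(\cdot\cap A_i)/\P(A_i)$, which is exactly your bound $m_i(E\setminus K)\le \P[\Psi\notin K]/p_i$), then establish unconditional tightness via the pathwise squeeze $\Lh^-\le \Lh^-\vee\Lh\wedge\Lh^+\le \Lh^+$ and a Helly-type selection argument in $(\V^+,d_L)$. Where you genuinely diverge is in the key compactness lemma. The paper uses the sets $\V^+_{-n,n}$ of paths uniformly bounded by $n$ on $(0,T)$, asserts that these are compact and that $\V^+=\bigcup_n \V^+_{-n,n}$, and deduces $\P[\Lh^\pm\in\V^+_{-N,N}]\ge 1-\eps/2$ for large $N$ by continuity of measure. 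That exhaustion claim is actually false as stated: a path such as $v(t)=\log t$ (or one diverging to $+\infty$ as $t\to T^-$) lies in $\V^+$ but in no $\V^+_{-n,n}$, so if $\Lh^-$ has such paths the paper's measure estimate fails. Your route --- invoke inner regularity (Ulam's theorem) of the laws of the two \emph{fixed} random variables $\Lh^\pm$ to get compacts $K^\pm$, and then prove that the order interval $\mathcal{C}$ between two compact sets is relatively compact --- is more general and in fact repairs this gap; the price is that you must prove the order-interval compactness from scratch, which is precisely your ``main obstacle'' step, whereas the paper's bounded sets $\V^+_{-n,n}$ are compact by a one-line Helly argument.

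One small slip in your compactness argument: the envelopes $k^\pm_n$ are \emph{not} eventually uniformly bounded on $[0,s]$, since every element of $\V^+$ equals $-\infty$ at $t=0$ and may diverge to $-\infty$ as $t\to 0^+$. This is harmless: either restrict to intervals $[a,s]\subset(0,T)$ with $a>0$ a continuity point of $k^-$ (monotonicity plus convergence of $k^-_n(a)$ then gives the lower bound you need), or apply Helly's selection theorem with values in the compact space $[-\infty,+\infty]$ and use the squeeze $k^-\le w\le k^+$ at continuity points to conclude that the left-continuous version of the limit is $\R$-valued on $(0,T)$, hence belongs to $\V^+$.
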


	\begin{proof}
	Let us consider the setting $\Gc = \{\emptyset,\Om\}$, where it is sufficient to check that
        $$
            \big\{\Lc(L^- \vee \Lh \wedge L^+)|\Gc): \Lh \in \L^0_\Fc(\Om,\V^+)\big\}
        $$
        is tight in $\Pc(\V^+)$.
        In details, for all $n \in \N$,
            we first define a subset $\V^+_{-n, n}$ of $\V^+$ by
        $$
            \V^+_{-n, n}
            ~ \vcentcolon= ~
            \{\mathbf{l} \in \V^+:
            - n \leq \mathbf{l}_t \leq n,
            ~\mbox{for all}~
            t \in (0,T).
            \}.
        $$
        Clearly for all $n \in \N$,
            $\V^+_{-n, n}$ is compact
            and $\V^+ = \cup_{n = 1}^{+\infty} \V^+_{-n, n}$.
        Then for any $\eps > 0$,
            there exists some $N_\eps \in \N$,
            such that
        $$
            \P \circ (L^-)^{-1}\big(\V^+_{-N_\eps,N_\eps}\big),
            ~
            \P \circ (L^+)^{-1}\big(\V^+_{-N_\eps,N_\eps}\big)
            \geq
            1 - \frac{\eps}{2}.
        $$
        Thus, for any $\P \circ (L)^{-1}$
            with $L \in \L^0_{\Fc}(\Om, \V^+)$,
            we have
        \begin{align*}
            & ~
            \P \circ (L)^{-1}\big(\V^+_{-N_\eps,N_\eps}\big)
            ~ = ~
            \P\big(\{-N_\eps \leq L \leq N_\eps\}\big)
            \\ ~ \geq & ~
            \P\big(\{-N_\eps \leq L^- \leq N_\eps\}
                \cap \{-N_\eps \leq L^+ \leq N_\eps\}\big)
            \\ ~ = & ~
            1 - (1 - \P\big(\{-N_\eps \leq L^- \leq N_\eps\}\big))
              - (1 - \P\big(\{-N_\eps \leq L^+ \leq N_\eps\}\big))
            \\ ~ = & ~
            1 - (1 - \P \circ (L^-)^{-1}\big(\V^+_{-N_\eps,N_\eps}\big))
              - (1 - \P \circ (L^+)^{-1}\big(\V^+_{-N_\eps,N_\eps}\big))
            \\ ~ \geq & ~
            1 - \eps.
        \end{align*}
        Finally, when $\Gc$ is generated by countably many disjoint sets $\{A_i\}_{i \in \N}$ with $\P(A_i) > 0$, $i \in \N$ and $\Om = \cup_{i = 1}^\infty A_i$, $\L^0_\Gc(\Om,\Pc(E))$ can be identified as the space $(\Pc(E))^\N$ by the map $\psi:\L^0_\Gc(\Om,\Pc(E)) \longrightarrow (\Pc(E))^\N$ defined as
        $$
            \psi(m)
            ~ = ~
            (m(\om_i))_{i \in \N},
        $$
        where $\om_i \in A_i$ is arbitrarily chosen.

	\vspace{0.5em}

        Then, since for each $i \in \N$, on the probability space $(\Om, \Fc, \P_i )$, where $\P_i \vcentcolon = \frac{1}{\P(A_i)}\P(\cdot \cap A_i)$, we can argue that
        $$
            \big\{ \P_i \circ (\Psi(X^m, \Lh))^{-1} : m \in \L^0_\Gc(\Om,\Pc(E)), \Lh \in \L^0_\Fc(\Om,\V^+) \big\}
        $$
        is tight in $\Pc(E)$.

	\vspace{0.5em}

        On the other hand,
        it holds that $\psi(\Lc(\Psi(X^m,\Lh)|\Gc)) = (\P_i \circ (\Psi(X^m, \Lh))^{-1})_{i \in \N}$,
        for all $m \in \L^0_\Gc(\Om,\Pc(E)), \Lh \in \L^0_\Fc(\Om,\V^+)$.
        Hence we know that the set $\psi\big(\{\Lc(\Psi(X^m,\Lh)|\Gc): m \in \L^0_\Gc(\Om,\Pc(E)), \Lh \in \L^0_\Fc(\Om,\V^+)\}\big)$ is tight in $(\Pc(E))^\N$.
        Thus, we conclude our proof since $\psi$ is a homeomorphism.
    \end{proof}

	\begin{proposition}\label{prop:complete_lattice}
		In the setting of Example \ref{eg:monotonicity}, let us consider two fixed stochastic processes $L^-, L^+ \in \L^0_\Fc(\Om, \V^+)$ such that $L^- \le_l \L^+$.
		Let
		$$
			\L_0
			\vcentcolon=
			\{L \in \L^0_\Fc(\Om,\V^+):L^- \le_l L \le_l L^+,\},
			~~\mbox{and}~
			K
			\vcentcolon=
			\{\Lc(\Lh|\Gc): \Lh \in \L_0\}.
		$$
		Then both of $(\L_0,\le_l)$ and $(K,\le_p)$, where $\le_p$ is a partial order induced by $(E = \V^+,\le_v)$ as in \eqref{eq:def_lep}, are complete lattices.
	\end{proposition}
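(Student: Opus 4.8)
The plan is to verify the definition of a complete lattice for each poset separately: exhibit a top and a bottom element and show that every subset admits both a supremum and an infimum. Throughout I would identify elements of $\L^0_\Fc(\Om,\V^+)$ that agree $\P$-almost surely, so that $\le_l$ and the induced $\le_p$ are genuine partial orders rather than preorders.

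For $(\L_0,\le_l)$ the constant bounds do the easy work: $L^+$ is the greatest and $L^-$ the least element, hence the top and bottom (equivalently the suprema/infima of the empty set). Given a nonempty family $\Ac=\{L^\alpha\}\subseteq\L_0$, I would build its supremum through essential suprema. Since $\L_0$ is stable under finite pointwise maxima and is dominated by $L^+$, for each rational $q\in[0,T)$ the essential supremum $\esssup_\alpha L^\alpha_q$ exists and, the family being upward directed, is attained along a single countable increasing sequence $g_n:=\max(M_1,\dots,M_n)$ with $M_j\in\Ac$. Setting $\bar L:=\sup_n g_n$, the increasing pointwise limit of left-continuous increasing paths is again left-continuous and increasing, so $\bar L\in\V^+$; it is squeezed between $L^-$ and $L^+$, hence lies in $\L_0$, and a short verification shows it is the least upper bound. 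Dually I would take $\essinf_\alpha L^\alpha_q$ at rationals; here the pointwise infimum of left-continuous increasing paths need not be left-continuous (the same jump mechanism as in the elementary examples), so I would pass to the left-continuous lower envelope $\underline L_t:=\sup_{q<t}\essinf_\alpha L^\alpha_q$. Left-continuity of each $L^\alpha$ yields $\underline L\le_l L^\alpha$, and essinf-minimality yields $M\le_l\underline L$ for every lower bound $M\in\L_0$; measurability of $\bar L$ and $\underline L$ is immediate from the countable representations. This settles the first claim.

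For $(K,\le_p)$ the map $\Phi(L):=\Lc(L|\Gc)$ is monotone onto $K$, since $L^1\le_l L^2$ gives $\E[\phi(L^1)|\Gc]\le\E[\phi(L^2)|\Gc]$ a.s. for every bounded increasing $\phi$, and $\Lc(L^+|\Gc)$, $\Lc(L^-|\Gc)$ serve as top and bottom; it then suffices to produce suprema. The point I would stress is that the supremum in $(K,\le_p)$ is \emph{not} $\Phi(\bar L)$ for $\bar L=\sup_{\L_0}\Ac$: two conditional laws each dominated by $m'$ can have a pointwise maximum whose law is not dominated (crossing paths make $\Phi(L^1\vee L^2)\not\le_p m'$), so the lattice operation must be performed at the level of conditional laws. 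Up to routine checks that $(\V^+,\le_v)$, with pointwise $\vee,\wedge$ and its closed order, is a Polish lattice, I would invoke and adapt (conditionally on $\Gc$) the Kamae--Krengel--O'Brien/Strassen theory to conclude that the conditional stochastic order on the order interval $[\Lc(L^-|\Gc),\Lc(L^+|\Gc)]_p$ is a complete lattice, the bounds $L^\pm$ furnishing the tightness that upgrades ``lattice'' to ``complete lattice.'' For a family $\{m^\alpha\}\subseteq K$ its stochastic-order supremum $\bar m$ then lies in this interval, because $\Lc(L^+|\Gc)$ is an upper bound and $\Lc(L^-|\Gc)$ a lower bound of the family.

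The hard part, and where essentially all the content sits, is the final representability step: showing $\bar m\in K$, i.e. that the supremum conditional law is realized by some $\Lh\in\L_0$ with $L^-\le_l\Lh\le_l L^+$ a.s. For this I would construct a conditional, order-preserving (Strassen-type) coupling adapted to the sub-$\sigma$-field $\Gc$ and to the fixed sandwiching processes $L^-,L^+$, turning the chain $\Lc(L^-|\Gc)\le_p\bar m\le_p\Lc(L^+|\Gc)$ into a single $\Fc$-measurable, $\V^+$-valued $\Lh$ with conditional law $\bar m$ pressed a.s. between $L^-$ and $L^+$. This simultaneously handles the infinite-dimensional partial order of $\V^+$, the conditioning on $\Gc$, and the almost-sure sandwich, and is the main obstacle of the proposition; the construction for infima is entirely symmetric. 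Once representability is secured, $\bar m=\sup_K\{m^\alpha\}$, every subset of $K$ has a supremum, and $(K,\le_p)$ is a complete lattice.
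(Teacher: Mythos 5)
Your treatment of $(\L_0,\le_l)$ is correct and is, in substance, the paper's own proof: the paper sets $\Lt_t:=\esssup_{L\in\Gamma}L_t$ and passes to its left-limit regularization along rationals, then checks membership in $\L_0$, the upper-bound property, and minimality; your countable realizing sequences, together with the remark that pointwise suprema (resp.\ left-continuous lower envelopes of infima) of left-continuous increasing paths are again left-continuous and increasing, is the same argument in slightly different clothing.

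The second half is where your proposal has a genuine gap, and it is not one that the route you sketch can close. Your observation that the supremum in $(K,\le_p)$ cannot be obtained by pushing forward the supremum from $(\L_0,\le_l)$ is correct --- and it is sharper than the paper itself, whose entire argument for $K$ is the one-line assertion that the increasing image of a complete lattice under $\Lc(\cdot|\Gc)$ is a complete lattice, which is not a valid inference. However, the key lemma you then invoke, namely that Kamae--Krengel(--O'Brien)/Strassen theory makes the conditional stochastic order on the interval $[\Lc(L^-|\Gc),\Lc(L^+|\Gc)]$ a complete lattice, does not exist: the paper's Theorem \ref{thm:p.o.PolishSpace} (Kamae--Krengel) only says that $(\Pc(E),\le_p)$ is a partially ordered Polish space (closedness of the order), and Strassen-type results produce monotone couplings for an already comparable pair; neither yields lattice structure, and lattice structure in fact fails. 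Concretely, take $\Gc=\{\emptyset,\Om\}$, $(\Om,\Fc,\P)$ atomless, $T=2$, and on $(0,2)$ set $b_t=\mathds{1}_{\{t>1\}}$, $c_t\equiv\tfrac12$, $a:=b\wedge c$, $d:=b\vee c$ (all completed by the value $-\infty$ at $t=0$, so that $a,b,c,d\in\V^+$ and $b,c$ are incomparable); take $L^-\equiv a$, $L^+\equiv d$, so that $K$ consists of all laws supported on $I:=\{v\in\V^+: a\le_v v\le_v d\}$. Let $\mu:=\tfrac12(\delta_b+\delta_c)$ and $\nu:=\tfrac12(\delta_a+\delta_d)$. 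Then $\lambda:=\tfrac12(\delta_b+\delta_d)$ and $\lambda':=\tfrac12(\delta_c+\delta_d)$ are both $\le_p$-upper bounds of $\{\mu,\nu\}$ (for any bounded increasing $\phi$ one has $\int\phi\,d\lambda-\int\phi\,d\mu=\tfrac12[\phi(d)-\phi(c)]\ge0$ and $\int\phi\,d\lambda-\int\phi\,d\nu=\tfrac12[\phi(b)-\phi(a)]\ge0$, and symmetrically for $\lambda'$). If some $\rho\in K$ were an upper bound of $\{\mu,\nu\}$ with $\rho\le_p\lambda$ and $\rho\le_p\lambda'$, then testing $\le_p$ against the indicators of the upper sets $U_b:=\{v\in\V^+:v\ge b\}$, $U_c:=\{v\in\V^+:v\ge c\}$, $U_d=U_b\cap U_c$ and $U_b\cup U_c$ forces $\tfrac12\le\rho(U_b)\le\lambda'(U_b)=\tfrac12$, $\tfrac12\le\rho(U_c)\le\lambda(U_c)=\tfrac12$, and $\rho(U_d)\ge\nu(U_d)=\tfrac12$, hence $\rho(U_b\cup U_c)=\rho(U_b)+\rho(U_c)-\rho(U_d)\le\tfrac12$, contradicting $\rho(U_b\cup U_c)\ge\mu(U_b\cup U_c)=1$. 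So $\{\mu,\nu\}$ has upper bounds in $K$ but no least one: in this instance $(K,\le_p)$ is not even a lattice --- the same phenomenon as the well-known failure of the multivariate stochastic order on $\Pc(\R^2)$ to be a lattice.

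Two consequences. First, your ``hard part'' (representing the supremum $\bar m$ by a sandwiched $\Lh\in\L_0$ via a conditional Strassen coupling) is moot: the object $\bar m$ you propose to represent does not exist in general, so no coupling construction can produce it. Second, the obstruction you correctly identified is fatal not only to your route but to the literal statement with the order $\le_p$ of \eqref{eq:def_lep}; the example above shows that the claim on $(K,\le_p)$ cannot be proved as stated, so any repair must either shrink $K$ (e.g.\ to conditional laws of a family of processes that are pathwise totally ordered) or replace $\le_p$ by an order under which $\Lc(\cdot|\Gc)$ genuinely transports the lattice operations from $(\L_0,\le_l)$.
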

	\begin{proof}
        For any subset $\Gamma$ of $\L_0$, we define a process as follows, for any $t \in [0,T]$,
        $$
            \Lt_t \vcentcolon= \esssup_{L \in \Gamma}L_t,
        $$
        and the corresponding process $\Lo$ with $\Lo_t \vcentcolon = \lim_{s \in \Q \rightarrow t-}\Lt_s$ for all $t \in (0,T]$ with $\Lo_0 \vcentcolon= -\infty$.

	\vspace{0.5em}

        Then $\Lo$ has left-continuous paths by its definition, and for any $t \in [0,T]$, $\Lo_t$ is $\Fc_t$-measurable, $\Lo_t \leq \Lt_t$ a.s.
        In fact, for any $s, t \in (0,T]$ with $s < t$,
            there exist two increasing sequences $(s_n)_{n \in \N}$, $(t_n)_{n \in \N}$ of constants in $[0,T] \in \Q$ with $s_n < t_n$ for all $n \in \N$
            such that $\lim_{n \to \infty}s_n = s$, $\lim_{n \to \infty}t_n = t$.
        Moreover, there exist a $\P$-null set $N$ such that on $N^C$, $\Lt_{s_n} \leq \Lt_{t_n}$ for all $n \in \N$.
        Letting $n$ tends to $\infty$, we have that $\Lo_s \leq \Lo_t$.
        In other words, $\Lo$ admits increasing paths almost surely.
        Thus we have $\Lo \in \L_0$.

	\vspace{0.5em}

        On the other hand, we fix some $t \in [0,T] \setminus \Q$, for any $s \in \Q$ with $s < t$ and $L \in \Gamma$, the inequality $L_s \leq \Lt_s$ holds a.s.
        By the left-continuity of $L$ and definition of $\Lo$, we have $L_t \leq \Lo_t$ holds a.s.
        Thus the definition of $\Lt$ and arbitrariness of $L$,
        imply that $\Lt_t \leq \Lo_t$ a.s.
        and we can conclude that for any $t\in [0,T]$, $\Lt_t =\Lo_t$ a.s., $\Lo$ is the least upper bound of $\Gamma$.

	\vspace{0.5em}

        Similarly we can prove there exists some $L \in \L_0$ is the greatest lower bound of $\Gamma$.

	\vspace{0.5em}

        Then we can conclude that $(\L_0,\le_l)$ is a complete lattice.

        Finally, we observe that $\xi \longmapsto \Lc(\xi|\Gc)$ can be seen as an increasing map from $\L^0$ to $K$, then the partially ordered set $(K,\le_p)$ is also a complete lattice.
	\end{proof}

    \begin{corollary}
        Let $E := \{ \mathbf{l} \in \V^+ ~: |\mathbf{l}(t)| \le 2C, ~t \in [0,T)\}$, we consider the partial order $\le_E$ by
		$$
			\mathbf{l}^1 \le_E \mathbf{l}^2 ~~\mbox{if}~~ \mathbf{l}^1_t \le \mathbf{l}^2_t, ~\mbox{for all}~t \in [0,T).
		$$
		Then $(E, \le_E)$ is a complete lattice and the corresponding space $\L^0_{\Gc}(\Om, \Pc(E))$ with the induced order $\le_p$ is also a complete lattice.
     \end{corollary}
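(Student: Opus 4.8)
The plan is to establish the two assertions separately, following the template of Proposition \ref{prop:complete_lattice}: first I would verify by hand that the deterministic path space $(E, \le_E)$ is a complete lattice, and then I would obtain the completeness of $\bigl(\L^0_\Gc(\Om, \Pc(E)), \le_p\bigr)$ by specialising Proposition \ref{prop:complete_lattice} to constant bounding processes.

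For the first assertion, fix a nonempty $\Gamma \subseteq E$ and set $\tilde{l}_t := \sup_{l \in \Gamma} l_t$ for $t \in [0,T)$. Being a pointwise supremum of increasing functions valued in $[-2C, 2C]$, the function $\tilde{l}$ is increasing and $[-2C,2C]$-valued, but possibly not left-continuous; I would therefore take its left-continuous regularisation $\bar{l}_t := \sup_{s \in [0,t)} \tilde{l}_s$ for $t > 0$ and $\bar{l}_0 := \tilde{l}_0$, which lies in $E$. It is an upper bound of $\Gamma$ because, for $l \in \Gamma$ and $t > 0$, left-continuity and monotonicity of $l$ give $l_t = \sup_{s \in [0,t)} l_s \le \sup_{s \in [0,t)} \tilde{l}_s = \bar{l}_t$, and $l_0 \le \tilde{l}_0 = \bar{l}_0$; it is the least one, since any $l' \in E$ dominating $\Gamma$ satisfies $\tilde{l}_s \le l'_s$ for all $s$ and hence $\bar{l}_t \le \sup_{s \in [0,t)} l'_s = l'_t$. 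The greatest lower bound is produced symmetrically, as the left-continuous regularisation of the pointwise infimum $\hat{l}_t := \inf_{l \in \Gamma} l_t$; the regularisation step is genuinely needed here, since a pointwise infimum of increasing left-continuous functions need not be left-continuous, yet its regularisation still bounds $\Gamma$ from below because each member of $\Gamma$ is left-continuous. With the constant paths $t \mapsto \pm 2C$ as top and bottom, this shows $(E, \le_E)$ is a complete lattice.

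For the second assertion, I would apply Proposition \ref{prop:complete_lattice} with the constant processes $L^- \equiv -2C$ and $L^+ \equiv 2C$. For this choice the auxiliary lattice $\L_0 = \{ L \in \L^0_\Fc(\Om, \V^+) : L^- \le_l L \le_l L^+ \}$ is exactly $\L^0_\Fc(\Om, E)$, so Proposition \ref{prop:complete_lattice} gives both that $(\L^0_\Fc(\Om, E), \le_l)$ is a complete lattice and that $K := \{ \Lc(L | \Gc) : L \in \L^0_\Fc(\Om, E) \}$ is a complete lattice for the induced order $\le_p$. To finish, I would identify $K$ with the whole space $\L^0_\Gc(\Om, \Pc(E))$: the inclusion $K \subseteq \L^0_\Gc(\Om, \Pc(E))$ is immediate, while for the converse I would realise an arbitrary $\Gc$-measurable random measure $m$ on the Polish space $E$ as a conditional law $\Lc(L | \Gc)$ of some $\Fc$-measurable $E$-valued $L$, via a disintegration and randomisation using a uniform variable independent of $\Gc$.

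The hard part will be this final identification, i.e. transporting the complete-lattice structure from $\L^0_\Fc(\Om, E)$ across the increasing map $L \mapsto \Lc(L | \Gc)$. Surjectivity of the conditional-law map needs the filtered space to carry enough randomness independent of $\Gc$; and, more delicately, one must check that the image of the $\L^0_\Fc(\Om, E)$-supremum of a lifted family is the \emph{least} upper bound in $(K, \le_p)$ and not merely an upper bound, since stochastic dominance $\Lc(L' | \Gc) \ge_p \Lc(L | \Gc)$ does not entail $L' \ge_l L$ for the chosen representatives. I expect to resolve this either by the order-theoretic principle that a surjection of posets admitting a monotone right adjoint carries completeness from domain to codomain, or, more concretely, by constructing suprema and infima directly on the random measures through the increasing-set (distribution-function) characterisation of the stochastic order $\le_p$ on the compact complete lattice $E$, combined with the $\Gc$-measurable essential-supremum argument already used in Proposition \ref{prop:complete_lattice}.
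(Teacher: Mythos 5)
Your first half is correct: the pointwise supremum/infimum followed by left-continuous regularisation does make $(E,\le_E)$ a complete lattice (in fact the pointwise supremum of left-continuous increasing paths is automatically left-continuous, since the two suprema commute, so regularisation is only genuinely needed on the infimum side — but your construction gives the right object either way), and this is the deterministic analogue of the essential-supremum argument in Proposition \ref{prop:complete_lattice}. The genuine gap is in your second half, precisely at what you call the hard part. The identification $K=\L^0_{\Gc}(\Om,\Pc(E))$ is false on a general filtered probability space, and the paper grants no richness assumption with which to repair it: if $\Gc=\Fc$, then $\Lc(L|\Gc)=\delta_L$ a.s.\ for every $E$-valued $\Fc$-measurable $L$, so $K$ consists exclusively of random Dirac masses, whereas $\L^0_{\Gc}(\Om,\Pc(E))$ contains, e.g., the constant measure $\frac12(\delta_{\mathbf{l}^1}+\delta_{\mathbf{l}^2})$ with $\mathbf{l}^1\neq\mathbf{l}^2$. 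Your disintegration-plus-randomisation step needs a uniform variable independent of $\Gc$, which need not exist on the given $(\Om,\Fc,\P)$. This is also not the paper's route: the corollary is meant to follow from Proposition \ref{prop:complete_lattice} alone, by pushing the complete lattice of processes through the monotone map $\xi\mapsto\Lc(\xi|\Gc)$ onto its image; surjectivity onto the whole space of random measures is never invoked.

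Neither of your two fallback ideas can close this gap, and it is worth seeing why. The adjoint principle requires the monotone surjection to preserve suprema, and $\Lc(\cdot|\Gc)$ does not: for $X,Y$ i.i.d.\ nonconstant $E$-valued (on a rich space, with $\Gc$ trivial), the supremum in the image of the one-element family $\{\Lc(X)\}=\{\Lc(Y)\}$ is $\Lc(X)$ itself, while the image $\Lc(X\vee Y)$ of the lattice supremum strictly dominates it. The direct construction of suprema of measures is obstructed by the fact that $(\Pc(E),\le_p)$ (i.e.\ $\Gc$ trivial) is not a lattice at all: $E$ contains the two-step paths $v_{a,b}$ (equal to $a$ on $(0,T/2]$ and to $b$ on $(T/2,T)$, $a\le b$), an order-isomorphic copy of a planar square, and the classical bivariate counterexample transports there: after rescaling, $\mu_1=\frac12(\delta_{(0,3)}+\delta_{(3,6)})$ and $\mu_2=\frac12(\delta_{(1,5)}+\delta_{(2,4)})$ admit two incomparable minimal upper bounds $\frac12(\delta_{(1,5)}+\delta_{(3,6)})$ and $\frac12(\delta_{(2,4)}+\delta_{(3,6)})$, hence no least upper bound, and the monotone measurable retraction $v\mapsto v_{v(T/2),\,v(T-)}$ of $E$ onto the two-step paths would map any putative supremum in $\Pc(E)$ to a supremum among measures on that copy. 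So the statement is only salvageable in the form actually supported by Proposition \ref{prop:complete_lattice}, namely as a claim about the image $K$ of the process lattice $\L_0$ — and even that transfer requires an argument beyond monotonicity of the map (a monotone image of a complete lattice need not be complete), which is exactly the least-upper-bound issue you identified and left unresolved.
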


    \subsection{Proofs of the main theorems}\label{subsec:proof_theorems}

    \noindent\textsl{Proof of Theorem \ref{thm:continuity}.}
        First, let us define a map $\psi: K \to K$ by
        $$
        	\psi(m) = \Lc( \psi(X^{m}, \Lh^{m}) | \Gc),
	$$
        where $\Lh^{m} \vcentcolon= \sup_{t \in [0,\cdot)}L^{m}_t$
        and $L^{m}$ is the solution of the representation theorem \eqref{eq:BankEK_repres_intro} of $Y^{m}$ w.r.t. $f^{m}$.

	\vspace{0.5em}

	Let $\{m^n\}_{n \in \N} \subset K$ be such that
	$\lim_{n \to \infty}m^n = m^{\infty}$ in probability for some $m^{\infty} \in K$.
	By the conditions in the theorem, one has
		$$
			\lim_{n \to \infty}
			\E \bigg[ d_\D(X^{m^n}, X^{m^{\infty}})
                + \sup_{t \in [0,T]}
                    \big| Y^{m^n}_{t}
                         - Y^{m^{\infty}}_{t} \big|
                + \sup_{\ell \in \R}
                    \int_0^{T}
                        \big|f^{m^n}(s,\ell) - f^{m^{+\infty}}(s,\ell)\big|
                    ds
                \bigg]
            =
            0.
		$$
      	Then it follows by Theorem \ref{Thm:stability} that
        $$
            \lim_{n \to \infty}
            \P
            \big\{ d_\D(X^{m^n}, X^{m^{\infty}}) \geq \eps \big\}
                    +
            \P
            \big\{d_L\big(\Lh^{m^n}, \Lh^{m^{\infty}}\big) \geq \eps \big\}
            ~ = ~
            0,
        $$
        or equivalently,
        $$
            \lim_{n \to \infty}
            \P
            \Big\{\sqrt{
                    d_\D(X^{m^n}, X^{m^{\infty}})^2
                    + d_L\big(\Lh^{m^n}, \Lh^{m^{\infty}}\big)^2
                    }
            \geq \eps \Big\}
            ~ = ~
            0,
        $$
	The above implies the continuity of $m \mapsto (X^{m}, \Lh^{m})$ from $K$ to $\L^0_{\Fc}(\Om, \D \x \V^+)$.
	By the continuity of the map $\Psi$, one obtains the continuity of $\Lc(\Psi(\cdot)|\Gc): \L^0_{\Fc}(\Om, \D \x \V^+) \longrightarrow K$, which leads to the continuity of $\psi$.
	We can then conclude with Schauder fixed-point theorem for the existence of a fixed point $m^* \in K$ such that $m^* = \psi(m^*)$.

	\vspace{0.5em}

        In particular, the couple $\big(\Lc( \Psi(X^{m^*}, \Lh^{m^*}) | \Gc), L^{m^*}\big)$ gives a solution to the mean-field representation in \eqref{eq:MF_representation}.
	\qed
	\endproof

	\vspace{0.5em}

    \noindent \textsl{Proof of Theorem \ref{thm:monotonicity}.}
	Let us define  $\psi_1: K \longrightarrow \L^0_{\Fc}\big(\Om, \D \x \V^+\big)$, and $\psi_2: \L^0_{\Fc}\big(\Om, \D \x \V^+\big) \longrightarrow K$ by
	\begin{equation}
            \psi_1(m) := (X^m, \Lh^{m}),
            ~~
            \psi_2(X, \Lh) := \Lc(\Psi(X,\Lh)|\Gc).
        \end{equation}
	Then $\psi_2$ is order-preserving by order-preserving property of $\Psi$.
	We claim that $\psi_1$ is also order-preserving.

	\vspace{0.5em}

	Indeed, by Theorem \ref{thm:BErepresentationTheorem} (ii),
        for any $m \in K$, we have
        \begin{equation} \label{eq:defLm}
            L^{m}_t ~ = ~
            \essinf_{\sigma \in \Tc_{t,+}} \ell^{m}_{t,\sigma},
        \end{equation}
        where $\ell^{m}_{t,\sigma}$ is the unique $\Fc_t$-measurable random variable such that
        \begin{equation*}
            \E\bigg[
                \int_{t}^{\sigma}
                    f^{m}(s, \ell^{m}_{t,\sigma})
                ds
            \Big| \Fc_t\bigg]
            ~ = ~
            \E[Y^{m}(t) - Y^{m}(\sigma) | \Fc_t].
        \end{equation*}
        For any $m^1, m^2 \in K$ with $m_1 \leq_p m_2$, the conditions in the theorem implies that the process $Y^{m_1}(\cdot) - Y^{m^2}(\cdot)$ is a $\F$-supermartingale,
        then for any $\sigma \in \Tc_{t,+}$, we have
        \begin{equation*}
            Y^{m^1}(t) - Y^{m^2}(t)
            ~\geq ~
            \E[Y^{m^1}(\sigma) - Y^{m^2}(\sigma) | \Fc_t],
        \end{equation*}
        i.e.
        \begin{equation*}
            \E[Y^{m^1}(t) - Y^{m^1}(\sigma) | \Fc_t]
            ~ \geq ~
            \E[Y^{m^2}(t) - Y^{m^2}(\sigma) | \Fc_t].
        \end{equation*}
        Then since $f$ is order-reversing w.r.t. $m$ (see \eqref{eq:XYf_order}), we get the estimate
        \begin{align*}
            & ~
            \E\bigg[
                \int_{t}^{\sigma}
                    f^{m^1}(s, \ell^{m^1}_{t,\sigma})
                ds
            \Big| \Fc_t\bigg]
            =  ~
            \E[Y^{m^1}(t) - Y^{m^1}(\sigma) | \Fc_t]
            \\
            \geq & ~
            \E[\Yt^{m^2}(t) - Y^{m^2}(\sigma) | \Fc_t]
            =  ~
            \E\bigg[
                \int_{t}^{\sigma}
                    f^{m^2}(s, \ell^{m^2}_{t,\sigma})
                ds
            \Big| \Fc_t\bigg]
            \\
            \geq & ~
            \E\bigg[
                \int_{t}^{\sigma}
                    f^{m^1}(s, \ell^{m^2}_{t,\sigma})
                ds
            \Big| \Fc_t\bigg]
            ~\mbox{a.s.}
        \end{align*}
        Since $f^{m^1}(\cdot, \ell)$ is strictly increasing in $\ell$, one can then conclude that $\ell^{m^1}_{t,\sigma} \geq \ell^{m^2}_{t,\sigma}$,
        hence $\Lh^{m^1}_t \geq \Lh^{m^2}_t$ a.s. for all $t \in [0,T)$ by \eqref{eq:defLm}.
        This means that $\psi_1$ is also order-preserving.

  	\vspace{0.5em}

        Consequently, $\psi_2 \circ \psi_1 : K \to K$ is order-preserving.

	\vspace{0.5em}

        Finally, by the Tarski's fixed point theorem, there exists some $m^* \in K$ such that $\psi_2 \circ \psi_1 (m^*) = m^*$.
        Let $L^*$ be the solution of the representation theorem \eqref{eq:BankEK_repres_intro} of $Y^{m^*}$ w.r.t. $f^{m^*}$, $\Lh^* \vcentcolon= \psi_1(m^*)$, $m^* = \Lc\big(\Psi(X^{m^*},\Lh^*)|\Gc\big)$, then $(L^*, m^*)$ is a solution to the mean-field representation problem \eqref{eq:MF_representation}.
    \qed
    \endproof

    \vspace{0.5em}

	\noindent \textsl{Proof of Theorem \ref{thm:contraction}.}
	$\mathrm{(i)}$ Let $L$ be the unique solution to the representation \eqref{eq:BankEK_repres_intro} problem with $\Yt$ and $\ft$.
	For each $m \in \Pc(\V^+)$, let us define the left-continuous increasing process $L^m$ by
	\begin{equation} \label{eq:LmLm}
            L^m_{\cdot} := L_\cdot +
               m_{\cdot}(\phi).
	\end{equation}
	We observe that $L^m$ is the unique solution to the representation problem \eqref{eq:BankEK_repres_intro} with $\Yt$ and generator $f^m$.
	Then, in the setting of Theorem \ref{thm:contraction}, the mean-field representation formula \eqref{eq:MF_representation} reduces to
        \begin{equation}\label{eq:fixedpoint_uniqueness}
            m
            ~ = ~
            \P \circ \big(L + m_{\cdot}(\phi) \big)^{-1}.
        \end{equation}

        \noindent $\mathrm{(ii)}$ Given a solution $m$ to the fixed-point problem \eqref{eq:fixedpoint_uniqueness},
	by integrating $\phi$ w.r.t. the marginal law of both sides in \eqref{eq:fixedpoint_uniqueness} at any time $t \in [0,T]$, one has that
        \begin{equation*}
           	m_t(\phi)
            ~ = ~
            \E\big[\phi\big(L_t + m_t(\phi) \big)\big],
            ~\mbox{for all}~
            t \in [0,T].
        \end{equation*}
        In other words, for each $t \in [0,T]$, $m_t(\phi)$ is a solution to the fixed point problem
        \begin{equation}\label{eq:fixed_point_contraction}
            \Phi(t,y) := \E[\phi(L_t + y)] = y, ~ y \in \R.
        \end{equation}

	\noindent $\mathrm{(iii)}$ Let $(y^*_t)_{t \in [0,T]}$ be a solution to the fixed point problem \eqref{eq:fixed_point_contraction},
	then
	\begin{equation*}
		L + y^*
		~=~
		L + \E[\phi(L_\cdot+ y^*_\cdot)]
		~ = ~
		L + \int_{\R}\phi(x)\P \circ (L + y^*)^{-1}_{\cdot}(dx)
		~=~
		L^{\P \circ (L + y^*)^{-1}},
	\end{equation*}
        where the last equality follows by the fact that $L^m$ in \eqref{eq:LmLm} provides the unique solution solution to the representation problem \eqref{eq:BankEK_repres_intro}.
        Consequently, $(\P \circ (L + y^*)^{-1}, L + y^*)$ is a solution to the fixed point problem \eqref{eq:fixedpoint_uniqueness}.

        \vspace{0.5em}

        \noindent $\mathrm{(iv)}$ Finally, we prove that, for each $t \in [0,T]$, there exists a unique solution to \eqref{eq:fixed_point_contraction}.
	In fact, since $ \phi^\prime \in [0,1)$,
	one has
	$$
            \frac{\partial\Phi}{\partial y}(t,y)  = \E[\phi^\prime(L_t + y)] \in [0,1),
            ~\mbox{for all}~
            (t,y) \in [0,T] \x \R.
        $$
	Together with the boundedness of $\phi$, one obtains that the fixed point problem \eqref{eq:fixed_point_contraction} has a unique solution $y^*_t \in \R$ for each $t \in [0,T]$.
	Moreover, as $L_t$ is increasing in $t$, this implies that $\Phi(t,y) = \E[ \phi(L_t + y) ]$ is also increasing in $t$, and so is $y^*_t$ in $t$.
	\qed
	\endproof

\subsection{Proofs of the results in Section \ref{subsec:applications}}
\label{sec:proofs_applications}

Let us first provide

\vspace{0.5em}

	\noindent \textsl{Proof of Proposition \ref{prop:optimal_stopping_application}.}

	\vspace{0.5em}

	\noindent \textbf{Step 1. Define the Polish space $E$ and the tuple $(X^m, Y^m, f^m)$.}

        Let us fix $\varepsilon > 0$.
        Recall that
		there exists a $\delta_\varepsilon > 0$ independent of $\om$ and $\mu$
		such that
		$$
			|g_{\ell_1}(t, \om, \mu) - g_{\ell_2}(t, \om, \mu)| ~ \leq ~ \varepsilon,
			~~\mbox{whenever}~
			|\ell_1 - \ell_2| \leq \delta_\varepsilon,
		$$
        and denote by $\Et_\eps$ the collection of elements $(t_\ell)_{\ell \in \R}$ in $[0,T]^\R$ such that
        the map $\ell \in \R \longrightarrow t_\ell \in [0,T]$ is increasing and Lipschitz continuous with Lipschitz constant $\frac{1}{\delta_{\eps/(3T)}}$.

        Then we equip $\Et_\eps$ with the L\'evy metric $d_L$ so that $\Et_\eps$ is a Polish space and its topology induced by $d_L$ coincides with the product topology of $[0,T]^\R$ restricted on $\Et_\eps$.

        Let $E = \D \x \Et_\eps$, for each $m \in \L^0_{\Gc}(\Om, \Pc(E))$, we can define $X^m: [0,T] \x \Om \longrightarrow \R$, $Y^m: [0,T] \x \Om \longrightarrow \R$ and $f^m: [0,T] \x \Om \x \R \longrightarrow \R$ by
        $$
            X^m(t, \om)
            \vcentcolon=
            X^{m}(t, \om),
            ~~
            Y^m(t, \om)
            \vcentcolon=
            G(t, \om, m),
            ~~
            f^m(t, \om, \ell)
            \vcentcolon=
            g_\ell(t, \om, m).
        $$
        Then it is easy to verify that $\{(X^m, Y^m, f^m)\}_{m \in \L^0_{\Gc}(\Om, \Pc(E))}$ satisfy Assumption \ref{assum:Yfm}.

        \vspace{0,5em}

	\noindent \textbf{Step 2. Define the couple $(K, \Psi)$ and verify their properties.}

        Let $\Psi :\Om \x \D \x \V^+ \to \D \x [0,T]^\R$ be defined by
        $$
            \Psi(\om,\mathbf{x},\mathbf{l})
            \vcentcolon =
            \bigg(\mathbf{x}, \Big(\tau^{\mathbf{l} + \delta_{\eps/(3T)}I}_\ell\Big)_{\ell \in \R}\bigg),
        $$
        where 
	$$
		\tau^\mathbf{l}_\ell \vcentcolon= \inf\{t \geq 0; \mathbf{l}_{t} \geq \ell\}, 
		~\mbox{for all}~
		\ell \in \R 
		~\mbox{and}~
		\mathbf{l} \in \V^+,
	$$
	and $I$ is the identity function on $[0,T]$.
        It is easy to see that $\Psi$ takes value in $E$.

        \vspace{0,5em}

        Further, let the space $K$ be defined by
        \begin{align*}
            ~
            K
            ~  & \vcentcolon=  ~
            \overline{
            conv\big(\{
            \Lc((X^m, (\tau_\ell)_{\ell \in \R})|\Gc)
            :
            m \in \L^0_{\Gc}(\Om, \Pc(E)),
            \tau_\ell \in \Tc,
            \ell \in \R,
            (\tau_\ell)_{\ell \in \R} \in \Et_\eps
            \}\big)},
        \end{align*}
        where $conv(P)$ denote the convex hull of the set $P$ in $\L^0_\Gc(\Om, \Pc(E))$, i.e. the intersection of all convex sets in $\L^0_\Gc(\Om, \Pc(E))$ that contain $P$.
	
        Then the space $K$ is obviously nonempty, convex and closed.
        Tychonoff's theorem implies the compactness of $[0,T]^\R$, endowed with product metric.
        By the tightness of
            $\{\Lc(X^m|\Gc): m \in \L^0_{\Gc}(\Om, \Pc(\D \x [0,T]^\R))\}$
            and
            $\{\Lc((\tau_\ell)_{\ell \in \R}|\Gc) : \tau_\ell \in \Tc, \ell \in \R\}$,
        one has that $K$ is compact.
        Meanwhile, the space $\L^0_\Gc(\Om, \Pc(E))$ is a Hausdorff locally convex topological vector space.

	Next, for $\mu^n, \mu^\infty \in K$, $n \in \N$
	with $\lim_{n \to \infty}m^n = m^\infty$ in probability,
	since $\Et_\eps$ is a closed subset of $[0,T]^\R$,
        it holds that
        $\lim_{n \to \infty}m^n = m^\infty$ almost surely.
        By assumption (iii) in the statement, we have for any $\ell \in \R$,
        $$
    		\lim_{n \to \infty}
            \E \bigg[ d_\D(X^{m^n}, X^{m^{\infty}})
                + \sup_{t \in [0,T]}\big| Y^{m^n}_{t} - Y^{m^{\infty}}_{t} \big|
                + \int_0^{T} |f^{m^n}(t,\ell) - f^{m^\infty}(t,\ell)| dt
            \bigg]
            ~ = ~
            0.
        $$

        Then it remains to verify that the map $\Psi$ is continuous from $\D \x \V^+$ to $E$, for all $\om \in \Om$.
        We claim that if the elements $\{\mathbf{l}^n\}_{n \in \N}, \mathbf{l}^\infty$ in $\V^+$ are such that $\lim_{n \to \infty}d_L(\mathbf{l}^n,\mathbf{l}^\infty) = 0$ and $\mathbf{l}^n$, $\mathbf{l}^\infty$ are strictly increasing, then
	\begin{equation} \label{eq:claim_timing}
		\lim_{n \to \infty}\tau^{\mathbf{l}^n}_\ell = \tau^{\mathbf{l}^\infty}_\ell,
		~~\mbox{a.s., for all}~
		\ell \in \R.
	\end{equation}

	Indeed, since $\lim_{n \to \infty}d_L(\mathbf{l}^n,\mathbf{l}^\infty) = 0$,
	by Proposition \ref{prop:ac_[0,T]Polish},
        we have
        $\mathbf{l}^n_\cdot$ converges to $\mathbf{l}^\infty_\cdot$ on every continuity points of $L^\infty_\cdot$,
        and then $ \tau^{\mathbf{l}^n}_\ell$ converges to $ \tau^{\mathbf{l}^\infty}_\ell$ on every continuity points of $\ell \longmapsto \tau^{\mathbf{l}^\infty}_\ell$, as $n$ tends to $\infty$.
        At the same time, as inverse function of the strictly increasing function $t \longmapsto \mathbf{l}^{\infty}_t$, $\ell \longmapsto \tau^{\mathbf{l}^{\infty}}_\ell$ is continuous for all $\ell \in \R$.
	This proves the claim in \eqref{eq:claim_timing}.

        \vspace{0,5em}

	Further, by Proposition \ref{prop:ac_[0,T]Polish}, $\Psi$ is clearly continuous.
	Hence, we can apply Theorem \ref{thm:continuity} to obtain the existence of some $m^*_\varepsilon \in \L^0_{\Gc}(\Om,\Pc(E))$ such that
	$m^*_\varepsilon = \Lc(\Psi(X^{m^*_\varepsilon},\Lh^{m^*_\varepsilon})|\Gc) $,
	where $\Lh^m$ is the running maximum process defined by $\Lh^m_t \vcentcolon= \sup_{s \in [0,t)}L^m_s$ with the convention that $\sup \emptyset = -\infty$ and $L^m$ is the solution to the representation theorem \eqref{eq:BankEK_repres_intro} of $Y^m$ w.r.t. $f^m$.

	\vspace{0.5em}

	\noindent \textbf{Step 3. Existence of the $\eps$-mean field equilibrium.}

        We claim that the pair
        $$
        \bigg(
        m^*_\eps,
        \Big(\tau^{\Lh^{m^*_\varepsilon} + \delta_{\eps/(3T)}I}_\ell\Big)_{\ell \in \R}\bigg)
        $$
        is a $\varepsilon$-mean field equilibrium.
        The stopping time $\tau^{\Lh^{m^*_\varepsilon} + \delta_{\eps/(3T)}I}_\ell$ is hereinafter abbreviated as $\tau^{*,\varepsilon}_\ell$ and one may define a map $\psi: \V^+ \longrightarrow \V^+$ by
        $$
            \psi(\mathbf{l})
            \vcentcolon =
            \mathbf{l} + \delta_{\eps/(3T)}I.
        $$

        First, the consistency condition
        $$
            m^*_\varepsilon
            ~ = ~
            \Lc\bigg(\bigg(X^{m^*_\varepsilon}, \Big(\tau^{\Lh^{m^*_\varepsilon} + \delta_{\eps/(3T)}I}_\ell\Big)_{\ell \in \R}\bigg)\bigg|\Gc\bigg),
        $$
        holds by the definition of $\Psi$.

	\vspace{0.5em}

	Then it is sufficient to verify that, for any $\tau \in \Tc$, one has
	$$
		J_\ell\big(\tau^{*,\varepsilon}_\ell, \Psi(m^*_\varepsilon)\big)
		~ \geq ~
		J_\ell\big(\tau,\Psi(m^*_\varepsilon)\big) - \varepsilon.
	$$
        By Theorem \ref{thm:BErepresentationTheorem} (iii), we know that $\tau^{\Lh^{m^*_\varepsilon}}_\ell$(hereinafter abbreviated as $\tau^*_\ell$) is the smallest optimal stopping time that maximizes the objective functional $J_\ell$ w.r.t. $m^*_\varepsilon$,
        i.e. for any $\tau \in \Tc$, we have
        $$
            J_\ell\big(\tau^*_\ell,m^*_\varepsilon\big)
            ~ \geq ~
            J_\ell\big(\tau, m^*_\varepsilon\big).
        $$
        Then we have the estimation
        \begin{align*}
            & ~
            \Big|
            J_{\ell}
            \big(\tau^{*,\varepsilon}_{\ell}, m^*_\varepsilon\big)
            -
            J_\ell\big(\tau^*_\ell, m^*_\varepsilon\big)
            \Big|
        \\ = & ~
            \bigg|
            \E\bigg[
                \int_{0}^{\tau^{*,\varepsilon}_{\ell}}
                g_\ell\big(t, m^*_\varepsilon\big) dt
            +
                G\big(\tau^{*,\varepsilon}_{\ell}, m^*_\varepsilon\big)
            -
                \int_{0}^{\tau^{*}_{\ell}}
                g_\ell\big(t,\Psi(m^*_\varepsilon)\big) dt
            -
                G\big(\tau^{*}_{\ell}, m^*_\varepsilon\big)
            \bigg]
            \bigg|
        \\ = & ~
            \bigg|
            \E\bigg[
            \int_{0}^{\tau^{*,\varepsilon}_{\ell}}
                f^{m^*_\varepsilon}(t,\ell) dt
                + Y^{m^*_\varepsilon}(\tau^{*,\varepsilon}_{\ell})
            -
            \int_{0}^{\tau^{*}_{\ell}}
                f^{m^*_\varepsilon}(t,\ell) dt
                - Y^{m^*_\varepsilon}(\tau^{*}_{\ell})
            \bigg]
            \bigg|
        \\ = & ~
            \bigg|
            \E\bigg[
            \int_{0}^{\tau^{*,\varepsilon}_{\ell}}
                f^{m^*_\varepsilon}(t,\ell) dt
                +
            \int_{\tau^{*,\varepsilon}_{\ell}}^{T}
                f^{m^*_\varepsilon}\big(t,\sup_{r \in [\tau^{*,\varepsilon}_{\ell},t)}
                    L^{m^*_\varepsilon}_r\big) dt
            \\ & ~~~~~~~~~~~~~~~~~~~~~~~~~~~~~~~~~~~~~~~~~-
            \int_{0}^{\tau^{*}_{\ell}}
                f^{m^*_\varepsilon}(t,\ell) dt
                -
            \int_{\tau^{*}_{\ell}}^{T}
                f^{m^*_\varepsilon}\big(t,\Lh^{m^*_\varepsilon}_t\big) dt
            \bigg]
            \bigg|
        \\ = & ~
            \bigg|
            \E\bigg[
            \int_{0}^{T}
                f^{m^*_\varepsilon}\Big(t,\sup_{r \in [\tau^{*,\varepsilon}_{\ell},t)}
                    L^{m^*_\varepsilon}_r\mathds{1}_{\{\psi(\Lh^{m^*_\varepsilon})_t \geq \ell\}} + \ell\mathds{1}_{\{\psi(\Lh^{m^*_\varepsilon})_t < \ell\}}\Big) dt
            \\ &~~~~~~~~~~~~~~~~~~~~~~~~~~~~~~~~~~~~~~~~~-
            \int_{0}^{T}
                f^{m^*_\varepsilon}\big(t,\Lh^{m^*_\varepsilon}_t \vee \ell
                \big) dt
            \bigg]
            \bigg|
        \\ \leq & ~
            \bigg|
            \E\bigg[
            \int_{0}^{T}
                f^{m^*_\varepsilon}\Big(t,\psi(\Lh^{m^*_\varepsilon})_t \vee \ell \Big) dt
            -
            \int_{0}^{T}
                f^{m^*_\varepsilon}\big(t,\Lh^{m^*_\varepsilon}_t \vee \ell
                \big) dt
            \bigg]
            \bigg|
            \\ & ~ + ~
            \bigg|
            \E\bigg[
            \int_{0}^{T}
                f^{m^*_\varepsilon}\Big(t,\Lh^{m^*_\varepsilon}_t\mathds{1}_{\{\psi(\Lh^{m^*_\varepsilon})_t \geq \ell\}} + \ell\mathds{1}_{\{\psi(\Lh^{m^*_\varepsilon})_t < \ell\}}\Big) dt
                \\ & ~~~~~~~~~~~~~~~~~~~~~~~~~~~~~~~~~~~~~~~~~-
            \int_{0}^{T}
                f^{m^*_\varepsilon}\Big(t,\psi(\Lh^{m^*_\varepsilon})_t\vee \ell \Big) dt
            \bigg]
            \bigg|
            \\ & ~ + ~
            \bigg|
            \E\bigg[
            \int_{0}^{T}
                f^{m^*_\varepsilon}\Big(t,\sup_{r \in [\tau^{*,\varepsilon}_{\ell},t)}
                    L^{m^*_\varepsilon}_r\mathds{1}_{\{\psi(\Lh^{m^*_\varepsilon})_t \geq \ell\}} + \ell\mathds{1}_{\{\psi(\Lh^{m^*_\varepsilon})_t < \ell\}}\Big) dt
                \\ & ~~~~~~~~~~~~~~~~~~~~~~~~~~~~~~~~~~~~~~~~~-
                \int_{0}^{T}
                    f^{m^*_\varepsilon}\Big(t,\Lh^{m^*_\varepsilon}_t\mathds{1}_{\{\psi(\Lh^{m^*_\varepsilon})_t \geq \ell\}} + \ell\mathds{1}_{\{\psi(\Lh^{m^*_\varepsilon})_t < \ell\}}\Big) dt
                \bigg]
            \bigg|
        \\ \leq & ~
        \varepsilon.
        \end{align*}
        In above, the first two equalities follow by the definition of $J_\ell$, $f^m$ and $Y^m$,
        the third equality holds by Theorem \ref{thm:BErepresentationTheorem} and that $\tau^{*}_\ell$, is a hitting time of $\Lh^{m^*_\varepsilon}$,
        the fourth equality holds by the strictly increasing property of $\ell \longmapsto f^m(t, \om, \ell)$, the first inequality is trivial.
        For the last inequality, it is enough to notice that on $\{s < \tau^{*,\varepsilon}_{\ell}\}$, $L^{m^*_\varepsilon}_s < \ell - \delta_{\eps/(3T)}s$
        and for $t > \tau^{*,\varepsilon}_{\ell}$, $\sup_{r \in [\tau^{*,\varepsilon}_{\ell},t)}
                    L^{m^*_\varepsilon}_r \geq \ell - \delta_{\varepsilon/(3T)}T
                    \geq
                    L^{m^*_\varepsilon}_s - \delta_{\varepsilon/(3T)}$T,
        and hence $\sup_{r \in [\tau^{*,\varepsilon}_{\ell},t)}
                    L^{m^*_\varepsilon}_r \geq \Lh^{m^*_\varepsilon}_t - \delta_{\varepsilon/(3T)}T \geq \sup_{r \in [\tau^{*,\varepsilon}_{\ell},t)}
                    L^{m^*_\varepsilon}_r - \delta_{\varepsilon/(3T)}$T.

	\vspace{0.5em}

	The above estimation implies the $\varepsilon$-optimality of $\tau^{*,\varepsilon}_\ell$,
	which are defined as hitting times of the process $\Lh^{m^*_\varepsilon} + \delta_{\eps/(3T)}I$.
	We hence conclude the proof.
	
	\qed
	\endproof

	\vspace{0.5em}

	\noindent \textsl{Proof of Proposition \ref{prop:optimal_stopping_application_order}.}

	\vspace{0.5em}

	\noindent \textbf{Step 1. Define the space $E$ and the tuple $(X^m, Y^m, f^m)_m$.}

	Let $E = I_c \x [0,T]^\R$.
	Then for each $m \in \L^0_{\Gc}(\Om, \Pc(I_c \x [0,T]^\R))$, we define $X^m: [0,T] \x \Om \longrightarrow \R$, $Y^m: [0,T] \x \Om \longrightarrow \R$ and $f^m: [0,T] \x \Om \x \R \longrightarrow \R$ by
	$$
            X^m(t, \om)
            ~ \vcentcolon = ~
            X^\mu(t, \om),
            ~
            Y^m(t, \om)
            ~ \vcentcolon = ~
            G(t, \om, \mu),
            ~
            f^m(t, \om, \ell)
            ~ \vcentcolon = ~
            g_\ell(t, \om, \mu),
	$$
	where $\mu \in \L^0_{\Gc}(\Om, \Pc(I_c \x [0,T]^\R))$ is such that
	$(\varphi,\mathrm{Id})\#\mu = m$.
	Then it is easy to verify that $\{(X^m, Y^m, f^m)\}_{m \in \L^0_{\Gc}(\Om, \Pc(I_c \x [0,T]^\R))}$ satisfy Assumption \ref{assum:Yfm}.

	\vspace{0,5em}

	\noindent \textbf{Step 2. Define the set $K$, the map $\Psi$ and verify their properties.}

        Let us define $K$ and $\Psi$ as follows,
        \begin{align*}
             ~
            K           ~ &  \vcentcolon = ~
            \big\{ \Lc \big( (X,(\tau_\ell)_{\ell \in \R}) \big | \Gc \big) :
            X \in \L^0_\Fc(\Om, I_c), \tau_\ell \in \Tc, \ell \in \R \big \},
            \\
            \Psi
            ~ & : \Om \x \D \x \V^+ \longrightarrow I_c \x [0,T]^\R
            \\
            & ~
            (\om,\mathbf{x},\mathbf{l}) \longmapsto (\varphi(\mathbf{x}), (\tau^\mathbf{l}_\ell)_{\ell \in \R}),
        \end{align*}
        where $\tau^\mathbf{l}_\ell \vcentcolon= \inf\{t \geq 0; \mathbf{l}_{t} \geq \ell\}$.
        Since the space $\Tc$ equipped with the almost sure partial order is a complete lattice, when $T < +\infty$, the product space $\Tc^{\R}$ equipped with the product order is still a complete lattice.
        On the other hand, since $I_c$ is a closed interval on $\R$, by the definition of essential supremum and essential infimum, $\L^0_\Fc(\Om, I_c)$ is a complete lattice with almost sure order.
        Then by the order-preserving property of the conditional expectation map $\Lc(\cdot|\Gc)$, one has that $K$ is a complete lattice.

\vspace{0.5em}

        The map $\Psi$ is increasing, for all $\om \in \Om$, in the sense that
        for any $(\mathbf{x}_1,\mathbf{l}_1), (\mathbf{x}_2,\mathbf{l}_2) \in \D \x \V^+$, with
        $\mathbf{l}_1 \leq_v \mathbf{l}_2$ and $\mathbf{x}_1 \leq_\D \mathbf{x}_2$,
        one has that $\tau^{\mathbf{l}_1}_\ell \leq \tau^{\mathbf{l}_2}_\ell$ for all $\ell \in \R$ and $\varphi(\mathbf{x}_1) \le \varphi(\mathbf{x}_2)$.

        \vspace{0.5em}

        Now by Theorem \ref{thm:monotonicity}, one obtains the existence of some $m^* \in \L^0_\Gc(\Om,\Pc(I_c \x [0,T]^\R))$ such that
        $m^* = \Lc(\Psi(X^{m^*}, \Lh^{m^*})|\Gc)$,
        where $\Lh^m$ is the running maximum process defined by $\Lh^m_t \vcentcolon= \sup_{s \in [0,t)}L^m_s$ with the convention that $\sup \emptyset = -\infty$ and $L^m$ is the solution to the representation theorem \eqref{eq:BankEK_repres_intro} of $Y^m$ w.r.t. $f^m$.

        \vspace{0.5em}

	\noindent \textbf{Step 3. Existence of the mean field equilibrium.}

	Finally, we claim that $\Big(\Lc((X^{m^*},  (\tau^{\Lh^{m^*}}_\ell)_{\ell \in \R})|\Gc), (\tau^{\Lh^{m^*}}_\ell)_{\ell \in \R}\Big)$ is a mean field equilibrium.

	\vspace{0.5em}
	
        In fact, by Theorem \ref{thm:BErepresentationTheorem} (iii), \eqref{eq:assumption_timing_order} and
        $$
            m^* = \Lc \big(  \big( \varphi(X^{m^*}), (\tau^{\Lh^{m^*}}_\ell)_{\ell \in \R}) \big| \Gc \big),
        $$
        one has that $\tau^{\Lh^{m^*}}_\ell$ is the smallest optimal stopping time that maximizes the objective functional $J_\ell$ w.r.t. $\Lc((X^{m^*},  (\tau^{\Lh^{m^*}}_\ell)_{\ell \in \R})|\Gc)$,
        i.e. for any $\tau \in \Tc$, it holds that
        $$
            J_\ell\big(\tau^{\Lh^{m^*}}_\ell,\Lc((X^{m^*},  (\tau^{\Lh^{m^*}}_\ell)_{\ell \in \R})|\Gc)\big)
            ~ \geq ~
            J_\ell\big(\tau,\Lc((X^{m^*},  (\tau^{\Lh^{m^*}}_\ell)_{\ell \in \R})|\Gc)\big),
        $$
        and this concludes the proof.
        \qed
	\endproof

	\vspace{0.5em}

	\noindent \textsl{Proof of Proposition \ref{prop:singular_control_application}.}

	\vspace{0.5em}

	\noindent \textbf{Step 1. Define the tuple $(X^m, Y^m, f^m)_m$.}

        For each $m \in \L^0_{\Gc}(\Om, \Pc(\D \x \Pi_{i \in \N}\V^+_{\thetau_i}))$, we define $X^m: [0,T] \x \Om \longrightarrow \R$, $Y^m: [0,T] \x \Om \longrightarrow \R$ and $f^m: [0,T] \x \Om \x \R \longrightarrow \R$ by
        $$
            X^m(t, \om)
            ~ \vcentcolon= ~
            X^{m}(t, \om),
            ~
            Y^m(t, \om)
            ~ \vcentcolon= ~
            k(t, \om, m),
            ~
            f^m(t, \om, \ell)
            ~ \vcentcolon= ~
            c^\prime(t, \om, m),
            ~ \ell \in \R.
        $$
	Then it is easy to verify that $\{(X^m, Y^m, f^m)\}_{m \in \L^0_{\Gc}(\Om, \Pc(\D \x \Pi_{i \in \N}\V^+_{\thetau_i}))}$ satisfy Assumption \ref{assum:Yfm}.

	\vspace{0.5em}

	\noindent \textbf{Step 2. Define the tuple $(E, K, \Psi)$ and verify their properties.}

	We specify the tuple $(E, K, \Psi)$ as follows:
	\begin{align*}
            E
            ~  & \vcentcolon =  ~
            \D \x \Pi_{i \in \N}\V^+_{\thetau_i}
            \\
            ~
            K
            ~  & \vcentcolon =  ~
            \overline{
            conv\{
            \Lc((X^m, (L^i)_{i \in \N})|\Gc):
            m \in \L^0_{\Gc}(\Om, \Pc(\D \x \Pi_{i \in \N}\V^+_{\thetau_i})),
            ~
            L^i \in \Ab_i,
            ~
            i \in \N
            \}},
            \\ ~
            \Psi
            ~ & : \Om \x \D \x \V^+ \longrightarrow \D \x \Pi_{i \in \N}\V^+_{\thetau_i})
            \\
            & ~
            (\om, \mathbf{x}, \mathbf{l}) \longmapsto (\mathbf{x}, (\thetau_i \vee \mathbf{l} \wedge \Thetah_i(\om))_{i \in \N}),
        \end{align*}
        where $K$ is endowed with the topology of weak convergence,
        $K \subset \L^0_\Gc(\Om, \Pc(\D \x \Pi_{i \in \N}\V^+_{\thetau_i}))$, $\D \x \Pi_{i \in \N}\V^+_{\thetau_i}$ is endowed with the product metric.

	\vspace{0.5em}

	The space $K$ is obviously nonempty, convex and closed.
	Then the tightness of
          $\{\Lc(X^m|\Gc): m \in \L^0_{\Gc}(\Om, \Pc(\D \x \Pi_{i \in \N}\V^+_{\thetau_i}))\}$
            and
          $\{\Lc((L^i)_{i \in \N}|\Gc) : L^i \in \Ab_i,~ i \in \N\}$
	implies the compactness of $K$.
	Besides, $K$ is a subset of the Hausdorff locally convex topological vector space $\L^0_\Gc(\Om, \Pc(\D \x \Pi_{i \in \N}\V^+_{\thetau_i}))$ endowed with the topology of convergence in probability.

	\vspace{0.5em}

	Further, the map $\Psi$ is clearly continuous, for all $\om \in \Om$.
	Hence, one can apply Theorem \ref{thm:continuity} to obtain the existence of some $m^* \in \L^0_{\Gc}(\Om,\Pc(\D \x \Pi_{i \in \N}\V^+_{\thetau_i}))$ such that
        $m^* = \Lc(\Psi(X^{m^*},\Lh^{m^*})|\Gc)$,
        where $\Lh^m_t \vcentcolon= \sup_{s \in [0,t)}L^m_s$ with the convention that $\sup \emptyset = -\infty$ and $L^m$ is the solution to the representation theorem \eqref{eq:BankEK_repres_intro} of $Y^m$ w.r.t. $f^m$.

        \vspace{0.5em}

	\noindent \textbf{Step 3. Existence of the mean field equilibrium.}

        Finally, we claim that $(m^*, (\thetau_i \vee \Lh^{m^*} \wedge \Thetah_i)_{i \in \N})$ is a mean field equilibrium.

	\vspace{0.5em}

        Indeed, the consistency condition clearly holds true because
        $$
            m^*
            ~ = ~
            \Lc(\Psi(X^{m^*},\Lh^{m^*})|\Gc)
            ~ = ~
            \Lc((X^{m^*},(\thetau_i \vee \Lh^{m^*} \wedge \Thetah_i)_{i \in \N})|\Gc) .
        $$
	Moreover, Theorem \ref{thm:Bank_singular_control} in Bank \cite{Bank2004} implies the required optimality of $\thetau_i \vee \Lh^{m^*} \wedge \Thetah_i$  in the definition of MFG in Definition \ref{def:MFE_singular_control}.
	Thus, the proof is concluded.
    \qed
    \endproof

    \vspace{0.5em}

	\noindent \textsl{Proof of Proposition \ref{prop:singular_control_application_order}.}

	\vspace{0.5em}

	\noindent\textbf{Step 1. Define the space $E$ and the tuple $(X^m, Y^m, f^m)_m$.}

	\vspace{0.5em}

	Let $E = I_c \x \Pi_{i \in \N}\V^+_{\thetau_i}$.

	\vspace{0.5em}

	Then for each $m \in \L^0_{\Gc}(\Om, \Pc(I_c\x \Pi_{i \in \N}\V^+_{\thetau_i}))$,
	we define $X^m: [0,T] \x \Om \longrightarrow \R$, $Y^m: [0,T] \x \Om \longrightarrow \R$ and $f^m: [0,T] \x \Om \x \R \longrightarrow \R$ by
	$$
            X^m(t, \om)
            ~ \vcentcolon = ~
            X^\mu(t, \om),
            ~
            Y^m(t, \om)
            ~ \vcentcolon = ~
            G(t, \om, \mu),
            ~
            f^m(t, \om, \ell)
            ~ \vcentcolon = ~
            g_\ell(t, \om, \mu),
        $$
        where $\mu \in \L^0_{\Gc}(\Om, \Pc(\D \x \Pi_{i \in \N}\V^+_{\thetau_i}))$ is such that
        $(\varphi \ox \mathrm{Id})\#\mu = m$.
        Then it is easy to verify that $\{(X^m, Y^m, f^m)\}_{m \in \L^0_{\Gc}(\Om, \Pc(I_c \x \Pi_{i \in \N}\V^+_{\thetau_i}))}$ satisfy Assumption \ref{assum:Yfm}.

        \vspace{0.5em}

	\noindent \textbf{Step 2. Define the set $K$, the map $\Psi$ and verify their properties.}

	\vspace{0.5em}

        Now one may specify the couple $(K, \Psi)$ as follows:
        \begin{align*}
            ~
            K
            ~ &  \vcentcolon = ~
            \{
            \Lc((X, (L^i)_{i \in \N})|\Gc):
            X \in \L^0_\Fc(\Om, I_c),
            L^i \in \Ab_i,
            ~
            i \in \N
            \},
            \\ ~
            \Psi
            ~ & : \Om \x \D \x \V^+ \longrightarrow I_c \x \Pi_{i \in \N}\V^+_{\thetau_i}
            \\
            & ~
            (\om, \mathbf{x}, \mathbf{l}) \longmapsto (\varphi(\mathbf{x}),(\thetau_i \vee \mathbf{l} \wedge \Thetah_i(\om))_{i \in \N}).
        \end{align*}

        In Proposition \ref{prop:complete_lattice}, we proved that given two stochastic processes $L^-, L^+ \in \L^0_\Fc(\Om, \V^+)$ such that $L^- \le_l L^+$, the set
        $\{L \in \L^0_\Fc(\Om,\V^+): L^- \le L \le L^+~\mbox{a.s.}\}$ is a complete lattice endowed with the order $\le_{l}$.
        Then the partially ordered set
        $
            \{(L^i)_{i \in \N}: L^i \in \Ab_i, i \in \N\}
        $
        is still a complete lattice endowed with the product order.
        On the other hand, since $I_c$ is a closed interval on $\R$, by the definition of essential supremum and essential infimum, $\L^0_\Fc(\Om, I_c)$ is a complete lattice with almost sure order.
        Then by the order-preserving property of the conditional expectation map $\Lc(\cdot|\Gc)$, we have $K$ is a complete lattice.

        \vspace{0.5em}

        Then by Theorem \ref{thm:monotonicity}, one obtains the existence of some $m^* \in \L^0_\Gc(\Om,\Pc(I_c \x \Pi_{i \in \N}\V^+_{\thetau_i}))$ such that
        $m^* = \Lc(\Psi(X^{m^*}, \Lh^{m^*})|\Gc)$,
        where $\Lh^m$ is the running maximum process of $L^m$, with $L^m$ being the solution to the representation theorem \eqref{eq:BankEK_repres_intro} of $Y^m$ w.r.t. $f^m$.

        \vspace{0.5em}

	\noindent \textbf{Step 3. Existence of the mean field equilibrium.}

        Finally, we claim that
        $\Big(\Lc((X^{m^*}, (\thetau_i \vee \Lh^{m^*} \wedge \Thetah_i)_{i \in \N}|\Gc), (\thetau_i \vee \Lh^{m^*} \wedge \Thetah_i)_{i \in \N}\Big)$
        is a mean field equilibrium.
	In fact, by Theorem \ref{thm:Bank_singular_control} in Bank \cite{Bank2004},  \eqref{eq:assumption_singular_control_order} and
        $$
            m^* = \Lc(( \varphi(X^{m^*}),  (\thetau_i \vee \Lh^{m^*} \wedge \Thetah_i)_{i \in \N})|\Gc),
        $$
        one has that $\thetau_i \vee \Lh^{m^*} \wedge \Thetah_i$ maximize $J(\cdot, \Lc((X^{m^*}, (\thetau_i \vee \Lh^{m^*} \wedge \Thetah_i)_{i \in \N}|\Gc))$ over $\Ab_i$, for $i \in \N$, thus one can conclude our proof.
	\qed
	\endproof

	\vspace{0.5em}

	\noindent \textsl{Proof of Proposition \ref{prop:optimal_consumption_application}.}

	\vspace{0.5em}

	\noindent \textbf{Step 1. Define the tuple $(X^m, Y^m, f^m)_m$.}

	\vspace{0.5em}

        For each $m \in \L^0_{\Gc}(\Om, \Pc(\D \x \D_-))$, we define $X^m: [0,T] \x \Om \longrightarrow \R$, $Y^m: [0,T] \x \Om \longrightarrow \R$ and $f^m: [0,T] \x \Om \x \R \longrightarrow \R$ by
        $$
            X^m(t, \om)
            ~ \vcentcolon= ~
            r^{m}(t, \om),
            ~
            Y^m(t, \om)
            ~ \vcentcolon= ~
            -\lambda e^{-\beta t}e^{-\int_0^t r^m_s ds},
        $$
        \begin{equation*}
            f^m(t, \om, \ell)
            ~ \vcentcolon = ~
            \left\{
                \begin{aligned}
                    & -u^\prime(t, \om, m, \frac{-e^{-\beta t}}{\ell})
                       ~ & ~\mbox{if}~ \ell < 0,
                    \\
                    & \ell & ~\mbox{if}~ \ell \ge 0.
                \end{aligned}
            \right.
        \end{equation*}
        Then it is easy to verify that $\{(X^m, Y^m, f^m)\}_{m \in \L^0_{\Gc}(\Om, \Pc(\D \x \D_-))}$ satisfy Assumption \ref{assum:Yfm}.

        \vspace{0.5em}

	\noindent\textbf{Step 2. Define the space $E$, the set $K$, the map $\Psi$  and verify their properties.}

	\vspace{0.5em}

        Let us define two maps $\psi_1$ and $\psi_2$ as follows:
        \begin{align*}
            \psi_1 : \V^+ & \longrightarrow \V^+
            ~&~
            \psi_2 : \V^+_{-\frac{1}{\eta},-\frac{1}{\etah}} &  \longrightarrow \D_-
            \\
            \mathbf{l} & \longmapsto -\frac{1}{\eta} \vee \mathbf{l} \wedge -\frac{1}{\etah},
            ~&~
            \mathbf{l} & \longmapsto \frac{-e^{-\beta \cdot}}{\mathbf{l}}.
        \end{align*}
        Then one may specify the space $E$, the set $K$ and the map $\Psi$ as follows:
        \begin{align*}
            ~ E
            & =
            \D \x \D_-,
            \\
            ~\Psi
             & : \Om, \D \x \V^+ \longrightarrow \D \x \D_-
            \\
            & ~
            (\om, \mathbf{x}, \mathbf{l}) \longmapsto (\mathbf{x},\psi_2 \circ \psi_1(\mathbf{l})),
            \\
            K
             & \vcentcolon =   ~
            \overline{
            conv\{
            \Lc(\Psi(X^m, L) |\Gc):
            m \in \L^0_{\Gc}(\Om, \Pc(\D \x \D_-)),
            ~
            L \in \L^0_\Fc(\Om, \V^+)
            \}},
        \end{align*}
        where $K \subset \L^0_\Gc(\Om, \Pc(\D \x \D_-))$ is endowed with the topology of weak convergence.

	\vspace{0.5em}

	The space $K$ is obviously nonempty, convex and closed.
        The tightness of $\{\Lc(\psi_1(L) |\Gc): L \in \L^0_\Fc(\Om,\V^+)\}$ is given by Proposition \ref{prop:tightness}.
        Then by the definition of $\Psi$, continuity, injectivity of $\psi_2$, one has the tightness of $\{\Lc(\psi(L) |\Gc) : L \in \L^0_\Fc(\Om,\V^+)\}$.
        Thus, together with the tightness of
        $\{\Lc(X^m |\Gc) : m \in \L^0_{\Gc}(\Om, \Pc(\D \x \D_-))\}$,
        the set $K$ is compact.

	\vspace{0.5em}

	Further, the map $\Psi$ is clearly continuous, for all $\om \in \Om$.
	
	\vspace{0.5em}
	
	Hence, one can apply Theorem \ref{thm:continuity} to obtain the existence of some $m^* \in \L^0_{\Gc}(\Om,\Pc(\D \x \D_-))$ such that
        $m^* = \Lc(\Psi(X^{m^*},\Lh^{m^*})|\Gc)$,
        where $\Lh^m_t \vcentcolon= \sup_{s \in [0,t)}L^m_s$ with the convention that $\sup \emptyset = -\infty$ with $L^m$ being the solution in \eqref{eq:BankEK_repres_intro} of $Y^m$ w.r.t. $f^m$.

        \vspace{0.5em}

	\noindent \textbf{Step 3. Existence of the mean field equilibrium.}

	\vspace{0.5em}

        Finally, we claim that $(\Lc((r^{m^*}, Y^{C^*})|\Gc), C^*)$ is a mean field equilibrium, where $C^*$ is the consumption plan such that
        $Y^{C^*} = e^{-\beta t} \Big( \eta \vee \frac{- 1}{\Lh^{m^*}_t} \Big)$, and for simplicity, $\Lc((r^{m^*}, Y^{C^*})|\Gc)$ is abbreviated as $\mt^*$.
	
        In fact, Theorem \ref{thm:Bank_optimal_consumption} implies the consumption plan $C^*$ is optimal for $U(C, m^*)$ at the cost $b^* = \E[\int_{0}^{+\infty}e^{-\int_0^\cdot r^{m^*}_s ds} d C^*_t]$,
        \eqref{eq:assumption_singular_control_order} implies that
        $U(C,m^*) = U(C, \mt^*)$ and $C^*$ is optimal for $U(C, \mt^*)$ at the cost
        $\E[\int_{0}^{+\infty}e^{-\int_0^\cdot r^{\mt^*}_s ds} d C^*_t]$, thus one can conclude the proof.
	\qed
	\endproof

	\vspace{0.5em}

	\noindent \textsl{Proof of Proposition \ref{prop:optimal_consumption_application_order}.}

	\vspace{0.5em}

	\noindent \textbf{Step 1. Define the space $E$ and the tuple $(X^m, Y^m, f^m)_m$.}

	\vspace{0.5em}

	Let $E \vcentcolon = I_c \x \D_-$ so that $E$ is a Polish space.

	\vspace{0.5em}

	For each $m \in \L^0_{\Gc}(\Om, \Pc(I_c \x \D_-))$, we define $X^m: [0,T] \x \Om \longrightarrow \R$, $Y^m: [0,T] \x \Om \longrightarrow \R$ and $f^m: [0,T] \x \Om \x \R \longrightarrow \R$ by
        $$
            X^m(t, \om)
            ~ \vcentcolon= ~
            r^{\mu}(t, \om),
            ~
            Y^m(t, \om)
            ~ \vcentcolon= ~
            -\lambda e^{-\beta t}e^{-\int_0^t r^\mu_s ds},
        $$
        \begin{equation*}
            f^m(t, \om, \ell)
            ~ \vcentcolon = ~
            \left\{
                \begin{aligned}
                    & -u^\prime(t, \om, \mu, \frac{-e^{-\beta t}}{\ell})
                       ~ & ~\mbox{if}~ \ell < 0,
                    \\
                    & \ell & ~\mbox{if}~ \ell \ge 0,
                \end{aligned}
            \right.
        \end{equation*}
        where $\mu \in \L^0_{\Gc}(\Om, \Pc(\D \x \Pi_{i \in \N}\V^+_{\thetau_i}))$ is such that
        $(\varphi \ox \mathrm{Id})\#\mu = m$.
        Then it is easy to verify that $\{(X^m, Y^m, f^m)\}_{m \in \L^0_{\Gc}(\Om, \Pc(I_c \x \D_-))}$ satisfy Assumption \ref{assum:Yfm}.

	\vspace{0.5em}

	\noindent \textbf{Step 2. Define the set $K$, the map $\Psi$ and verify their properties.}

	\vspace{0.5em}

        We define two maps $\psi_1$ and $\psi_2$ as follows:
        \begin{align*}
            \psi_1 : \V^+ & \longrightarrow \V^+
            ~&~
            \psi_2 : \V^+_{-\frac{1}{\eta},-\frac{1}{\etah}} &  \longrightarrow \D_-
            \\
            \mathbf{l} & \longmapsto -\frac{1}{\eta} \vee \mathbf{l} \wedge -\frac{1}{\etah},
            ~&~
            \mathbf{l} & \longmapsto \frac{-e^{-\beta \cdot}}{\mathbf{l}}.
        \end{align*}
        Then one may specify the space $E$, the set $K$ and the map $\Psi$ as follows:
        \begin{align*}
            ~ E
            & =
            \D \x \D_-,
            \\
            ~\Psi
             & : \Om, \D \x \V^+ \longrightarrow \D \x \D_-
            \\
            & ~
            (\om, \mathbf{x}, \mathbf{l}) \longmapsto (\varphi(\mathbf{x}),\psi_2 \circ \psi_1(\mathbf{l})),
            \\
            K
             & \vcentcolon =   ~
            \{
            \Lc((X, \psi_2 \circ \psi_1(L))|\Gc):
            X \in \L^0_{\Fc}(\Om, I_c),
            ~
            L \in \L^0_\Fc(\Om, \V^+)
            \}.
        \end{align*}

        In Proposition \ref{prop:complete_lattice}, we proved that
        $\L^0 \vcentcolon= \{L \in \L^0_\Fc(\Om,\V^+): L_- \le L \le L^+~\mbox{a.s.}\}$ is a complete lattice endowed with the order $\le_{l}$, for some $L^-, L^+ \in \L^0_\Fc(\Om, \V^+)$.
        As a special case of $\L^0$,
        $
            \{\psi_1(L): L \in \L^0_\Fc(\Om, \V^+)\}
        $
        is a complete lattice.
        Then by the order-preserving property of $\psi_2$, the partially ordered set
        $
            \{\psi(L): L \in \L^0_\Fc(\Om, \V^+)\}
        $
        is still a complete lattice endowed with the almost sure order.
        On the other hand, since $I_c$ is a closed interval on $\R$, by the definition of essential supremum and essential infimum, $\L^0_\Fc(\Om, I_c)$ is a complete lattice with the almost sure order.
        Thus, by the order-preserving property of the conditional expectation map $\Lc(\cdot|\Gc)$, it holds that $K$ is a complete lattice.

	\vspace{0.5em}

        Then by Theorem \ref{thm:monotonicity}, one obtains the existence of some $m^* \in \L^0_{\Gc}(\Om,\Pc(I_c \x \D_-))$ such that
        $m^* = \Lc(\Psi(X^{m^*},\Lh^{m^*})|\Gc)$,
        where $\Lh^m_t \vcentcolon= \sup_{s \in [0,t)}L^m_s$ with the convention that $\sup \emptyset = -\infty$ and $L^m$ being the solution to \eqref{eq:BankEK_repres_intro} of $Y^m$ w.r.t. $f^m$.

	\vspace{0.5em}

	\noindent \textbf{Step 3. Existence of the mean field equilibrium.}

	\vspace{0.5em}

        Finally, we claim that $(\Lc((r^{m^*}, Y^{C^*})|\Gc), C^*)$ is a mean field equilibrium, where $C^*$ is the consumption plan such that
        $Y^{C^*} = e^{-\beta t} \Big( \eta \vee \frac{- 1}{\Lh^{m^*}_t} \Big)$, and for simplicity, $\Lc((r^{m^*}, Y^{C^*})|\Gc)$ is abbreviated as $\mt^*$.
	
        In fact, by Theorem \ref{thm:Bank_optimal_consumption} and \eqref{eq:consumption_order_equivalence},
        one has that the consumption plan $C^*$ is optimal for the utility $U(C, \Lc(r^{m^*}, \psi_1(Y^{C^*}))|\Gc))$ at the cost $b^* = \E[\int_{0}^{+\infty}e^{-\int_0^\cdot r_s ds} d C^*_t]$, and
        \eqref{eq:consumption_order_restriction} implies that
        $U(C,\Lc((r^{m^*}, \psi_1(Y^{C^*}))|\Gc)) = U(C, \mt^*)$ and $C^*$ is optimal for $U(C, \mt^*)$ at the cost
        $b^*$, thus one can conclude the proof.
    \qed
    \endproof

    \vspace{0.5em}

    \noindent \textsl{Proof of Proposition \ref{prop:optimal_consumption_contraction}.}

	\vspace{0.5em}

	\noindent \textbf{Step 1. Define the tuple $(Y^\mu, f^\mu)_\mu$.}

	\vspace{0.5em}

	There exists a constant $\lambda \in \R$, such that, for one $\mu \in \Pc(\V^+_{-\frac{1}{\eta}})$ (and hence for all $\mu \in \Pc(\V^+_{-\frac{1}{\eta}})$),
	with $Y: [0,T] \x \Om \longrightarrow \R$ and $f^\mu: [0,T] \x \Om \x \R \longrightarrow \R$ defined by
        $$
            Y(t, \om)
            ~ \vcentcolon= ~
            -\lambda e^{-\beta t}e^{-\int_0^t r_s ds},
        $$
        \begin{equation*}
            f^\mu(t, \om, \ell)
            ~ \vcentcolon = ~
            \left\{
                \begin{aligned}
                    & -\ut^\prime \Big(t, \om, \frac{-e^{-\beta t}}{\ell - \mu_t(\phi)} \Big)
                       ~ & ~\mbox{if}~ \ell < 0,
                    \\
                    & \ell & ~\mbox{if}~ \ell \ge 0,
                \end{aligned}
            \right.
        \end{equation*}
        the corresponding optional process $L^{\mu}$ in  the representation \eqref{eq:BankEK_repres_intro} is almost surely non-decreasing.
        Then it is easy to verify that $\{(Y^\mu, f^\mu)\}_{\mu \in \Pc(\D_-)}$ satisfy Assumption \ref{assum:Yfm}.

        \vspace{0.5em}

	\noindent \textbf{Step 2. Define the function $\Psi$ and verify its property.}

	\vspace{0.5em}

        We define the map $\Psi$ as the following:
        \begin{align*}
            \Psi : &  \Om \x \V^+ \longrightarrow \V^+_{-\frac{1}{\eta}}
            \\
            & ~ (\om, \mathbf{l}) \longmapsto \frac{-1}{\eta} \vee \mathbf{l}
        \end{align*}
        Then by Theorem \ref{thm:contraction}, one obtains some $\mu^* \in \Pc(\V^+_{-\frac{1}{\eta}})$ such that
        $\mu^* = \P \circ (\frac{-1}{\eta} \vee \Lh^{\mu^*})^{-1}$,
        where $\Lh^{\mu^*}_t \vcentcolon= \sup_{s \in [0,t)}L^{\mu^*}_s$ with the convention that $\sup \emptyset = -\infty$ and $L^{\mu^*}$ is solution to the representation theorem \eqref{eq:BankEK_repres_intro} of $Y^{\mu^*}$ with generator $f^{\mu^*}$.

	\vspace{0.5em}

	\noindent \textbf{Step 3. Existence of the mean field equilibrium.}

	\vspace{0.5em}

        Finally, we claim that $(\P \circ (e^{-\beta \cdot}(\eta \vee \frac{-1}{ \Lh^{\mu^*_\cdot}})^{-1}, C^* ) )$ is a mean field equilibrium, where $C^*$ is the consumption plan such that
        $Y^{C^*} = e^{-\beta \cdot}(\eta \vee \frac{-1}{ \Lh^{\mu^*_\cdot}})$.
	Indeed, by Theorem \ref{thm:Bank_optimal_consumption}, one has that the consumption plan $C^*$ is optimal at the cost $\E[\int_{0}^{+\infty}e^{-\int_0^t r_s ds} d C^*_t]$.
    \qed
    \endproof

\begin{appendix}
\section{Bank-El Karoui's representation theorem}

	We recall here Bank-El Karoui's representation theorem as well as some first properties of the representation processes.

	\begin{definition} \label{def:USCE}
		An optional process $Y$ of class $(D)$ is said to be upper-semicontinuous in expectation if, for any $\tau \in \Tc$ and any sequence $(\tau_n)_{n \ge 1} \subset \Tc$ satisfying either
		$$
			(\tau_n)_{n\ge 1} ~\mbox{is non-decreasing,}~
    			\tau_n < \tau ~\mbox{on}~\{\tau > 0\},~\mbox{for each}~n\ge 1,
    			~\lim_{n \to \infty} \tau_n = \tau,
    		$$
    		or
    		$$
                \tau_n \ge \tau,~\mbox{a.s., for each}~ n \ge 1,~\lim_{n \to \infty} \tau_n = \tau,
    		$$
    		one has
    		$$
    			\E[Y_{\tau}] ~\geq~ \limsup_{n\to \infty}\E[Y_{\tau_n}].
    		$$
    	\end{definition}

	We are given a function $f: [0,T] \x \Om \x \R \longrightarrow \R$, which satisfies the following condition.

	\begin{assumption}\label{Assumption:BErepresentation}
		For all $(t, \om) \in [0,T] \x \Om$, the map $\ell \longmapsto f(t,\om, \ell)$ is continuous and strictly increasing from $-\infty$ to $+\infty$.
		Moreover, for each $\ell \in \R$, the process $(t, \om) \longmapsto f(t, \om, \ell)$ is progressively measurable, and satisfies
		$$
			\E \bigg[\int_{0}^{T} \big| f(t, \ell) \big| dt \bigg] < \infty.
		$$
	\end{assumption}

	\begin{theorem}[Bank-El Karoui's representation, \cite{BankKaroui2004, BankFollmer}] \label{thm:BErepresentationTheorem}
		Let  $f: [0,T] \x \Om \x \R \longrightarrow \R$ satisfy Assumption \ref{Assumption:BErepresentation}.
		Then for every optional process $Y$ of class (D) and u.s.c. in expectation such that $Y_T = 0$,
		the following holds true.

    		\vspace{0.5em}	
    		
		\noindent $\mathrm{(i)}$ There exists an optional process $L : [0,T] \x \Om \longrightarrow \R$, such that
		\begin{equation*}
			\E\bigg[\int_{\tau}^{T} \Big|f\Big( t, \sup_{v\in [\tau,t)}L_v\Big)\Big| dt \bigg] < \infty,
    			~~\mbox{for all}~ \tau \in \Tc,
    		\end{equation*}
		and
		\begin{equation}\label{eq:BErepresentation}
			Y_\tau = \E\bigg[\int_{\tau}^{T} f\Big(t, \sup_{s \in [\tau,t)}L_s \Big) dt \Big| \Fc_\tau\bigg],
			~\mbox{a.s., for all}~
			\tau \in \Tc.
		\end{equation}
    		
		\noindent $\mathrm{(ii)}$
		Let $L$ be a solution to the representation \eqref{eq:BErepresentation}, which is progressively measurable and upper-right continuous.
		Then
		\begin{equation}
			L_\tau ~ = ~
			\essinf_{\sigma \in \Tc_{\tau, +}} \ell_{\tau,\sigma},
		\end{equation}
		where $\Tc_{\tau,+} = \{\sigma \in \Tc_\tau: \sigma > \tau ~\mbox{on}~ \{\tau < T\}\}$, $\ell_{\tau,\sigma}$ is the unique $\Fc_\tau$-measurable random variable such that
		\begin{equation*}
			\E\bigg[\int_{\tau}^{\sigma}
			f(t, \ell_{\tau,\sigma})
			dt\Big| \Fc_\tau\bigg]
			~ = ~
			\E[Y_\tau - Y_\sigma | \Fc_\tau].
		\end{equation*}
		In particular, such an upper-right continuous solution $L$ is unique up to optional section.
		
		\vspace{0.5em}

		\noindent $\mathrm{(iii)}$ Let the optional process $L$ be a solution to the representation \eqref{eq:BErepresentation}.
		Then for any $\ell \in \R$,
		the stopping times
		\begin{equation*}
			\tau_{\ell}
            ~\vcentcolon=~
            \inf \big\{ t\geq 0 ~: L_{t-} \geq \ell \big\},
			~
            \tauh_{\ell}
            ~\vcentcolon=~
            \inf \big\{ t\geq 0 ~: L_{t-} > \ell \big\}
		\end{equation*}
		are the smallest solution and the biggest solution of the optimal stopping problem:
		\begin{equation}\label{OptimalStopping:BErepresentation}
			\sup_{\tau \in \Tc}
			~\E\bigg[Y_\tau + \int_{0}^{\tau}h(t, \ell) dt \bigg].
		\end{equation}
    	\end{theorem}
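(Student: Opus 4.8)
The plan is to realize $L$ as a generalized inverse of a monotone family of optimal stopping times, and then to verify the representation by a Stieltjes layer-cake argument combined with Fubini's theorem. For each level $\ell \in \R$ I would first analyse the auxiliary optimal stopping problem with reward $t \mapsto Y_t + \int_0^t f(s,\ell)\,ds$. Since $Y$ is of class (D) and upper semi-continuous in expectation and $f(\cdot,\ell)$ is integrable, the classical theory of optimal stopping produces a Snell envelope
\[
Z^\ell_\tau \vcentcolon= \esssup_{\sigma \in \Tc,\, \sigma \ge \tau} \E\Big[ Y_\sigma + \int_\tau^\sigma f(t,\ell)\, dt \,\Big|\, \Fc_\tau \Big],
\]
which is a class (D) supermartingale whose first contact time with the reward,
\[
\tau_\ell \vcentcolon= \inf\Big\{ t \ge 0 :\ \esssup_{\sigma \ge t} \E\big[ Y_\sigma + \int_t^\sigma f(s,\ell)\, ds \,\big|\, \Fc_t \big] = Y_t \Big\},
\]
is the smallest optimal stopping time. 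The key structural input is monotonicity: because $\ell \mapsto f(s,\ell)$ is strictly increasing, raising $\ell$ rewards waiting, so $\ell \mapsto \tau_\ell$ is non-decreasing; and because $f(s,\cdot)$ exhausts $\R$, one gets $\tau_\ell \downarrow 0$ as $\ell \to -\infty$ and $\tau_\ell \uparrow T$ as $\ell \to +\infty$.

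Next I would build $L$ by inverting this family. Setting the running maximum $\Lh_t \vcentcolon= \sup\{\ell \in \R : \tau_\ell < t\}$ (with $\sup\emptyset = -\infty$) forces the level-crossing equivalence $\{\Lh_t \ge \ell\} = \{\tau_\ell < t\}$ by construction. Equivalently, and this is the route to part $\mathrm{(ii)}$, I would define $L$ pointwise through the one-step formula $L_\tau \vcentcolon= \essinf_{\sigma \in \Tc_{\tau,+}} \ell_{\tau,\sigma}$, where $\ell_{\tau,\sigma}$ is the unique $\Fc_\tau$-measurable random variable solving
\[
\E\Big[ \int_\tau^\sigma f(t, \ell_{\tau,\sigma})\, dt \,\Big|\, \Fc_\tau \Big] = \E\big[ Y_\tau - Y_\sigma \,\big|\, \Fc_\tau \big].
\]
Existence and uniqueness of $\ell_{\tau,\sigma}$ follow from the strict monotonicity and intermediate value property of $f(t,\cdot)$ together with monotone and dominated convergence, while $\Fc_\tau$-measurability comes from a measurable selection argument; upper-right continuity of $L$ is then obtained by regularizing this pointwise definition.

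The heart of the argument, and the step I expect to be the main obstacle, is verifying that the constructed $L$ yields
\[
\E\Big[ \int_\tau^T f\Big(t, \sup_{\tau \le s < t} L_s \Big)\, dt \,\Big|\, \Fc_\tau \Big] = Y_\tau
\]
for every $\tau \in \Tc$. Here I would fix $\tau$, introduce the post-$\tau$ running maximum $\Lh^\tau_t \vcentcolon= \sup_{\tau \le s < t} L_s$ and the post-$\tau$ hitting times $\tau^\tau_\ell \vcentcolon= \inf\{s \ge \tau : \Lh^\tau_s \ge \ell\}$, and exploit the martingale optimality principle for the $\ell$-problem: on the set where stopping is optimal the envelope equals the reward, which ties $Y$, $f$ and $\tau^\tau_\ell$ together in an indifference identity. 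Writing $f(t,\Lh^\tau_t)$ through a Stieltjes layer-cake decomposition of $f(t,\cdot)$ and substituting the equivalence $\{\Lh^\tau_t \ge \ell\} = \{\tau^\tau_\ell < t\}$, an application of Fubini's theorem interchanges the $d\ell$- and $dt$-integrations and collapses the double integral, via the optimality identities, precisely to $Y_\tau$. The delicate points are the boundary behaviour as $\ell \to \pm\infty$ (controlled by the integrability of $f$ and by $Y_T = 0$), the justification of the interchange (class (D) of $Y$ and the a priori bound on $f(t,\sup_{\tau \le s < t} L_s)$), and the passage between left and right limits of the running maximum, which requires the optional section theorem to upgrade identities from deterministic levels to optional stopping times.

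Finally, parts $\mathrm{(ii)}$ and $\mathrm{(iii)}$ would follow as corollaries. Uniqueness of an upper-right continuous solution is immediate once the one-step equation is seen to determine $\ell_{\tau,\sigma}$ uniquely: any two such solutions share the representation $L_\tau = \essinf_{\sigma \in \Tc_{\tau,+}} \ell_{\tau,\sigma}$ up to optional section, hence agree by the section theorem. For part $\mathrm{(iii)}$, the level-crossing equivalence established in the construction identifies the hitting times $\inf\{t : L_{t-} \ge \ell\}$ and $\inf\{t : L_{t-} > \ell\}$ with $\tau_\ell$ and $\tau_{\ell+}$; combined with the standard dichotomy that the first contact time of the Snell envelope is the smallest and its left-limit version the largest optimal stopping time, this shows they are exactly the smallest and biggest maximizers of $\sup_{\tau \in \Tc} \E[ Y_\tau + \int_0^\tau f(t,\ell)\, dt ]$.
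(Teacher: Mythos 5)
You should first be aware that the paper does not prove this statement at all: Theorem \ref{thm:BErepresentationTheorem} is recalled in the appendix as a known result, with proof deferred entirely to the cited works of Bank--El Karoui \cite{BankKaroui2004} and Bank--F\"ollmer \cite{BankFollmer}. So your attempt can only be measured against those original proofs, and at the level of strategy it reconstructs them faithfully: the level-indexed auxiliary optimal stopping problems, the monotonicity of $\ell \longmapsto \tau_\ell$ and its limits as $\ell \to \pm\infty$, the Gittins-index-type inversion producing the running maximum, and the identification in part $\mathrm{(iii)}$ of the level-passage times with the extremal optimizers are exactly the ingredients of the original argument.

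Judged as a proof rather than a roadmap, however, there is a genuine gap precisely where you place the ``main obstacle'', and it is not just bookkeeping. Your verification step rests on a Stieltjes layer-cake decomposition of $f(t,\cdot)$ followed by ``an application of Fubini's theorem'' interchanging the $d\ell$- and $dt$-integrations. But the Stieltjes measure $f(t,d\ell)$ depends on $t$ (and on $\om$), so $dt\,f(t,d\ell)$ is not a product measure, and Fubini does not yield an iterated integral with the $\ell$-integration outermost: the object one would need, namely $\int_{\R}\big(\int_\tau^T \mathds{1}_{\{\tau^\tau_\ell < t\}}\, f(t,d\ell)\, dt\big)$, is not even well formed. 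Making this step rigorous requires either absolute continuity of $f$ in $\ell$ (not granted --- the theorem assumes only continuity and strict monotonicity) or a reorganization of the argument around increments of the value functions $\ell \longmapsto \sup_{\tau \in \Tc} \E\big[Y_\tau + \int_0^\tau f(t,\ell)\,dt\big]$ via an envelope-theorem identity, together with a careful treatment of the boundary behaviour as $\ell \to \pm\infty$, where both limiting values are infinite and only differences make sense. Two further points are glossed over: the level-crossing identity $\{\Lh_t \ge \ell\} = \{\tau_\ell < t\}$ does not hold ``by construction'' but only up to left/right-limit corrections (which is exactly why part $\mathrm{(iii)}$ is stated with $L_{t-}$ and distinguishes $\ge$ from $>$); and part $\mathrm{(ii)}$ cannot be obtained by defining $L_\tau \vcentcolon= \essinf_{\sigma \in \Tc_{\tau,+}} \ell_{\tau,\sigma}$ and ``regularizing'', since that formula is the uniqueness characterization of a solution --- showing that it actually defines one is essentially the existence problem over again. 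None of this makes your plan wrong in spirit (it is the plan of \cite{BankKaroui2004}), but the unresolved verification step is the substantial core of that paper, which is presumably why the present paper cites the theorem rather than reproving it.
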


	\begin{remark}
		In the deterministic setting with $\Fc = \{\Om, \emptyset\}$, so that $f(\cdot, \ell)$ and $Y$ can be seen as deterministic functions on $[0,T]$.
		Let $f(\cdot, \ell) = \ell$ for all $\ell \in \R$.
		Then given $Y: [0,T] \longrightarrow \R$, with the representation process $L: [0,T) \longrightarrow \R$,
		the function $\int_{\cdot}^{T} \sup_{r \in [t,s)}L_r ds$ turns to be the convex envelope of $- Y|_{[t,T]}$,
		and $L_t$ is the derivative of the convex envelope of $- Y|_{[t,T]}$ at time $t$.

 	\end{remark}

\section{Fixed-point theorems and partially ordered Polish space}
\label{sec:fixed_point}

\subsection{Some fixed-point theorems}

	We recall in this section three fixed-point theorems which are used in the paper.
	It seems that most of the fixed-point theorems in the literature can be considered as variations or extensions of them.
	
	\vspace{0.5em}
	

	Let us first present Schauder fixed-point theorem, which is frequently used in the fields of differential equations and game theory.
	
	\begin{theorem}[Schauder fixed-point theorem, Theorem 3.2 \cite{Bonsall1962}]\label{thm:Schauder}
		Let $V$ be a Hausdorff locally convex topological vector space and $K \subset V$ be a nonempty convex closed subset.
		Let $T: K \longrightarrow K$ be continuous, and such that $T(K)$ is precompact.
		Then $T$ has a fixed point.
	\end{theorem}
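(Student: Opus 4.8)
The plan is to reduce the statement to the finite-dimensional Brouwer fixed-point theorem by means of finite-rank Schauder projections that approximate the identity on the compact set $\overline{T(K)}$. Since $T(K)$ is precompact and $K$ is closed, its closure $C_0 := \overline{T(K)}$ is a compact subset of $K$. The topology of the Hausdorff locally convex space $V$ is generated by a family of continuous seminorms, which I may take to be directed under finite maxima; I would index the construction by pairs $(p,\eps)$, where $p$ ranges over this family and $\eps > 0$, directing the index set by letting $p$ grow and $\eps$ shrink, so as to produce a net of approximate fixed points.

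First I would build the Schauder projection. Fixing $(p,\eps)$, compactness of $C_0$ lets me choose finitely many centres $y_1,\dots,y_n \in T(K)$ such that the $p$-balls $\{y : p(y - y_i) < \eps\}$ cover $C_0$. Setting $\lambda_i(y) := \max\{0,\eps - p(y - y_i)\}$, define
\[
	P_{p,\eps}(y) := \frac{\sum_{i=1}^{n} \lambda_i(y)\, y_i}{\sum_{i=1}^{n} \lambda_i(y)}, \qquad y \in C_0 .
\]
The denominator is strictly positive on $C_0$ because the balls cover it, so $P_{p,\eps}$ is continuous, takes values in the simplex $C := \mathrm{conv}\{y_1,\dots,y_n\}$, and satisfies $p\big(P_{p,\eps}(y) - y\big) \le \eps$ for all $y \in C_0$, since $\lambda_i(y) > 0$ forces $p(y - y_i) < \eps$. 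Because each $y_i \in T(K) \subseteq K$ and $K$ is convex, the compact convex set $C$ lies in $K$, and $C$ is contained in a finite-dimensional subspace of $V$.

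Next I would apply Brouwer's theorem. The map $P_{p,\eps} \circ T$ restricted to $C$ is a continuous self-map of the finite-dimensional compact convex set $C$, so Brouwer's fixed-point theorem yields a point $x_{p,\eps} \in C$ with $P_{p,\eps}\big(T(x_{p,\eps})\big) = x_{p,\eps}$, and the approximation estimate gives $p\big(x_{p,\eps} - T(x_{p,\eps})\big) \le \eps$. To pass to the limit, I note that the net $\big(T(x_{p,\eps})\big)$ lies in the compact set $C_0$, so a subnet converges to some $y^* \in C_0 \subseteq K$. Along this subnet $x_{p,\eps} \to y^*$ as well, because $p(x_{p,\eps} - T(x_{p,\eps})) \to 0$ for every continuous seminorm $p$ (using that $p$ is allowed to grow along the directed index set); continuity of $T$ then gives $T(x_{p,\eps}) \to T(y^*)$, and uniqueness of limits in the Hausdorff space forces $T(y^*) = y^*$.

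The main obstacle is the infinite-dimensional passage to the limit together with the seminorm bookkeeping: one must index the approximate fixed points by the directed set of pairs $(p,\eps)$ and verify, after passing to a subnet, that $T(x_{p,\eps})$ converges and that the residual $x_{p,\eps} - T(x_{p,\eps})$ vanishes simultaneously for all seminorms — this is precisely where the precompactness hypothesis and the Hausdorff property are indispensable. The finite-dimensional heart of the argument, Brouwer's fixed-point theorem, is classical, and I would invoke it directly rather than reprove it; everything else reduces to the elementary continuity and convexity estimates sketched above.
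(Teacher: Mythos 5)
The paper itself does not prove this statement: it is recalled in the appendix as a classical result, cited from Bonsall's lecture notes (Theorem 3.2 of \cite{Bonsall1962}), so there is no internal proof to compare against. Your argument is the standard proof of the Schauder--Tychonoff theorem and it is correct: covering the compact set $C_0=\overline{T(K)}$ by finitely many $p$-balls with centres $y_i\in T(K)$, the Schauder projection $P_{p,\eps}$ into $\mathrm{conv}\{y_1,\dots,y_n\}\subset K$ with $p\big(P_{p,\eps}(y)-y\big)\le\eps$ on $C_0$, Brouwer's theorem applied to $P_{p,\eps}\circ T$ on that finite-dimensional compact convex set, and the limit along the directed set of pairs $(p,\eps)$ using compactness of $C_0$ and the Hausdorff property are exactly the ingredients of the classical argument (and essentially the proof given in the cited reference). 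The one point worth flagging is terminological: you read ``precompact'' as ``has compact closure,'' which is what your limit passage requires (a subnet of $T(x_{p,\eps})$ must converge \emph{in} $V$); if ``precompact'' is instead read as ``totally bounded'' in a not-necessarily-complete space, the theorem needs a completeness assumption on $K$ (this is how Bonsall states it), since otherwise the candidate fixed point could be a missing limit point. Under the reading forced by the paper's statement, your proof is complete; the finite-dimensional reduction, the positivity of the denominator of $P_{p,\eps}$ on $C_0$, and the simultaneous vanishing of the residual $x_{p,\eps}-T(x_{p,\eps})$ for all seminorms are all handled correctly.
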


	The second one is Tarski's fixed point theorem, under some order structure condition.

	\begin{definition}
		$\mathrm{(i)}$
		Let $L$ be a set, the relation $\leq$ is called a partial order if, for all $\mathbf{l} _1, \mathbf{l} _2, \mathbf{l} _3 \in L$, one has
		$$
			\mathbf{l} _1 \leq \mathbf{l} _1,
			~~~~~
			\mathbf{l} _1 \leq \mathbf{l} _2,~ \mathbf{l} _2 \leq \mathbf{l} _3
			\Longrightarrow
			\mathbf{l} _1 \leq \mathbf{l} _3,
			~~~~~
			\mathbf{l} _1 \leq \mathbf{l} _2, ~\mathbf{l} _2 \leq \mathbf{l} _1
			\Longrightarrow
			\mathbf{l} _1 = \mathbf{l} _2.
		$$
		\noindent $\mathrm{(ii)}$
		A partially ordered set $L$ is called a complete lattice if,
		for any subset $L_0 \subset L$, there exist some $\mathbf{l}^-, \mathbf{l}^+ \in L$, such that
		\begin{itemize}
			\item $\mathbf{l}^- \le \mathbf{l} \le \mathbf{l}^+$, for all $\mathbf{l} \in L_0$,
			
			\item if there exists another pair $\lt^-, \lt^+ \in L$, such that $\lt^- \le \mathbf{l} \le \lt^+$ for all $\mathbf{l} \in L_0$,
			then $\lt^- \le \mathbf{l}^-$ and $\mathbf{l}^+ \le \lt^+$.
		\end{itemize}

    \noindent $\mathrm{(iii)}$
        Suppose that $f$ is a function from a partially ordered set $L_1$ to a partially ordered set $L_2$, we say it is order preserving (order reversing) if for any $\mathbf{l} _1, \mathbf{l} _2 \in L_1$ with $\mathbf{l} _1 \leq \mathbf{l} _2$, $f(\mathbf{l} _1) \leq f(\mathbf{l} _2), (f(\mathbf{l} _2) \leq f(\mathbf{l} _1))$.
    \end{definition}

	The Tarski's fixed point theorem gives an existence result, which is based on the iteration method.
	\begin{theorem}[Tarski's fixed point theorem]\label{thm:Tarski}
		Let $(L,\leq)$ be a complete lattice and let $T : L \to L$ be a monotonic function w.r.t. $\leq$.
		Then the set of all fixed points of $T$ is also a (nonempty) complete lattice under $\leq$.
		Moreover, there exists a unique least fixed point and a unique largest fixed point.
	\end{theorem}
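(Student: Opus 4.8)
The plan is to prove the three assertions in turn — existence of a greatest fixed point, existence of a least fixed point, and the complete-lattice structure of the whole fixed-point set $\mathrm{Fix}(T) := \{x \in L : T(x) = x\}$ — relying throughout on the fact that suprema and infima of arbitrary subsets exist in $L$ by the complete-lattice hypothesis, and that monotonicity of $T$ lets one compare $T$ of a sup (resp.\ inf) with the sup (resp.\ inf) of the images.

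First I would introduce the set of post-fixed points $P := \{x \in L : x \le T(x)\}$ and set $u := \sup P$, which exists since $L$ is a complete lattice. For each $x \in P$ one has $x \le u$, hence $T(x) \le T(u)$ by monotonicity, and combining with $x \le T(x)$ gives $x \le T(u)$; thus $T(u)$ is an upper bound of $P$ and therefore $u \le T(u)$, i.e.\ $u \in P$. Applying $T$ once more yields $T(u) \le T(T(u))$, so $T(u) \in P$ and hence $T(u) \le u$ by the definition of $u$ as a supremum. The two inequalities give $u = T(u)$. Since every fixed point lies in $P$, it is dominated by $u$, so $u$ is the greatest fixed point. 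The least fixed point $v$ is obtained by the dual argument, working with the pre-fixed points $Q := \{x : T(x) \le x\}$ and $v := \inf Q$.

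For the complete-lattice claim, let $S \subseteq \mathrm{Fix}(T)$ be arbitrary and put $w := \sup_L S$. For every $x \in S$ one has $x = T(x) \le T(w)$ because $x \le w$, so $T(w)$ is an upper bound of $S$ and hence $w \le T(w)$. Consider the interval $[w, \top] := \{y \in L : w \le y\}$, where $\top := \sup L$; this is itself a complete lattice, and the inequality $w \le T(w)$ together with monotonicity shows that $T$ maps $[w,\top]$ into itself, since $y \ge w$ implies $T(y) \ge T(w) \ge w$. Applying the least-fixed-point result already established to the restriction $T|_{[w,\top]}$ produces a fixed point $s$ that is the smallest fixed point lying above $w$. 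One then checks that $s$ is exactly the supremum of $S$ inside $\mathrm{Fix}(T)$: it is a fixed point dominating every element of $S$, and any fixed point that is an upper bound of $S$ must dominate $w$, hence lies in $[w,\top]$ and therefore dominates $s$. The infimum of $S$ in $\mathrm{Fix}(T)$ is produced symmetrically, using the interval $[\bot, w']$ with $w' := \inf_L S$ and the greatest-fixed-point result.

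The routine parts are the two symmetric least/greatest arguments and the verification that $[w,\top]$ is a complete lattice stable under $T$. The main obstacle, and the only genuinely subtle point, is the complete-lattice claim: one must resist the temptation to take $\sup_L S$ itself as the supremum in $\mathrm{Fix}(T)$ — it need not be a fixed point — and instead relocate the problem to the sublattice $[w,\top]$, where the already-proven existence of a least fixed point supplies the correct candidate. Getting the two inequalities right, namely $w \le T(w)$ and the fact that any fixed upper bound of $S$ dominates $s$, is where care is needed.
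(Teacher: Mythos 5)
Your proposal is correct: the post-fixed-point argument ($u=\sup\{x : x\le T(x)\}$ is the greatest fixed point, dually for the least), followed by relocating the supremum of a set $S$ of fixed points into the sub-complete-lattice $[\sup_L S,\top]$ and invoking the least-fixed-point result there, is exactly the classical Knaster--Tarski proof, and every step you give goes through. Note that the paper itself states this theorem in its appendix as a recalled classical result and offers no proof at all, so there is nothing to compare against; if anything, the paper's remark that the theorem ``is based on the iteration method'' is slightly misleading, since the canonical argument you give is non-iterative (transfinite iteration is the route of Kleene-type fixed-point theorems, which need extra continuity hypotheses). One cosmetic point: with the paper's definition of complete lattice (suprema and infima of \emph{all} subsets, including the empty one), your argument already covers $S=\emptyset$ — then $w=\sup_L\emptyset$ is the bottom of $L$ and your construction returns the least fixed point, as it should — so nonemptiness of $\mathrm{Fix}(T)$ and the empty-set case come for free from what you wrote.
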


\subsection{Partially ordered Polish space}

	We recall the notion of the partially ordered Polish space,
	and present a result from Kamae and Krengel \cite{KamaeKrengel1978},
	that the space of all probability measures on a partially ordered Polish space is still a partially ordered Polish space.

	\begin{definition}\label{def:pos}
		A partially ordered Polish space is a Polish space $X$ equipped with a partial order $\le$ such that the graph set $\{(x,y) \in X^2: x \le y\}$ is a closed subset of $X^2$.
	\end{definition}

	Let $(E,\le_e)$ be a partially ordered Polish space, $\Pc(E)$ be the space of all (Borel) probability measures on $E$ equipped with the weak convergence topology.
	A function $f: E \longrightarrow \R$ is said to be increasing if $f(e_1) \le f(e_2)$ for all $e_1 \le_e e_2$.
	Then based on the partial order $\le_e$ on $E$, we introduce a partial order $\le_p$ on $\Pc(E)$ as follows:
	Let $\Gamma$ denote the space of all bounded increasing measurable function $f: \E \longrightarrow \R$, for $\P, \Q \in \Pc(E)$, we say\
	$$
		\P \le_p \Q,
		~~\mbox{if and only if}~~
		\int_E f(x) \P(dx) \le  \int_E f(x) \Q(dx), ~\mbox{for all}~f \in \Gamma.
	$$

	\begin{theorem}[Kamae and Krengel \cite{KamaeKrengel1978}]\label{thm:p.o.PolishSpace}
		Let $(E, \le_e)$ be a partially ordered Polish space. Then $(\Pc(E), \le_p)$ is also a partially ordered Polish space.
	\end{theorem}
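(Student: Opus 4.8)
The plan is to take for granted, as already recorded in the preliminaries of Section~\ref{subsec:Prelim}, that $\Pc(E)$ endowed with the weak convergence topology is itself Polish whenever $E$ is; thus the only two things left to verify are that $\le_p$ is a partial order and that its graph $\{(\P,\Q)\in\Pc(E)^2 : \P\le_p\Q\}$ is closed. Reflexivity and transitivity of $\le_p$ are immediate: $\int_E f\,d\P\le\int_E f\,d\P$ for every $f\in\Gamma$, and if $\P\le_p\Q$ and $\Q\le_p\P'$ then $\int_E f\,d\P\le\int_E f\,d\Q\le\int_E f\,d\P'$ for all $f\in\Gamma$. So the work is concentrated in antisymmetry and in closedness of the graph.

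For antisymmetry, I would exploit that the defining class $\Gamma$ of bounded increasing measurable functions already contains every indicator $\mathds{1}_U$ of a measurable \emph{up-set} $U$ (a set with $x\in U$, $x\le_e y\Rightarrow y\in U$). Hence $\P\le_p\Q$ and $\Q\le_p\P$ force $\P(U)=\Q(U)$ for all measurable up-sets $U$. Up-sets are stable under finite intersection, so they form a $\pi$-system; moreover, using the closedness of the order and the separability of $E$, the closed up-sets $\{y:x\le_e y\}$ and the open up-sets $\{y:y\not\le_e x\}$ generate $\Bc(E)$. Dynkin's $\pi$-$\lambda$ theorem then upgrades the agreement of $\P$ and $\Q$ on this generating $\pi$-system to $\P=\Q$ on $\Bc(E)$, giving antisymmetry.

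For the closedness of the graph I first note that one cannot argue directly with the class $\Gamma$: for a merely measurable $f$ the map $\P\mapsto\int_E f\,d\P$ is not continuous for weak convergence, so the naive intersection $\bigcap_{f\in\Gamma}\{(\P,\Q):\int_E f\,d\P\le\int_E f\,d\Q\}$ need not be closed. The key step — and the main obstacle — is the equivalence, due to Kamae and Krengel \cite{KamaeKrengel1978} (ultimately resting on Strassen's monotone-coupling theorem together with the regularity of Borel probability measures on a Polish space), that $\P\le_p\Q$ holds if and only if $\int_E f\,d\P\le\int_E f\,d\Q$ for all $f$ in the smaller class $\Gamma_c$ of bounded increasing \emph{continuous} functions (equivalently, $\P(U)\le\Q(U)$ for all open up-sets $U$). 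Granting this reduction, the graph of $\le_p$ equals $\bigcap_{f\in\Gamma_c}\{(\P,\Q):\int_E f\,d\Q-\int_E f\,d\P\ge 0\}$; for each fixed $f\in\Gamma_c$ the functional $(\P,\Q)\mapsto\int_E f\,d\Q-\int_E f\,d\P$ is continuous on $\Pc(E)^2$ by the very definition of weak convergence, so each set in the intersection is closed and hence so is the graph. Combining the three order properties with the Polishness of $\Pc(E)$ yields that $(\Pc(E),\le_p)$ is a partially ordered Polish space.

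I expect the genuine difficulty to lie entirely in the cited equivalence between the measurable and the continuous (open-up-set) formulations of the stochastic order: it is exactly this passage that makes the graph closed, and it fails in the absence of the topological hypotheses, so any honest proof must either reprove the Kamae--Krengel--O'Brien coupling theorem or invoke it as a black box, as I would do here. The secondary point — that measurable up-sets generate $\Bc(E)$ — belongs to the same circle of order-regularity facts and should be handled in passing.
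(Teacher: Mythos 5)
Your overall decomposition is the right one, and in fairness the paper itself offers no proof of this statement: it is imported verbatim from Kamae and Krengel \cite{KamaeKrengel1978}, so deferring the hard equivalence (measurable versus continuous increasing test functions, i.e. the Strassen/Kamae--Krengel--O'Brien coupling theorem) to that reference is consistent with the paper's own treatment. Your reflexivity and transitivity checks, your observation that the naive intersection over $\Gamma$ is not obviously closed, and your closedness argument via continuous increasing test functions are all fine modulo that citation.

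The genuine gap is in the antisymmetry step. The claim that the principal up-sets $\{y : x \le_e y\}$ together with the open up-sets $\{y : y \not\le_e x\}$ generate $\Bc(E)$ is false in general. Take the equality order on an uncountable Polish space $E$ (that is, $x \le_e y$ if and only if $x = y$): its graph is the diagonal, which is closed, so this is a partially ordered Polish space in the sense of Definition \ref{def:pos}; but the sets in question are then exactly the singletons and their complements, and the $\sigma$-algebra they generate is the countable--cocountable $\sigma$-algebra, strictly smaller than $\Bc(E)$. So the Dynkin $\pi$-$\lambda$ argument as written does not go through. (In this example antisymmetry of $\le_p$ is trivially true, since every bounded Borel function is increasing; it is your argument, not the conclusion, that fails.) Moreover, the statement you would actually need --- that the family of \emph{all} Borel up-sets generates $\Bc(E)$ for an arbitrary closed partial order --- is not a secondary point to be handled in passing: it is of the same nature and difficulty as the order-separation results of Kamae--Krengel--O'Brien that you are already citing. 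Within your own framework, the clean repair is to derive antisymmetry from the coupling form of the cited theorem: from $\P \le_p \Q$ and $\Q \le_p \P$ obtain monotone couplings of $(\P,\Q)$ and of $(\Q,\P)$, glue them to produce a self-coupling $\rho$ of $\P$ with $\rho\big(\{(x,y) : x \le_e y\}\big) = 1$, and then argue that such a $\rho$ must be concentrated on the diagonal (which again uses a countable order-determining family of continuous increasing functions, i.e. the same cited machinery); alternatively, note that antisymmetry is itself part of the Kamae--Krengel statement, so once their theorem is taken as a black box nothing is gained by the up-set detour.
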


	\begin{example}\label{exam:p.o.PolishSpace}
		On the Polish spaces $\V^+$, $\V^+_\eta:= \{\mathbf{l} \vee \eta: \mathbf{l} \in \V^+\}$ and $\D$, where $\eta \in \R$ is a constant, let us introduce the following partial order $\le_e$:
		$$
			\mathbf{x}^1 \le_e \mathbf{x}^2
			~~\mbox{if and only if}~~
			\mathbf{x}^1_t \le \mathbf{x}^2_t,~\mbox{for all}~t \in [0,T] ~(\mbox{or}~[0,T)).
		$$
		Then all of $(\V^+, \le_e)$, $(\V^+_\eta, \le_e)$ and $(\D, \le_e)$ are partially ordered Polish spaces.
	\end{example}

\section{On the space $\V^+$}\label{sec:spaceV}

	It is wellknown that the space of all left-continuous and increasing $\R$-valued paths on a closed interval is a Polish space under the L\'evy metric.
	Here, our space $\V^+$ is defined as the space of all increasing and left-continuous paths $\mathbf{l}: [0,T) \longrightarrow \R \cup\{-\infty\}$,
	such that $\mathbf{l}(0) = -\infty$ and $\mathbf{l}$ is $\R$-valued on $(0,T)$.
	For completeness, we show that it is also a Polish space under the L\'evy metric $d_L$.
	Recall that the L\'evy metric $d_L$ defined by
	\begin{equation*}
		d_L(\mathbf{l}_1, \mathbf{l}_2)
		\vcentcolon=
		\inf \big\{
			\eps \geq 0 ~:
			\mathbf{l}_1\big((t - \eps)\vee 0\big) - \eps \leq \mathbf{l}_2(t),
			~\mathbf{l}_2\big((t - \eps)\vee 0\big) - \eps \leq \mathbf{l}_1(t),
			~\forall t \in (0,T)
		\big\},
	\end{equation*}
	when $T < +\infty$,
	and
	\begin{equation*}
		d_L(\mathbf{l}_1,\mathbf{l}_2)
		~ \vcentcolon= ~
		\sum_{n = 1}^{+\infty}
		2^{-n}(d_L(\mathbf{l}_1|_{[0,n)},\mathbf{l}_2|_{[0,n)}) \wedge 1),
	\end{equation*}
	when $T = +\infty$.

	\begin{proposition}\label{prop:ac_[0,T]Polish}
		The space $(\V^+, d_L)$  is a complete separable metric space.
		Moreover,
		for $\mathbf{l}_n, \mathbf{l} \in \V^+$, $n \in \N$, the sequence $\{\mathbf{l}_n\}_{n = 1}^{+\infty}$ converges to $\mathbf{l}$ in $d_L$ if and only if $\mathbf{l}_n$ converges point-wise to $\mathbf{l}$ on continuity points of $\mathbf{l}$ on $(0,T)$.
	\end{proposition}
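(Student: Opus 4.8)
The plan is to first dispose of the genuinely infinite horizon by the product structure of $d_L$: for $T=\infty$ the metric is the weighted sum of the finite-horizon Lévy metrics of the restrictions $v|_{[0,n)}$, and since each restriction of an element of $\V^+$ lands in the corresponding finite-horizon space, convergence, separability and completeness for $T=\infty$ will all follow from the case $T<\infty$ by the usual countable-product / consistency-and-gluing argument. So I would work throughout with $T<\infty$. Before anything else I would check the metric axioms: symmetry is built into the definition, the triangle inequality follows by chaining the two shift inequalities (if $d_L(v_1,v_2)<a$ and $d_L(v_2,v_3)<b$ then $v_1\big((t-a-b)\vee0\big)-(a+b)\le v_3(t)$ and symmetrically), and $d_L(v_1,v_2)=0$ forces $v_1((t-\eps)\vee0)-\eps\le v_2(t)\le v_1((t+\eps)\wedge\text{(dom)})+\eps$ for all $\eps>0$, whence $v_1=v_2$ at common continuity points and then everywhere by left-continuity together with the shared convention $v(0)=-\infty$. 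It is convenient to keep in mind the geometric reading of $d_L$ as a skew $L^\infty$-distance between the completed graphs of $v_1,v_2$; this makes the above manipulations transparent.

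Next I would prove the stated convergence characterization. For the forward direction, from $d_L(v_n,v)\le\eps$ one extracts, for any interior $t$ with $t+\eps<T$, the sandwich $v(t-\eps)-\eps\le v_n(t)\le v(t+\eps)+\eps$ (the upper bound comes from applying the second defining inequality at the shifted point $t+\eps$); letting $n\to\infty$ and using continuity of $v$ at $t$ squeezes $v_n(t)\to v(t)$. A key simplification here is that the truncation $(t-\eps)\vee0$ together with $v(0)=v_n(0)=-\infty$ makes both defining inequalities automatic for $t\in(0,\eps]$, so the boundary at $0$ is free. For the converse, given $\eps>0$ I would choose a finite grid of continuity points of $v$ in $(0,T)$, $\eps$-fine in both time and value away from $0$, use pointwise convergence on the grid and monotonicity to sandwich $v_n$ between $\eps$-shifts of $v$, and thereby bound $d_L(v_n,v)$; the behaviour near $T$ (where $v$ may tend to $+\infty$) is absorbed by combining monotonicity with density of continuity points, and the behaviour near $0$ is again trivial. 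Separability then follows by taking the countable family of increasing left-continuous step functions with rational jump times in $(0,T)$ and rational values, whose density is exactly the grid-approximation argument just used.

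The hard part will be completeness, and I would handle it by a bounded reparametrization of the range rather than by inverting. Fix an increasing homeomorphism $\rho:[-\infty,\infty]\to[-1,1]$ with $\rho(-\infty)=-1$, and for a $d_L$-Cauchy sequence $(v_n)$ set $g_n:=\rho\circ v_n:[0,T)\to[-1,1]$, which are increasing and uniformly bounded. By Helly's selection theorem a subsequence $g_{n_k}$ converges pointwise at the continuity points to an increasing limit $g:[0,T)\to[-1,1]$; I then take its left-continuous modification, put $v:=\rho^{-1}\circ g$ on $(0,T)$ and $v(0):=-\infty$. The crucial point is to show $v\in\V^+$, i.e. that $v$ is finite on all of $(0,T)$, equivalently $g(t_0)<1$ for every $t_0\in(0,T)$. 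This is where Cauchy-ness is essential: if $g(t_0)=1$ then $v_n\to+\infty$ at every continuity point $s\in(t_0,T)$, so comparing $v_n$ (for large $n$) against a fixed $v_N$ through the inequality $v_n\big((s-\eps_N)\vee0\big)-\eps_N\le v_N(s)$, with $\eps_N:=\sup_{m,n\ge N}d_L(v_n,v_m)$ small and $s-\eps_N>t_0$, forces the finite right-hand side to dominate a quantity tending to $+\infty$ — a contradiction. Hence $v$ is finite on $(0,T)$ and lies in $\V^+$; at its continuity points $g_{n_k}\to g$ with values in $(-1,1)$ gives $v_{n_k}\to v$ pointwise, so by the already-proved characterization $d_L(v_{n_k},v)\to0$, and a Cauchy sequence with a convergent subsequence converges. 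I expect ruling out this interior blow-up (and the symmetric bookkeeping near $t=0$ and $t\to T$) to be the only delicate step; the remaining verifications are routine.
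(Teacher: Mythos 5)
Your proposal is correct and, for most of the statement, follows the same route as the paper: the metric axioms via chaining the two shift inequalities, the two-sided sandwich $v\big((t-\eps)\vee 0\big)-\eps \le v_n(t) \le v(t+\eps)+\eps$ for the forward implication, a time-fine grid of (continuity) points plus monotonicity for the converse and for separability, and the reduction of $T=\infty$ to $T<\infty$ by restriction and gluing of consistent finite-horizon limits. Where you genuinely diverge is completeness. The paper truncates the range: it introduces $I_n(v) := (-n)\vee v \wedge n$, notes $d_L\big(I_n(v),I_n(w)\big) \le d_L(v,w)$, extracts limits $v^n$ of $(I_n(v_m))_m$ in the compact spaces $\V^+_n := I_n(\V^+)$, checks the consistency $I_n(v^{n+1}) = v^n$, and glues; you instead compactify the range once via a homeomorphism $\rho$ onto $[-1,1]$ and apply Helly's selection theorem to $\rho\circ v_n$, producing a single limit object with no consistency step. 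The two are close in substance --- the compactness of the paper's $\V^+_n$ is itself a Helly-type fact --- but your version is more direct, and it isolates cleanly the one delicate point: ruling out that the limit escapes to $\pm\infty$ on an interior interval, which both you and the paper handle identically, by a Cauchy comparison against a fixed $v_N$ through the defining inequalities of $d_L$. One item you should write out rather than gesture at: finiteness of $v$ on $(0,T)$ is \emph{not} equivalent to $g(t_0)<1$ for all $t_0$; you must also exclude $g\equiv -1$ on some initial interval $(0,t_0]$ (downward escape to $-\infty$), using the symmetric inequality $v_N\big((s-\eps_N)\vee 0\big)-\eps_N \le v_{n_k}(s)$ at a continuity point $s\in(0,t_0)$ with $\eps_N<s$, whose left side is a fixed finite number while the right side tends to $-\infty$. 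Your phrase ``symmetric bookkeeping near $t=0$'' shows you are aware of this, and in fairness the paper's own proof also treats only the $+\infty$ case, but the lower escape is a genuine failure mode (it is exactly what non-Cauchy sequences such as $v_n\equiv -n$ produce) and deserves an explicit line.
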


	\begin{proof}
	$\mathrm{(i)}$ Let us first consider the case where $T < +\infty$.
	
	\vspace{0.5em}

	\noindent \textbf{Step 1. $d_L$ is a metric on $\V^+$.}

	\vspace{0.5em}

	First, for any $\mathbf{l}_1, \mathbf{l}_2 \in \V^+$,
	as the set
	$$
		\{\eps \geq 0 ~:
			\mathbf{l}_1\big((t - \eps)\vee 0\big) - \eps \leq \mathbf{l}_2 (t),
			~
			\mathbf{l}_2\big((t - \eps)\vee 0\big) - \eps \leq \mathbf{l}_1 (t),
			~\forall~ t \in (0,T)
		\}
		\neq
		\emptyset,
	$$
	it follows that $d_L(\mathbf{l}_1, \mathbf{l}_2) \ge 0$.

	\vspace{0.5em}

	Next, by its definition, it is easy to check that $d_L(\mathbf{l}_1, \mathbf{l}_2) = 0$ if and only if $\mathbf{l}_1 = \mathbf{l}_2$, and that $d_L(\mathbf{l}_1, \mathbf{l}_2) = d_L(\mathbf{l}_2, \mathbf{l}_1)$.

	\vspace{0.5em}

	For the triangle inequality, let $\mathbf{l}_1, \mathbf{l}_2, \mathbf{l}_3 \in \V^+$,
	there exist two sequences of nonnegative constants $(\eps^i_n)_{n \ge 1}$ with $\lim_{n \to \infty}\eps^i_n = d_L(\mathbf{l}_i,\mathbf{l}_{i + 1})$ such that,
	for each $n \ge 1$,
        \begin{equation*}
            \mathbf{l}_i\big((t - \eps^i_n)\vee 0\big) - \eps^i_n \leq \mathbf{l}_{i + 1}(t),
            ~
            \mathbf{l}_{i + 1}\big((t - \eps^i_n)\vee 0\big) - \eps^i_n \leq \mathbf{l}_i(t),
                ~\mbox{for all}~ t \in (0,T),
                ~ i = 1, 2.
        \end{equation*}
	Then
	\begin{align*}
		&
		\mathbf{l}_1\big((t - \eps^1_n - \eps^2_n)\vee 0\big) - \eps^1_n - \eps^2_n
		~=~
		\mathbf{l}_1\big(((t - \eps^2_n) - \eps^1_n)\vee 0\big) - \eps^1_n - \eps^2_n
		\\
		\leq &~
		\mathbf{l}_2\big((t - \eps^2_n)\vee 0\big) - \eps^2_n
		\\
		\leq&~
		\mathbf{l}_3(t),
		~~~~\mbox{for all}~ t \in (0,T).
	\end{align*}
	Similarly, one has
	\begin{align*}
		\mathbf{l}_3\big((t - \eps^1_n - \eps^2_n)\vee 0\big) - \eps^1_n - \eps^2_n
		\leq \mathbf{l}_1(t),
		~\mbox{for all}~ t \in (0,T).
	\end{align*}
	By the definition of $d_L$, it follows that, for all $n \ge 1$,  $d_L(\mathbf{l}_1,\mathbf{l}_3) \leq \eps^1_n + \eps^2_n$.
	Let $n$ tends to $\infty$, one obtains that
	$$
		d_L(\mathbf{l}_1,\mathbf{l}_3) ~\leq~ d_L(\mathbf{l}_1,\mathbf{l}_2) + d_L(\mathbf{l}_2,\mathbf{l}_3).
	$$

	\noindent \textbf{Step 2.}
	Let $\mathbf{l} \in \V^+$, we denote by $\mathbb{T}_\mathbf{l}$ the set of all continuity points $t \in [0,T)$ of $\mathbf{l}$.
	We next proe that
	$$
		\lim_{n \to \infty}d_L(\mathbf{l}_n,\mathbf{l}) = 0
		~~\Longrightarrow~~
		\lim_{n \to \infty}\mathbf{l}_n(t) = \mathbf{l}(t),
		~\mbox{for all}~
		t \in \mathbb{T}_\mathbf{l}.
	$$

	\vspace{0.5em}

	First, for the sufficient condition part $(\Longrightarrow)$,
	let $\lim_{n \to \infty}d_L(\mathbf{l}_n,\mathbf{l}) = 0$, then there exists a sequence of constants $\eps_n$ with $\lim_{n \to \infty}\eps_n = 0$ such that
        \begin{equation*}
            \mathbf{l}_n\big((t - \eps_n)\vee 0\big) - \eps_n \leq \mathbf{l}(t),
            ~
            \mathbf{l}\big((t - \eps_n)\vee 0\big) - \eps_n \leq \mathbf{l}_n(t),
                ~\mbox{for all}~ t \in (0,T).
        \end{equation*}
	For fixed $t$, we can find $N$ large enough such that for any $n \geq N$, $t + \eps_n < T$, then we can rewrite the above inequalities as
        \begin{equation*}
            \mathbf{l}\big((t - \eps_n)\vee 0\big) - \eps_n
            ~
            \leq
            ~
            \mathbf{l}_n(t)
            ~
            \leq
            ~
            \mathbf{l}_n(t\vee \eps_n)
            ~
            \leq
            ~
            \mathbf{l}\big(t + \eps_n\big) + \eps_n,
                ~\forall~ t \in (0,T).
        \end{equation*}
	When $t \in \mathbb{T}_\mathbf{l}$,  letting $n \longrightarrow \infty$, it follows that
        \begin{equation*}
            \mathbf{l}(t)
            ~ \leq
            \liminf_{n \to \infty} \mathbf{l}_n(t)
            ~ \leq
            \limsup_{n \to \infty} \mathbf{l}_n(t)
            ~ \leq
            \mathbf{l}(t).
        \end{equation*}

	Next, for the necessary condition part $(\Longleftarrow)$, we fix an $\eps > 0$, then there exists a finite partition $\pi_\eps = \{t_1, \cdots, t_{K_\eps}\}$ of $[0,T)$ with $0 = t_0 < t_1 < \cdots < t_{K_\eps} < T =: t_{K_\eps + 1}$ and its norm $|\pi_\eps| \vcentcolon= \sup_{0 \leq i \leq K_\eps}|t_{i + 1} - t_i| < \eps.$

	\vspace{0.5em}

	Let $N = N(\eps, t_1, \cdots, t_{K_\eps})$ be large enough such that for any $n \ge 1$,
        \begin{equation*}
            |\mathbf{l}_n(t_i) - \mathbf{l}(t_i)|
            ~ \leq ~
            \eps,
            ~\mbox{for all}~
            i = 1, \cdots, K_\eps.
        \end{equation*}
        Therefore, for $t \in [t_i, t_{i + 1}]$, $i = 0, \cdots, K_\eps$, one has the estimation
        \begin{align*}
            & ~
            \mathbf{l}_n(t)
            ~ \geq ~
            \mathbf{l}_n(t_i)
            ~ \geq ~
            \mathbf{l}(t_i) - \eps
            ~ \geq ~
            \mathbf{l}\big((t - \eps) \vee 0\big) - \eps,
            \\ & ~
            \mathbf{l}(t)
            ~ \geq ~
            \mathbf{l}(t_i)
            ~ \geq ~
            \mathbf{l}_n(t_i) - \eps
            ~ \geq ~
            \mathbf{l}_n\big((t - \eps) \vee 0\big) - \eps,
        \end{align*}
        which implies that $d_L(\mathbf{l}_n,\mathbf{l}) \leq \eps$.

	\vspace{0.5em}

	\noindent \textbf{Step 3. $(\V^+,d_L)$ is complete and separable.}
	
	\vspace{0.5em}

	Let  $(\mathbf{l}_m)_{m = 1}^\infty$ be a Cauchy sequence in  $\V^+$, we aim to find some $\mathbf{l} \in \V^+$ such that
	$$
		\lim_{m \to \infty}d_L(\mathbf{l}_m,\mathbf{l}) = 0.
	$$

	We first define a sequence of maps $I_n : \V^+ \longrightarrow \V^+$ for each $n \ge 1$ by
	$$
		I_n(\mathbf{l})(0) := \mathbf{l}(0)= -\infty,
		~~\mbox{and}~
		I_n(\mathbf{l})(t) := -n \vee \mathbf{l}(t) \wedge n,
		~~\mbox{for all}~ t \in (0,T).
	$$
	Let $\V^+_{n} := I_n(\V^+)$, so that $(\V^+_n, d_L)$ is a compact Polish space.
	For each $n \ge 1$,
	we notice that $d_L(I_n(\mathbf{l}), I_n(w)) \leq d_L(\mathbf{l}, w)$, for all $\mathbf{l}, w \in \V^+$ and $n \ge 1$, so that  $\big(I_n(\mathbf{l}_m) \big)_{m = 1}^\infty$ is alaso a Cauchy sequence in the compact Polish space $\V^+_n$,
	then there exists some $\mathbf{l}^n \in \V^+_{n}$ such that $I_n(\mathbf{l}_m) \longrightarrow \mathbf{l}^n$ in $\V^+_{n}$.
	In particular, one has $I_n(\mathbf{l}^{n + 1}) = \mathbf{l}^n$.

	\vspace{0.5em}

	For any $\eps > 0$, choose $m = m(\eps, \mathbf{l}^n, \mathbf{l}^{n + 1})$ large enough such that $d_L\big(\mathbf{l}^n, I_n(\mathbf{l}_m)\big) \leq \eps$ and $d_L\big(\mathbf{l}^{n + 1}, I_{n + 1}(\mathbf{l}_m)\big) \leq \eps$.
	Then we obtain the estimation
	\begin{align*}
		&
		d_L\big(\mathbf{l}^n, I_n(\mathbf{l}^{n + 1})\big)
		~ \leq ~
		d_L\big(\mathbf{l}^n, I_n(\mathbf{l}_m)\big)
		+ d_L\big(I_n(\mathbf{l}_m), I_n(\mathbf{l}^{n + 1})\big)
		\\
		\leq ~ &
		d_L\big(\mathbf{l}^n, I_n(\mathbf{l}_m)\big)
                + d_L\big(I_{n+1}(\mathbf{l}_m), \mathbf{l}^{n + 1}\big)
		~\leq ~
		2\eps.
        \end{align*}
	Now, let us define $\tilde{\mathbf{l}}$ by
	$$
		\tilde{\mathbf{l}}_0 := -\infty,
		~~\mbox{and}~
		\tilde{\mathbf{l}}(t) \vcentcolon= \lim_{n \to \infty}\mathbf{l}^n(t),
		~\mbox{for all}~
		t \in [0,T),
	$$
	and let $\mathbf{l}$ be the left-continuous version of $\tilde{\mathbf{l}}$, then $\mathbf{l}$ is obviously increasing as the limit of increasing function on $[0,T)$.

	\vspace{0.5em}

	We next check that $\mathbf{l}$ is $\R$-valued on $(0,T)$ to show that $\mathbf{l} \in \V^+$.
	Assume that $\mathbf{l}$ is not $\R$-valued,
	then one has, for some $s \in (0,T)$, $\mathbf{l}(t) = +\infty$ and $\mathbf{l}^n(t) = n$, for all $t \in [s, T)$.
        Let $\eps_0 \vcentcolon= (T - s)/5$ and fix $s_0 \vcentcolon= (s + T)/2$, then there exists some $N = N(\eps_0)$, such that for any $l,m \geq N$, $d_L(\mathbf{l}_l,\mathbf{l}_m) \leq \eps_0$.
        Thus we have for any $n \in \N$, $d_L(I_n(\mathbf{l}_l), I_n(\mathbf{l}_m)) \leq \eps_0$, and $d_L(I_n(\mathbf{l}_l), \mathbf{l}^n) \leq \eps_0$. Now we focus on $\mathbf{l}_N$ and choose $n$ large enough such that $\mathbf{l}^n(s_0) = n > \mathbf{l}_N(s_0 + 2\eps_0) + 2\eps_0$, which contradicts with the inequality $d_L(I_n(\mathbf{l}_l), \mathbf{l}^n) \leq \eps_0$.
        Therefore, we conclude that $\mathbf{l} \in \V^+$.

  	\vspace{0.5em}

	Finally, for any continuity point $t \in \mathbb{T}_\mathbf{l}$ of $\mathbf{l} \in \V^+_n$,
	one has $\mathbf{l}(t) = \mathbf{l}^n(t)$ and $t$ is also a continuity point of $\mathbf{l}^n$,
	hence $\mathbf{l}(t) = \mathbf{l}^n(t) = \lim_{m \to \infty}I_n(\mathbf{l}_m(t)) = \lim_{m \to \infty}\mathbf{l}_m(t)$.
	This implies that $\mathbf{l}_m$ converges to $\mathbf{l}$ in $\V^+$ as $m$ tends to $+\infty$, and hence $(\V^+, d_L)$ is complete.
	To prove it is also separable, one can apply similar arguments in {\bf Step 2}.

	\vspace{0.5em}

	\noindent $\mathrm{(ii)}$ For the case $T = \infty$, it is obvious that $d_L$ here is still a metric.
	
	\vspace{0.5em}
	
	Moreover, for any sequence $\{\mathbf{l}_i\}_{i = 0}^
        \infty \subset \V^+$, we have
        $\lim_{i \to \infty}d_L(\mathbf{l}_i,\mathbf{l}) = 0$ if and only if
        $\lim_{i \to \infty}d_L(\mathbf{l}_i|_{[0,n)},\mathbf{l}|_{[0,n)}) = 0$ for all $n \in \N_+$
        and
        we can conclude that $\lim_{i \to \infty}d_L(\mathbf{l}_i,\mathbf{l}) = 0$
        if and only if
        $\lim_{i \to \infty}\mathbf{l}_i(t) = \mathbf{l}(t)$ for all continuity points of $\mathbf{l}$ by Step 2, .

	\vspace{0.5em}

	The space $(\V^+, d_L)$ is clearly separable,
	we finally prove the completeness of $(\V^+, d_L)$.
	Let $(\mathbf{l}_m)_{m = 1}^\infty$ be a Cauchy sequence in  $\V^+$.
        Then there exists some $\mathbf{l}^n \in \V^+$ such that
	$$
		\lim_{m \to \infty}d_L(\mathbf{l}_m|_{[0,n)},\mathbf{l}^n) = 0,
		~~\mbox{for all}~
		n \ge 1.
	$$
        We now verify that for all $n \in \N$,
        $\mathbf{l}^{n + 1}|_{[0,n)} = \mathbf{l}^{n}$.
        Since $\lim_{m \to \infty}d_L(\mathbf{l}_m|_{[0,n + 1)},\mathbf{l}^{n + 1}) = 0$, we have $\lim_{m \to \infty}d_L(\mathbf{l}_m|_{[0,n)},\mathbf{l}^{n + 1}|_{[0,n)}) = 0$
        and uniqueness of the limit of the Cauchy sequence $\{\mathbf{l}_m|_{[0,n)}\}_{m = 1}^{+\infty}$ implies our claim.
        Then $\mathbf{l} \vcentcolon= \sum_{n = 1}^{+\infty}\mathbf{l}^n \mathds{1}_{[n - 1,n)}$ is a well-defined increasing function on $\R$ and belongs to $\V^+$.
    \end{proof}

\section{Enlarged space $\Om \x [0,T]$ and the stable convergence topology}
\label{sec:stable_cvg}

	Given the (abstract) filtered probability space $(\Om, \Fc,  \P)$, we introduce an enlarged measurable space $(\overline{\Om}, \Fcb)  := (\Om \x [0,T], \Fc \ox \Bc([0,T]))$.
	Let $\Pc(\overline{\Om})$ denote the collection of all probability measures on $(\Omb, \Fcb)$, we equip it with the stable convergence topology of Jacod and M\'emin \cite{JacodMemin1981}, and recall some basic facts here.
	
	\vspace{0.5em}

	Let $\Pc(\Omb)$  (resp. $\Pc(\Om)$, $\Pc([0,T])$) denote the collection of all probability measures on $(\overline{\Om}, \Fcb)$ (resp. $(\Om, \Fc)$, $([0,T], \Bc([0,T]))$).
	Further, let $B_{mc}(\overline{\Om})$ (resp. $B_{mu}(\overline{\Om})$) denote the collection of all bounded $\Fcb$-measurable functions $\xi: \Omb \longrightarrow \R$ such that for every $\om \in \Om$, the mapping $\theta \in [0,T] \longmapsto \xi(\om, \theta)$ is continuous (resp. upper semi-continuous).
	The stable convergence on $\Pc(\Omb)$ is the coarsest topology making $\P \longmapsto \E^{\P}[\xi]$ continuous for all $\xi \in B_{mc}$.
	We also equip $\Pc([0,T])$ with the weak convergence topology and $\Pc(\Om)$ with the coarsest topology such that $\P \mapsto \E^\P[\xi]$ is continuous for all bounded measurable functions on $(\Om, \Fc)$.
	Let us first recall a result from Jacod and M\'emin \cite{JacodMemin1981}.

	\begin{theorem}\label{thm:stable_convergence}
		$\mathrm{(i)}$ Let $T < \infty$, a subset $\overline{\Pc} \subset \Pc(\overline{\Om})$ is relatively compact w.r.t. the stable topology
		if and only if
		$\{\Po|_{\Om}:\Po \in \overline{\Pc}\}$ is relatively compact in $\Pc(\Om)$.

		\vspace{0.5em}

		\noindent $\mathrm{(ii)}$
		Let $(\Po_n)_{n \in \N} \subset \Pc(\overline{\Om})$ be a sequence such that $\Po_n$ converges to some $\Po \in \Pc(\overline{\Om})$ under the stable topology,
		then one has that
		\begin{equation*}
			\lim_{n \to \infty}\E^{\Po_n}[\xi] = \E^{\Po}[\xi],
			~\mbox{for all}~
			\xi \in B_{mc}(\overline{\Om}),
		\end{equation*}
		and
		\begin{equation*}
			\limsup_{n \to \infty}\E^{\Po_n}[\xi] \le \E^{\Po}[\xi],
			~\mbox{for all}~
			\xi \in B_{mu}(\overline{\Om}).
		\end{equation*}
	\end{theorem}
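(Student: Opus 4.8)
The plan is to treat the two assertions separately: I would establish the compactness criterion (i) by a Banach--Alaoglu/Tychonoff embedding together with a disintegration argument, and the convergence statement (ii) by regularizing upper semi-continuous integrands from above.

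For (i), the implication ``$\Rightarrow$'' is soft. The marginal projection $\pi:\Pc(\overline{\Om})\to\Pc(\Om)$, $\Po\mapsto\Po|_{\Om}$, is continuous: any bounded $\Fc$-measurable $\phi$ yields a test function $(\om,\theta)\mapsto\phi(\om)$ lying in $B_{mc}(\overline{\Om})$, so $\Po\mapsto\E^{\Po}[\phi]=\E^{\Po|_{\Om}}[\phi]$ is stably continuous; since a continuous image of a relatively compact set is relatively compact, $\{\Po|_{\Om}\}$ is relatively compact in $\Pc(\Om)$. For ``$\Leftarrow$'' I would embed $\Pc(\overline{\Om})$ into the product $\R^{B_{mc}(\overline{\Om})}$ via $\iota(\Po):=(\E^{\Po}[\xi])_{\xi}$. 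Because $B_{mc}(\overline{\Om})$ separates points (it contains all $(\om,\theta)\mapsto\mathds{1}_A(\om)g(\theta)$ with $A\in\Fc$ and $g\in C([0,T])$), $\iota$ is a homeomorphism onto its image for the stable topology, and $\iota(\overline{\Pc})$ sits inside the Tychonoff-compact box $\prod_{\xi\in B_{mc}(\overline{\Om})}[-\|\xi\|_{\infty},\|\xi\|_{\infty}]$, whose closure $C$ is compact. The heart of the matter is to show $C\subseteq\iota(\Pc(\overline{\Om}))$, that is, every cluster functional $\ell$ is represented as $\ell_{\xi}=\E^{\Po}[\xi]$ by a genuine probability measure $\Po$ on $\overline{\Om}$.

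This is where both hypotheses are used. Restricting $\ell$ to functions of $\om$ alone gives a positive normalized functional on bounded $\Fc$-measurable functions, and relative compactness of the marginals in $\Pc(\Om)$ guarantees that this functional is represented by a countably additive $\mu\in\Pc(\Om)$. For each $g\in C([0,T])$ the set function $A\mapsto\ell_{\mathds{1}_A\otimes g}$ is a signed measure dominated by $\|g\|_{\infty}\mu$, so Radon--Nikodym yields a density $h_g$ with $h_1\equiv1$ and $h_g\ge0$ when $g\ge0$; read fibrewise, $g\mapsto h_g(\om)$ is for $\mu$-a.e. $\om$ a positive unit-mass functional on $C([0,T])$, hence by Riesz representation (here compactness of $[0,T]$ is essential) a probability measure $K(\om,\cdot)$, and $\Po:=\mu\otimes K$ represents $\ell$. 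I expect the main obstacle to be precisely this disintegration: producing a jointly measurable version $\om\mapsto K(\om,\cdot)$ with the exceptional null set chosen uniformly in $g$. I would handle it by constructing $h_g$ only for $g$ ranging over a countable $\|\cdot\|_{\infty}$-dense subset of $C([0,T])$, enforcing $\Q$-linearity and positivity off a single $\mu$-null set, and then extending by continuity; compactness of $[0,T]$ is also what makes the fibre marginals automatically tight, so that no constraint beyond the $\Om$-marginal appears.

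For (ii), the first identity is immediate, as the stable topology is by definition the coarsest one rendering $\Po\mapsto\E^{\Po}[\xi]$ continuous for every $\xi\in B_{mc}(\overline{\Om})$. For the upper semi-continuous inequality I would regularize from above: given bounded $\xi\in B_{mu}(\overline{\Om})$, set
\[
\xi^{k}(\om,\theta):=\sup_{\theta'\in\Q\cap[0,T]}\big(\xi(\om,\theta')-k\,|\theta-\theta'|\big),\qquad k\in\N,
\]
which is $\Fcb$-measurable, Lipschitz and hence continuous in $\theta$ for each fixed $\om$, so $\xi^{k}\in B_{mc}(\overline{\Om})$, and $\xi^{k}\downarrow\xi$ pointwise as $k\to\infty$ because $\theta\mapsto\xi(\om,\theta)$ is upper semi-continuous on the compact interval $[0,T]$. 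Using $\xi\le\xi^{k}$ and the convergence already proved on $B_{mc}(\overline{\Om})$ gives $\limsup_{n}\E^{\Po_n}[\xi]\le\limsup_{n}\E^{\Po_n}[\xi^{k}]=\E^{\Po}[\xi^{k}]$, and letting $k\to\infty$ with dominated convergence ($\E^{\Po}[\xi^{k}]\downarrow\E^{\Po}[\xi]$) yields the claim.
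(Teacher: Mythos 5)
A preliminary remark: the paper does not prove this statement at all --- Theorem \ref{thm:stable_convergence} is recalled from Jacod and M\'emin \cite{JacodMemin1981} as a known result --- so your proposal must be judged on its own merits against the classical argument. Your part (i) ``$\Rightarrow$'' and the first assertion of (ii) are correct, and your overall architecture (Tychonoff embedding plus disintegration for (i), upper regularization for (ii)) is indeed the classical one; but each of the two remaining steps has a genuine defect. In (i) ``$\Leftarrow$'', what your construction of $\mu$ and the kernel $K$ actually delivers is the identity $\ell_{\xi}=\E^{\Po}[\xi]$, $\Po:=\mu\otimes K$, only for the product functions $\xi=\mathds{1}_A\otimes g$. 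To conclude $\ell\in\iota(\Pc(\Omb))$ you need this for \emph{every} $\xi\in B_{mc}(\Omb)$, and the extension is not automatic: a cluster point of $\iota(\overline{\Pc})$ is merely a positive normalized linear functional, and a Carath\'eodory integrand cannot in general be approximated uniformly in $(\om,\theta)$ by finite sums $\sum_i\mathds{1}_{A_i}(\om)g_i(\theta)$, because the modulus of continuity in $\theta$ depends on $\om$. The missing step is precisely where compactness of $[0,T]$ and countable additivity of $\mu$ must be used a second time: for instance, if $\xi_n\in B_{mc}(\Omb)$ decrease pointwise to $0$, Dini's theorem gives $s_n(\om):=\sup_{\theta}\xi_n(\om,\theta)\downarrow 0$, whence $\ell_{\xi_n}\le\int_\Om s_n\,d\mu\to 0$; so $\ell$ is Daniell-continuous and the Daniell--Stone theorem represents it by a measure on $\Fc\otimes\Bc([0,T])$, which then must coincide with $\mu\otimes K$. (Alternatively, approximate $\xi$ through a partition of unity in $\theta$ and control the error by the $\om$-wise modulus of continuity integrated against the setwise-convergent marginals.) You single out the measurable disintegration as the main obstacle, but that part is routine; this representation step is the real one, and as written it is skipped.

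In (ii), the regularization is wrong as stated: restricting the supremum to rational $\theta'$ destroys exactly the two properties you use. For u.s.c.\ (rather than continuous) integrands neither $\xi\le\xi^k$ nor $\xi^k\downarrow\xi$ holds: take $\xi(\om,\theta)=\mathds{1}_{\{\theta_0\}}(\theta)$ with $\theta_0\notin\Q$, which is bounded, jointly measurable and u.s.c.\ in $\theta$, yet $\xi^{k}(\om,\theta_0)=\sup_{\theta'\in\Q}\big(-k|\theta_0-\theta'|\big)=0<1=\xi(\om,\theta_0)$ for every $k$. Hence the inequality $\limsup_n\E^{\Po_n}[\xi]\le\limsup_n\E^{\Po_n}[\xi^{k}]$ has no justification and the argument collapses. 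The repair is to take the supremum over all $\theta'\in[0,T]$: then $\xi^{k}\ge\xi$, $\xi^{k}$ is $k$-Lipschitz in $\theta$, and $\xi^{k}\downarrow\xi$ pointwise by upper semi-continuity and compactness of $[0,T]$. The price is that joint measurability of this uncountable supremum is no longer free; it follows either by noting that $\{\xi^{k}\ge c\}$ is the projection of $\{(\om,\theta,\theta'):\xi(\om,\theta')-k|\theta-\theta'|\ge c\}$, a product-measurable set with compact $\theta'$-sections, or by invoking completeness of $(\Om,\Fc,\P)$ and the measurable projection theorem, universal measurability being enough for the integrals involved. With these two repairs your scheme goes through and recovers the Jacod--M\'emin statements.
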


	Let $\F = (\Fc_t)_{t \in [0,T]}$ be a filtration on the probability space $(\Om, \Fc, \P)$,
	we introduce $\Fcb^0$ and $\Fbb^0 = (\Fcb^0_t)_{t \in [0,T]}$ by
	$$
		\Fcb^0 := \Fc \otimes  \{\emptyset, [0,T]\},
		~~~
		\Fcb^0_t := \Fc_t \otimes \{\emptyset, [0,T]\},
		~~
		t \in [0,T].
	$$
	Further, we introduce a canonical element $\Theta : \Omb \longrightarrow [0,T]$ by
	$$
		\Theta(\omb) ~:=~ \theta, ~~
		\mbox{for all}~
		\omb = (\om, \theta) \in \Omb.
	$$
	The following result is mainly a direct adaptation to our context from Carmona, Delarue and Lacker \cite[Theorem 6.4]{CarmonaDelarueLacker2017}, we nevertheless provide a proof for completeness.
	\begin{proposition}\label{prop:H_hypothesis}
		Let $T < \infty$, then the set
		$$
		\Pcb_0
		:=
		\Big\{
			\Po \in \Pc(\Omo) ~:
			\Po|_{\Om} = \P,
			~\Po[\Theta \le t|\Fcb^0] = \Po[\Theta \le t|\Fcb^0_t],
			~~\mbox{for all}~
			t \in [0,T]
		\Big\}
		$$
		is compact in $\Pc(\Omo)$ under the stable convergence topology.
	\end{proposition}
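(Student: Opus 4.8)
The plan is to show that $\Pcb_0$ is both relatively compact and closed for the stable convergence topology, so that it is compact. Relative compactness is immediate from Theorem \ref{thm:stable_convergence}$\mathrm{(i)}$: every $\Po \in \Pcb_0$ satisfies $\Po|_{\Om} = \P$, so the set of $\Om$-marginals $\{\Po|_{\Om} : \Po \in \Pcb_0\} = \{\P\}$ is trivially relatively compact in $\Pc(\Om)$, whence $\Pcb_0$ is relatively compact. The entire difficulty therefore lies in proving that $\Pcb_0$ is \emph{closed}: given $(\Po_n)_{n} \subset \Pcb_0$ with $\Po_n \to \Po$ stably, I must check $\Po \in \Pcb_0$. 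That $\Po|_{\Om} = \P$ follows at once by testing against $(\om,\theta)\mapsto \eta(\om)$ for bounded $\Fc$-measurable $\eta$, since such a function lies in $B_{mc}(\Omo)$ and Theorem \ref{thm:stable_convergence}$\mathrm{(ii)}$ gives $\E^{\P}[\eta] = \E^{\Po_n}[\eta] \to \E^{\Po}[\eta]$.

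The crucial preliminary step is to reformulate the (H)-hypothesis in a measure-robust way. Since $\Fcb^0$-measurable (resp. $\Fcb^0_t$-measurable) functions are exactly those of the form $(\om,\theta)\mapsto H(\om)$ with $H$ being $\Fc$-measurable (resp. $\Fc_t$-measurable), the identity $\Po[\Theta\le t\,|\,\Fcb^0] = \Po[\Theta\le t\,|\,\Fcb^0_t]$ is equivalent to requiring, for every bounded $\Fc$-measurable $H$,
\[
\E^{\Po}\big[\mathds{1}_{\{\Theta \le t\}}\,(H - H_t)\big] = 0,
\qquad H_t := \E^{\Po}[H\,|\,\Fcb^0_t].
\]
The key observation is that, because $H$ depends on $\om$ only and $\Po|_{\Om}=\P$, the random variable $H_t$ equals $\E^{\P}[H\,|\,\Fc_t]$, a \emph{fixed} $\Fc_t$-measurable object independent of the particular measure; in particular the very same $H_t$ occurs for every $\Po_n$ and for the limit $\Po$. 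Thus it suffices to pass to the limit in $\E^{\Po_n}[\mathds{1}_{\{\Theta\le t\}}(H-H_t)]=0$.

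The hard part will be handling the limit, since $\theta\mapsto \mathds{1}_{\{\theta\le t\}}$ is not continuous and $\Delta := H - H_t$ has no definite sign, so neither the continuous nor the semicontinuous conclusions of Theorem \ref{thm:stable_convergence}$\mathrm{(ii)}$ apply directly. The plan is to restrict first to those $t$ with $\Po[\Theta = t] = 0$ (all but the at most countably many atoms of $\Po\circ\Theta^{-1}$) and to sandwich. Writing $\Delta = \Delta^+ - \Delta^-$, the function $\mathds{1}_{\{\Theta\le t\}}\Delta^+$ is bounded and u.s.c.\ in $\theta$ while $\mathds{1}_{\{\Theta< t\}}\Delta^+$ is bounded and l.s.c.\ in $\theta$; using the u.s.c.\ inequality of Theorem \ref{thm:stable_convergence}$\mathrm{(ii)}$ (and its l.s.c.\ counterpart obtained by negation), together with $\Po[\Theta=t]=0$ (which gives $\E^{\Po}[\mathds{1}_{\{\Theta\le t\}}\Delta^+]=\E^{\Po}[\mathds{1}_{\{\Theta<t\}}\Delta^+]$) and the monotonicity $\mathds{1}_{\{\Theta<t\}}\le \mathds{1}_{\{\Theta\le t\}}$, one obtains the chain
\[
\limsup_n \E^{\Po_n}[\mathds{1}_{\{\Theta\le t\}}\Delta^+]
\le \E^{\Po}[\mathds{1}_{\{\Theta\le t\}}\Delta^+]
= \E^{\Po}[\mathds{1}_{\{\Theta<t\}}\Delta^+]
\le \liminf_n \E^{\Po_n}[\mathds{1}_{\{\Theta<t\}}\Delta^+]
\le \liminf_n \E^{\Po_n}[\mathds{1}_{\{\Theta\le t\}}\Delta^+],
\]
forcing the limit to exist and equal $\E^{\Po}[\mathds{1}_{\{\Theta\le t\}}\Delta^+]$. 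The identical argument for $\Delta^-$ and subtraction yield $\lim_n \E^{\Po_n}[\mathds{1}_{\{\Theta\le t\}}\Delta] = \E^{\Po}[\mathds{1}_{\{\Theta\le t\}}\Delta]$, which is $0$ since each term vanishes. Hence the reformulated condition holds for all non-atom $t$.

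Finally I would upgrade this to all $t\in[0,T]$. For arbitrary $t$, pick non-atom times $s_m \searrow t$; then $\E^{\Po}[\mathds{1}_{\{\Theta\le s_m\}}\,|\,\Fcb^0]$ is $\Fcb^0_{s_m}$-measurable, and by dominated convergence for conditional expectations it converges to $\E^{\Po}[\mathds{1}_{\{\Theta\le t\}}\,|\,\Fcb^0]$, which is therefore measurable with respect to $\bigcap_m \Fc_{s_m} = \Fc_{t+} = \Fc_t$ by right-continuity of $\F$. This gives $\Po[\Theta\le t\,|\,\Fcb^0] = \Po[\Theta\le t\,|\,\Fcb^0_t]$ for every $t$, so $\Po \in \Pcb_0$, establishing closedness and completing the proof. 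The main obstacle throughout is precisely the simultaneous discontinuity of the indicator and the indefinite sign of $H-H_t$; the sandwiching at continuity points, made possible by the countability of the atoms, is what circumvents it.
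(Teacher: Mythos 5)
Your proof is correct, but your closedness argument takes a genuinely different route from the paper's. The paper never touches the discontinuous indicator $\mathds{1}_{\{\Theta \le t\}}$ during the limit passage: it tests the defining property of $\Pcb_0$ against products $h_t(\theta)g_t(\om)g_T(\om)$ in which $h_t$ is \emph{continuous} in $\theta$ and $\sigma(\theta\wedge t)$-measurable (i.e.\ continuous and constant on $[t,T]$), rewrites $g_T$ as $\E^{\Po_n}[g_T|\Fcb^0_t]$ using the (H)-hypothesis for $\Po_n$, and then passes to the limit directly, since only continuous-in-$\theta$ test functions appear; the indicator is recovered only afterwards, by a monotone-class argument and the identification of conditional expectations (which implicitly also requires a $t'\downarrow t$ right-continuity step). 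You instead keep the indicator throughout and pay for its discontinuity with the upper/lower semicontinuity half of Theorem \ref{thm:stable_convergence}$\mathrm{(ii)}$, a decomposition $\Delta = \Delta^+-\Delta^-$ to get definite signs, a restriction to the co-countably many non-atoms of $\Po\circ\Theta^{-1}$, and an explicit upgrade to all $t$ via $\Fc_{t+}=\Fc_t$. Both proofs hinge on the same crucial observation, which you identify clearly: since every measure in play has $\Om$-marginal $\P$ and $H$ depends on $\om$ only, $\E^{\Po_n}[H|\Fcb^0_t]=\E^{\P}[H|\Fc_t]$ is a fixed object, so the conditional-expectation part of the constraint can be frozen while taking limits. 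What each approach buys: the paper's choice of test functions makes the limit passage one line and needs no case analysis on atoms, at the cost of a slightly abstract conditional-independence reformulation whose last step is left terse; your sandwich argument is more hands-on and makes the final right-continuity step explicit, at the cost of the atom bookkeeping and the sign decomposition. Two small points you should still record: the case $t=T$ is trivial (both sides equal $1$), and the measurability of the a.s.\ limit $\lim_m \Po[\Theta\le s_m|\Fcb^0]$ with respect to $\Fc_{s_k}$ for each $k$ uses that $\F$ is complete (usual conditions), since the limit is only defined up to $\P$-null sets.
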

	\begin{proof}
		Notice that $[0,T]$ is compact and $\Po|_{\Om} = \P$ for all $\Po \in \Pcb_0$,
		then $\Pcb_0$ is relatively compact under the stable convergence topology.
		
		\vspace{0.5em}
		
		It remains to prove that the set $\Pcb_0$ is closed.
		Let $\{\Po_n\}_{n \in \N} \subset \Pcb_0$ be such that $\Po_n$ converges to some $\Po \in \Pc(\Omo)$ under the stable convergence topology.
		Let $h_t$, $g_t$, $g_T$ be bounded measurable functions on $([0,T], \sigma\{\Theta \wedge t\})$, $(\Om, \Fc_t)$ and $(\Om, \Fc)$, respectively.
		We assume further that the function $\theta \mapsto h_t(\theta)$ is continuous.
		Then, by Theorem \ref{thm:stable_convergence},
		\begin{align*}
            \int_{\Omo}
                h_t(\theta)g_t(\om)g_T(\om)
            \Po(d\om,d\theta)
            ~ = & ~
            \lim_{n \to \infty}
            \int_{\Omo}
                h_t(\theta)g_t(\om)g_T(\om)
            \Po_n(d\om,d\theta)
            \\ ~ = & ~
            \lim_{n \to \infty}
            \int_{\Omo}
                h_t(\theta)g_t(\om)\E^{\Po_n}[g_T|\Fcu_t](\om)
            \Po_n(d\om,d\theta)
            \\ ~ = & ~
            \lim_{n \to \infty}
            \int_{\Omo}
                h_t(\theta)g_t(\om)\E^{\Po}[g_T|\Fcu_t](\om)
            \Po_n(d\om,d\theta)
            \\ ~ = & ~
            \int_{\Omo}
                h_t(\theta)g_t(\om)\E^{\Po}[g_T|\Fcu_t](\om)
            \Po(d\om,d\theta).
        \end{align*}
        Thus, it holds that
        \begin{equation*}
            \int_{\Omo}
                h_t(\theta)g_t(\om)g_T(\om)
            \Po(d\om,d\theta)
            ~ = ~
            \int_{\Omo}
                h_t(\theta)g_t(\om)\E^{\Po}[g_T|\Fcu_t](\om)
            \Po(d\om,d\theta),
        \end{equation*}
        for all bounded measurable functions $h_t$, $g_t$, $g_T$ on $([0,T], \sigma\{\theta \wedge t\})$, $(\Om, \Fc_t)$ and $(\Om, \Fc)$, respectively.
        Finally, by the definition of conditional expectation, one can conclude the proof.
    \end{proof}
\end{appendix}


\begin{thebibliography}{99}
    \bibitem{Bank2004}
    Bank P.
    Optimal control under a dynamic fuel constraint.
    SIAM Journal on Control and Optimization. 2005;44(4):1529-41.

    \bibitem{BankBesslich2021}
    Bank P, Besslich D.
    On a stochastic representation theorem for Meyer-measurable processes. Annales de l’Institut Henri Poincaré-Probabilit\'es et Statistiques. 2021;57(3):1336-68.

    \bibitem{BankKaroui2004}
    Bank P, El Karoui N.
    A stochastic representation theorem with applications to optimization and obstacle problems.
    The Annals of Probability. 2004 Jan;32(1B):1030-67.

    \bibitem{BankKauppila2017}
    Bank P, Kauppila H.
    Convex duality for stochastic singular control problems.
    The Annals of Applied Probability. 2017 Feb;27(1):485-516.

    \bibitem{BankFollmer}
    Bank P, F\"ollmer H.
    American Options, Multi-armed Bandits, and Optimal Consumption Plans: A Unifying View.
    Paris-Princeton Lectures on Mathematical Finance 2002. 2003:1-42.

    \bibitem{BankRiedel01}
    Bank P, Riedel F.
    Optimal consumption choice with intertemporal substitution.
    The Annals of Applied Probability. 2001 Aug;11(3):750-88.

    \bibitem{Bertucci2018}
    Bertucci C.
    Optimal stopping in mean field games, an obstacle problem approach.
    Journal de Mathématiques Pures et Appliquées. 2018 Dec 1;120:165-94.

    \bibitem{Bertucci2020}
    Bertucci C.
    Monotone solutions for mean field games master equations: finite state space and optimal stopping.
    Journal de l'\'Ecole polytechnique--Mathématiques. 2021 Jun 3;8:1099-132.

    \bibitem{Bertucci2020_2}
    Bertucci C.
    Fokker-Planck equations of jumping particles and mean field games of impulse control.
    Annales de l'Institut Henri Poincar\'e C. 2020 Oct 1;37(5):1211-44.

    \bibitem{Billingsley2008}
    Billingsley P.
    Probability and measure. John Wiley\& Sons. 2008 Aug 4.

    \bibitem{BismutSkalli1977}
    Bismut J. M, Skalli B.
    Temps d'arrêt optimal, théorie générale des processus et processus de Markov.
    Zeitschrift für Wahrscheinlichkeitstheorie und verwandte Gebiete. 1977 Dec;39(4):301-13.

    \bibitem{Bonsall1962}
    Bonsall F. F, Vedak K. B.
    Lectures on some fixed point theorems of functional analysis. Bombay: Tata Institute of Fundamental Research. 1962.

    \bibitem{BouveretDumitrescuTankov2020}
    Bouveret G, Dumitrescu R, Tankov P.
    Mean-field games of optimal stopping: a relaxed solution approach.
    SIAM Journal on Control and Optimization. 2020;58(4):1795-821.

    \bibitem{BurzoniCampi2022}
    Burzoni M, Campi L. 
	Mean field games with absorption and common noise with a model of bank run. 
	Stochastic Processes and their Applications. 2023 Oct 1;164:206-41.

    \bibitem{CampiDeAngelisGhio2020}
    Campi L, De Angelis T, Ghio M, Livieri G.
    Mean-field games of finite-fuel capacity expansion with singular controls.
    The Annals of Applied Probability. 2022 Oct;32(5):3674-717.

    \bibitem{CaoDianettiFerrari2021}
    Cao H, Dianetti J, Ferrari G.
    Stationary discounted and ergodic mean field games with singular controls.
    Mathematics of Operations Research. 2022 Oct 28.

    \bibitem{CaoGuo2020}
    Cao H, Guo X.
    MFGs for partially reversible investment.
    Stochastic Processes and their Applications. 2022 Aug 1;150:995-1014.

    \bibitem{CaoGuoLee2022}
    Cao H, Guo X, Lee JS. 
	Approximation of N-player stochastic games with singular controls by mean field games. Numerical Algebra, Control and Optimization. 2023 Sep 1;13(3\&4):604-29.

    \bibitem{Cardaliaguet2010}
    Cardaliaguet P.
    Notes on mean field games. Technical report, 2010.

    \bibitem{CarmonaDelarueLacker2016}
    Carmona R, Delarue F, Lacker D.
    Mean field games with common noise.
    Annals of Probability. 2016 Nov;44(6):3740-803.

    \bibitem{CarmonaDelarueLacker2017}
    Carmona R, Delarue F, Lacker D.
    Mean field games of timing and models for bank runs.
    Applied Mathematics \& Optimization. 2017 Aug;76(1):217-60.

    \bibitem{CarmonaDelarue2018}
    Carmona R, Delarue F.
    Probabilistic theory of mean field games with applications I-II.
    Springer Nature; 2018.

    \bibitem{ChiarollaFerrari2014}
    Chiarolla M.B, Ferrari G.
    Identifying the free boundary of a stochastic, irreversible investment problem via the Bank--El Karoui representation theorem.
    SIAM Journal on Control and Optimization. 2014;52(2):1048-70.

    \bibitem{DianettiFerrari2022}
    Dianetti J, Ferrari G, Fischer M, Nendel M. A unifying framework for submodular mean field games. Mathematics of Operations Research. 2023 Aug;48(3):1679-710.

    \bibitem{DumitrescuLeutscherTankov2021}
    Dumitrescu R, Leutscher M, Tankov P. Control and optimal stopping Mean Field Games: a linear programming approach. Electronic Journal of Probability. 2021 Jan;26:1-49.

    \bibitem{ElKaroui_SF}
    El Karoui, N.
    Les aspects probabilistes du contrôle stochastique,
    École d’été de Probabilités de Saint-Flour IX-1979. Springer Berlin Heidelberg.

    \bibitem{Fu2019}
    Fu G. Extended mean field games with singular controls. SIAM Journal on Control and Optimization. 2023 Feb 28;61(1):285-314.

    \bibitem{FuHorst2017}
    Fu G, Horst U.
    Mean field games with singular controls.
    SIAM Journal on Control and Optimization. 2017;55(6):3833-68.

    \bibitem{HeTanZou2021}
    He X, Tan X, Zou J. An exit contract optimization problem. ESAIM: Control, Optimisation and Calculus of Variations. 2023;29:82.

    \bibitem{HuangCainesMalhame2006}
    Huang M, Malhamé R.P, Caines P.E.
    Large population stochastic dynamic games: closed-loop McKean-Vlasov systems and the Nash certainty equivalence principle.
    Communications in Information \& Systems.
    2006 Jan;6(3):221-52.

    \bibitem{JacodShiryaev2013}
    Jacod J, Shiryaev A.
    Limit theorems for stochastic processes. Springer Science \& Business Media. 2013 Mar 9.

    \bibitem{JacodMemin1981}
    Jacod J, M\'emin J. Sur un type de convergence interm\'ediaire entre la convergence en loi et la convergence en probabilité. InS\'eminaire de Probabilit\'es XV 1979/80 1981:529-46. Springer, Berlin, Heidelberg.

    \bibitem{KamaeKrengel1978}
    Kamae T, Krengel U.
    Stochastic partial ordering.
    The Annals of Probability. 1978 Dec 1:1044-9.

    \bibitem{LasryLions2007}
    Lasry J.M, Lions P.L.
    Mean field games.
    Japanese Journal of Mathematics. 2007 Mar; 2(1):229-60.

    \bibitem{Lions}
    Lions P.-L. Cours au Coll`ege de France (Lectures on Nov 27th, Dec 4th-11th, 2009). www.college-de-france.fr, 2009.

    \bibitem{MaWang2009}
    Ma J, Wang Y.
    On variant reflected backward SDEs, with applications.
    International Journal of Stochastic Analysis. 2009;2009.

    \bibitem{Nutz2018}
    Nutz M. A mean field game of optimal stopping.
    SIAM Journal on Control and Optimization. 2018;56(2):1206-21.

    \bibitem{QianXu2018}
    Qian Z, Xu M. Reflected backward stochastic differential equations with resistance. The Annals of Applied Probability. 2018 Apr 1;28(2):888-911.

    \bibitem{MFG_Planning}
    Ren Z, Tan X, Touzi N, Yang J. Entropic optimal planning for path-dependent mean field games. SIAM Journal on Control and Optimization. 2023 Jun 30;61(3):1415-37.

    \bibitem{Sznitman1991}
    Sznitman A.S.
    Topics in propagation of chaos.
    Ecole d’été de probabilités de Saint-Flour XIX—1989. 1991 Jan 1;1464:165-251.

    \bibitem{Topkis1998}
    Topkis D.M.
    Supermodularity and complementarity. Princeton University Press; 1998 May 3.
\end{thebibliography}
\end{document}